\declaretheorem[style = plain, numberwithin = section]{theorem}
\declaretheorem[style = plain,      sibling = theorem]{lemma}
\declaretheorem[style = plain,      sibling = theorem]{proposition}
\declaretheorem[style = definition, sibling = theorem]{example}
\declaretheorem[style = remark,    numbered = no]{remark}
\DeclareMathOperator{\vol}{vol}
\DeclareMathOperator{\Aut}{Aut}
\DeclareMathOperator{\id}{id}
\newcommand{\B}{\mathcal{B}}
\DeclareMathOperator{\modular}{mod}
\DeclareMathOperator{\supp}{supp}
\newcommand{\bra}{\mathcal{N}}
\newcommand{\N}{\mathbb{N}}   
\newcommand{\Z}{\mathbb{Z}}   
\newcommand{\Q}{\mathbb{Q}}   
\newcommand{\R}{\mathbb{R}}   
\newcommand{\C}{\mathbb{C}}   
\newcommand{\F}{\mathbb{F}}   
\newcommand{\T}{\mathbb{T}}   
\newcommand{\algint}{\mathcal{O}}
\newcommand{\A}{\mathbb{A}}
\def\adeles#1{\A_{#1}}
\def\sadeles#1#2{\A_{#1,#2}}
\def\rationals#1{R_{#1}}
\def\srationals#1#2{R_{#1,#2}}
\def\resprod#1#2#3{\sideset{}{^{#3}}\prod_{#1} #2}
\def\tfp#1{#1 \times \widehat{#1}}
\setlist[enumerate]{label=({\roman*})}
\author[Enstad]{Ulrik Enstad}
\address{Department of Mathematics, University of Oslo, 0851 Oslo, Norway.}
\email{ubenstad@math.uio.no}
\author[Jakobsen]{Mads S.\ Jakobsen}
\address{DESY, Notkestraße 85, 22607 Hamburg, Germany}
\email{mads.jakobsen@desy.de}
\author[Luef]{Franz Luef}
\address{Department of Mathematical Sciences, Norwegian University of Science and Technology, 7491 Trondheim, Norway.}
\email{franz.luef@ntnu.no}
\author[Omland]{Tron Omland}
\address{Norwegian National Security Authority (NSM)}
\email{tron.omland@gmail.com}
\title[Deformations of Gabor frames on the adeles]{Deformations and Balian--Low theorems for Gabor frames on the adeles}
\subjclass[2010]{42C15 (Primary) 11R56, 43A70, 46E30 (Secondary)}
\begin{document}
\maketitle

\begin{abstract}
    \noindent
    We generalize Feichtinger and Kaiblinger's theorem on linear deformations of uniform Gabor frames to the setting of a locally compact abelian group $G$. More precisely, we show that Gabor frames over lattices in the time-frequency plane of $G$ with windows in the Feichtinger algebra are stable under small deformations of the lattice by an automorphism of $\tfp{G}$. The topology we use on the automorphisms is the Braconnier topology. We characterize the groups in which the Balian--Low theorem for the Feichtinger algebra holds as exactly the groups with noncompact identity component. This generalizes a theorem of Kaniuth and Kutyniok on the zeros of the Zak transform on locally compact abelian groups. We apply our results to a class of number-theoretic groups, including the adele group associated to a global field.
\end{abstract}

\section{Introduction}

Bases and frames are indispensable tools in functional analysis, and their stability under small perturbations or deformations is a well-studied topic. The study of perturbations of bases began with the work of Paley and Wiener on nonharmonic Fourier series \cite{Yo80}. A frame is a generalized basis that allows for robust, but non-unique expansions. By definition, a sequence $(g_j)_{j \in J}$ in a Hilbert space $H$ is a \emph{frame} if there exist $A,B > 0$ such that
\begin{equation}
     A \|f \|^2 \leq \sum_{j \in J} |\langle f, g_j \rangle |^2 \leq B \| f \|^2 \label{eq:frame_property}
\end{equation}
for all $f \in H$. Frames were introduced by Duffin and Schaeffer in \cite{DuSc52}, and have found major applications to sampling theory \cite{AlGr01}, wavelet theory \cite{DaGrMe86,Da90} and pseudodifferential operators \cite{GrHe99}, to name a few. A systematic study of perturbations of frames was initiated by Christensen in \cite{Ch95,Ch95-2}, and since then, a number of pertubation results have been proved for various types of frames, including frames of exponentials, wavelet frames and Gabor frames \cite{Ba97,SuZh99-1,SuZh02,CaCh08}.

There are natural notions of perturbations and deformations of structured function systems like Gabor systems and wavelet systems. A Gabor system is a set $\mathcal{G}(g,\Delta) \coloneqq \{ e^{2\pi i \langle \omega, \cdot \rangle } g(\cdot-x) : (x,\omega) \in \Delta \}$ of time-frequency translates of a single function $g \in L^2(\R^n)$, where the translates come from a discrete point set $\Delta$ in the time-frequency plane $\R^{2n}$. A \emph{Gabor frame} is a Gabor system that satisfies the frame property \eqref{eq:frame_property}. One can then raise the natural question of whether the frame property of a Gabor system is retained after a sufficiently small deformation of either the point set $\Delta$ or the window $g$.

The first result in this direction is due to Feichtinger and Kaiblinger \cite{FeKa04}, and concerns \emph{linear} deformations of \emph{uniform} Gabor frames with windows in the modulation space $M^1(\R^n)$, also known as the Feichtinger algebra $S_0(\R^n)$ \cite{Fe81-2}. A linear deformation of the point set $\Delta$ is implemented by a matrix $A \in GL_{2n}(\R)$, while uniform means that the point set $\Delta$ has the structure of a lattice (a discrete, cocompact subgroup) in $\R^{2n}$. Their main result states that Gabor frames of this type are stable both under linear deformations of the point set $\Delta$ and under small changes in the $S_0(\R^n)$-norm of $g$:

\begin{theorem}[Feichtinger--Kaiblinger \cite{FeKa04}]\label{thm:feka}
Let $g \in S_0(\R^n)$, and let $\Delta$ be a lattice in $\R^{2n}$. If $\mathcal{G}(g,\Delta)$ is a Gabor frame, then there exists a neighbourhood $W$ of $I_{2n} \in GL_{2n}(\R)$ and a neighbourhood $U$ of $g \in S_0(\R^n)$ such that $\mathcal{G}(h,A\Delta)$ is a Gabor frame for all $A \in W$ and $h \in U$.
\end{theorem}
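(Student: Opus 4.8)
The plan is to reduce the continuous deformation statement to a compactness argument in a suitable function space, following the strategy that eventually became standard in this circle of ideas. The key point is that the lower and upper frame bounds of a Gabor system depend continuously — in an appropriate sense — on the generating window and on the lattice, provided the window lives in $S_0(\R^n)$. Concretely, I would first recall that for $g \in S_0(\R^n)$ and a lattice $\Delta \subset \R^{2n}$, the Gabor frame operator $S_{g,\Delta} f = \sum_{\lambda \in \Delta} \langle f, \pi(\lambda) g \rangle \pi(\lambda) g$ is a bounded operator on $L^2(\R^n)$, and in fact the frame property is equivalent to $S_{g,\Delta}$ being invertible. So the whole problem becomes: show that the map $(A, h) \mapsto S_{h, A\Delta}$ is continuous into $\mathcal{B}(L^2(\R^n))$ (with the norm topology) at the point $(I_{2n}, g)$, since invertibility is an open condition in $\mathcal{B}(L^2)$.

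First I would handle the dependence on the window. Writing $\Delta = M\Z^{2n}$ for some $M \in GL_{2n}(\R)$, one shows that $\|S_{h,\Delta} - S_{g,\Delta}\|_{\mathcal{B}(L^2)}$ is controlled by $\|h - g\|_{S_0}$ times a constant depending only on $\max(\|h\|_{S_0}, \|g\|_{S_0})$ and on $\Delta$; this is the familiar estimate that the frame operator norm is bounded by $C_\Delta \|g\|_{S_0}^2$, combined with bilinearity of $(g,h) \mapsto$ (cross-frame operator). This gives local Lipschitz continuity in the window and immediately yields the neighbourhood $U$ once we know the result holds for the unperturbed lattice. Second, and this is where the real work lies, I would handle the dependence on the lattice. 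The idea is to absorb the deformation into the window: for $A$ close to $I_{2n}$, one relates $\mathcal{G}(h, A\Delta)$ to $\mathcal{G}(h, \Delta)$ via a metaplectic-type operator, or more elementarily, one uses that $A$ factors (for $A$ near the identity) as a product of a symplectic matrix and a small correction, and that symplectic matrices act on $S_0(\R^n)$ by bounded operators (metaplectic representation preserves $S_0$). The non-symplectic part changes the lattice covolume and can be handled by a dilation, which again preserves $S_0$. The upshot is a reduction to showing that $A \mapsto \mathcal{G}(\rho(A) g, \Delta)$ has frame operator continuous in $A$, and since $A \mapsto \rho(A)g$ is continuous from a neighbourhood of $I_{2n}$ into $S_0(\R^n)$, we are back to the window-continuity estimate of the previous step.

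The main obstacle — and the technical heart of \cite{FeKa04} — is establishing the continuity of $A \mapsto \pi(\cdot)(\rho(A)g)$, equivalently the continuity of the metaplectic action on $S_0$, together with the uniform control needed to pass from pointwise estimates on $\langle f, \pi(\lambda)g\rangle$ to norm estimates on the frame operator. This requires the Wiener-amalgam / $S_0$ machinery: the key lemma is that for $g \in S_0$, the function $\lambda \mapsto \|\pi(\lambda) g\|$-type quantities, and more precisely the sampled short-time Fourier transform, lie in $\ell^1$ over any lattice with a bound that is uniform for lattices in a small neighbourhood of a fixed one. One packages this into the statement that the Gabor frame operator, as a function of the lattice, is continuous in operator norm — a consequence of the fact that $V_g g \in W(C_0, \ell^1)$ (Wiener's algebra) for $g \in S_0$. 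I would isolate this as a lemma, prove it using the convolution relations for the STFT and dominated convergence on the amalgam space, and then the theorem follows by combining it with openness of invertibility and the bilinear estimate, choosing $W$ and $U$ small enough that $S_{h, A\Delta}$ stays within $\|S_{g,\Delta}^{-1}\|^{-1}$ of $S_{g,\Delta}$.
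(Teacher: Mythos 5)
Your overall architecture (frame $\Leftrightarrow$ invertibility of the frame operator, invertibility is open in $\B(L^2)$, so it suffices to prove continuity of $(h,A)\mapsto S_{h,A\Delta}$) is the same skeleton as the paper's proof of the general result \Cref{thm:pert}, but the step where you handle the lattice dependence has a genuine gap. You propose to absorb the deformation into the window: factor $A$ near $I_{2n}$ as a symplectic matrix times a ``small correction'' handled ``by a dilation''. This reduction cannot work. For $n\geq 2$ a generic $A\in GL_{2n}(\R)$ close to the identity is not a symplectic similitude (the group $\{A: A^TJA=\lambda J\}$ has dimension $2n^2+n+1 < 4n^2$), so there is no metaplectic-type operator $\rho(A)$ intertwining $\pi(Az)$ with $\pi(z)$ up to phases. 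Even for $n=1$ the covolume-changing factor cannot be absorbed: a dilation $f\mapsto f(c\,\cdot)$ induces the symplectic map $\operatorname{diag}(c^{-1}I_n,cI_n)$ on the time-frequency plane, which preserves covolume, and indeed no operator can transfer the frame property across a covolume-changing deformation, since otherwise a frame over a lattice of covolume slightly below $1$ would yield a frame over a lattice of covolume above $1$, contradicting the density theorem (\Cref{prop:density_thm}). So the non-symplectic directions must be treated as honest lattice perturbations, which is precisely the part your outline defers.

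Your fallback claim --- that $A\mapsto S_{g,A\Delta}$ is norm-continuous as a consequence of $V_gg\in W(C_0,\ell^1)$ plus dominated convergence --- is exactly the hard point, and the sketched argument does not prove it: in the Janssen representation the adjoint lattice moves with $A$, and a termwise comparison requires $\|\pi(\mu_A(k))-\pi(\mu_0(k))\|$ to be small, which fails because time-frequency shifts are not norm-continuous in their parameter ($\|\pi(z)-\pi(w)\|=2$ whenever $z\neq w$). This is why both \cite{FeKa04} and the paper only establish continuity of the frame-type operator \emph{at dual-frame points} (\Cref{lem:cont_at_frame}): there the Wexler--Raz relations (\Cref{prop:duality_theory}) say the Janssen coefficients of the limit operator are $\vol(\Delta)\,\delta_{z,0}$, so one compares $S_{g,h,A\Delta}$ with a scalar multiple of the identity and only needs the $\ell^1$-continuity of the sampled short-time Fourier transform jointly in $(g,h,A)$ (\Cref{lem:cont_comp}, which is where the $S_0$/amalgam machinery and the continuity of dilation enter). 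One then needs the additional algebraic step in the proof of \Cref{thm:pert}: invertibility of the mixed operator $S_{g,h_0,A\Delta}$ near the dual pair is upgraded to the existence of a dual window, hence to the frame property. To repair your proposal, replace the absorption step by this Wexler--Raz/Janssen comparison at the dual pair, and restrict your continuity claim to that point rather than asserting it for arbitrary windows and lattices.
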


Ascenci, Feichtinger and Kaiblinger generalized the result to linear deformations of arbitrary point sets using the theory of pseudodifferential operators in \cite{AsFeKa14}. Later, stability of nonuniform Gabor frames under a large class of nonlinear deformations was proved by Gröchenig, Ortega-Cerdà and Romero  \cite{GrOrRo15}.

In this paper, we initiate the study of deformations of Gabor frames on a (second-countable) locally compact abelian group $G$. In this setting, a time-frequency shift of $f \in L^2(G)$ by $(x,\omega) \in \tfp{G}$ is defined as follows:
\begin{align*}
    \pi(x,\omega)f(t) = \omega(t) f(x^{-1}t),  \;\;\;\;\;  t \in G.
\end{align*}
Here, $\widehat{G}$ denotes the Pontryagin dual of $G$. The Gabor system with window $g \in L^2(G)$ over the discrete point set $\Delta \subseteq \tfp{G}$ is defined as
\[ \mathcal{G}(g,\Delta) = \{ \pi(z) g : z \in \Delta \} \subseteq L^2(G) .\]
Even though time-frequency analysis is most commonly done in the Euclidean case $G = \R^n$, many of the fundamental results hold in the locally compact abelian setting, such as density and duality results for uniform Gabor frames and Zak transform considerations \cite{Gr98,JaLe16,JaLe16-2}. Gabor analysis on other groups appear naturally in applications. For instance, the integers $\Z$, finite cyclic groups $\Z/n\Z$ and the circle group $\T$ are important when sampling or periodizing Gabor frames on $\R$ \cite{Ja97,So07,Ka05,JaLu19}. Other considerations on general locally compact abelian groups include \cite{KaKu98,GrSt07,JaLu19}, and Balian--Low type phenomena were proved to hold for finite groups in \cite{NiOl19}.

In general, one refers to the groups $\R$, $\Z$, $\T$ and $\Z/n\Z$ and their products as \emph{elementary} LCA groups. A rather different class of LCA groups come from algebraic number theory. Given an algebraic number field or a global function field, the associated adele ring is a restricted product of completions of the given field with respect to its different absolute values. Among the many applications of the adele ring is an elegant statement of the Artin reciprocity law from class field theory \cite{Ta67}. In the case of the rational numbers, the associated adele ring is built from $\R$ and the fields of $p$-adic numbers $\Q_p$ for a prime $p$. Gabor frames on the adele ring of the rational numbers were recently constructed in \cite{EnJaLu19}.

In the Euclidean case $G = \R^n$, there is an immediate notion of a linear deformation of $G$, and the group $GL_n(\R)$ of such deformations is again Euclidean, thus providing a familiar topology to formulate deformation results. In the general context of a locally compact abelian group $G$, there is a natural topology on the automorphism group $\Aut(G)$, called the \emph{Braconnier topology}. First introduced by Braconnier in \cite{Br48}, it is a topology defined in such a way that the operations of composition and taking inverses of automorphisms are continuous. Thus, $\Aut(G)$ itself becomes a topological group. In specific situations, the Braconnier topology coincides with the topology one would expect---in particular, $\Aut(\R^n) \cong GL_n(\R)$ as topological groups. Our first result is a generalization of the result of Feichtinger and Kaiblinger (\Cref{thm:feka}), and states that in the setting of a locally compact abelian group $G$, we can replace $GL_{2n}(\R) = \Aut(\tfp{\R^n})$ with the group $\Aut(\tfp{G})$, equipped with the Braconnier topology:

\begin{theorem}[cf.\ \Cref{thm:pert}]\label{intro:thm1}
Let $G$ be a locally compact abelian group, let $\Delta$ be a lattice in $\tfp{G}$ and let $g \in S_0(G)$. If $\mathcal{G}(g,\Delta)$ is a Gabor frame, then there exist a neighbourhood $W$ of $\id_G \in \Aut(\tfp{G})$ and a neighbourhood $U$ of $g \in S_0(G)$ such that $\mathcal{G}(h,\alpha(\Delta))$ is a Gabor frame for all $\alpha \in W$ and $h \in U$.
\end{theorem}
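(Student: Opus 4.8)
The plan is to reduce the frame property of $\mathcal{G}(h,\alpha(\Delta))$ to the invertibility of its frame operator on $L^2(G)$, and then to show that this frame operator depends continuously, in the operator norm of $\B(L^2(G))$, on the pair $(\alpha,h)\in\Aut(\tfp{G})\times S_0(G)$. Since the invertible elements of $\B(L^2(G))$ form an open set and $S_{g,\Delta}$ is invertible by hypothesis, continuity forces invertibility to persist near $(\id_G,g)$, which is what is claimed.

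I would first establish \emph{locally uniform Bessel bounds}: there are neighbourhoods $W_0$ of $\id_G$ in $\Aut(\tfp{G})$ and $U_0$ of $g$ in $S_0(G)$ such that $\mathcal{G}(h,\alpha(\Delta))$ is Bessel with a common bound for all $\alpha\in W_0$, $h\in U_0$. The inputs are the Wiener-amalgam/Feichtinger-algebra estimate bounding $\sum_{w}|\langle h,\pi(w)h\rangle|$ over a lattice $w$ in terms of $\|h\|_{S_0}$ and the covolume, together with continuity of the module on $\Aut(\tfp{G})$: since $\modular(\id_G)=1$, the covolume $\vol(\alpha(\Delta))=\modular(\alpha)\,\vol(\Delta)$ stays bounded away from $0$ near $\id_G$, making these estimates uniform there. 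Granting this, for $(\alpha,h)\in W_0\times U_0$ the system $\mathcal{G}(h,\alpha(\Delta))$ is a frame if and only if its frame operator $S_{h,\alpha(\Delta)}f=\sum_{z\in\Delta}\langle f,\pi(\alpha(z))h\rangle\,\pi(\alpha(z))h$ is bounded below, i.e.\ invertible.

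The core step is operator-norm continuity of $(\alpha,h)\mapsto S_{h,\alpha(\Delta)}$ at $(\id_G,g)$; a term-by-term comparison of the defining series fails since the rank-one summands are not operator-norm summable, so I would use Janssen's representation of the frame operator, valid for $S_0$ windows over LCA groups: for a lattice $\Gamma\subseteq\tfp{G}$ with adjoint lattice $\Gamma^{\circ}$,
\[
S_{h,\Gamma}=\vol(\Gamma)^{-1}\sum_{w\in\Gamma^{\circ}}\langle h,\pi(w)h\rangle\,\pi(w),\qquad \sum_{w\in\Gamma^{\circ}}|\langle h,\pi(w)h\rangle|<\infty,
\]
the sum converging absolutely in $\B(L^2(G))$. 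The deformation $\Delta\mapsto\alpha(\Delta)$ induces (via continuity of the adjoint-lattice operation) a continuous deformation $\Delta^{\circ}\mapsto\alpha(\Delta)^{\circ}$ of adjoint lattices that equals $\Delta^{\circ}$ at $\alpha=\id_G$; fixing a continuous parametrization $\Delta^{\circ}\ni w\mapsto w_{\alpha}\in\alpha(\Delta)^{\circ}$ with $w_{\id_G}=w$, and noting each summand has operator norm $|\langle h,\pi(w_{\alpha})h\rangle|$, the problem reduces to the convergence $\vol(\alpha(\Delta))^{-1}\to\vol(\Delta)^{-1}$ together with the scalar statement
\[
\sum_{w\in\Delta^{\circ}}\bigl|\langle h,\pi(w_{\alpha})h\rangle-\langle g,\pi(w)g\rangle\bigr|\longrightarrow 0\qquad\text{as }(\alpha,h)\to(\id_G,g).
\]
This splits into a window part (replace $h$ by $g$, controlled uniformly in $\alpha$ by a Wiener-amalgam estimate summed over the nearby adjoint lattices) and a lattice part with $g$ fixed (use that $w\mapsto\langle g,\pi(w)g\rangle$ lies in $S_0(\tfp{G})$ to obtain a uniform tail bound over a large compact $K$, then uniform continuity on $K$ together with $w_{\alpha}\to w$).

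The main obstacle is exactly the uniformity in the last display. The Braconnier topology only gives that $\alpha$ is close to $\id_G$ \emph{uniformly on compact sets}, whereas $\Delta^{\circ}$ is noncompact, so one must combine this with a tail estimate that is uniform as the lattice ranges over a neighbourhood of $\Delta^{\circ}$ (with covolume bounded below); this is where the Feichtinger-algebra hypothesis is essential, since $\langle g,\pi(\cdot)g\rangle$ and $\langle h,\pi(\cdot)h\rangle$ lie in $S_0(\tfp{G})$, which embeds into a Wiener amalgam space whose $\ell^1$-norm over a lattice is uniformly controlled over nearby lattices. Once continuity is in hand, openness of the invertibles in $\B(L^2(G))$ and invertibility of $S_{g,\Delta}$ yield neighbourhoods $W\subseteq W_0$ and $U\subseteq U_0$ on which $S_{h,\alpha(\Delta)}$ remains invertible; together with the uniform Bessel bounds this shows $\mathcal{G}(h,\alpha(\Delta))$ is a frame for all $\alpha\in W$ and $h\in U$.
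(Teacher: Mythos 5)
Your proposal is correct, and its skeleton is the same as the paper's: duality theory for $S_0$-windows over lattices (\Cref{prop:duality_theory}), the Janssen representation, operator-norm continuity of a frame-type operator in $(h,\alpha)$, and openness of the invertibles in $\B(L^2(G))$. The differences are in the realization. You track the frame operator $S_{h,\alpha(\Delta)}$ itself and invoke ``frame iff frame operator invertible'' (so your preliminary uniform Bessel step is in fact subsumed by \Cref{prop:duality_theory}); the paper instead fixes a dual window $h_0\in S_0(G)$, proves continuity of the mixed operator $(g,h,\alpha)\mapsto S_{g,h,\alpha(\Delta)}$ at dual-frame pairs (\Cref{lem:cont_at_frame}), where the Wexler--Raz relations make the limiting Janssen coefficients a multiple of $\delta_{z,0}$ and hence the limit operator a multiple of the identity, and then recovers a dual window for the perturbed system by a commutation argument in \Cref{thm:pert}; your endgame is a bit shorter, the paper's norm estimate a bit cleaner. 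For the crucial $\ell^1$-continuity of the Janssen coefficients you propose Wiener-amalgam estimates with tail bounds uniform over the nearby adjoint lattices---essentially the original Feichtinger--Kaiblinger route on $\R^n$---whereas the paper deliberately avoids amalgam spaces: it proves continuity of dilation $(F,\beta)\mapsto F\circ\beta$ on the Feichtinger algebra (\Cref{thm:strong_cont_s0}) and combines it with continuity of the short-time Fourier transform into $S_0(\tfp{G})$ and of restriction to a lattice (\Cref{prop:feichtinger_properties}, \Cref{lem:cont_comp}), which yields exactly your displayed scalar convergence; since your parametrization $w\mapsto w_\alpha$ already pulls the sum back to the fixed lattice $\Delta^{\circ}$, you could discharge your ``main obstacle'' by citing these continuity results instead of redoing uniform amalgam estimates on a general LCA group. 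One step you should make explicit: the continuous parametrization of the adjoint lattices is not automatic but comes from $(\alpha(\Delta))^{\circ}=(\alpha^{*})^{-1}(\Delta^{\circ})$, where $\alpha^{*}$ is the adjoint of $\alpha$ with respect to the symplectic bicharacter on $\tfp{G}$, and continuity of $\alpha\mapsto(\alpha^{*})^{-1}$ follows from \Cref{prop:dual_iso}; note that $(\alpha(\Delta))^{\circ}\neq\alpha(\Delta^{\circ})$ in general, so this small ingredient is genuinely needed---you actually handle it more explicitly than the paper's displayed sums do.
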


Our main tools are the duality theory for uniform Gabor frames on LCA groups \cite{JaLe16}, continuity of various maps between the Feichtinger algebra of different groups \cite{Ja18}, and the existence of dual windows in the Feichtinger algebra \cite{GrLe04,Au21}. At the heart of our approach is the continuity of dilation on $S_0(G)$ by automorphisms from $\Aut(G)$, see \Cref{thm:strong_cont_s0}. We establish this result using a particular description of the Feichtinger algebra, see \Cref{prop:feichtinger_description}, and a similar continuity result for $L^1(G)$ already proved in \cite{Br48}, see \Cref{prop:strong_cont}.

To give examples of applications of \Cref{intro:thm1} to groups other than $G = \R^n$, we introduce a class of groups which we call \emph{higher dimensional $S$-adeles}. These groups are built from a global field $K$, a set $S$ of finite places on $K$ and a natural number $n$. If $S$ is the set of all finite places on $K$ and $n=1$, one recovers the usual adele ring of $K$, and if $K = \Q$ and $S = \emptyset$, one recovers $G = \R^n$. We apply \Cref{intro:thm1} to all of these groups when $K = \Q$, and the following is the statement for the higher dimensional full adele ring over the rationals:

\begin{theorem}[cf.\ \Cref{prop:lattices}, \Cref{thm:pert_adeles}]
Let $\A_{\Q}$ denote the rational adele ring, and identify $\Q$ with its diagonal copy inside $\A_{\Q}$ (see \Cref{sec:adeles}). Then any lattice $\Delta$ in $\tfp{\A_{\Q}^n} \cong \A_{\Q}^{2n}$ is of the form
\[ \Delta = A \Q^{2n} \coloneqq \{ (A_{\infty}q, A_2q, A_3q, A_5q, \ldots ) : q \in \Q^{2n} \} \]
for a collection of matrices $A = (A_{\infty}, A_2, A_3, A_5, \ldots )$ where $A_{\infty} \in GL_{2n}(\R)$, $A_p \in GL_{2n}(\Q_p)$, with $A_p \in GL_{2n}(\Z_p)$ for all but finitely many primes $p$. Moreover, if $\mathcal{G}(g,\Delta)$ is a Gabor frame with $\Delta$ as above, then there exist open neighbourhoods $W_{\infty} \subseteq GL_{2n}(\R)$ of $A_{\infty}$, $W_p \subseteq GL_{2n}(\Q_p)$ of $A_p$, $p$ prime, with $W_p = GL_{2n}(\Z_p)$ for all but finitely many $p$, and a neighbourhood $U$ of $g$ in $S_0(\A_{\Q})$ such that $\mathcal{G}(h,B\Q^{2n})$ is a Gabor frame whenever $B = (B_{\infty}, B_2, B_3, B_5, \ldots) \in W_{\infty} \times W_2 \times W_3 \times \cdots$ and $h \in U$.
\end{theorem}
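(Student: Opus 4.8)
The plan is to deduce this statement from the general deformation result (\Cref{intro:thm1}, i.e.\ \Cref{thm:pert}) by identifying the Braconnier topology on $\Aut\bigl(\tfp{\A_{\Q}^n}\bigr)$ concretely. First I would treat the lattice classification, which is \Cref{prop:lattices}: one uses that $\A_{\Q}^{2n} \cong \tfp{\A_{\Q}^n}$ as LCA groups and that $\Q^{2n}$, embedded diagonally, is a lattice in $\A_{\Q}^{2n}$ with compact quotient $(\A_{\Q}/\Q)^{2n}$. Any lattice $\Delta$ in $\A_{\Q}^{2n}$ has the same covolume-type invariants as some $A\Q^{2n}$, and since $GL_{2n}$ of the adeles acts transitively on lattices (this is essentially the statement that all rank-$2n$ projective $\Q$-modules inside $\A_{\Q}^{2n}$ that are discrete and cocompact are $GL_{2n}(\A_{\Q})$-translates of $\Q^{2n}$ — a standard adelic fact, provable by the strong approximation theorem for $GL_{2n}$ over $\Q$, or more elementarily by diagonalizing place-by-place), one gets $\Delta = A\Q^{2n}$ with $A = (A_\infty, A_2, A_3, \dots) \in GL_{2n}(\A_{\Q})$, and the restricted-product condition forces $A_p \in GL_{2n}(\Z_p)$ for almost all $p$.

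Next I would set up the automorphism group. The key identification is $\Aut\bigl(\tfp{\A_{\Q}^n}\bigr) \cong \Aut(\A_{\Q}^{2n}) \cong GL_{2n}(\A_{\Q})$, where the latter carries its restricted product topology with respect to the compact open subgroups $GL_{2n}(\Z_p)$, and this topology coincides with the Braconnier topology. For the identification of topologies I would argue that dilation of $\A_{\Q}^{2n}$ by an adelic matrix is continuous in the restricted-product topology (checking on a basis of compact-open neighbourhoods), and conversely that a Braconnier-convergent net of automorphisms must stabilize on the compact parts $\Z_p^{2n}$ for cofinitely many $p$; the neighbourhood structure then matches. A neighbourhood $W$ of $\id$ in $\Aut(\A_{\Q}^{2n})$ thus has the form $W_\infty \times W_2 \times W_3 \times \cdots$ with $W_\infty$ a neighbourhood of $I_{2n}$ in $GL_{2n}(\R)$, each $W_p$ a neighbourhood of $I_{2n}$ in $GL_{2n}(\Q_p)$, and $W_p = GL_{2n}(\Z_p)$ for almost all $p$.

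Then I would finish as follows. Given the frame $\mathcal{G}(g,\Delta)$ with $\Delta = A\Q^{2n}$, apply \Cref{intro:thm1} to obtain a Braconnier-neighbourhood $W'$ of $\id$ and an $S_0(\A_{\Q})$-neighbourhood $U$ of $g$ with $\mathcal{G}(h,\alpha(\Delta))$ a frame for all $\alpha \in W'$, $h \in U$. Now observe that $B\Q^{2n} = (BA^{-1})(A\Q^{2n}) = (BA^{-1})(\Delta)$; the map $B \mapsto BA^{-1}$ is a homeomorphism of $GL_{2n}(\A_{\Q})$ (by continuity of multiplication, using that $\Aut$ is a topological group in the Braconnier topology, established earlier), so it pulls $W'$ back to an open neighbourhood of $A$, which by the product description can be shrunk to one of the form $W_\infty \times W_2 \times \cdots$ with $W_p = GL_{2n}(\Z_p)$ for almost all $p$. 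For $B$ in this neighbourhood the automorphism $\alpha_{BA^{-1}}$ lies in $W'$, so $\mathcal{G}(h, B\Q^{2n})$ is a frame for all such $B$ and all $h \in U$, which is the claim.

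The main obstacle is the topological identification $\Aut(\A_{\Q}^{2n}) \cong GL_{2n}(\A_{\Q})$ \emph{as topological groups} with the Braconnier topology on the left and the restricted-product topology on the right. The algebraic isomorphism is routine, but one must verify carefully that the Braconnier neighbourhood basis (built from uniform control on a compact set, together with the dual condition needed to make inversion continuous) translates exactly into "small perturbation at $\infty$ and finitely many $p$, no change at the remaining $p$." I expect this to require a genuine argument using that $\Z_p^{2n}$ is compact open in $\Q_p^{2n}$ and that an automorphism close to the identity in the Braconnier sense must carry $\Z_p^{2n}$ to itself for all but finitely many $p$; the transitivity of the $GL_{2n}(\A_{\Q})$-action on lattices (\Cref{prop:lattices}) is the other nontrivial input, but that is a well-documented adelic fact.
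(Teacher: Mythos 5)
The genuine gap is in the first half, the lattice classification. You dispose of \Cref{prop:lattices} by appealing to ``a standard adelic fact'' that $GL_{2n}(\A_{\Q})$ acts transitively on lattices, glossed as the statement that discrete, cocompact rank-$2n$ projective $\Q$-modules inside $\A_{\Q}^{2n}$ are $GL_{2n}(\A_{\Q})$-translates of $\Q^{2n}$. But a lattice $\Delta$ in the sense of this paper is merely a discrete, cocompact \emph{subgroup}; it carries no a priori $\Q$-module structure, and proving that it nevertheless has this rigid arithmetic form is precisely the content of the paper's argument (the $\Q$-linearity of $\Delta$ is a consequence of the classification, not a hypothesis you may assume). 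Your two suggested justifications do not close this: strong approximation holds for $SL_{2n}$, not for $GL_{2n}$ (the determinant/id\`ele class group obstructs it), and in any case it is a density statement about rational points and says nothing about an abstract closed cocompact subgroup; and ``diagonalizing place-by-place'' does not parse, since a lattice such as the diagonal $\Q^{2n}$ is in no way a restricted product of local pieces --- its projection to the finite part is \emph{dense}, not discrete (this density is exactly \Cref{lem:dense-image}). The paper's actual proof intersects $\Delta$ with $\R^{2n}\times\prod_p\Z_p^{2n}$, shows this intersection is a lattice there isomorphic to $\Z^{2n}$ (using \Cref{lem:lattice-constructions} and injectivity of the projection to $\R^{2n}$ from \Cref{lem:dense-image}), builds $A_\infty$ and each $A_p$ from the images of a $\Z$-basis via a density/linear-independence argument over $\Q_p$, and then pins the lattice down with the rigidity statement \Cref{lem:final-step}: if $\Lambda\cap(\R^{2n}\times\prod_p\Z_p^{2n})=\Z^{2n}$ then $\Lambda=\Q^{2n}$. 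Without this (or an honest citation of an equivalent classification), the first assertion of the statement is unproved in your write-up.

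The remainder of your plan is essentially the paper's route. The identification of $\Aut(\tfp{\A_{\Q}^n})\cong\Aut(\A_{\Q}^{2n})$ with the restricted product of $GL_{2n}(\R)$ and the groups $GL_{2n}(\Q_p)$ (restricted with respect to $GL_{2n}(\Z_p)$), with the Braconnier topology matching the restricted-product topology, is \Cref{prop:automorphisms}; note that surjectivity there does need the small argument that there are no nontrivial continuous homomorphisms $\R^{2n}\to\Q_p^{2n}$, $\Q_p^{2n}\to\R^{2n}$ or $\Q_p^{2n}\to\Q_{p'}^{2n}$, and that the paper obtains openness of the identification not by matching neighbourhood bases by hand but from an open mapping theorem for separable completely metrizable groups, so your ``the neighbourhood structure then matches'' would need either that device or a genuine two-sided comparison of the sets $\bra(K,U)$ with product neighbourhoods. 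Granting these two ingredients, your final step --- applying \Cref{thm:pert} to $\Delta=A\Q^{2n}$, rewriting $B\Q^{2n}=(BA^{-1})(\Delta)$, and pulling the Braconnier neighbourhood of the identity back along the homeomorphism $B\mapsto BA^{-1}$ to a basic neighbourhood $W_\infty\times W_2\times\cdots$ of $A$ with $W_p=GL_{2n}(\Z_p)$ for almost all $p$ --- is correct and is how \Cref{thm:pert_adeles} yields the stated theorem.
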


\subsection*{The Balian--Low theorem}

Our second main goal is to link the deformation result in \Cref{intro:thm1} to The Balian--Low theorem for locally compact abelian groups. In the Euclidean setting, the Balian--Low theorem is a cornerstone of time-frequency analysis, and concerns the nonexistence of well-localized Gabor frames at the critical density. One of the consequences of the deformation result of Feichtinger--Kaiblinger (\Cref{thm:feka}) is that it gives as an immediate consequence the Balian--Low theorem for all lattices of volume 1 in the time-frequency plane. We present the theorem and the proof given in \cite{AsFeKa14} below.

\begin{theorem}[Feichtinger--Kaiblinger \cite{FeKa04}]\label{thm:bltr}
Let $\Delta$ be a lattice in $\R^{2n}$ of volume 1, and let $g \in S_0(\R^n)$. Then the Gabor system $\mathcal{G}(g,\Delta)$ is not a frame for $L^2(\R^n)$.
\end{theorem}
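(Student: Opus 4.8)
The plan is to derive the Balian--Low theorem as a direct consequence of the deformation theorem of Feichtinger and Kaiblinger (\Cref{thm:feka}), arguing by contradiction. Suppose that $g \in S_0(\R^n)$ and that $\Delta$ is a lattice in $\R^{2n}$ of volume $1$ such that $\mathcal{G}(g,\Delta)$ \emph{is} a frame for $L^2(\R^n)$. By \Cref{thm:feka} applied to this frame, there is a neighbourhood $W$ of the identity $I_{2n}$ in $GL_{2n}(\R)$ such that $\mathcal{G}(g, A\Delta)$ remains a Gabor frame for every $A \in W$.

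The key step is to exploit the fact that we may choose $A \in W$ with $|\det A| < 1$, so that $A\Delta$ is a lattice of volume $|\det A| \cdot \vol(\Delta) = |\det A| < 1$. Indeed, the scalar matrices $sI_{2n}$ for $s$ close to $1$ lie in $W$, and taking $s < 1$ gives such an $A$. But a Gabor frame $\mathcal{G}(g, \Lambda)$ over a lattice $\Lambda \subseteq \R^{2n}$ can only exist when $\vol(\Lambda) \leq 1$ is necessary---more precisely, the density theorem for Gabor frames states that if $\mathcal{G}(g,\Lambda)$ is a frame then $\vol(\Lambda) \leq 1$, and this is not yet the contradiction. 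The actual obstruction comes from the sharper fact that below the critical density the system is ``overcomplete'' in a way incompatible with it staying a frame after a \emph{generic} perturbation: one uses instead that a Gabor frame over a lattice of volume strictly less than $1$, with window in $S_0$, would by the duality theory (Ron--Shen / Wexler--Raz duality, valid in $S_0$) produce a Gabor \emph{Riesz sequence} over the adjoint lattice, which has volume strictly greater than $1$, contradicting the density theorem for Riesz sequences (which requires volume $\leq 1$). Thus no such $g$ and $\Delta$ exist.

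Alternatively, and more in the spirit of the cited argument in \cite{AsFeKa14}, one can short-circuit this as follows: the standard Balian--Low theorem is classically known for the specific lattice $\Z^{2n}$ (via the Zak transform argument, e.g.\ the nonexistence of a continuous nonvanishing Zak transform). Given an arbitrary volume-$1$ lattice $\Delta$, there exists $A \in GL_{2n}(\R)$ with $A\Delta = \Z^{2n}$, and by connectedness of the coset $A \cdot \{B : B\Delta = \Delta \text{ up to scaling}\}$... actually more simply: the set of volume-$1$ lattices is the homogeneous space $SL_{2n}(\R)/SL_{2n}(\Z)$ (up to the obvious identifications), which is connected, so any two volume-$1$ lattices are joined by a path, and combined with \Cref{thm:feka} the frame property would propagate along this path; since it fails at $\Z^{2n}$ it must fail everywhere. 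The main obstacle in writing this cleanly is making precise which lattices are reachable from $\Delta$ by elements of a prescribed neighbourhood $W$ and handling the dependence of $W$ on the frame---this is why the volume-strictly-less-than-$1$ route, combined with the density theorem, is the cleanest: it needs only the single perturbation $sI_{2n}$ with $s<1$ and a density bound, and requires no path-connectedness bookkeeping.

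I expect the write-up to be short; the only real content is the observation that $\vol$ is not preserved under arbitrary deformations together with the (well-known) density theorem for Gabor frames, so that a volume-$1$ frame would survive into the forbidden region $\vol < 1$.
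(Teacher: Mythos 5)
Your main route has a genuine error, and it is in the direction of the deformation. You perturb by $sI_{2n}$ with $s<1$, producing a lattice of volume $|\det(sI_{2n})|\cdot\vol(\Delta)=s^{2n}<1$. But volume strictly less than $1$ is the \emph{allowed} region for Gabor frames: the density theorem only forbids frames over lattices of volume $>1$, and indeed Gabor frames with $S_0$ windows over lattices of volume $<1$ exist in abundance (e.g.\ the Gaussian over $a\Z\times b\Z$ with $ab<1$), so no argument can extract a contradiction from the frame property persisting at volume $<1$. Your attempted rescue via duality misstates the relevant density bound: Ron--Shen/Wexler--Raz duality turns a frame over $\Delta'$ into a Riesz sequence over the adjoint lattice $(\Delta')^{\circ}$, whose volume is $1/\vol(\Delta')>1$; but the density theorem for Riesz sequences requires volume $\geq 1$ (sparse lattices), not $\leq 1$, so this is perfectly consistent and yields no contradiction. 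The correct and essentially trivial fix is to deform in the opposite direction: apply \Cref{thm:feka} and choose the scalar perturbation $(1+\epsilon)I_{2n}$ with $\epsilon>0$ small, so that $\mathcal{G}(g,(1+\epsilon)\Delta)$ is still a frame while $\vol((1+\epsilon)\Delta)=(1+\epsilon)^{2n}>1$, contradicting the density theorem for frames (\Cref{prop:density_thm}). This is exactly the paper's proof.

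Your alternative route (propagating the classical Balian--Low theorem for $\Z^{2n}$ along a path in the space of volume-$1$ lattices) also does not work as sketched: \Cref{thm:feka} gives openness of the set of deformations yielding frames, but failure of the frame property at $\Z^{2n}$ does not propagate along a path unless the complement is also open, which is not provided; you correctly sensed the bookkeeping obstacle but then fell back on the flawed volume-$<1$ argument. Once the sign of the perturbation is corrected, no duality, no Riesz-sequence density theorem, and no connectedness argument are needed.
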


\begin{proof}
Let $\Delta = A \Z^{2n}$ be a lattice in $\R^{2n}$ with $\vol(\Delta) = |\det A| = 1$. Let $\mathcal{G}(g,\Delta)$ be a Gabor frame with $g \in S_0(\R^n)$. Then by \Cref{thm:feka}, there exists an $\epsilon > 0$ such that $\mathcal{G}(g,\Delta')$ is a Gabor frame, where $\Delta' = ((1+\epsilon)A)\Z^{2n}$. But then
\[ \vol(\Delta') = |\det((1+\epsilon)A)| = (1+\epsilon)^n > 1 \]
which contradicts the density theorem for Gabor frames.
\end{proof}

For the above proof to work, it is crucial that the determinant function is open, i.e.\ it maps open sets to open sets. The (absolute value of the) determinant describes how the Lebesgue measure of a set changes as a linear deformation is applied. In the setting of a locally compact abelian group $G$, the role of the determinant is played by the \emph{Braconnier modular function} $\modular_G \colon \Aut(G) \to (0,\infty)$ which is defined via the property
\begin{equation}
    \mu(\alpha(S)) = \modular_G(\alpha) \mu(S)
\end{equation}
where $\mu$ is any Haar measure on $G$ and $S$ is any Borel set of positive measure. The Braconnier modular function is continuous with respect to the Braconnier topology on $\Aut(G)$. It is clear from the proof of \Cref{thm:bltr} that one gets a similar result in the locally compact abelian setting, provided that the Braconnier modular function of $\tfp{G}$ is open. In \Cref{thm:idcomp_equivalences}, which is one of our main technical results, we prove that the openness of $\modular_G$ is equivalent to a number of other conditions. One of them is the noncompactness of the identity component of $G$, as well as the openness of the Braconnier modular function of $\tfp{G}$. The characterization relies on the structure theory of locally compact abelian groups and van Dantzig's theorem for totally disconnected groups. Using this characterization, we proceed, in the exact same manner as in the proof of \Cref{thm:bltr}, to show the following:

\begin{theorem}[cf.\ \Cref{thm:balian_low}]\label{intro:thm3}
Let $G$ be a locally compact abelian group with noncompact identity component. Let $\Delta$ be a lattice in $\tfp{G}$ of volume $1$, and let $g \in S_0(G)$. Then the Gabor system $\mathcal{G}(g,\Delta)$ cannot be a frame for $L^2(G)$.
\end{theorem}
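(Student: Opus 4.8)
The plan is to mimic the proof of \Cref{thm:bltr} almost verbatim, substituting the Braconnier modular function $\modular_{\tfp{G}}$ for the absolute value of the determinant and $\Aut(\tfp{G})$ for $GL_{2n}(\R)$. Assume for contradiction that $\mathcal{G}(g,\Delta)$ is a Gabor frame for $L^2(G)$ with $g \in S_0(G)$ and $\vol(\Delta) = 1$. Applying the deformation result \Cref{intro:thm1} (that is, \Cref{thm:pert}) to the window $g$ and the lattice $\Delta$ produces a neighbourhood $W$ of $\id_{\tfp{G}}$ in $\Aut(\tfp{G})$ such that $\mathcal{G}(g,\alpha(\Delta))$ is a Gabor frame for every $\alpha \in W$; shrinking $W$ if necessary, I may take it to be open.

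Next I would use the hypothesis that $G$ has noncompact identity component: by the characterization in \Cref{thm:idcomp_equivalences}, this is equivalent to the Braconnier modular function $\modular_{\tfp{G}} \colon \Aut(\tfp{G}) \to (0,\infty)$ being an open map. Hence $\modular_{\tfp{G}}(W)$ is open in $(0,\infty)$ and contains $\modular_{\tfp{G}}(\id_{\tfp{G}}) = 1$, so it must contain points strictly larger than $1$; pick $\alpha \in W$ with $\modular_{\tfp{G}}(\alpha) > 1$. Since $\alpha$ carries a relatively compact measurable fundamental domain for $\Delta$ onto one for $\alpha(\Delta)$, the defining property of $\modular_{\tfp{G}}$ yields $\vol(\alpha(\Delta)) = \modular_{\tfp{G}}(\alpha)\,\vol(\Delta) = \modular_{\tfp{G}}(\alpha) > 1$.

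Finally, $\mathcal{G}(g,\alpha(\Delta))$ would be a Gabor frame for $L^2(G)$ over a lattice of volume greater than $1$, contradicting the density theorem for regular Gabor frames on locally compact abelian groups (see \cite{Gr98,JaLe16}), which forbids frames over lattices of volume exceeding $1$. This contradiction proves the claim.

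Most of the substance of this argument is already packaged into the two cited inputs, so within this proof there is no real obstacle; the one point that genuinely matters is the distinction between mere continuity and openness of $\modular_{\tfp{G}}$. Continuity alone would only tell us that $\modular_{\tfp{G}}$ is near $1$ close to the identity, which is useless here; it is openness — guaranteed precisely by the noncompact-identity-component hypothesis via \Cref{thm:idcomp_equivalences} — that supplies a deformation strictly inflating the volume. I therefore expect the only care needed is in correctly invoking \Cref{thm:pert}, \Cref{thm:idcomp_equivalences}, and the LCA density theorem in the stated generality.
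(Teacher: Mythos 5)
Your proposal is correct and follows essentially the same route as the paper's proof of \Cref{thm:balian_low}: apply \Cref{thm:pert} to get an open neighbourhood of the identity in $\Aut(\tfp{G})$ preserving the frame property, use the openness of $\modular_{\tfp{G}}$ from \Cref{thm:idcomp_equivalences} to find a deformation with $\modular_{\tfp{G}}(\alpha) > 1$, and contradict the density theorem via $\vol(\alpha(\Delta)) = \modular_{\tfp{G}}(\alpha)\vol(\Delta)$ (the paper packages this last identity as \Cref{prop:lattice_aut}).
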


On the other hand, if $G$ has compact identity component, then $\modular_G$ takes values in the rational numbers (\Cref{thm:idcomp_equivalences}), so the argument for \Cref{intro:thm3} cannot be carried out.

It was already observed by Kaniuth and Kutyniok in \cite{KaKu98} that Balian--Low phenomena are valid in certain locally compact abelian groups with noncompact identity component. Their main result concerns the zeros of the Zak transform, which is an essential tool used to study Gabor systems over lattices in the time-frequency plane of the form $\Lambda \times \Lambda^{\perp}$ for $\Lambda$ a lattice in $G$. The result goes as follows:

\begin{theorem}[Kaniuth--Kutyniok \cite{KaKu98}]
Let $G$ be a locally compact abelian group that is compactly generated and has noncompact identity component. Then for every lattice $\Lambda$ in $G$ and every $f \in L^2(G)$ such that the Zak transform $Z_{\Lambda} f$ is continuous, $Z_{\Lambda} f$ has a zero.
\end{theorem}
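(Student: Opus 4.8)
The plan is to reduce the statement to the nonvanishing of the first Chern class of a complex line bundle over a compact abelian group, and then to extract that nonvanishing from the hypothesis that $G$ has noncompact identity component. First I would use the two structural properties of the Zak transform: $Z_\Lambda f$ may be regarded as a function on $G\times\widehat\Lambda$, and it is \emph{quasi-periodic} in the first variable, $Z_\Lambda f(x\mu,\chi)=\chi(\mu)\,Z_\Lambda f(x,\chi)$ for $\mu\in\Lambda$. Arguing by contradiction, suppose $Z_\Lambda f$ is continuous and nowhere zero. The action $\mu\cdot(x,\chi,z)=(x\mu,\chi,\chi(\mu)z)$ of $\Lambda$ on $G\times\widehat\Lambda\times\C$ is free and properly discontinuous (since $G\to G/\Lambda$ is a covering map), so its quotient is a complex line bundle $L$ over the compact space $X\coloneqq(G/\Lambda)\times\widehat\Lambda$; quasi-periodicity says precisely that $Z_\Lambda f$ descends to a continuous section of $L$, which by assumption vanishes nowhere. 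Hence $L$ is topologically trivial and $c_1(L)=0$ in the \v{C}ech cohomology $H^2(X;\Z)$ — I would work with \v{C}ech rather than singular cohomology throughout, since $X$ need not be a CW complex.

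Next I would invoke the structure theory: $G$ compactly generated gives $G\cong\R^a\times\Z^b\times K$ with $K$ compact, and noncompactness of $G_0=\R^a\times\{0\}\times K_0$ forces $a\ge1$. A lattice in such a group is finitely generated of rank $a+b$, so $\widehat\Lambda$ is a finite disjoint union of tori of positive dimension. The key point is that $\Lambda\cap G_0$ is cocompact in $G_0$ — its quotient $G_0/(\Lambda\cap G_0)$ is the identity component of the compact group $G/\Lambda$ — so, $G_0$ being noncompact, $\Lambda\cap G_0$ is an infinite group; fix $\gamma\in\Lambda\cap G_0$ of infinite order. Since the image of $\gamma$ in $\Lambda/(\mathrm{torsion})\cong\Z^{a+b}$ is nonzero, evaluation at $\gamma$ is a character of $\widehat\Lambda$ that is nontrivial on the identity component, so I can fix a circle $C=\{\chi_s:s\in\R/\Z\}$ inside that component with $\chi_s(\gamma)=e^{2\pi i m s}$ for some integer $m\ne0$.

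When $K_0$ is trivial — which covers $G=\R^n$ and the adelic and $S$-adelic groups treated in this paper — $\gamma$ lies in the path component $\R^a\times\{0\}\times\{e\}$ of $e$, and the contradiction is the classical one: choose a path $\sigma\colon[0,1]\to G$ from $e$ to $\gamma$, extend it by $\sigma(t+1)\coloneqq\gamma\sigma(t)$, and set $\widetilde F(t,s)\coloneqq Z_\Lambda f(\sigma(t),\chi_s)$; then $\widetilde F$ is continuous, nowhere zero, $1$-periodic in $s$, and $\widetilde F(t+1,s)=e^{2\pi i m s}\widetilde F(t,s)$, so the loops $t\mapsto\widetilde F(t,0)$ and $t\mapsto\widetilde F(t,1)$ coincide (by $1$-periodicity in $s$) yet their winding numbers differ by $m$ (by quasi-periodicity), forcing $m=0$. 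For general $K_0$ (which may be, e.g., a solenoid, hence not path connected) I would instead argue cohomologically: $\gamma$ determines a nonzero class in $H^1(G/\Lambda;\Z)$ — here one uses that the identity component $(G/\Lambda)_0=G_0/(\Lambda\cap G_0)$ is a nontrivial compact connected abelian group, whose first \v{C}ech cohomology is its Pontryagin dual — and by the Künneth formula its product with the generator of $H^1(C;\Z)$ is, up to sign, $m$ times a nonzero element of the summand $H^1(G/\Lambda;\Z)\otimes H^1(\widehat\Lambda;\Z)$ of $H^2(X;\Z)$; thus $c_1(L)\ne0$, contradicting the second paragraph.

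The hard part is this last cohomological step in the non-path-connected case: making the heuristic ``the Zak transform must wind around a fundamental loop'' precise when $G/\Lambda$ may be an infinite-dimensional or badly connected compact group, and verifying that the relevant Künneth component of $c_1(L)$ is genuinely nonzero. It is also exactly here that the hypotheses are used essentially: compact generation supplies the decomposition $\R^a\times\Z^b\times K$, and noncompactness of $G_0$ is precisely what makes $(G/\Lambda)_0$ — and with it $H^1(G/\Lambda;\Z)$ — nontrivial, so that the obstruction cannot vanish. (If $G_0$ were compact, say $G=\Z$, then $G/\Lambda$ is finite and the obstruction can, and does, vanish.)
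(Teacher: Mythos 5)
Your reduction to the nonvanishing of $c_1(L)$ for the line bundle $L$ over $(G/\Lambda)\times\widehat{\Lambda}$ is sound, as is the structure-theoretic step $G\cong\R^a\times\Z^b\times K$ with $a\geq 1$ and the fact that $\Lambda$ is finitely generated of rank $a+b$. The gap is the step you label the key point: it is false in general that $\Lambda\cap G_0$ is cocompact in $G_0$, false that $G_0/(\Lambda\cap G_0)$ is the identity component of $G/\Lambda$, and an infinite-order element $\gamma\in\Lambda\cap G_0$ need not exist at all. Concretely, let $G=\R\times\widehat{\Z}$, where $\widehat{\Z}=\prod_p\Z_p$ is the profinite completion of $\Z$; this $G$ is compactly generated and $G_0=\R\times\{0\}$ is noncompact, so the theorem applies to it. Let $\Lambda=\{(n,\iota(n)):n\in\Z\}$ with $\iota\colon\Z\to\widehat{\Z}$ the canonical embedding. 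Then $\Lambda$ is discrete, and $[0,1]\times\widehat{\Z}$ surjects onto $G/\Lambda$, so $\Lambda$ is a lattice (the quotient is the solenoid); but $\iota$ is injective, so $\Lambda\cap G_0=\{0\}$. Moreover the solenoid $G/\Lambda$ is connected, while the image of $G_0$ in it is a proper dense, non-closed subgroup, so $(G/\Lambda)_0\neq G_0/(\Lambda\cap G_0)$. Both your elementary winding argument and your cohomological argument begin by choosing $\gamma\in\Lambda\cap G_0$ of infinite order, so both collapse already on this example---which is exactly the kind of group (a piece of the adeles, with solenoidal quotient) that your non-path-connected case was meant to cover.

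Any repair must extract the nonvanishing of the relevant K\"unneth component of $c_1(L)$ from an element $\gamma\in\Lambda$ with nonzero $\R^a$-coordinate (such $\gamma$ always exists, since otherwise $G/\Lambda$ would surject onto $\R^a$), and such a $\gamma$ typically does not lie in the identity (or path) component of $G$; making the \emph{winding} of a quasiperiodic function along such a $\gamma$ precise is exactly the delicate point you deferred, and as written the hypotheses are not used in a way that closes it. Note also that the paper does not prove this statement topologically: it quotes \cite{KaKu98}, and instead derives the stronger \Cref{thm:zak_zeros} (with no compact generation assumption) from \Cref{prop:zak} together with the Balian--Low theorem \Cref{thm:balian_low}, which rests on the deformation result \Cref{thm:pert}, on \Cref{thm:idcomp_equivalences} and on the density theorem---a frame-theoretic route that sidesteps the connectivity issue entirely.
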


A vital assumption in their argument is that the group is compactly generated. Using \Cref{intro:thm3}, we go from a Balian--Low theorem to a statement about the zeros of the Zak transform, and we are able to remove the assumption of $G$ being compactly generated:

\begin{theorem}[cf.\ \Cref{thm:zak_zeros}]
Let $G$ be a locally compact abelian group with noncompact identity component. Then for any lattice $\Lambda$ in $G$ and any $f \in L^2(G)$ for which the Zak transform $Z_{\Lambda} f$ is continuous, $Z_{\Lambda} f$ has a zero.
\end{theorem}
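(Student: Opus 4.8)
The plan is to deduce this from the Balian--Low theorem \Cref{thm:balian_low}, via the classical correspondence between the Zak transform and Gabor frames over lattices of the form $\Lambda \times \Lambda^{\perp}$. First I would assemble the ingredients: for a lattice $\Lambda$ in $G$ the annihilator $\Lambda^{\perp} \subseteq \widehat{G}$ is again a lattice, and $\Lambda \times \Lambda^{\perp}$ is a lattice in $\tfp{G}$ of volume $1$ (see \cite{JaLe16}); the Zak transform $Z_{\Lambda}$ is, up to a fixed constant, a unitary from $L^2(G)$ onto the $L^2$-space of quasi-periodic functions over a fundamental domain, which is compact because $\Lambda$ is discrete and cocompact; and $\mathcal{G}(g, \Lambda \times \Lambda^{\perp})$ is a frame for $L^2(G)$ if and only if there are constants $0 < A \le B < \infty$ with $A \le |Z_{\Lambda} g| \le B$ almost everywhere (see \cite{JaLe16}).

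Then I would argue by contradiction. Suppose $f \in L^2(G)$ is such that $Z_{\Lambda} f$ is continuous and nowhere zero. Since $|Z_{\Lambda} f|$ descends to a continuous function on a compact quotient, it is bounded below by some $A > 0$ and above by some $B < \infty$; in particular $\mathcal{G}(f, \Lambda \times \Lambda^{\perp})$ is already a frame. The crucial step is to upgrade the window to one lying in the Feichtinger algebra: I would use that $Z_{\Lambda}$ maps $S_0(G)$ into the continuous quasi-periodic functions with image dense not merely in $L^2$ but in the \emph{uniform} norm --- this is where the continuity hypothesis on $Z_{\Lambda} f$ is genuinely exploited --- and choose $g \in S_0(G)$ with $\|Z_{\Lambda} g - Z_{\Lambda} f\|_{\infty} < A/2$. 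Since $Z_{\Lambda}g - Z_{\Lambda}f = Z_{\Lambda}(g-f)$ is quasi-periodic with the same cocycle, this makes sense on the fundamental domain, and it gives $A/2 \le |Z_{\Lambda} g| \le B + A/2$ everywhere, so $\mathcal{G}(g, \Lambda \times \Lambda^{\perp})$ is a Gabor frame with window $g \in S_0(G)$ over a lattice of volume $1$. As $G$ has noncompact identity component, this contradicts \Cref{thm:balian_low}, so $Z_{\Lambda} f$ must vanish somewhere.

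The main obstacle will be the uniform-density step. The $L^2$-density of $Z_{\Lambda}(S_0(G))$ is immediate from density of $S_0(G)$ in $L^2(G)$, but preserving the \emph{lower} frame bound after the approximation requires sup-norm control, and for that I must bring in the finer mapping properties of the Zak transform on the Feichtinger algebra (continuity of $Z_{\Lambda}$ into continuous functions, cf.\ \cite{Ja18}, together with a partition-of-unity argument over trivializing neighbourhoods producing $S_0$-windows whose Zak transforms approximate a given continuous quasi-periodic function uniformly). A secondary point to nail down is the normalization making $\Lambda \times \Lambda^{\perp}$ have volume exactly $1$, but this is standard \cite{JaLe16}. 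I would also remark that, unlike the theorem of Kaniuth and Kutyniok \cite{KaKu98}, which proceeds by structure theory under the extra assumption that $G$ is compactly generated, this route needs no such hypothesis --- precisely because \Cref{thm:balian_low} does not.
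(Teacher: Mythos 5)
Your argument is essentially the paper's: the paper likewise deduces the statement from \Cref{thm:balian_low} applied to the volume-one lattice $\Lambda\times\Lambda^{\perp}$, passing through the correspondence between nonvanishing continuous Zak transforms and $S_0$-windowed Gabor frames at critical density. The only difference is that the uniform-approximation bridge you flag as the main obstacle (approximating a nowhere-vanishing continuous quasiperiodic function in sup norm by $Z_{\Lambda}g$ with $g\in S_0(G)$, so as to upgrade the $L^2$ window to a Feichtinger-algebra window) is precisely the content of \Cref{prop:zak}, quoted from \cite[Theorem 5.5]{En19}, which the paper simply invokes, so that step can be closed by citation rather than by the partition-of-unity argument you sketch.
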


Note that in contrast to the main result of \cite{KaKu98}, the above result applies to e.g.\ the group of adeles, as this is not a compactly generated group. Note also that while we restrict to second-countable groups in the present paper, the result of \cite{KaKu98} holds without this assumption.

Having established a general Balian--Low theorem for $S_0(G)$ in groups with noncompact identity component, the question of what happens when the identity component is compact arises---or equivalently, when $G$ contains a compact open subgroup (see \Cref{thm:idcomp_equivalences}). Gröchenig observed in \cite{Gr98} that if $G$ contains a compact, open subgroup, then the Balian--Low theorem fails in the following strong sense: There exists a discrete set $\Delta$ in $\tfp{G}$ and a function $g \in S_0(G)$ such that $\mathcal{G}(g,\Delta)$ is an orthonormal basis for $L^2(G)$. However, the question still stands whether one can obtain a result for uniform Gabor frames, i.e.\ if $\Delta$ can be chosen to be a lattice (in the case that lattices exist). In this paper, we prove the following:

\begin{theorem}[cf.\ \Cref{thm:blt_failure}]\label{intro:thm5}
Let $G$ be a locally compact abelian group with compact identity component, and let $\Lambda$ be a lattice in $G$. Then there exists a $g \in S_0(G)$ such that $\mathcal{G}(g,\Lambda \times \Lambda^{\perp})$ is an orthonormal basis for $L^2(G)$.
\end{theorem}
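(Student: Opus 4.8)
The plan is to reduce the statement to the triviality of a single line bundle over a compact group, and then to extract such a triviality from the structure theory of LCA groups. Since $\Lambda \times \Lambda^{\perp}$ is a lattice in $\tfp{G}$ with $\Lambda^{\perp}$ the annihilator of $\Lambda$, the Zak transform $Z_{\Lambda}$ is a unitary from $L^2(G)$ onto the Hilbert space of $L^2$-sections of a Hermitian line bundle $\mathcal{L}$ over the compact abelian group $Q \coloneqq (G/\Lambda)\times(\widehat{G}/\Lambda^{\perp})$, and under this unitary every time--frequency shift $\pi(z)$ with $z \in \Lambda \times \Lambda^{\perp}$ becomes multiplication by a character of $Q$. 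Hence $Z_{\Lambda}$ carries $\mathcal{G}(g, \Lambda \times \Lambda^{\perp})$ onto $\{\chi \cdot Z_{\Lambda} g : \chi \in \widehat{Q}\}$, which is an orthonormal basis of the $L^2$-sections of $\mathcal{L}$ precisely when $Z_{\Lambda} g$ is a unimodular section, i.e.\ $|Z_{\Lambda} g|$ is constant almost everywhere; this is the Zak transform criterion underlying the duality theory of uniform Gabor frames \cite{JaLe16}. So the task is to produce $g \in S_0(G)$ whose Zak transform is a nowhere-vanishing section of $\mathcal{L}$.

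I would split this into a topological step and a regularity step. The topological step is to prove that $\mathcal{L}$ is a topologically trivial line bundle, i.e.\ that it has a continuous unimodular section $\sigma$. The regularity step is then routine: since $Q$ is compact, $S_0(Q)$ is the Fourier algebra $A(Q)$, and, by the mapping properties of the Zak transform on the Feichtinger algebra (cf.\ \cite{Ja18}), $Z_{\Lambda}(S_0(G))$ is a uniformly dense $A(Q)$-submodule of the continuous sections of $\mathcal{L}$. Approximating $\sigma$ uniformly by some $\Phi_0 = Z_{\Lambda} g_0$ with $g_0 \in S_0(G)$ closely enough that $\Phi_0$ is nowhere zero, one has $\langle \Phi_0, \Phi_0\rangle_{\mathcal{L}} \in A(Q)$ bounded below by a positive constant, so by a Wiener-type lemma (holomorphic functional calculus in $A(Q)$) its inverse square root lies in $A(Q)$, and $\Phi \coloneqq \langle \Phi_0, \Phi_0\rangle_{\mathcal{L}}^{-1/2}\,\Phi_0$ lies in $Z_{\Lambda}(S_0(G))$ with $|\Phi| \equiv 1$; then $g \coloneqq Z_{\Lambda}^{-1}\Phi \in S_0(G)$ satisfies the criterion.

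The topological step is the heart of the matter and where the hypothesis is used. By \Cref{thm:idcomp_equivalences}, $G$ has compact identity component $G_0$ and, along with $\widehat{G}$, a compact open subgroup. Now $\mathcal{L}$ is already trivial in the $(G/\Lambda)$-direction and carries only the cocycle $(\dot{x}, \tau) \mapsto \overline{\tau(x)}$, $\tau \in \Lambda^{\perp}$, in the $(\widehat{G}/\Lambda^{\perp})$-direction, so its isomorphism class is governed by the finite groups $G_0 \cap \Lambda$ and $(\widehat{G})_0 \cap \Lambda^{\perp}$. When $G_0 \cap \Lambda = \{0\}$ one can, using van Dantzig's theorem, choose a compact open subgroup $K \supseteq G_0$ with $K \cap \Lambda = \{0\}$; patching translates of the corresponding local section over the finitely many cosets of $(K+\Lambda)/\Lambda$ produces a continuous section of $G \to G/\Lambda$, hence a compact open fundamental domain $D$ for $G/\Lambda$, and already $g = \vol(\Lambda)^{-1/2}\,\ind_D \in S_0(G)$ has $|Z_{\Lambda} g| \equiv 1$ --- the same mechanism behind Gröchenig's orthonormal basis for groups containing a compact open subgroup \cite{Gr98}. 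Dually, when $(\widehat{G})_0 \cap \Lambda^{\perp} = \{0\}$ the same argument applied to $\widehat{G}$ trivializes $\mathcal{L}$, with a window of ``trigonometric polynomial type'' (the Fourier transform of the indicator window on the dual side), as one already sees for $G = \mathbb{T}$ and $\Lambda$ a finite subgroup.

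The genuinely hard case --- and the main obstacle --- is when $G_0 \cap \Lambda$ and $(\widehat{G})_0 \cap \Lambda^{\perp}$ are both nonzero, which does occur (for instance on products of the shape $(\mathbb{T}\times\Z)\times(\Z\times\mathbb{T})$). Since a lattice need not respect any product decomposition of $G$, one cannot simply tensor the two constructions above; instead one must work directly along the canonical chain $G_0 \subseteq K \subseteq ((\widehat{G})_0)^{\perp} \subseteq G$ --- where $((\widehat{G})_0)^{\perp}$ is open in $G$ with discrete torsion-free quotient --- and build the trivialization of $\mathcal{L}$ by combining a locally constant part coming from the totally disconnected quotient (handled as in the $G_0 \cap \Lambda = \{0\}$ case) with a ``logarithmic'' part on the compact connected subgroup, obtained by lifting the relevant $(G/\Lambda)\times\Lambda^{\perp} \to \R/\Z$ pairing to an $\R$-valued map. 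As a consistency check, for $G = \R$ neither mechanism applies, $\mathcal{L}$ is the degree-one line bundle over $\mathbb{T}^2$, no continuous unimodular section exists, and the construction fails exactly where the Balian--Low theorem (\Cref{intro:thm3}) requires it to.
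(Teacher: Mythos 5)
Your reduction of the statement to producing a continuous nowhere-vanishing (indeed unimodular, after a Wiener-lemma normalization) quasiperiodic function on $\tfp{G}$ is a legitimate strategy, and your two ``easy'' cases are essentially correct: if $G_0\cap\Lambda=\{1\}$ one can indeed find, via van Dantzig applied to $G/G_0$, a compact open subgroup $K\supseteq G_0$ with $K\cap\Lambda=\{1\}$, hence a compact open fundamental domain $D$ for $\Lambda$, and $\ind_D$ has unimodular Zak transform; the case $(\widehat{G})_0\cap\Lambda^{\perp}=\{1\}$ then follows by Fourier duality. The genuine gap is exactly where you say it is: the case in which both $G_0\cap\Lambda$ and $(\widehat{G})_0\cap\Lambda^{\perp}$ are nontrivial is not proved. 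The sketch you offer there is not a construction: the assertion that the triviality of the line bundle is ``governed by'' these two finite groups is unsubstantiated, and the proposed ``logarithmic part, obtained by lifting the relevant $(G/\Lambda)\times\Lambda^{\perp}\to\R/\Z$ pairing to an $\R$-valued map'' runs into an immediate obstruction---a nontrivial character of a compact connected group admits no continuous $\R$-valued lift (not even a non-homomorphic one when the character is homotopically nontrivial, as for $\mathbb{T}$), so the connected part cannot be ``logarithmized'' in isolation; the trivialization, which does exist by the theorem, must mix the connected and totally disconnected directions, and your outline supplies no mechanism for that. Since you yourself flag this as the main obstacle and the example $G=\mathbb{T}\times\Z$, $\Lambda=\{\pm1\}\times2\Z$ shows the case really occurs (and a general lattice need not respect a product splitting), the proof is incomplete at its decisive point. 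There are also smaller unverified claims in the regularity step (that $Z_{\Lambda}(S_0(G))$ is uniformly dense in the continuous sections, that it is an $A(Q)$-module, and that $|Z_{\Lambda}g_0|^2\in A(Q)$), which are plausible but would need arguments.

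For comparison, the paper's proof of \Cref{thm:blt_failure} sidesteps the hard case entirely by a structural reduction rather than a bundle-trivialization argument: choosing a compact open subgroup $K$ (via \Cref{thm:idcomp_equivalences}), it observes that $K\Lambda$ has finite index in $G$ and $F=K\cap\Lambda$ is finite, and then uses two transfer lemmas---\Cref{lem:finite_index} (lifting an orthonormal Gabor basis along a finite-index inclusion, with an explicit window $\tilde{g}$ and a check that $\tilde{g}\in S_0$) and \Cref{lem:finite_subgroup} (its Fourier-dual version for quotients by finite subgroups of $\Lambda$)---to reduce to the split situation $G\cong K\times\Lambda$, where \Cref{lem:compact_times_discrete} gives the basis with window $\delta_0\otimes\ind_K$. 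It is precisely these finite-index/finite-quotient transfer lemmas that absorb the simultaneous nontriviality of $G_0\cap\Lambda$ and $(\widehat{G})_0\cap\Lambda^{\perp}$; your plan has no counterpart of them, and supplying one (or an honest trivialization of the bundle in the mixed case, e.g.\ via \Cref{prop:zak} strengthened to unimodular sections) is what would be needed to complete your route.
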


Hence, the Balian--Low theorem for uniform lattices fails in a very strong sense if the group has compact identity component. To prove \Cref{intro:thm5}, we rely on our characterization of groups with compact identity component in \Cref{thm:idcomp_equivalences}. Of course, for \Cref{intro:thm5} to be applied, one needs the existence of a lattice in the first place, and there are many LCA groups without lattices, the $p$-adic numbers $\Q_p$ being an example.

Finally, we apply our characterization of the groups $G$ for which the Balian--Low theorem for $S_0(G)$ holds to the higher dimensional $S$-adeles associated to a global field $K$. We show that we do get a Balian--Low theorem precisely when $K$ is an algebraic number field. This generalizes \cite[Theorem C]{En19}, where it was shown that a Balian--Low theorem for a certain lattice in the time-frequency plane of the group $\R \times \Q_p$ holds. Below, we state our result for the full adeles associated to a global field:

\begin{theorem}[cf.\ \Cref{thm:blt_adeles1}, \Cref{thm:blt_adeles2}]
Let $K$ be a global field and let $n$ be a natural number. Denote by $G = \adeles{K}^n$ the $n$-dimensional adeles associated to $K$. Then the following hold:
\begin{enumerate}
    \item If $K$ is an algebraic number field, then the Balian--Low theorem holds for $G$. That is, for any lattice $\Delta$ in $\tfp{G}$ of volume $1$ and any $g \in S_0(G)$, the Gabor system $\mathcal{G}(g,\Delta)$ is not a frame for $L^2(G)$.
    \item If $K$ is a global function field, then the Balian--Low theorem does not hold for $G$. In fact, for every lattice $\Lambda$ in $G$, there exist $g \in S_0(G)$ such that $\mathcal{G}(g,\Lambda \times \Lambda^{\perp})$ is an orthonormal basis for $L^2(G)$.
\end{enumerate}
\end{theorem}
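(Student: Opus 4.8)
The plan is to deduce the dichotomy directly from the characterization of groups with compact identity component in \Cref{thm:idcomp_equivalences}, together with the Balian--Low theorem for the noncompact case (\Cref{intro:thm3}) and its failure in the compact case (\Cref{intro:thm5}). Since the identity component of a finite product of LCA groups is the product of the identity components, it suffices to analyse the identity component of $\adeles{K}$ and then pass to $G = \adeles{K}^n$.

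For part (i), I would invoke the structure of the adele ring of a number field: $\adeles{K} \cong K_{\infty} \times \adeles{K,\mathrm{fin}}$, where $K_{\infty} = K \otimes_{\Q} \R \cong \R^{r_1} \times \C^{r_2}$ is a nonzero finite-dimensional real vector space (every number field has at least one archimedean place, so $r_1 + 2 r_2 = [K : \Q] \geq 1$), while $\adeles{K,\mathrm{fin}}$ is the restricted product of the non-archimedean completions $K_v$, each of which is totally disconnected; a restricted product of totally disconnected groups is again totally disconnected. Hence the identity component of $\adeles{K}$ is precisely $K_{\infty}$, so the identity component $K_{\infty}^n$ of $G$ is noncompact, and \Cref{intro:thm3} applies verbatim, giving exactly the assertion of part (i): for any lattice $\Delta$ in $\tfp{G}$ of volume $1$ and any $g \in S_0(G)$, the system $\mathcal{G}(g,\Delta)$ is not a frame for $L^2(G)$.

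For part (ii), I would use that a global function field has no archimedean places, so every completion $K_v$ is a non-archimedean local field, hence totally disconnected; consequently $\adeles{K}$, and therefore $G = \adeles{K}^n$, is totally disconnected, so its identity component is trivial and in particular compact. By \Cref{thm:idcomp_equivalences}, $G$ then contains a compact open subgroup, which is the hypothesis needed to apply \Cref{intro:thm5}. It remains only to note that $G$ does admit lattices, so that the statement is not vacuous: the diagonal embedding of $K$ in $\adeles{K}$ is discrete and cocompact (a classical fact valid for every global field), hence $\Lambda = K^n$, with each coordinate diagonally embedded, is a lattice in $G$. Now \Cref{intro:thm5}, applied to an arbitrary lattice $\Lambda$ in $G$, produces a $g \in S_0(G)$ for which $\mathcal{G}(g,\Lambda \times \Lambda^{\perp})$ is an orthonormal basis for $L^2(G)$; here $\Lambda^{\perp} \subseteq \widehat{G}$ is automatically a lattice, since the annihilator of a lattice in an LCA group is a lattice in the Pontryagin dual.

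I expect the only genuine content to be the classical structure theory invoked above --- that the presence or absence of archimedean places of $K$ controls whether the identity component of $\adeles{K}$ is noncompact or trivial, and that a global field is discrete and cocompact in its adele ring. Granting these, both halves of the theorem are immediate consequences of \Cref{intro:thm3} and \Cref{intro:thm5}, in the exact same manner as \Cref{thm:bltr} is deduced from \Cref{thm:feka}. The only minor point requiring attention is the verification that $\Lambda \times \Lambda^{\perp}$ is a genuine lattice in $\tfp{G}$, which, as noted, follows from the general duality between lattices in an LCA group and lattices in its dual.
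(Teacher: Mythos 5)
Your proposal is correct and follows essentially the same route as the paper: both parts reduce to deciding whether the identity component of $\adeles{K}^n$ is compact and then invoking \Cref{thm:balian_low} in the noncompact (number field) case and \Cref{thm:blt_failure} in the compact (function field) case. The only cosmetic difference is how that condition is verified: you identify the identity component of $\adeles{K}$ directly as $K_\infty$ (resp.\ trivial for function fields) via total disconnectedness of the non-archimedean parts, whereas the paper passes through the isomorphism $\sadeles{K}{S}^n \cong \sadeles{\Q}{S}^{dn}$ and, in the function field case, exhibits a compact open subgroup and applies \Cref{thm:idcomp_equivalences}.
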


Finally, we want to remark that our deformation and Balian--Low type results only cover the case of uniform sampling, i.e.\ the point set $\Delta$ is a lattice. For Gabor frames on $G = \R^n$, the state of the art is non-uniform Gabor frames, such as in \cite{AsFeKa14,GrOrRo15}. In fact, some of the cornerstones of Gabor analysis on $\R^n$, such as the density theorems and the Balian--Low theorem, have in the last few years found non-uniform generalizations beyond the setting of time-frequency analysis \cite{GrRo18,GrHaKl17}. In a recent paper of Gröchenig, Romero, Rottensteiner and van Velthoven \cite{GrRoRo19}, a (nonuniform) Balian--Low type theorem for homogeneous Lie groups was established. While their main theorem generalizes the Balian--Low theorem for Gabor frames on $G=\R^n$, it does not generalize \Cref{intro:thm3}: Many locally compact abelian groups, e.g.\ the adeles over the rationals, are not Lie groups. Thus, an interesting question is whether there exists a Balian--Low type theorem that simultaneously generalizes the main result of \cite{GrRoRo19} and \Cref{intro:thm3}.

The paper is structured as follows: In \Cref{sec:groups_automorphisms}, we cover the basics of locally compact abelian groups and their automorphisms, including \Cref{thm:idcomp_equivalences} where we characterize groups with noncompact identity component. In \Cref{sec:tfa}, we review time-frequency analysis on LCA groups. In \Cref{sec:deformation}, we prove continuity and deformation results for Gabor frames on locally compact abelian groups, including the main result \Cref{thm:pert}. In \Cref{sec:blt}, we prove that the Balian--Low theorem for $S_0(G)$ holds for groups with noncompact identity component and fails for groups with compact identity component. In \Cref{sec:adeles}, we introduce the higher-dimensional $S$-adeles and apply our results for LCA groups to them.

\subsection*{Acknowledgements}

The first and third author wish to thank Hans Feichtinger for enlightening discussions in Vienna and Lisbon. The first author wants to thank Nadia Larsen and Sven Raum for helpful discussions about the Braconnier topology and totally disconnected groups, respectively.

\section{Locally compact abelian groups and their automorphisms}\label{sec:groups_automorphisms}

Throughout the paper, we will assume, unless otherwise stated, that $G$ is a locally compact (Hausdorff) abelian group. Such a group always carries a translation invariant regular Borel measure called a Haar measure, which is unique up to a positive constant. We will also add the standard assumption that $G$ is second-countable in order to use results from \cite{JaLe16,JaLe16-2,Ja18}, although we remark that many stated results throughout the present paper hold without this assumption. We write the group operation multiplicatively and we denote by $1$ the identity element of $G$.

If $H$ is a closed subgroup of $G$, then both $H$ and $G/H$ are locally compact abelian groups. The relation between the Haar measure on these three groups can be given as follows: Once two out of three Haar measures have been chosen on $G$, $H$ and $G/H$, the last one can be chosen so that Weil's formula \cite[Proposition 3.3.11]{Re20} holds:
\begin{align}
    \int_G f(x) \dif{\mu_G(x)} = \int_{G/H} \int_H f(xy) \dif{\mu_H(y)} \dif{\mu_{G/H}(xH)} \ \ \text{for all} \ \ f \in C_c(G) . \label{eq:weil}
\end{align}
A closed subgroup $H$ of $G$ is called \emph{cocompact} if the quotient group $G/H$ is compact. A subgroup $\Lambda$ is a \emph{lattice} in $G$ if it is both discrete and cocompact. If one fixes a Haar measure $\mu_G$ on $G$ and chooses the counting measure on $\Lambda$, then there exists a unique measure $\mu_{G/\Lambda}$ on $G/\Lambda$ such that Weil's formula \eqref{eq:weil} is satisfied. Since $G/\Lambda$ is compact, the measure $\mu_{G/\Lambda}$ is finite, and we define the \emph{volume} of $\Lambda$ to be the number
\begin{equation}
    \vol(\Lambda) = \mu_{G/\Lambda}(G/\Lambda) . \label{eq:volume}
\end{equation}
Note that $\vol(\Lambda)$ depends on $\mu_G$.

Denote by $\widehat{G}$ the Pontryagin dual of $G$. If $H$ is a closed subgroup of $G$, the set
\begin{equation}
    H^{\perp} \coloneqq \big\{ \omega \in \widehat{G} : \text{$\omega(x) = 1$ for all $x\in H$} \big\}
\end{equation}
is a closed subgroup of $\widehat{G}$ called the \emph{annihilator} of $\Lambda$. If $\Lambda$ is a lattice in $G$, then $\Lambda^{\perp}$ is a lattice in $\widehat{G}$ \cite[Lemma 3.1]{Ri88}. A proof of the following formula is found in \cite{Gr98}:
\begin{equation}
    \vol(\Lambda)\vol(\Lambda^{\perp}) = 1 . \label{eq:vol_lattice_perp}
\end{equation}

We will need the following lemma later, which contains elementary observations on lattices in LCA groups.

\begin{lemma}\label{lem:lattice-constructions}
The following hold:
\begin{itemize}
\item[(i)]
Let $\Lambda\subseteq H\subseteq G$ be a sequence of closed subgroups such that $G/H$ is compact.
Then $\Lambda$ is a lattice in $H$ if and only if $\Lambda$ is a lattice in $G$.
\item[(ii)]
Let $\varphi\colon G\to H$ be an open surjective map such that $\ker\varphi$ is compact.
Let $\Lambda$ be a lattice in $G$.
Then $\varphi(\Lambda)$ is a lattice in $H$.
In particular, this applies when $K$ is a compact subgroup of $G$ and $\varphi$ is the quotient map $G\to G/K$.
\item[(iii)]
Let $L\subseteq\Lambda\subseteq G$ be a sequence of closed subgroups such that $\Lambda/L$ is finite.
Then $\Lambda$ is a lattice in $G$ if and only if $L$ is a lattice in $G$.
\end{itemize}
\end{lemma}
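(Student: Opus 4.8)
The plan is to verify, in each of the three parts and in both directions of each ``if and only if'', the two defining properties of a lattice in the sense of this paper: \emph{discreteness} and \emph{cocompactness} (compactness of the quotient group). I want to stress at the outset that once cocompactness is established, the quotient group is compact and hence carries a \emph{finite} Haar measure, so the volume in \eqref{eq:volume} is automatically well-defined and finite; no separate argument for finiteness of the covolume is needed, as it is a consequence of cocompactness rather than an extra hypothesis. Throughout I will freely use the standard facts that a discrete subgroup of a Hausdorff topological group is closed, that compactness passes to closed subgroups and to continuous quotients, and that an extension of a compact group by a compact group is again compact; this last fact disposes of the short exact sequences that arise.

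For (i), discreteness is immediate: since $H$ carries the subspace topology from $G$, the topology $\Lambda$ inherits from $H$ agrees with the one it inherits from $G$, so $\Lambda$ is discrete in $H$ if and only if it is discrete in $G$. For cocompactness I would use the exact sequence $1 \to H/\Lambda \to G/\Lambda \to G/H \to 1$, in which $G/H$ is compact by hypothesis. If $H/\Lambda$ is compact, then $G/\Lambda$ is an extension of a compact group by a compact group, hence compact; conversely, if $G/\Lambda$ is compact, then $H/\Lambda$ is a closed subgroup of it (as $H$ is closed in $G$) and so is compact. For (iii), the analogous sequence is $1 \to \Lambda/L \to G/L \to G/\Lambda \to 1$ with $\Lambda/L$ finite, and compactness of either $G/L$ or $G/\Lambda$ forces compactness of the other (an extension of a compact group by a finite group is compact, and a quotient of a compact group is compact). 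For discreteness in (iii), if $\Lambda$ is discrete then so is its subgroup $L$; conversely, writing $\Lambda = \bigcup_{i=1}^m \lambda_i L$ as a finite union of cosets, each $\lambda_i L$ is a translate of the closed discrete set $L$, so the union $\Lambda \setminus L$ of the cosets other than $L$ is closed and avoids the identity. Intersecting a neighbourhood of $1$ witnessing discreteness of $L$ with the complement of $\Lambda \setminus L$ yields a neighbourhood of $1$ meeting $\Lambda$ only in $1$, so $\Lambda$ is discrete.

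For (ii), openness of $\varphi$ identifies $H$ with $G/K$, where $K = \ker\varphi$ is compact, and cocompactness is then easy: $H/\varphi(\Lambda) \cong G/(\Lambda K)$ is a continuous quotient of the compact group $G/\Lambda$, hence compact. The main obstacle is the discreteness of $\varphi(\Lambda)$, which is precisely the point where compactness of $K$ is essential. Here I would argue that, since $\Lambda$ is discrete (hence closed) and $K$ is compact, the product $\Lambda K$ is closed, and for any compact neighbourhood $C$ of $1$ the set $\{\lambda \in \Lambda : \lambda K \cap CK \neq \emptyset\} = \Lambda \cap CK$ is finite, as $CK$ is compact and $\Lambda$ is closed and discrete. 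The finitely many cosets $\lambda K$ that meet $CK$ but are disjoint from $K$ are compact sets disjoint from the compact set $K$, so they can be separated from $K$; choosing an open neighbourhood $U$ of $K$ with $U \subseteq CK$ that avoids all of them, one checks that if $\varphi(u) = \varphi(\lambda)$ with $u \in U$ and $\lambda \in \Lambda$, then $u \in \lambda K$ forces $\lambda \in K$, so $\varphi(U)$ is an open neighbourhood of $1$ in $H$ meeting $\varphi(\Lambda)$ only in $1$. The final assertion of (ii) is then the special case in which $\varphi$ is the quotient map $G \to G/K$ for a compact subgroup $K$, which is open with kernel $K$. Assembling the discreteness and cocompactness verifications across all three parts completes the proof; the only genuinely delicate step is the separation argument for the discreteness of $\varphi(\Lambda)$ in (ii), where the compactness of the kernel cannot be dispensed with.
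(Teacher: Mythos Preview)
Your proof is correct and follows essentially the same approach as the paper: you use the same short exact sequences for cocompactness in (i) and (iii), and the same separation idea for discreteness in (ii). The paper's presentation is terser, citing Hewitt--Ross for the closedness of $H/\Lambda$ in $G/\Lambda$ and for the existence of an open $V$ with $VK \subseteq U$ (their Theorem~4.10), whereas you spell out the compactness/separation argument by hand, but the underlying arguments are the same.
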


\begin{proof}
(i) Note first that $H/\Lambda$ is closed in $G/\Lambda$ (\cite[5.39b]{HeRo63}). Thus if $G/\Lambda$ is compact, then $H/\Lambda$ is compact. Conversely, if $H/\Lambda$ is compact, then by \cite[5.25, 5.35]{HeRo63} it follows that $G/\Lambda$ is compact.

If $\Lambda$ is discrete in $H$, then there exists an open set $U$ of $G$ such that $(U\cap H)\cap\Lambda=\{1\}$, but then $U\cap\Lambda=\{1\}$, so $\Lambda$ is discrete in $G$. The converse is trivial.

(ii)
Since $\Lambda$ is discrete in $G$, we can find an open set $U$ around the kernel $K$ such that $\Lambda\cap K\subseteq U$ and $U\cap(\Lambda\cap K^c)=\varnothing$ (i.e., $U\cap\Lambda=K\cap\Lambda$). By \cite[Theorem~4.10]{HeRo63} there exists an open set $V$ around $1$ such that $VK\subseteq U$. Therefore $\pi(VK)\cap\pi(\Lambda)\subseteq\pi(K\cap\Lambda)=\{1\}$, so $1$ is isolated in $G/K$.

Moreover, $\Lambda K$ is closed in $G$ since it is the inverse image of $\pi(\Lambda)$). Thus, there is a surjective map $G/\Lambda\to G/(\Lambda K)$, given by $x\Lambda\mapsto x\Lambda K$. This means that the image is compact.

It is easy to see that the converse does not hold.

(iii)
Clearly, $\Lambda$ is discrete in $G$ if and only if $L$ is discrete in $G$. Again by \cite[5.25, 5.35]{HeRo63} it follows that $G/L$ is compact if and only if $G/\Lambda$ is compact. Note that there is a quotient map from $G/L$ onto $G/\Lambda$.
\end{proof}

The \emph{identity component} $G_0$ of $G$ is the connected component of $G$ containing the identity element $1$ of $G$. This is a closed, connected subgroup of $G$. We call $G$ \emph{totally disconnected} if its underlying topology is disconnected, i.e.\ the connected components of $G$ are exactly the one-point sets. Equivalently, the identity component $G_0$ is the trivial subgroup of $G$. The quotient $G/G_0$ is always a totally disconnected, locally compact abelian group, see \cite[Theorem 7.3]{HeRo63}.

We will need the following famous result on totally disconnected groups, known as van Dantzig's theorem:

\begin{proposition}\label{prop:vandantzig}
Let $G$ be a locally compact abelian, totally disconnected group. Then every neighbourhood of the identity of $G$ contains a compact, open subgroup.
\end{proposition}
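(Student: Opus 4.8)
The plan is to prove van Dantzig's theorem from the standard structure facts about locally compact groups, so I would begin with a compact neighbourhood $C$ of the identity $1$ and work to manufacture a compact open subgroup inside it. The key input is that in any locally compact group, every neighbourhood of the identity contains a compact neighbourhood, and—crucially for the totally disconnected case—the connected component of $1$ equals the intersection of all compact open subsets containing $1$ (equivalently, the quasi-component of $1$ coincides with $\{1\}$). So the first step is: fix a compact neighbourhood $C$ of $1$; since $G$ is totally disconnected, the quasi-component of $1$ is $\{1\}$, hence the intersection of all clopen neighbourhoods of $1$ is $\{1\}$, and by compactness of the boundary $\partial C$ (a closed subset of the compact set $C$, hence compact) one can find finitely many clopen sets whose intersection is a compact open set $V$ with $1 \in V \subseteq C$ and $V \cap \partial C = \varnothing$, so $V$ is also open.

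Once I have a compact open set $V$ with $1 \in V$, the second step is to symmetrize and then "absorb" it into a subgroup. Using continuity of multiplication and compactness of $V$, choose a symmetric open neighbourhood $W$ of $1$ with $WV \subseteq V$ (this is where one uses that $V$ is a neighbourhood of each of its points together with a tube-lemma / compactness argument: the map $(w,v)\mapsto wv$ is continuous, $\{1\}\times V$ maps into the open set $V$, so some $W \times V$ maps into $V$); shrink $W$ so that $W \subseteq V$ and $W = W^{-1}$. Then set $H = \bigcup_{n \ge 1} W^n$. This $H$ is an open subgroup of $G$: it is closed under the group operations by construction, and it is open because it is a union of the open sets $W^n$. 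An open subgroup is automatically closed. Finally, $H \subseteq V$: inductively $W^n \subseteq V$ because $W^{n} = W\cdot W^{n-1} \subseteq W V \subseteq V$. Hence $H$ is a closed subset of the compact set $V$, so $H$ is compact, and $H$ is the desired compact open subgroup contained in the original neighbourhood $C$.

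I would expect the main obstacle to be the first step: producing the compact open (clopen) neighbourhood $V$ from total disconnectedness. This rests on the nontrivial topological fact that in a compact (or locally compact) Hausdorff space, components coincide with quasi-components, so that total disconnectedness forces the clopen neighbourhoods of $1$ to separate points; the cleanest route is to pass to the compact set $C$, note that in a compact Hausdorff space the quasi-component of $1$ is $\bigcap\{U : U \text{ clopen in } C, 1 \in U\}$ and equals the component $\{1\}$, then use that $\partial C$ is compact and disjoint from this intersection to extract a finite subfamily whose intersection avoids $\partial C$, yielding a set clopen in $C$ and disjoint from $\partial C$, hence open in $G$. The remaining steps (symmetrizing, forming $H = \bigcup W^n$, checking it is a compact open subgroup) are routine topological-group manipulations. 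I should also remark that the abelian/normality hypotheses play no role here—van Dantzig's theorem holds for all locally compact totally disconnected groups—so in the write-up I can simply cite a standard reference (e.g.\ \cite{HeRo63}) rather than reproduce the argument, but the sketch above is the proof I would give if asked to be self-contained.
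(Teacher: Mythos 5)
Your sketch is correct: the passage from total disconnectedness to a compact open neighbourhood $V$ via quasi-components in a compact neighbourhood, followed by choosing a symmetric open $W$ with $WV\subseteq V$ and taking $H=\bigcup_{n\ge 1}W^n$, is exactly the standard argument behind van Dantzig's theorem. The paper does not reproduce a proof at all but simply cites \cite[Theorem 7.7]{HeRo63}, which is the same route you describe, so your write-up (or the citation you propose) matches the paper's treatment.
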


See \cite[Theorem 7.7]{HeRo63} for a proof. A consequence of van Dantzig's theorem is the following description of the identity component (see \cite[Theorem 7.8]{HeRo63} for a proof):
\begin{equation}
G_0 = \bigcap \big\{ H : \text{$H$ is an open subgroup of $G$}\, \big\} . \label{eq:identity_component_intersection}
\end{equation}

\subsection{Group automorphisms}
By an automorphism of $G$, we will mean a group isomorphism $G \to G$ which, at the same time, is a homeomorphism with respect to the topology of $G$. The set $\Aut(G)$ of automorphisms of $G$ becomes a group with respect to composition of automorphisms. This group carries a natural topology which makes it into a topological group itself. The topology was introduced by Braconnier in \cite{Br48}. The following proposition describes a neighbourhood basis at the identity for this topology, see \cite[Theorem 26.5]{HeRo63}.

\begin{proposition}\label{prop:braconnier_def}
Let $G$ be a topological group. Given a compact set $K \subseteq G$ and an (open) neighbourhood $U$ of the identity of $1$, define
\[ \bra(K,U) = \{ \alpha \in \Aut(G) : \text{$\alpha(x)x^{-1} \in U$ and $\alpha^{-1}(x)x^{-1} \in U$ for all $x \in K$} \} . \]
Then these sets form a neighbourhood basis at the identity for $\Aut(G)$, and we call the topology they generate the \emph{Braconnier topology} on $\Aut(G)$.
\end{proposition}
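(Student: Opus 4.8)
The goal is to verify that the family $\bigl\{\bra(K,U)\bigr\}$, indexed by pairs $(K,U)$ with $K\subseteq G$ compact and $U$ a neighbourhood of $1$, satisfies the axioms for a neighbourhood basis at the identity of a topological group. By the standard criterion (see e.g.\ \cite[Theorem 4.5]{HeRo63}), it suffices to check: (a) each $\bra(K,U)$ contains $\id_G$ and the family is a filter basis, i.e.\ the intersection of any two members contains a third; (b) for each $\bra(K,U)$ there is $\bra(K',U')$ with $\bra(K',U')\bra(K',U')\subseteq\bra(K,U)$ (submultiplicativity); (c) for each $\bra(K,U)$ there is $\bra(K',U')$ with $\bra(K',U')^{-1}\subseteq\bra(K,U)$; and (d) for each $\bra(K,U)$ and each $\beta\in\Aut(G)$ there is $\bra(K',U')$ with $\beta\,\bra(K',U')\,\beta^{-1}\subseteq\bra(K,U)$. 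Property (a) is immediate: $\id_G(x)x^{-1}=1\in U$, and $\bra(K_1\cap\emptyset\dots)$—more precisely $\bra(K_1\cup K_2,\,U_1\cap U_2)\subseteq\bra(K_1,U_1)\cap\bra(K_2,U_2)$. Property (c) is built into the definition: the condition defining $\bra(K,U)$ is symmetric in $\alpha$ and $\alpha^{-1}$, so $\bra(K,U)^{-1}=\bra(K,U)$.

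The substance is in (b) and (d), and these require the continuity of the group operation in $G$ together with compactness. For (b), pick a symmetric neighbourhood $V$ of $1$ with $V^2\subseteq U$. We want $K'$ so that $\bra(K',V)^2\subseteq\bra(K,U)$. If $\alpha,\beta\in\bra(K',V)$ and $x\in K$, write $\alpha\beta(x)x^{-1}=\alpha(\beta(x))\beta(x)^{-1}\cdot\beta(x)x^{-1}$. The second factor lies in $V$ provided $K\subseteq K'$. The first factor lies in $V$ provided $\beta(x)\in K'$, so we need $K'$ to contain $\beta(x)$ for all $x\in K$ and all $\beta$ in the neighbourhood we are constructing—a mild circularity resolved as follows: first enlarge $K$ to a compact set $K'$ with $K'=K\cdot\overline{W}$ for a suitable relatively compact neighbourhood $W$ of $1$, then shrink $V$ further so that $\bra(K',V)$ forces $\beta(x)x^{-1}\in W$, hence $\beta(x)\in K\overline W\subseteq K'$, for all $x\in K'$. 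Then the computation above closes up. The same device handles $\alpha^{-1}\beta^{-1}$ by symmetry. This two-step "enlarge the compact set, then shrink the neighbourhood" maneuver is the main obstacle, in that it is the only place where one must be careful rather than formal.

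For (d), fix $\beta\in\Aut(G)$ and a target $\bra(K,U)$. Since $\beta$ is a homeomorphism, $\beta^{-1}(K)$ is compact and $\beta^{-1}(U)$ is a neighbourhood of $1$; set $K'=\beta^{-1}(K)$ and $U'=\beta^{-1}(U)$. If $\alpha\in\bra(K',U')$ and $x\in K$, then $\beta\alpha\beta^{-1}(x)\,x^{-1}=\beta\bigl(\alpha(\beta^{-1}x)\,(\beta^{-1}x)^{-1}\bigr)$, because $\beta$ is a homomorphism, and since $\beta^{-1}x\in K'$ we have $\alpha(\beta^{-1}x)(\beta^{-1}x)^{-1}\in U'$, hence its image under $\beta$ lies in $\beta(U')=U$. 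The condition on $\alpha^{-1}$ follows identically, using that $(\beta\alpha\beta^{-1})^{-1}=\beta\alpha^{-1}\beta^{-1}$. This gives $\beta\,\bra(K',U')\,\beta^{-1}\subseteq\bra(K,U)$, establishing (d). Having checked (a)--(d), the criterion of \cite[Theorem 4.5]{HeRo63} produces a (unique) group topology on $\Aut(G)$ for which $\{\bra(K,U)\}$ is a neighbourhood basis at the identity, which is exactly the assertion; this topology is Hausdorff because $\bigcap_{K,U}\bra(K,U)=\{\id_G\}$ whenever $G$ itself is Hausdorff, since an automorphism fixing every point is the identity.
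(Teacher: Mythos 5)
Your proposal is correct in substance, and it is worth knowing that the paper gives no proof of this proposition at all: it is quoted from Hewitt--Ross, see \cite[Theorem 26.5]{HeRo63}. So what you have written is essentially the standard verification underlying that citation: you check the Bourbaki/Hewitt--Ross axioms for a neighbourhood base at the identity of a group topology (filter base, submultiplicativity, symmetry under inversion, and invariance under conjugation by a fixed automorphism), and the two substantive steps---the ``enlarge the compact set, then shrink the neighbourhood'' device giving $\bra(K',V')\bra(K',V')\subseteq\bra(K,U)$, and the identity $\beta\alpha\beta^{-1}(x)x^{-1}=\beta\bigl(\alpha(\beta^{-1}x)(\beta^{-1}x)^{-1}\bigr)$ handling conjugation---are exactly the right ones; symmetry $\bra(K,U)^{-1}=\bra(K,U)$ and the Hausdorff remark are also fine.

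Three caveats, none fatal. First, your step (b) uses a relatively compact neighbourhood $W$ of the identity, i.e.\ local compactness of $G$; this is not available for an arbitrary topological group, so strictly speaking your argument (like the cited result in \cite{HeRo63}) proves the statement under the paper's standing assumption that $G$ is locally compact, which is the only case the paper uses---you should state this hypothesis explicitly rather than leave it implicit. Second, in a nonabelian ambient group $\beta(x)x^{-1}\in W$ gives $\beta(x)\in Wx$ rather than $xW$, so the enlarged compact set should be taken two-sided, e.g.\ $K'=\overline{W}K\overline{W}$; since the paper applies this only to abelian $G$ (and to $\tfp{G}$), this is cosmetic. Third, what your construction needs and in fact delivers is $\beta(K)\subseteq K'$, i.e.\ the containment for $x\in K$, not ``for all $x\in K'$'' as written; and the sentence beginning with ``$\bra(K_1\cap\emptyset\dots)$'' is garbled, though the intended inclusion $\bra(K_1\cup K_2,\,U_1\cap U_2)\subseteq\bra(K_1,U_1)\cap\bra(K_2,U_2)$ is the correct one.
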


For the rest of the paper, we will view $\Aut(G)$ as a topological group with the Braconnier topology. One thing to note is that even if $G$ is locally compact (Hausdorff), $\Aut(G)$ need not be, see \cite[12.1.3]{Pa01}.

Fix a Haar measure $\mu$ on $G$ and let $\alpha \in \Aut(G)$. Then the mapping $S \mapsto \mu(\alpha(S))$ for Borel sets $S \subseteq G$ defines another Haar measure on $G$, so there exists a constant $\modular_G(\alpha) \in (0,\infty)$ such that
\begin{equation}
    \mu(\alpha(S)) = \modular_G(\alpha) \, \mu(S)
\end{equation}
for all Borel sets $S \subseteq G$. The constant $\modular_G(\alpha)$ is independent of the choice of Haar measure. Moreover, it defines a continuous group homomorphism
\begin{equation}
\modular_G \colon \Aut(G) \to (0,\infty) \label{eq:modular_function}
\end{equation}
where the latter is the group of positive real numbers under multiplication. This homomorphism is called the \emph{Braconnier modular function}. Note that we use the original convention due to Braconnier as in \cite[p.\ 75]{Br48}, rather than the convention of e.g.\ \cite[p.\ 1275]{Pa01}. Extending to integrals, one obtains
\begin{equation}
    \int_G f(\alpha(x)) \dif{x} = \modular_G(\alpha)^{-1} \int_G f(x) \dif{x} \label{eq:modular_integral}
\end{equation}
for $f \in L^1(G)$.

\begin{example}
If $G = \R$, then any automorphism $\alpha \in \Aut(\R)$ is given by $\alpha(x) = ax$ for some $a \in \R^{\times} = \R \setminus \{ 0 \}$. This gives an isomorphism $\Aut(\R) \cong \R^{\times}$, and under this isomorphism, the Braconnier modular function is given by $\modular_{\R}(a) = |a|$. Similarly, if $G = \Q_p$, then $\Aut(\Q_p) \cong \Q_p^{\times}$ and $\modular_{\Q_p}(a) = |a|_p$, where $|\cdot |_p$ denotes the $p$-adic absolute value.
\end{example}

\begin{example}
(\cite[Corollary 3]{We74}). Let $F$ be a locally compact field, and set $G = F^n$ for some $n \in \N$. Then we can identify $\Aut(G)$ with $\textnormal{GL}_n(F)$, and the Braconnier modular function on $G$ is given by
\[ \modular_G(A) = \modular_F(\det A) .\]
\end{example}

If $\alpha \in \Aut(G)$ and $H$ is a closed subgroup of $G$, we say that $H$ is invariant under $\alpha$ if $\alpha(H) \subseteq H$. If $H$ is invariant under $\alpha$ for all $\alpha \in \Aut(G)$, then $H$ is called a \emph{characteristic subgroup} of $G$. Note that if $H$ is a characteristic subgroup and $\alpha \in \Aut(G)$, then in fact $\alpha(H) = H$, which comes from the fact that $H$ is also invariant under $\alpha^{-1}$.

If $H$ is a closed subgroup invariant under $\alpha$, then we get induced automorphisms $\alpha|_H \in \Aut(H)$ and $\tilde{\alpha} \in \Aut(G/H)$ given by $\tilde{\alpha}(xH) = \alpha(x)\,H$ for $x \in G$. By replacing $f$ in Weil's formula \eqref{eq:weil} with $f \circ \alpha$ and using \eqref{eq:modular_integral}, one deduces that
\begin{equation}
    \modular_G(\alpha) = \modular_{H}(\alpha|_H) \,  \modular_{G/H}(\tilde{\alpha}) . \label{eq:modular_prod}
\end{equation}
In particular, if $H$ is a characteristic subgroup of $G$, then \eqref{eq:modular_prod} holds for all $\alpha \in \Aut(G)$.

The identity component $G_0$ is always a characteristic subgroup of $G$, since an automorphism $\alpha$ must map the connected component of $x \in G$ into the connected component of $\alpha(x)$, and the identity of $G$ is mapped to itself.

Since we assume that $G$ is abelian, we can also consider the automorphism group of the dual group of $G$. The relation to the automorphism group of the original group is given as follows, see \cite[Theorem 26.9]{HeRo63}:

\begin{proposition}\label{prop:dual_iso}
Let $G$ be a locally compact abelian group. Then the map $\Aut(G) \to \Aut(\widehat{G})$ given by $\alpha \mapsto \widehat{\alpha}$ where
\[ \widehat{\alpha}(\omega) = \omega \circ \alpha \]
for $\omega \in \widehat{G}$, is an anti-isomorphism of topological groups.
\end{proposition}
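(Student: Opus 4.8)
The plan is to verify directly that $\alpha \mapsto \widehat{\alpha}$ is a well-defined map $\Aut(G) \to \Aut(\widehat{G})$, that it reverses composition, that it is bijective, and finally that it and its inverse are continuous for the Braconnier topologies. First I would check that $\widehat{\alpha}$ is a genuine automorphism of $\widehat{G}$: for $\omega \in \widehat{G}$ the composite $\omega \circ \alpha$ is again a continuous homomorphism $G \to \mathbb{T}$, hence an element of $\widehat{G}$; the map $\omega \mapsto \omega \circ \alpha$ is clearly a group homomorphism of $\widehat{G}$; and it is continuous because the topology on $\widehat{G}$ is that of uniform convergence on compact sets, and $\alpha$ maps compact sets to compact sets (being a homeomorphism), so $\widehat{\alpha}$ pulls back a basic neighbourhood $\bra(\alpha(K),U)$-type condition to $\bra(K,U)$-type condition on characters. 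The key algebraic identity is $\widehat{\alpha \circ \beta} = \widehat{\beta} \circ \widehat{\alpha}$, which is immediate from $\omega \circ (\alpha \circ \beta) = (\omega \circ \alpha) \circ \beta$; this shows the assignment is an anti-homomorphism, and in particular that $\widehat{\alpha}$ has inverse $\widehat{\alpha^{-1}}$, so it indeed lands in $\Aut(\widehat{G})$.

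Next I would address bijectivity. Injectivity: if $\widehat{\alpha} = \id_{\widehat{G}}$, then $\omega \circ \alpha = \omega$ for all $\omega \in \widehat{G}$, i.e.\ $\omega(\alpha(x)x^{-1}) = 1$ for all $\omega$ and all $x$; since characters separate points of $G$ (Pontryagin duality), $\alpha(x) = x$ for all $x$, so $\alpha = \id_G$. Surjectivity: given $\beta \in \Aut(\widehat{G})$, one can form $\widehat{\beta} \in \Aut(\widehat{\widehat{G}})$ and transport it through the Pontryagin isomorphism $G \cong \widehat{\widehat{G}}$ to obtain $\alpha \in \Aut(G)$ with $\widehat{\alpha} = \beta$; this uses that the canonical evaluation map $G \to \widehat{\widehat{G}}$ is a topological isomorphism. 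I would spell out this diagram chase carefully, since it is the conceptual heart of why the assignment is onto.

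Finally, continuity of $\alpha \mapsto \widehat{\alpha}$ and of its inverse. It suffices, since both are (anti-)homomorphisms of topological groups, to check continuity at the identity. Fix a basic neighbourhood $\bra(C, V)$ of $\id_{\widehat{G}}$ in $\Aut(\widehat{G})$, with $C \subseteq \widehat{G}$ compact and $V$ a neighbourhood of the trivial character. Unwinding the definition of the topology on $\widehat{G}$, the condition $\widehat{\alpha}(\omega)\omega^{-1} \in V$ for all $\omega \in C$ translates into a uniform smallness condition of the form $|\omega(\alpha(x)x^{-1}) - 1| < \varepsilon$ for $x$ ranging over a compact set $K \subseteq G$ (coming from the definition of $V$) and $\omega$ ranging over $C$. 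A compactness argument — $C \times K$ is compact and $(\omega,x) \mapsto \omega(x)$ is jointly continuous — produces a neighbourhood $U$ of $1 \in G$ such that $\alpha(x)x^{-1} \in U$ (and symmetrically $\alpha^{-1}(x)x^{-1}\in U$) for all $x \in K$ forces the desired estimate; that is, $\bra(K,U) \subseteq$ the preimage of $\bra(C,V)$. This gives continuity; applying the same reasoning with $G$ replaced by $\widehat{G}$ and using the Pontryagin identification gives continuity of the inverse. The main obstacle I anticipate is precisely this last step: matching the compact-open topology on dual groups with the Braconnier neighbourhood bases on the two automorphism groups requires care in choosing the compacta and neighbourhoods on each side and in invoking joint continuity of the pairing $\widehat{G} \times G \to \mathbb{T}$; the algebra and the bijectivity are comparatively routine. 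Since all of this is standard and recorded in \cite[Theorem 26.9]{HeRo63}, I would either give the above sketch in brief or simply cite it.
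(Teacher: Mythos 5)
Your proposal is correct; it is the standard argument (well-definedness, the anti-homomorphism identity, bijectivity via Pontryagin duality, and continuity by matching the compact-open topology on $\widehat{G}$ with the Braconnier neighbourhood bases), which is precisely the content of the result the paper itself does not reprove but simply cites as \cite[Theorem 26.9]{HeRo63}. Since you also propose citing that same theorem, your approach coincides with the paper's.
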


It will be important to us to determine when the Braconnier modular function of a group is open. The following lemma states that when a group factorizes into a product containing a factor of $\R$, then this is indeed the case:

\begin{lemma}\label{lem:R_open}
Let $G$ be a locally compact abelian group. Then the Braconnier modular function of $\R \times G$ is open.
\end{lemma}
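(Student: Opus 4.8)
The plan is to reduce the claim to openness of $\modular_{\R\times G}$ at the identity, and then, for any neighbourhood $W$ of $\id_{\R\times G}$ in $\Aut(\R\times G)$, to exhibit an entire interval around $1$ inside $\modular_{\R\times G}(W)$ by sliding $\id_{\R\times G}$ through $W$ along dilations of the $\R$-factor. Since $\modular_{\R\times G}\colon\Aut(\R\times G)\to(0,\infty)$ is a continuous group homomorphism, it is open as soon as it is open at the identity: for $W$ open and $\alpha\in W$ we have $\modular_{\R\times G}(W)=\modular_{\R\times G}(\alpha)\cdot\modular_{\R\times G}(\alpha^{-1}W)$, and $\alpha^{-1}W$ is a neighbourhood of $\id_{\R\times G}$; so it suffices to show that $\modular_{\R\times G}(W)$ is a neighbourhood of $1$ whenever $W$ is a neighbourhood of $\id_{\R\times G}$.

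For $a\in(0,\infty)$, define $\delta_a\in\Aut(\R\times G)$ by $\delta_a(x,g)=(ax,g)$, writing the $\R$-component additively. Each $\delta_a$ is indeed an automorphism (a continuous group automorphism with continuous inverse $\delta_{1/a}$), also when $G$ is nonabelian, since $\{0\}\times G$ is a normal $\delta_a$-invariant subgroup on which $\delta_a$ restricts to the identity, with induced automorphism $x\mapsto ax$ on $(\R\times G)/(\{0\}\times G)\cong\R$. Computing $\modular_{\R\times G}(\delta_a)$ — either directly on measurable rectangles, using that a Haar measure on $\R\times G$ is a product of Haar measures, or from the factorization formula \eqref{eq:modular_prod} applied to $H=\{0\}\times G$ — gives $\modular_{\R\times G}(\delta_a)=\modular_{\R}(a)=|a|=a$.

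It remains to check that $a\mapsto\delta_a$ is continuous from $(0,\infty)$ into $\Aut(\R\times G)$ with the Braconnier topology; as it is a group homomorphism between topological groups, continuity at $a=1$ suffices. Given a compact $K\subseteq\R\times G$ and a neighbourhood $U$ of the identity, choose $M>0$ with $|x|\le M$ for all $(x,g)\in K$ (the projection of $K$ to $\R$ is compact, hence bounded), and pick $\eta>0$ and a neighbourhood $V$ of $1_G$ with $(-\eta,\eta)\times V\subseteq U$. For $a$ close enough to $1$ one has $|(a-1)x|<\eta$ and $|(1/a-1)x|<\eta$ for all such $x$, so $\delta_a(x,g)\cdot(x,g)^{-1}=((a-1)x,1_G)\in U$ and likewise $\delta_a^{-1}(x,g)\cdot(x,g)^{-1}\in U$ for all $(x,g)\in K$; that is, $\delta_a\in\bra(K,U)$. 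This continuity check is the only step that requires any real verification — everything else is formal.

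Finally, given a neighbourhood $W$ of $\id_{\R\times G}=\delta_1$, continuity yields $\epsilon>0$ with $\delta_a\in W$ for all $a\in(1-\epsilon,1+\epsilon)$, whence $(1-\epsilon,1+\epsilon)=\modular_{\R\times G}(\{\delta_a:|a-1|<\epsilon\})\subseteq\modular_{\R\times G}(W)$. So $\modular_{\R\times G}(W)$ is a neighbourhood of $1$, and by the reduction in the first paragraph, $\modular_{\R\times G}$ is open.
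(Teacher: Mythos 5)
Your proof is correct and follows essentially the same route as the paper: both arguments produce an open interval of modular values around $1$ inside the image of any identity neighbourhood by using automorphisms that dilate only the $\R$-factor and fix $G$ (your explicit Braconnier-continuity check for $a\mapsto\delta_a$ plays the role of the paper's inclusion $\bra(K_1,U_1)\times\{\id_G\}\subseteq\bra(K_1\times K_2,U_1\times U_2)$, and your translation argument replaces the paper's citation of Hewitt--Ross for upgrading "open at the identity" to open).
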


\begin{proof}
First, note that $\modular_{\R \times G}$ is surjective. Indeed, if $\alpha \in \Aut(\R)$ and $\beta \in \Aut(G)$, denote by $\alpha \times \beta$ the automorphism of $\R \times G$ given by $(\alpha \times \beta)(x,y) = (\alpha(x),\beta(y))$ for $(x,y) \in \R \times G$. Then for any $t > 0$ we have that $\modular_{\R \times G}(\cdot t \times \id_G) = t$, where $\cdot t \colon \R \to \R$ denotes multiplication by $t$.

Next, let $U$ be an open set in $\R \times G$ containing $1$ and let $K$ be a compact set in $\R \times G$. Consider the element $\bra(K,U)$ of the neighborhood basis at the identity of $\Aut(\R \times G)$ described in \Cref{prop:braconnier_def}. We will show that $\modular_{\R \times G}(\bra(K,U))$ has nonempty interior, from which it will follow by \cite[(5.40) (b)]{HeRo63} that $\modular_{\R \times G}$ is an open map. We can find open sets $U_1$ and $U_2$ in $\R$ and $G$, respectively, containing the identity, such that $U_1\times U_2\subseteq U$. Moreover, there are compact sets $K_1$ and $K_2$ in $\R$ and $G$, respectively, such that $K\subseteq K_1\times K_2$. Set $S = \{ \alpha \times \id_G : \alpha \in \bra(K_1,U_1) \}$. Then $S \subseteq \bra(K,U)$, so $\modular_{\R}(\bra(K_1,U_1)) = \modular_{\R \times G}(S) \subseteq \modular_{\R \times G}(\bra(K,U))$. Since $\modular_{\R}$ is an open map, this shows that $\modular_{\R \times G}(\bra(K,U))$ has nonempty interior, which finishes the proof.
\end{proof}

On the other hand, if $G$ has compact identity component, then its Braconnier modular function is not open. In fact, we have the following:

\begin{proposition}\label{prop:modular_rational}
Suppose $G$ is a locally compact abelian group with compact identity component. Then the Braconnier modular function of $G$ takes values in the rational numbers.
\end{proposition}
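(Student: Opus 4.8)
The plan is to reduce everything to the existence of a compact open subgroup of $G$ and then to express $\modular_G(\alpha)$ as a ratio of two indices of such subgroups, which is manifestly rational. The key input is that a locally compact abelian group with compact identity component always contains a compact open subgroup. To see this, note that $G_0$ compact implies that $G/G_0$ is a totally disconnected locally compact abelian group, so by van Dantzig's theorem (\Cref{prop:vandantzig}) it contains a compact open subgroup $\overline{K}$. If $\pi\colon G\to G/G_0$ is the quotient map, then $K\coloneqq\pi^{-1}(\overline{K})$ is an open subgroup of $G$ containing $G_0$, and since $K/G_0\cong\overline{K}$ is compact and $G_0$ is compact, $K$ is compact (a standard fact about extensions of compact groups by compact groups).

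With this in hand, fix a Haar measure $\mu$ on $G$ and let $\alpha\in\Aut(G)$. Then $\alpha(K)$ is again a compact open subgroup, hence so is $L\coloneqq K\cap\alpha(K)$, being an intersection of two open subgroups and a closed subset of the compact group $K$. Because $L$ is open in the compact groups $K$ and $\alpha(K)$, the indices $m\coloneqq[K:L]$ and $m'\coloneqq[\alpha(K):L]$ are finite. Writing $K$ and $\alpha(K)$ as disjoint unions of their (finitely many) $L$-cosets and using translation invariance of $\mu$, one gets $\mu(K)=m\,\mu(L)$ and $\mu(\alpha(K))=m'\,\mu(L)$, where $0<\mu(L)\le\mu(K)<\infty$. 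Combining this with $\mu(\alpha(K))=\modular_G(\alpha)\,\mu(K)$ from the definition of the Braconnier modular function yields
\[
\modular_G(\alpha)=\frac{\mu(\alpha(K))}{\mu(K)}=\frac{[\alpha(K):L]}{[K:L]}\in\Q .
\]
Since $\alpha$ was arbitrary, $\modular_G$ takes values in $\Q$.

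I expect no serious obstacle here: the only genuinely load-bearing point is the existence of the compact open subgroup, which is exactly where the hypothesis on the identity component enters, and the finiteness of the two indices. As an alternative that avoids passing through $G/G_0$ explicitly, one can use that $G_0$ is a characteristic subgroup, so \eqref{eq:modular_prod} gives $\modular_G(\alpha)=\modular_{G_0}(\alpha|_{G_0})\,\modular_{G/G_0}(\widetilde{\alpha})$; the first factor equals $1$ because an automorphism of a compact group preserves total Haar mass, and the claim then reduces to the totally disconnected group $G/G_0$, to which the index computation above applies verbatim. I would present the first route, as it keeps the argument self-contained modulo \Cref{prop:vandantzig}.
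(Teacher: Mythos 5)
Your proof is correct, but it takes a different route from the paper. The paper's proof is a two-line reduction: since $G_0$ is compact and characteristic, \eqref{eq:modular_prod} gives $\modular_G(\alpha)=\modular_{G/G_0}(\tilde{\alpha})$, and rationality of the modular function on the totally disconnected group $G/G_0$ is then quoted from the literature (Palmer, Corollary 12.3.18). You instead work directly in $G$: you first produce a compact open subgroup $K\subseteq G$ (exactly as the paper does in the proof of \Cref{thm:idcomp_equivalences}, via van Dantzig applied to $G/G_0$ and pulling back along the proper quotient map), and then you compute $\modular_G(\alpha)=\mu(\alpha(K))/\mu(K)=[\alpha(K):L]/[K:L]$ with $L=K\cap\alpha(K)$, all indices being finite because $L$ is open in the compact groups $K$ and $\alpha(K)$. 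Every step checks out: $\mu(K)$ is finite and positive, the coset decompositions are finite and disjoint, and the identity $\mu(\alpha(K))=\modular_G(\alpha)\mu(K)$ is the paper's defining convention for the Braconnier modular function. What your argument buys is self-containedness and transparency — it essentially reproves the cited result of Palmer (and does not even use commutativity or the factorization \eqref{eq:modular_prod}), exhibiting the value of $\modular_G$ explicitly as a ratio of subgroup indices; what the paper's route buys is brevity and reuse of the machinery (\eqref{eq:modular_prod} and the characteristic-subgroup observation) already set up in the section. Your closing alternative is precisely the paper's reduction, with the citation replaced by your index computation.
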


\begin{proof}
Suppose that $G_0$, the identity component of $G$, is compact. Then the modular function of $G_0$ is constantly equal to $1$. Since $G_0$ is a characteristic subgroup, we have from \eqref{eq:modular_prod} that
\begin{equation}
\modular_G(\alpha) = \modular_{G/G_0}(\tilde{\alpha}) \label{eq:modular_g0}
\end{equation}
for all $\alpha \in \Aut(G)$. Now $G/G_0$ is totally disconnected, so by \cite[Corollary 12.3.18]{Pa01}, $\modular_{G/G_0}$ takes values in the rational numbers. But then by \eqref{eq:modular_g0}, $\modular_G$ also takes values in the rational numbers. This finishes the proof.
\end{proof}

We will need the following result, which is one of the consequences of the structure theory for locally compact abelian groups \cite[Theorem 24.30]{HeRo63}:

\begin{proposition}\label{prop:structure_result}
Let $G$ be a locally compact abelian group. Then $G \cong \R^d \times H$, where $d \geq 0$ is an integer and $H$ is a locally compact abelian group containing a compact, open subgroup. Furthermore, if $\R^d \times H \cong \R^{d'} \times H'$ where both $H$ and $H'$ are locally compact abelian groups containing compact, open subgroups, then $d = d'$ and $H \cong H'$.
\end{proposition}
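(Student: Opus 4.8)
The existence statement is precisely \cite[Theorem~24.30]{HeRo63}, which asserts that every LCA group is topologically isomorphic to $\R^d\times H$ for some $d\in\N$ and some LCA group $H$ containing a compact open subgroup; so the content lies in the uniqueness assertion, which I would obtain from a cancellation property of the Euclidean factor. Throughout, ``isomorphism'' means topological isomorphism. One preliminary observation is needed: if an LCA group $H$ contains a compact open subgroup $U$, then its identity component $H_0$ is compact, since by \eqref{eq:identity_component_intersection} $H_0$ is contained in every open subgroup and hence in $U$, so that $H_0$ is a closed subgroup of the compact group $U$.

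Now let $\phi\colon\R^d\times H\to\R^{d'}\times H'$ be an isomorphism, with $H$ and $H'$ each containing a compact open subgroup, and put $V=\R^d\times\{1\}$, a closed subgroup with $(\R^d\times H)/V\cong H$. The key claim would be that $\phi(V)\cap(\{1\}\times H')=\{1\}$. Since $\phi(V)$ is a closed subgroup of $\R^{d'}\times H'$ isomorphic to $\R^d$, this intersection corresponds under $\phi|_V$ to a closed subgroup of $\R^d$, hence is isomorphic to $\R^j\times\Z^k$ for some $j,k\ge0$ (the classification of closed subgroups of $\R^d$ being standard). If $j\ge1$, then $\phi(V)$ would contain a subgroup $L\cong\R$ with $L\subseteq\{1\}\times H'$; as $L$ is closed in $\phi(V)$, which is closed in $\R^{d'}\times H'$, it is closed in $\{1\}\times H'\cong H'$, and being connected it lies in $H_0'$, which is compact, forcing $L$ to be compact---a contradiction. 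If $j=0$ but $k\ge1$, the composite $\phi(V)\hookrightarrow\R^{d'}\times H'\twoheadrightarrow(\R^{d'}\times H')/(\{1\}\times H')\cong\R^{d'}$ has kernel $\phi(V)\cap(\{1\}\times H')\cong\Z^k$, so it induces a continuous injection of $\R^d/\Z^k$ into $\R^{d'}$; but $\R^d/\Z^k$ contains a subgroup isomorphic to $\T$, which cannot embed into $\R^{d'}$ since the latter has no nontrivial compact subgroup. Hence $j=k=0$, proving the claim.

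Granting the claim, the projection $\psi\coloneqq\pi_{\R^{d'}}\circ\phi|_V\colon V\to\R^{d'}$ is an injective continuous homomorphism, so $d\le d'$; the same argument applied to $\phi^{-1}$ gives $d=d'$, and then $\psi$ is an injective linear endomorphism of $\R^d$, hence an isomorphism. Writing $\phi|_V(v)=(\psi(v),\theta(v))$ with $\theta\colon V\to H'$ a continuous homomorphism, one checks that $\phi(V)$ is exactly the kernel of the continuous surjection $\beta\colon\R^{d'}\times H'\to H'$, $\beta(a,h)=h\,\theta(\psi^{-1}(a))^{-1}$, which admits the continuous section $h\mapsto(1,h)$; hence $\beta$ splits topologically and $(\R^{d'}\times H')/\phi(V)\cong H'$. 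On the other hand, $\phi$ carries $V$ onto $\phi(V)$ and therefore induces an isomorphism $(\R^d\times H)/V\xrightarrow{\sim}(\R^{d'}\times H')/\phi(V)$; since the left-hand side is $\cong H$, we conclude $H\cong H'$.

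The main obstacle I anticipate is the transversality claim $\phi(V)\cap(\{1\}\times H')=\{1\}$ together with the ensuing identification of the quotient by $\phi(V)$: the subtlety is that $\phi$ need not map $V$ onto $\R^{d'}\times\{1\}$ but may shear it into a graph over $\R^{d'}$, and one must rule out---using the classification of closed subgroups of $\R^d$ and the absence of nontrivial compact subgroups in $\R^{d'}$---that any part of the Euclidean factor gets absorbed into $H'$. The remaining ingredients, namely that continuous homomorphisms between Euclidean spaces are linear and that a continuous surjection admitting a continuous section splits topologically, are routine.
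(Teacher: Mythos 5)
Your proof is correct. Note, however, that the paper gives no argument at all for this proposition: it is stated purely as a citation, with both the existence and the uniqueness of the decomposition attributed to \cite[Theorem 24.30]{HeRo63}. You take existence from the same source but supply a self-contained proof of the uniqueness part, and the argument holds up: the transversality claim $\phi(\R^d\times\{1\})\cap(\{1\}\times H')=\{1\}$ is correctly reduced, via the classification of closed subgroups of $\R^d$, to the two facts that the identity component $H_0'$ is compact (by \eqref{eq:identity_component_intersection}, since $H'$ has a compact open subgroup) and that $\R^{d'}$ has no nontrivial compact subgroups; the comparison of dimensions via the injective continuous (hence linear) map $\psi$ and its symmetric counterpart for $\phi^{-1}$ gives $d=d'$; and the shear map $(a,h)\mapsto\bigl(a,\,h\,\theta(\psi^{-1}(a))^{-1}\bigr)$, equivalently your map $\beta$ with its continuous section $h\mapsto(1,h)$, correctly identifies $(\R^{d'}\times H')/\phi(V)$ with $H'$ \emph{topologically} without invoking any open mapping theorem --- a genuine point of care, since $H'$ need not be $\sigma$-compact. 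What your route buys is a verifiable uniqueness argument from standard elementary facts rather than an appeal to the literature; what the paper's route buys is brevity, since for its later use (the equivalences of \Cref{thm:idcomp_equivalences}) only the cited statement is needed.
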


We are now ready to give various characterizations of the openness of the Braconnier modular function of $G$.

\begin{theorem}\label{thm:idcomp_equivalences}
Let $G$ be a locally compact abelian group. Then the following are equivalent:
\begin{enumerate}
    \item\label{it:idcomp1} $G \cong \R^d \times H$ where $d \geq 1$ and $H$ is a group containing a compact, open subgroup.
    \item\label{it:idcomp2} $G \cong \R \times H'$ for some locally compact abelian group $H'$.
    \item\label{it:idcomp3} $G$ has noncompact identity component.
    \item\label{it:idcomp4} $G$ has no compact, open subgroups.
    \item\label{it:idcomp5} The Braconnier modular function of $G$ is open.
    \item\label{it:idcomp6} The Braconnier modular function of $\tfp{G}$ is open.
\end{enumerate}
\end{theorem}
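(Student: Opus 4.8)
The plan is to establish the implications $(1)\Rightarrow(2)\Rightarrow(5)\Rightarrow(3)$ and $(4)\Rightarrow(1)$ and $(3)\Leftrightarrow(4)$ among the first five conditions, and then to attach $(6)$ via $(2)\Rightarrow(6)$ and $(6)\Rightarrow(3)$. Three observations are used throughout. First, openness of the Braconnier modular function depends only on the topological isomorphism class of the group: a topological isomorphism $G\cong G'$ induces one $\Aut(G)\cong\Aut(G')$ intertwining the modular functions, so we may freely replace $G$ by an isomorphic group. Second, a locally compact abelian group $G$ has a compact open subgroup if and only if its identity component $G_0$ is compact: if $K\subseteq G$ is compact and open, then $G_0\subseteq K$ by \eqref{eq:identity_component_intersection} and $G_0$, being closed, is compact; conversely, writing $G\cong\R^d\times H$ as in \Cref{prop:structure_result} one has $G_0\cong\R^d\times H_0$, so compactness of $G_0$ forces $d=0$ and $G\cong H$ already carries a compact open subgroup. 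This observation is precisely $(3)\Leftrightarrow(4)$. Third, $G$ has a compact open subgroup if and only if $\widehat{G}$ does, which follows from the standard facts of Pontryagin duality that the annihilator of a compact subgroup is open and the annihilator of an open subgroup is compact (apply this to a compact open $K\subseteq G$ to get a compact open $K^{\perp}\subseteq\widehat{G}$, and conversely).

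The constructive implications are short. For $(1)\Rightarrow(2)$ take $H'=\R^{d-1}\times H$. For $(2)\Rightarrow(5)$, if $G\cong\R\times H'$ then \Cref{lem:R_open} gives that $\modular_{\R\times H'}$, hence $\modular_G$, is open. For $(2)\Rightarrow(6)$, note $\widehat{G}\cong\widehat{\R}\times\widehat{H'}\cong\R\times\widehat{H'}$, so $\tfp{G}\cong\R\times(H'\times\R\times\widehat{H'})$, and \Cref{lem:R_open} applies again. For $(4)\Rightarrow(1)$, use \Cref{prop:structure_result} to write $G\cong\R^d\times H$ with $H$ containing a compact open subgroup; if $d=0$ then $G\cong H$ has a compact open subgroup, contradicting $(4)$, so $d\geq1$.

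The implications $(5)\Rightarrow(3)$ and $(6)\Rightarrow(3)$ I would prove by contraposition, and this is where the content lies. Suppose $G_0$ is compact. By the second observation $G$ has a compact open subgroup, hence by the third observation so does $\widehat{G}$, hence so does $\tfp{G}$ (take the product of the two compact open subgroups); applying the second observation once more, now to $\tfp{G}$, we find that $\tfp{G}$ too has compact identity component. Then \Cref{prop:modular_rational} applies to both $G$ and $\tfp{G}$, so $\modular_G$ and $\modular_{\tfp{G}}$ take values in $\Q$. But $\Q$ has empty interior in $(0,\infty)$ while the image of any modular function is nonempty (it contains $1$); hence neither $\modular_G$ nor $\modular_{\tfp{G}}$ is an open map. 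This contradicts $(5)$ and $(6)$ respectively, closing all the loops and proving the six conditions equivalent.

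I expect the main obstacle to be the handling of condition $(6)$, that is, transferring structural information between $G$ and its time-frequency plane $\tfp{G}$. The bridge is the third observation---a group and its Pontryagin dual simultaneously possess or lack compact open subgroups---used in tandem with the second, which together let \Cref{prop:structure_result}, \Cref{lem:R_open}, and \Cref{prop:modular_rational} be invoked for $\tfp{G}$ exactly as for $G$. With those two observations in hand, the rest is an assembly of the structure theory already developed: the splitting $G\cong\R^d\times H$ from \Cref{prop:structure_result}, the description \eqref{eq:identity_component_intersection} of $G_0$ afforded by van Dantzig's theorem, openness of the modular function of any group with an $\R$-factor (\Cref{lem:R_open}), and rationality of the modular function when the identity component is compact (\Cref{prop:modular_rational}).
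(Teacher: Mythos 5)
Your proof is correct and follows essentially the same route as the paper: the structure theorem $G\cong\R^d\times H$, \Cref{lem:R_open} for groups with an $\R$-factor, \Cref{prop:modular_rational} plus the emptiness of the interior of $\Q$ for the contrapositive directions, the description \eqref{eq:identity_component_intersection} of $G_0$, and the duality $K\mapsto K^{\perp}$ of compact open subgroups to handle $\tfp{G}$. The only differences are cosmetic ones in how the implication graph is arranged (e.g.\ you get the converse of $(3)\Leftrightarrow(4)$ from the structure theorem rather than from van Dantzig applied to $G/G_0$, and you route $(6)$ through $(2)\Rightarrow(6)$ and $(6)\Rightarrow(3)$ instead of the paper's appeal to the $(1)\Leftrightarrow(5)$ equivalence for $G\times\widehat{G}$), which does not change the substance.
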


\begin{proof}
By \cref{prop:structure_result}, $G \cong \R^d \times H$ where $d\geq 0$ is an integer and $H$ is a locally compact abelian group containing a compact, open subgroup, $d$ is unique and $H$ is unique up to isomorphism. The identity component of $G$ is then given by $\R^d \times H_0$, where $H_0$ denotes the identity component of $H$. The following implications are sufficient to prove the equivalence of $\ref{it:idcomp1}$--$\ref{it:idcomp6}$:

$\ref{it:idcomp1} \iff \ref{it:idcomp2}$: It is clear that $\ref{it:idcomp1}$ implies $\ref{it:idcomp2}$, and the reverse implication comes from using \Cref{prop:structure_result} on $H'$.

$\ref{it:idcomp3} \iff \ref{it:idcomp4}$: Suppose $G$ contains a compact open subgroup $K$. Then by \eqref{eq:identity_component_intersection}, $G_0 \subseteq K$, so $G_0$ must also be compact. Conversely, suppose $G_0$ is compact. Then the quotient map $p \colon G \to G/G_0$ is proper. Since $G/G_0$ is totally disconnected, van Dantzig's theorem (\Cref{prop:vandantzig}) gives the existence of a compact, open subgroup $\tilde{K}$ in $G/G_0$. The preimage $p^{-1}(\tilde{K})$ is then a compact, open subgroup of $G$.

$\ref{it:idcomp1} \implies \ref{it:idcomp3}$: If $d \geq 1$ then $G_0 = \R^d \times H_0$ is not compact.

$\ref{it:idcomp4} \implies \ref{it:idcomp1}$: We prove the contrapositive. If $G$ is not of the form $\R^{d} \times H$ with $H$ containing a compact open subgroup and $d \geq 1$, then by the uniqueness part of \Cref{prop:structure_result}, we must have $d = 0$. But then $G = H$ and $H$ contains a compact open subgroup.

$\ref{it:idcomp2} \implies \ref{it:idcomp5}$: This is \cref{lem:R_open}.

$\ref{it:idcomp5} \implies \ref{it:idcomp4}$: We prove the contrapositive. If $G$ contains a compact, open subgroup, then by \cref{prop:modular_rational} the Braconnier modular function of $G$ takes values in the rational numbers. Consequently, $\modular_G(\Aut(G))$ cannot be an open subset of $(0,\infty)$ as it is a subset of $\Q$. This shows that $\modular_G$ is not an open map.

$\ref{it:idcomp1} \iff \ref{it:idcomp6}$: Note that $G \times \widehat{G} \cong \R^{2d} \times H \times \widehat{H}$. If $K$ is a compact, open subgroup of $H$, then $K^{\perp}$ is a compact, open subgroup of $\widehat{H}$. Hence $K \times K^{\perp}$ is a compact, open subgroup of $H \times \widehat{H}$. Using the equivalence of $\ref{it:idcomp1}$ and $\ref{it:idcomp5}$ with $G \times \widehat{G}$ in place of $G$, we see that $\modular_{\tfp{G}}$ is open if and only if $2d \geq 1$, which happens if and only if $d \geq 1$. Thus $\ref{it:idcomp1}$ and $\ref{it:idcomp6}$ are equivalent. This finishes the proof.
\end{proof}

The following proposition describes how the volume of a lattice as in \eqref{eq:volume} changes when an automorphism is applied.

\begin{proposition}\label{prop:lattice_aut}
Let $\Lambda$ be a uniform lattice in a locally compact abelian group $G$, and let $\alpha \in \Aut(G)$. Then $\alpha(\Lambda)$ is also a lattice in $G$, and
\[ \vol(\alpha(\Lambda)) = \modular_G(\alpha) \vol(\Lambda) .\]
\end{proposition}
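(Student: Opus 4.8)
The plan is to split the statement into two parts: first that $\alpha(\Lambda)$ is again a lattice, and then the volume formula. For the first part, $\alpha$ is a homeomorphism, so it maps the discrete set $\Lambda$ to a discrete set, and it induces a homeomorphism $G/\Lambda \to G/\alpha(\Lambda)$ (sending $x\Lambda \mapsto \alpha(x)\alpha(\Lambda)$); since $G/\Lambda$ is compact, so is $G/\alpha(\Lambda)$, hence $\alpha(\Lambda)$ is cocompact. Thus $\alpha(\Lambda)$ is a lattice. (Alternatively, one can cite \Cref{lem:lattice-constructions} style reasoning, but the direct argument is cleaner here.)

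For the volume formula, the idea is to track how Weil's formula \eqref{eq:weil} transforms under $\alpha$. Fix the Haar measure $\mu_G$ on $G$, the counting measure on $\Lambda$, and let $\mu_{G/\Lambda}$ be the unique measure making \eqref{eq:weil} hold, so $\vol(\Lambda) = \mu_{G/\Lambda}(G/\Lambda)$. Now apply \eqref{eq:weil} to the function $f \circ \alpha$ in place of $f$, for $f \in L^1(G)$; the left-hand side becomes $\int_G f(\alpha(x))\,\dif{\mu_G(x)} = \modular_G(\alpha)^{-1}\int_G f(x)\,\dif{\mu_G(x)}$ by \eqref{eq:modular_integral}. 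On the right-hand side, $\int_{G/\Lambda}\int_\Lambda f(\alpha(xy))\,\dif\mu_\Lambda(x)\,\dif\mu_{G/\Lambda}(y)$: since $\alpha$ restricts to a bijection of $\Lambda$, and $\mu_\Lambda$ is counting measure on $\Lambda$, the inner integral $\sum_{x \in \Lambda} f(\alpha(x)\alpha(y)) = \sum_{x' \in \alpha(\Lambda)} f(x'\alpha(y))$ is exactly the inner integral for the lattice $\alpha(\Lambda)$ evaluated at $\alpha(y)$. Pushing $\mu_{G/\Lambda}$ forward along the homeomorphism $\tilde\alpha\colon G/\Lambda \to G/\alpha(\Lambda)$ identifies this with $\int_{G/\alpha(\Lambda)}\sum_{x' \in \alpha(\Lambda)} f(x'z)\,\dif(\tilde\alpha_*\mu_{G/\Lambda})(z)$. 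Combining, we see that $\modular_G(\alpha)^{-1}\mu_G$ together with counting measure on $\alpha(\Lambda)$ and the measure $\tilde\alpha_*\mu_{G/\Lambda}$ on $G/\alpha(\Lambda)$ satisfy Weil's formula; by uniqueness (the quotient measure is determined once the other two are fixed), the quotient measure on $G/\alpha(\Lambda)$ attached to the Haar measure $\mu_G$ and counting measure on $\alpha(\Lambda)$ is $\modular_G(\alpha)\,\tilde\alpha_*\mu_{G/\Lambda}$. Evaluating total masses,
\[
\vol(\alpha(\Lambda)) = \modular_G(\alpha)\,(\tilde\alpha_*\mu_{G/\Lambda})(G/\alpha(\Lambda)) = \modular_G(\alpha)\,\mu_{G/\Lambda}(G/\Lambda) = \modular_G(\alpha)\vol(\Lambda).
\]

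The main obstacle is bookkeeping the three measures correctly through the substitution: one must be careful that the "quotient measure normalization" used to define $\vol$ is exactly the one from Weil's formula with counting measure upstairs, and that rescaling $\mu_G$ by $\modular_G(\alpha)^{-1}$ forces the compensating factor $\modular_G(\alpha)$ onto the quotient measure (and not, say, onto the counting measure, which is rigid). Everything else — discreteness, cocompactness, the homeomorphism $\tilde\alpha$, the reindexing of the sum over $\Lambda$ — is routine. A cosmetic alternative for the last step is to invoke \eqref{eq:modular_prod} with $H = \Lambda$: since $\alpha(\Lambda) = \Lambda$ is false in general we cannot do this directly, but if one passes to $\alpha$ as an isomorphism $G \to G$ carrying $\Lambda$ to $\alpha(\Lambda)$ one gets $\modular_G(\alpha) = \modular_{\Lambda}(\alpha|_\Lambda)\modular_{G/\Lambda}(\tilde\alpha)$ with $\modular_\Lambda \equiv 1$ (discrete), reducing the claim to $\modular_{G/\Lambda}(\tilde\alpha) = \vol(\alpha(\Lambda))/\vol(\Lambda)$, which is again the pushforward computation above; I would present whichever version is shorter in context, likely the direct Weil-formula argument.
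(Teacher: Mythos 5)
Your proposal is correct and follows essentially the same route as the paper: the lattice property via the homeomorphism $\tilde\alpha\colon G/\Lambda\to G/\alpha(\Lambda)$ and discreteness of the image, and the volume formula by running Weil's formula through $\alpha$, reindexing the sum over $\Lambda$ to one over $\alpha(\Lambda)$, using \eqref{eq:modular_integral}, and comparing the pushforward $\tilde\alpha_*\mu_{G/\Lambda}$ with the canonical quotient measure. The paper phrases the last step by introducing a proportionality constant $K$ between these two quotient measures and computing $K=\modular_G(\alpha)^{-1}$, whereas you invoke uniqueness of the measure in Weil's formula directly; this is only a cosmetic difference.
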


\begin{proof}
If $U$ is any open set in $G$ such that $U \cap \Lambda = \{ 1 \}$, then $\alpha(U)$ is open in $G$ and $\alpha(U) \cap \alpha(\Lambda) = \alpha(U \cap \Lambda) = \{ 1 \}$. Moreover, the map $\tilde{\alpha} \colon G/\Lambda \to G/\alpha(\Lambda)$ given by $x\Lambda \mapsto \alpha(x) \alpha(\Lambda)$ is a topological isomorphism. This shows that $\alpha(\Lambda)$ is a lattice in $G$.

Fix a Haar measure on $G$, equip the lattices $\Lambda$ and $\alpha(\Lambda)$ with the counting measure, and let $\mu_{G/\Lambda}$ and $\mu_{G/\alpha(\Lambda)}$ be measures on $G/\Lambda$ and $G/\alpha(\Lambda)$ respectively, such that Weil's formula \eqref{eq:weil} holds. The pushforward measure of $\mu_{G/\Lambda}$ along $\tilde{\alpha}$ is a Haar measure on $G/\alpha(\Lambda)$. It follows that there exists a constant $K > 0$ such that
\begin{equation}
    \int_{G/\Lambda} g \circ \tilde{\alpha} \dif{\mu_{G/\Lambda}} = K \int_{G/\alpha(\Lambda)} g \dif{\mu_{G/\alpha(\Lambda)}} \label{eq:lattice_aut_K}
\end{equation}
for all $g \in L^1(G/\alpha(\Lambda))$. Letting $f \in L^1(G)$, we have that
\begin{align*}
    \int_G f(x) \dif{\mu_G(x)} &= \modular_G(\alpha) \int_G f(\alpha(x)) \dif{\mu_G(x)} && \text{by \eqref{eq:modular_integral}} \\
    &= \modular_G(\alpha) \sum_{\lambda \in \Lambda} \int_{G/\Lambda} f(\alpha(y)\alpha(\lambda)) \dif{\mu_{G/\Lambda}(y\Lambda)} && \text{by \eqref{eq:weil}} \\
    &= \modular_G(\alpha) K \sum_{\gamma \in \alpha(\Lambda)} \int_{G/\alpha(\Lambda)} f(y' \gamma) \dif{\mu_{G/\alpha(\Lambda)}(y'\Lambda)} && \text{by \eqref{eq:lattice_aut_K}} \\
    &= \modular_G(\alpha) K \int_G f(x) \dif{\mu_G(x)} && \text{by \eqref{eq:weil}} .
\end{align*}
This shows that $K = \modular_G(\alpha)^{-1}$. Setting $g = \mathbbm{1}_{G/\alpha(\Lambda)}$ in \eqref{eq:lattice_aut_K}, we obtain
\begin{align*}
\mu_{G/\Lambda}(G/\Lambda) &= \int_{G/\Lambda} \mathbbm{1}_{G/\Lambda} \dif{\mu_{G/\Lambda}} \\
&= \int_{G/\Lambda} \mathbbm{1}_{G/\alpha(\Lambda)} \circ \tilde{\alpha} \dif{\mu_{G/\Lambda}} \\
&= \modular_G(\alpha)^{-1} \int_{G/\alpha(\Lambda)} \mathbbm{1}_{G/\alpha(\Lambda)} \dif{\mu_{G/\alpha(\Lambda)}} \\
&= \modular_G(\alpha)^{-1} \mu_{G/\alpha(\Lambda)}(G/\alpha(\Lambda)) .
\end{align*}
By definition of the volume of a lattice, we then have that
\[ \vol(\alpha(\Lambda)) = \modular_G(\alpha)\vol(\Lambda) .\]
\end{proof}

\section{Time-frequency analysis}\label{sec:tfa}

As usual, let $G$ be a (second-countable) locally compact abelian group, and let $\widehat{G}$ be the Pontryagin dual of $G$. The group $\tfp{G}$ is called the \emph{time-frequency plane} associated to $G$.

Given $x \in G$ and $\omega \in \widehat{G}$, we define two unitary linear operators $T_x$ and $M_{\omega}$ on $L^2(G)$ by
\begin{equation}
    (T_x f)(t) = f(x^{-1}t), \ \  (M_{\omega} f)(t) = \omega(t)f(t) \label{eq:tf_shift}
\end{equation}
for $t \in G$, $f \in L^2(G)$. These two operators are called \emph{time shift} by $x$ and \emph{frequency shift} by $\omega$, respectively. They obey the following commutation relation:
\begin{equation} 
    M_{\omega} T_x = \omega(x) \, T_x M_{\omega}, \ \  x \in G, \omega \in \widehat{G} \label{eq:commutation_relation}
\end{equation}
We also set $\pi(x,\omega) = M_{\omega} T_x$ and call it a \emph{time-frequency shift}. From \eqref{eq:commutation_relation}, one calculates that
\begin{equation}
    \pi(x,\omega) \, \pi(y,\tau) = \overline{\tau(x)} \, \pi(xy,\omega\tau)
\end{equation}
for $(x,\omega),(y,\tau) \in \tfp{G}$.

Let $\Delta$ be a lattice in the time-frequency plane $\tfp{G}$, and let $g \in L^2(G)$. The set
\[ \mathcal{G}(g,\Delta) \coloneqq \{ \pi(z) g : z \in \Delta \} \]
is called the \emph{Gabor system} with window $g$ over the lattice $\Delta$. If this set is a \emph{frame} for $L^2(G)$, i.e.\ there exist $A,B > 0$ such that
\begin{equation}
    A \, \| f \|_2^2 \leq \sum_{z \in \Delta} |\langle f ,\pi(z) g \rangle |^2 \leq B \, \| f \|_2^2
\end{equation}
for every $f \in L^2(G)$, then $\mathcal{G}(g,\Delta)$ is called a \emph{Gabor frame}.

Fix a Haar measure $\mu$ on $G$. The Fourier transform is the map $\mathcal{F} \colon L^1(G) \to C_0(\widehat{G})$ given by
\begin{equation}
    \widehat{f}(\omega) = \mathcal{F}(f)(\omega) = \int_G f(t) \, \overline{\omega(t)} \dif{\mu(t)}
\end{equation}
for $f \in L^1(G)$, $\omega \in \widehat{G}$. The \emph{dual measure} $\widehat{\mu}$ on $\widehat{G}$ corresponding to the chosen measure $\mu$ on $G$ is the Haar measure on $\widehat{G}$ appropriately scaled such that the \emph{Plancherel formula} holds \cite[(31.1)]{HeRo63} for all $f \in L^1(G) \cap L^2(G)$:
\begin{equation}
    \int_G |f(t)|^2 \dif{\mu(t)} = \int_{\widehat{G}} |\widehat{f}(\omega)|^2 \dif{\widehat{\mu}(\omega)} \label{eq:plancherel}
\end{equation}
From this formula, one extends $\mathcal{F}$ in the usual fashion to a unitary linear map of Hilbert spaces $L^2(G) \to L^2(\widehat{G})$.

\subsection{The Feichtinger algebra}

Let $g \in L^2(G)$. The \emph{short-time Fourier transform} \cite[p.\ 215]{Gr98} of a function $f \in L^2(G)$ (with respect to $g$) is the function $V_g f \colon \tfp{G} \to \C$ given by
\begin{equation}
    V_g f(x,\omega) = \langle f, \pi(x,\omega)g \rangle = \int_G f(t) \, \overline{ \omega(t) \, g(x^{-1}t)} \dif{t}
\end{equation}
for $(x,\omega) \in \tfp{G}$.

The \emph{Feichtinger algebra} \cite{Fe81-2,Ja18} is the set $S_0(G)$ consisting of those functions $f \in L^2(G)$ for which
\begin{equation}
\int_{\tfp{G}} |V_f f(z)| \dif{z} < \infty . \label{eq:feichtinger_alg_def}
\end{equation}
For fixed nonzero $g \in S_0(G)$, the expression
\begin{equation}
    \| f \|_{S_0(G),g} = \int_{\tfp{G}} |V_gf(z)| \dif{z} \label{eq:feichtinger_equivalent}
\end{equation}
for $f \in S_0(G)$ defines a norm on $S_0(G)$ turning it into a Banach space, and any other choice of $g \in S_0(G)$ yields an equivalent norm. Specifically, we have for all $f,g_1,g_2 \in S_0(G)$ that
\begin{equation}
    c \| f \|_{S_0(G),g_2} \leq \| f \|_{S_0(G),g_1} \leq C \| f \|_{S_0(G),g_2} \label{eq:feichtinger_constants}
\end{equation}
where $c = \| g_1 \|_2^2 \| g_2 \|_{S_0(G),g_1}^{-1}$ and $C = \| g_2 \|_2^{-2} \| g_1 \|_{S_0(G),g_2}$ (\cite[Proposition 4.10]{Ja18}). When the function $g$ in \eqref{eq:feichtinger_equivalent} is not important, we will omit it from the notation and just write $\| \cdot \|_{S_0(G)}$.

Let $\mathcal{F}$ denote the Fourier transform on $G$. If $f \in S_0(G)$, then $\mathcal{F}(f) \in S_0(\widehat{G})$. Specifically, one has the following equality of norms for $f,g \in S_0(G)$:
\begin{equation}
    \| \mathcal{F}(f) \|_{S_0(\widehat{G}),\mathcal{F}(g)} = \| f \|_{S_0(G),g} . \label{eq:feichtinger_fourier}
\end{equation}
If $\alpha \in \Aut(G)$, then the dilation operator $D_{\alpha} \colon L^2(G) \to L^2(G)$ given by $D_{\alpha} f = \modular_G(\alpha)^{1/2} f \circ \alpha$ is unitary. It also implements a Banach space isomorphism $S_0(G) \to S_0(G)$, and one has the following equality of norms:
\begin{equation}
    \| D_{\alpha} f \|_{S_0(G),D_{\alpha}g} = \| f \|_{S_0(G),g} \ \ f,g \in S_0(G). \label{eq:feichtinger_aut}
\end{equation}

The Fourier algebra of $G$ is the space
\[ A(G) = \{ f \in C_0(G) : \text{there exists $g \in L^1(\widehat{G})$ such that $\mathcal{F}^{-1}(g) = f$} \} . \]
If $G$ is discrete then $S_0(G) = \ell^1(G)$ and if $G$ is compact then $S_0(G) = A(G)$, cf.\ \cite[Lemma 4.11]{Ja18}. The Feichtinger algebra has a number of different descriptions, and one that we will make use of is the following.

\begin{proposition}\label{prop:feichtinger_description}
Let $G$ be a locally compact abelian group. Let $\tilde{K}$ be a compact set in $\widehat{G}$ with nonempty interior. Then $S_0(G)$ consists of all functions $f \in A(G)$ that can be written as a sum $f = \sum_{n \in \N} M_{\omega_n} g_n$ where $g_n \in L^1(G)$ with $\supp(\widehat{g_n}) \subseteq \tilde{K}$ for each $n \in \N$, $\omega_n \in \widehat{G}$ for each $n \in \N$ and $\sum_{n \in \N} \| g_n \|_1 < \infty$. Moreover, an equivalent norm on $S_0(G)$ is given by
\[ \| f \|_{S_0(G),\tilde{K}} = \inf_{(g_n)_{n \in \N}} \sum_{n \in \N} \| g_n \|_1 \]
where the infimum is taken over all such representations of $f$. In particular, there exists a constant $C \geq 0$ such that for all $f \in S_0(G)$ with $\supp(\widehat{f}) \subseteq \tilde{K}$ we have that
\begin{equation}
    \| f \|_{S_0(G)} \leq C \| f \|_{1} . \label{eq:cs_eq_norm}
\end{equation}
\end{proposition}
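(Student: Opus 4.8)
I want to show that $S_0(G)$ coincides with the "Wiener-amalgam-type" space of all $f \in A(G)$ admitting a decomposition $f = \sum_n M_{\omega_n} g_n$ with $\widehat{g_n}$ supported in a fixed compact set $\tilde K$ and $\sum_n \|g_n\|_1 < \infty$, and that the infimum of $\sum_n \|g_n\|_1$ over such decompositions is an equivalent norm. The key structural fact to exploit is that $S_0(G)$ is invariant under the Fourier transform (see \eqref{eq:feichtinger_fourier}) and is minimal among Banach spaces that are isometrically invariant under time-frequency shifts and contain one nonzero "nice" function; equivalently, it is the space generated by $L^1(G)$ together with a band-limiting operation. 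Concretely, I would proceed as follows.

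\textbf{Step 1: the easy inclusion.} Let $f = \sum_n M_{\omega_n} g_n$ with $\supp(\widehat{g_n}) \subseteq \tilde K$ and $\sum_n \|g_n\|_1 < \infty$. Fix a Schwartz-type window, i.e.\ any nonzero $\varphi \in S_0(G)$; it suffices to bound $\|M_{\omega_n} g_n\|_{S_0(G)}$ by a constant multiple of $\|g_n\|_1$, uniformly in $n$. Since modulation is an isometry on $S_0(G)$ (up to equivalence of norms it is bounded with bound independent of $\omega_n$, because $V_g(M_\omega h)(x,\tau) = \overline{\tau(x)}\, V_{g}h(x,\tau\omega^{-1})$ up to a unimodular factor), I only need $\|g_n\|_{S_0(G)} \lesssim \|g_n\|_1$ for $g_n$ with $\widehat{g_n} \subseteq \tilde K$. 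This is precisely the band-limited estimate \eqref{eq:cs_eq_norm}, so I would in fact prove \eqref{eq:cs_eq_norm} first and then derive the inclusion. For \eqref{eq:cs_eq_norm}: pick a single $\psi \in S_0(G)$ with $\widehat{\psi} = 1$ on $\tilde K$ (such $\psi$ exists since $S_0(\widehat G)$ contains compactly supported functions that are $1$ on any given compact set — a standard fact about $S_0$, e.g.\ via a Fejér-type construction or Urysohn in $A(\widehat G) \cap S_0$). Then $\widehat{g_n} = \widehat{g_n}\,\widehat{\psi}$, i.e.\ $g_n = g_n * \psi$, and I estimate $\|g_n * \psi\|_{S_0(G)} \le \|g_n\|_1 \|\psi\|_{S_0(G)}$ using the convolution-module property of $S_0$ (namely $L^1(G)*S_0(G) \subseteq S_0(G)$ with $\|h*u\|_{S_0} \le \|h\|_1 \|u\|_{S_0}$, which follows from $V_g(h*u) = $ an integral of translates of $V_g u$ weighted by $h$, or equivalently by Minkowski's integral inequality applied to the STFT). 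Taking $C = \|\psi\|_{S_0(G)}$ gives \eqref{eq:cs_eq_norm}, and then $\|f\|_{S_0(G)} \le \sum_n \|M_{\omega_n} g_n\|_{S_0(G)} \le C' \sum_n \|g_n\|_1$, so passing to the infimum, $\|f\|_{S_0(G)} \le C' \|f\|_{S_0(G),\tilde K}$.

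\textbf{Step 2: the reverse inclusion and norm equivalence.} Conversely, given $f \in S_0(G)$, I must manufacture a decomposition of the stated type with $\sum_n \|g_n\|_1 \lesssim \|f\|_{S_0(G)}$. Fix a compact neighbourhood $\tilde K_0 \subseteq \tilde K$ of the identity in $\widehat G$ with nonempty interior, small enough that translates $\{\omega_n \tilde K_0\}$ by a suitable countable set $\{\omega_n\}$ cover $\widehat G$ with bounded overlap; this uses only local compactness and $\sigma$-compactness of $\widehat G$ on each "slice" — more precisely, one takes a partition of unity. Choose a bounded partition of unity $\{\chi_n\}$ on $\widehat G$ subordinate to the cover $\{\omega_n \tilde K_0\}$ with each $\chi_n \in A(\widehat G)$, $\sum_n \chi_n = 1$, and $\sup_n \|M_{\omega_n^{-1}}\text{-translate of }\chi_n\|_{?}$ controlled (a standard BUPU — bounded uniform partition of unity — in $S_0(\widehat G)$, which exists by the results in \cite{Ja18} cited in the excerpt). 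Then write $\widehat f = \sum_n \chi_n \widehat f$. Each $\chi_n \widehat f$ is supported in $\omega_n \tilde K_0 \subseteq \omega_n \tilde K$; translating, $\widehat{g_n} := (\chi_n \widehat f)(\omega_n \,\cdot\,)$ is supported in $\tilde K_0 \subseteq \tilde K$, and on the group side this corresponds to $f = \sum_n M_{\omega_n} g_n$ with $g_n \in S_0(G) \subseteq L^1(G)$. The crucial estimate is $\sum_n \|g_n\|_1 = \sum_n \|\widehat{g_n}\|_{A(\widehat G)} \lesssim \sum_n \|\chi_n \widehat f\|_{A} \lesssim \|\widehat f\|_{S_0(\widehat G)} = \|f\|_{S_0(G)}$, where the middle inequality is exactly the defining property of $S_0(\widehat G)$ via a BUPU: the $S_0$-norm is equivalent to $\sum_n \|\chi_n u\|_{A}$ for any fixed BUPU $\{\chi_n\}$ (this characterization of $S_0$ as a Wiener amalgam $W(A, \ell^1)$ is one of Feichtinger's standard descriptions, available from \cite{Fe81-2, Ja18}). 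This completes both the reverse inclusion and the norm equivalence $\|f\|_{S_0(G),\tilde K} \asymp \|f\|_{S_0(G)}$.

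\textbf{Main obstacle.} The delicate point is Step 2: constructing, in an arbitrary (possibly non-$\sigma$-compact, non-metrizable) LCA group $\widehat G$, a BUPU $\{\chi_n\}_{n\in\N}$ indexed by a \emph{countable} set whose members are translates of finitely many model functions and which is subordinate to translates of $\tilde K$, together with the norm equivalence $\|u\|_{S_0} \asymp \sum_n \|\chi_n u\|_A$. The countability is not automatic — it relies on the fact that functions in $S_0(\widehat G)$, hence their Fourier supports once we band-limit, are essentially concentrated on a $\sigma$-compact open subgroup, so only countably many translates $\omega_n \tilde K$ are ever "hit". I would handle this by first reducing to the $\sigma$-compact case: $S_0(\widehat G) = \varinjlim S_0(\widehat G_\beta)$ over open $\sigma$-compact subgroups $\widehat G_\beta$ (each $u \in S_0$ being supported, up to the $A$-module structure, near such a subgroup), then invoke the classical Wiener-amalgam description of $S_0$ on $\sigma$-compact groups from \cite{Ja18}. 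The bounded-overlap and uniform-norm bounds on the BUPU then follow from translation-invariance of the $A$-norm under the modulations $M_{\omega_n}$. Everything else — the convolution and modulation module properties, existence of a compactly supported $S_0$-function equal to $1$ on a prescribed compact set, Fourier invariance — is quoted from the cited literature.
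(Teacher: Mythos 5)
Your argument is correct in substance, but it distributes the weight differently from the paper. The paper's proof of the first part is a direct citation of the description of $S_0(G)$ by the set $\mathcal{N}$ in Jakobsen's survey (Proposition 8.1 there), and it then gets \eqref{eq:cs_eq_norm} essentially for free: a band-limited $f$ admits the trivial one-term representation $g_1=f$, $g_k=0$ for $k>1$, $\omega_k=1$, so $\|f\|_{S_0(G),\tilde{K}}\leq\|f\|_1$, and the constant $C$ comes from equivalence of norms on $S_0(G)$. You instead prove the inclusion ``decomposable $\Rightarrow$ in $S_0$'' and the bound $\|f\|_{S_0(G)}\leq C'\,\|f\|_{S_0(G),\tilde{K}}$ honestly, via a window $\psi\in S_0(G)$ with $\widehat{\psi}\equiv 1$ on $\tilde{K}$, the identity $g_n=g_n*\psi$, the module estimate $\|g_n*\psi\|_{S_0}\leq\|g_n\|_1\|\psi\|_{S_0}$, and the uniform boundedness of modulations; this also yields \eqref{eq:cs_eq_norm} directly, independently of the infimum norm, which is a pleasant alternative to the paper's route. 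For the reverse inclusion and the estimate $\sum_n\|g_n\|_1\lesssim\|f\|_{S_0(G)}$, however, your ``crucial estimate'' $\sum_n\|\chi_n\widehat{f}\|_{A(\widehat{G})}\lesssim\|\widehat{f}\|_{S_0(\widehat{G})}$ is precisely the BUPU/Wiener-amalgam characterization $S_0=W(A,\ell^1)$, which is an equivalent reformulation of the very result the paper cites; so Step 2 is not an independent or more elementary argument, it just shifts the external input from the $\mathcal{N}$-description to the amalgam description (and your reduction to open $\sigma$-compact subgroups to secure countability of the index set, while a reasonable sketch, is exactly the kind of detail that citation already covers for general LCA groups). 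In short: your write-up is a valid proof modulo the same literature the paper leans on, with a genuinely different and self-contained treatment of \eqref{eq:cs_eq_norm} and of the easy inclusion, but the paper's proof of \eqref{eq:cs_eq_norm} is even shorter once the characterization is granted.
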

\begin{proof}
The first part of the proposition follows from the description of $S_{0}(G)$ by the set $\mathcal{N}$ in \cite[Proposition 8.1]{Ja18}. To see that \eqref{eq:cs_eq_norm} holds, note that if $f \in S_0(G)$ is such that $\supp(\widehat{f}) \subseteq \tilde{K}$, then one gets a representation of $f$ as $f = \sum_{n \in \N} M_{\omega_n} g_n$ where $g_1 = f$, $g_k = 0$ for $k > 1$ and $\omega_k = 1$ for all $k \in \N$. By the description of the $S_0(G)$-norm, we then have that
\[ \| f \|_{S_0(G),\tilde{K}} \leq \sum_{k \in \N} \| g_k \|_1 = \| f \|_1 .\]
The constant $C \geq 0$ then comes from the fact that the norm above is equivalent to any other norm on the Feichtinger algebra.
\end{proof}

We will make use of the following properties of the Feichtinger algebra. See \cite[Corollary 5.5(ii), Theorem 5.7(i), Lemma 4.15(iii)]{Ja18} for proofs.

\begin{proposition}\label{prop:feichtinger_properties}
Let $G$ be a locally compact abelian group. Then the following hold:
\begin{enumerate}
    \item\label{it:feichtinger_properties1} The map $S_0(G) \times S_0(G) \to S_0(\tfp{G})$ given by
    \[ (f,g) \mapsto V_g f \]
    is continuous.
    \item\label{it:feichtinger_properties2} If $H$ is a closed subgroup of $G$, then restriction
    \[ f \mapsto f|_H \]
    is a well-defined, continuous map $S_0(G) \to S_0(H)$. In particular, if $H$ is discrete, this is a continuous map $S_0(G) \to \ell^1(H)$.
    \item\label{it:feichtinger_properties3} For any $f \in S_0(G)$, the map $\tfp{G} \to S_0(G)$ given by
    \[ (x,\omega) \mapsto \pi(x,\omega) f \]
    is continuous.
\end{enumerate}
\end{proposition}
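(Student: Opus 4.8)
The three assertions are standard structural properties of the Feichtinger algebra, and the plan is to recover them from the characterization \eqref{eq:feichtinger_equivalent} of the $S_0$-norm together with the behaviour of the short-time Fourier transform under time-frequency shifts. For \ref{it:feichtinger_properties1} I would show that $(f,g)\mapsto V_g f$ is a \emph{bounded bilinear} map, from which joint continuity is automatic. Fix a nonzero $\phi\in S_0(G)$; then $\Phi\coloneqq V_\phi\phi$ lies in $S_0(\tfp G)$ --- a fact one checks directly for a convenient choice of $\phi$ --- and there is a pointwise identity on the time-frequency plane of $\tfp G$ of the shape
\[ V_\Phi(V_g f)=c\cdot\overline{V_\phi g\circ A}\cdot(V_\phi f\circ B), \]
where $c$ is unimodular and $A,B$ are affine maps into $\tfp G$, arising from the commutation relation \eqref{eq:commutation_relation}, whose combination $(A,B)$ is a measure-preserving bijection onto $\tfp G\times\tfp G$. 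Integrating the modulus and applying Fubini yields $\|V_g f\|_{S_0(\tfp G),\Phi}\le\|f\|_{S_0(G),\phi}\,\|g\|_{S_0(G),\phi}$; the only delicate point is the bookkeeping of the change of variables and of the dual group of $\tfp G$.

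For \ref{it:feichtinger_properties3} I may assume $f\neq0$ and work with the equivalent norm $\|\cdot\|_{S_0(G),f}$. Covariance of the STFT gives $V_f(\pi(z)f)(w)=c_z(w)\,V_f f(z^{-1}w)$ for a family of unimodular functions $c_z$ on $\tfp G$ depending continuously on $z$, whence
\[ \|\pi(z)f-\pi(z_0)f\|_{S_0(G),f}\le\big\|V_f f(z^{-1}\,\cdot\,)-V_f f(z_0^{-1}\,\cdot\,)\big\|_{L^1(\tfp G)}+\big\|(c_z-c_{z_0})\,V_f f(z_0^{-1}\,\cdot\,)\big\|_{L^1(\tfp G)}. \]
The first term tends to $0$ by strong continuity of translation on $L^1(\tfp G)$, applied to $V_f f\in L^1(\tfp G)$, and the second by a routine compact-exhaustion argument using joint continuity of $(z,w)\mapsto c_z(w)$; that $\pi(z)f\in S_0(G)$ in the first place is immediate from the definition and covariance.

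The real obstacle is \ref{it:feichtinger_properties2}. First, $f|_H$ is unambiguous since $S_0(G)\subseteq A(G)\subseteq C_0(G)$ (\Cref{prop:feichtinger_description}), so $f$ is a genuine continuous function; what remains is to see that $f\mapsto f|_H$ is bounded into $S_0(H)$. I see two routes. One is to Fourier-dualize: by \eqref{eq:feichtinger_fourier} it is equivalent to bound periodization over the closed subgroup $H^{\perp}\subseteq\widehat G$ as a map $S_0(\widehat G)\to S_0(\widehat G/H^{\perp})\cong S_0(\widehat H)$, the two operations being intertwined by $\mathcal F$ via Weil's formula \eqref{eq:weil}; boundedness of periodization is in turn a standard property of $S_0$. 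The other is a structural reduction: by \Cref{prop:structure_result}, $G\cong\R^d\times G_1$ with $G_1$ containing a compact open subgroup, and since isomorphisms of LCA groups act boundedly on $S_0$ by \eqref{eq:feichtinger_aut}, one is reduced to restriction to coordinate subgroups of elementary groups. Either way, the reason this is the hard step is that a general closed subgroup of a general LCA group need not sit compatibly inside such a product decomposition, so one must argue that, up to an $S_0$-compatible isomorphism of the ambient group, it can be brought into a tractable position. All of this is carried out in \cite{Ja18}.
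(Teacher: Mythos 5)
Your proposal is correct in substance, but it is worth noting that the paper does not prove this proposition at all: its ``proof'' is the citation \cite[Corollary 5.5(ii), Theorem 5.7(i), Lemma 4.15(iii)]{Ja18}. What you have done is reconstruct, in outline, the standard arguments behind those cited results. For \ref{it:feichtinger_properties1}, the bounded-bilinearity route via the identity for $V_{V_\phi\phi}(V_g f)$ (a unimodular factor times $\overline{V_\phi g}$ and $V_\phi f$ composed with a measure-preserving affine change of variables) is exactly the mechanism used in the literature, and joint continuity of a bounded bilinear map then finishes it; the one point to be explicit about is that $V_\phi\phi\in S_0(\tfp{G})$ must be verified \emph{without} invoking \ref{it:feichtinger_properties1} itself, which your remark about a ``convenient choice of $\phi$'' handles (e.g.\ $\phi$ a Gaussian tensored with the indicator of a compact open subgroup via \Cref{prop:structure_result}, so that $V_\phi\phi$ is a tensor product of functions checkable factorwise). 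For \ref{it:feichtinger_properties3}, covariance of the STFT plus strong continuity of translation on $L^1(\tfp{G})$ is the standard proof; since $\tfp{G}$ need not be metrizable, the second term should indeed be handled by the compact-exhaustion estimate you mention (uniform convergence of the phases on a compact set carrying all but $\epsilon$ of the mass of $|V_f f|$) rather than by sequential dominated convergence. For \ref{it:feichtinger_properties2} you correctly identify well-definedness from $S_0(G)\subseteq C_0(G)$ and the right mechanism --- under $\mathcal F$, restriction to $H$ is intertwined with periodization along $H^{\perp}$, and periodization is bounded $S_0(\widehat G)\to S_0(\widehat G/H^{\perp})$ --- but, as you concede, the boundedness of periodization is itself deferred to \cite{Ja18}; your alternative ``structural'' route is weaker, since (as you note) a general closed subgroup cannot be assumed to sit compatibly in the decomposition of \Cref{prop:structure_result}. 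So relative to the paper you supply more argument, not less; relative to a self-contained proof, part \ref{it:feichtinger_properties2} still rests on the same external reference the paper uses, which is acceptable here.
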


\subsection{Duality theory for regular Gabor frames}

Let $\mu$ be a Haar measure on $G$. If another Haar measure $c \mu$, $c > 0$, is chosen on $G$, then the dual measure is given by $\widehat{c \mu} = c^{-1} \widehat{\mu}$. It follows that the product measure on $\tfp{G}$ constructed from any Haar measure on $G$ and the corresponding dual measure on $\widehat{G}$ is independent of the choice of Haar measure on $G$. Consequently, any lattice $\Delta$ in $\tfp{G}$ has a canonically defined volume as in \eqref{eq:volume}. The volume $\vol(\Delta)$ gives an obstruction to the existence of Gabor frames over $\Delta$ which is summarized in the following theorem:

\begin{proposition}[Density theorem]\label{prop:density_thm}
Let $\Delta$ be a lattice in $\tfp{G}$, where $G$ is a locally compact abelian group. If $\mathcal{G}(g,\Delta)$ is a Gabor frame, where $g \in L^2(G)$, then
\[ \vol(\Delta) \leq 1 .\]
\end{proposition}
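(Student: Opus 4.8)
The plan is to reduce the general locally compact abelian case to the Euclidean–and–compact-open structure afforded by \Cref{prop:structure_result}, and to pin down $\vol(\Delta)\leq 1$ by comparing the frame bounds of $\mathcal{G}(g,\Delta)$ with those of a Gabor system over a sublattice or superlattice whose volume we can control. More concretely: suppose for contradiction that $\vol(\Delta)>1$. The idea is to produce, inside $\Delta$, a chain of lattices or to pass to a quotient where the obstruction becomes visible. I would first invoke the duality theory for uniform Gabor frames on LCA groups (the Jakobsen–Lensmar result \cite{JaLe16} cited in the introduction): if $\mathcal{G}(g,\Delta)$ is a frame with window $g\in L^2(G)$, then the adjoint system $\mathcal{G}(g,\Delta^{\circ})$ over the adjoint lattice $\Delta^{\circ}$ (the commutant lattice with respect to the Heisenberg-type cocycle on $\tfp{G}$) is a Riesz sequence, and $\vol(\Delta^{\circ})=\vol(\Delta)^{-1}$. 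A Riesz sequence cannot be overcomplete, so one gets $\vol(\Delta^{\circ})\geq 1$ — equivalently $\vol(\Delta)\leq 1$ — provided one knows a Riesz sequence indexed by a lattice forces that lattice to have volume $\geq 1$. This last fact is itself the "easy half" of the density theorem and can be extracted from the Wexler–Raz / biorthogonality relations in \cite{JaLe16}.

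If instead a self-contained argument is wanted, the plan is structural. Write $G\cong\R^d\times H$ with $H$ containing a compact open subgroup $K$, so $\tfp{G}\cong\R^{2d}\times(\tfp{H})$ and $\tfp{H}$ contains the compact open subgroup $K\times K^{\perp}$. A lattice $\Delta$ in $\tfp{G}$ meets the compact open part in a finite-index fashion and projects to a lattice in the Euclidean factor after quotienting by the compact open subgroup (use \Cref{lem:lattice-constructions}(ii)); meanwhile $\Delta\cap(\R^{2d}\times\{e\})$ handles the connected directions. One reduces the volume computation across the exact sequence $1\to (K\times K^{\perp})\to \tfp{G}\to \tfp{G}/(K\times K^{\perp})\to 1$ via Weil's formula \eqref{eq:weil}, which multiplies volumes; the target group is of the form $\R^{2d}\times(\text{discrete})\times(\text{compact})$, where the classical Euclidean density theorem together with the elementary counting bound for lattices in discrete$\times$compact groups applies. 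One must be careful that the normalization of Haar/dual measures on the quotient is the canonical one making $\vol(\Delta)$ well-defined, but \eqref{eq:vol_lattice_perp} and the remarks preceding \Cref{prop:density_thm} guarantee exactly this independence.

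The main obstacle is the reduction step: passing a lattice through the quotient by $K\times K^{\perp}$ need not preserve the property of being a \emph{lattice} unless one checks discreteness is retained, and one must simultaneously track how the Gabor frame property descends to (or lifts from) the quotient group — this is where the subtlety of the cocycle on $\tfp{G}$ enters, since $K\times K^{\perp}$ need not be isotropic for the symplectic form and a genuine quotient Gabor system may not exist. For this reason I expect the cleanest route is the first one: cite the duality theory of \cite{JaLe16} to obtain the adjoint Riesz sequence and read off $\vol(\Delta)\leq 1$ directly, relegating all the structure-theoretic bookkeeping to results already in the literature. I would therefore present the short proof via duality as the main argument, and only sketch the structural alternative as a remark.
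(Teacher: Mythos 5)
The paper does not actually prove this proposition: it is imported verbatim from \cite[Theorem 5.6]{JaLe16}, as the sentence following the statement makes explicit, so your plan amounts to re-deriving the cited theorem from the duality machinery of the same reference. That is a legitimate thing to do, and your first route is the standard one, but its load-bearing step is soft as written. The ``easy half'' you invoke --- that a Gabor Riesz sequence over a lattice $\Lambda$ forces $\vol(\Lambda)\geq 1$ --- is not an independent input: by the very duality principle you use (together with $\Delta^{\circ\circ}=\Delta$ and $\vol(\Delta^{\circ})=\vol(\Delta)^{-1}$) it is exactly equivalent to the statement being proved, and the natural way to ``extract it from Wexler--Raz'' is to dualize back to a frame, which makes the argument circular (or, at best, makes the detour through $\Delta^{\circ}$ dead weight). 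The non-circular version works directly on $\Delta$: let $h=S_{g,\Delta}^{-1}g$ be the canonical dual window, which lies in $L^2(G)$ and generates the canonical dual Gabor frame over the same lattice; the $L^2$-window Wexler--Raz relations of \cite{JaLe16} give $\langle h,g\rangle=\vol(\Delta)$ (note the constant must be normalized consistently with \eqref{eq:janssen_rep} and with the usage in the proof of \Cref{lem:cont_at_frame}, i.e.\ it is $\vol(\Delta)$ and not $\vol(\Delta)^{-1}$ as printed in \eqref{eq:wexler_raz}; with the printed constant your final inequality would come out the wrong way). Then the minimal-$\ell^2$-norm property of the canonical coefficients $(\langle g,\pi(z)h\rangle)_{z\in\Delta}$ of $g$ in its own frame expansion, compared with the trivial representation $g=\pi(0)g$, yields $\sum_{z\in\Delta}|\langle g,\pi(z)h\rangle|^2\leq 1$, hence $\vol(\Delta)=\langle h,g\rangle\leq |\langle g,h\rangle|\leq 1$. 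Note also that you cannot quote \Cref{prop:duality_theory} for this, since it is stated only for $g,h\in S_0(G)$ while the proposition concerns arbitrary $g\in L^2(G)$; you need the $L^2$-window versions from \cite{JaLe16}.

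Your structural second route is where the genuine gap sits, and you diagnose it correctly yourself: there is no well-behaved quotient Gabor system modulo $K\times K^{\perp}$ (the frame property neither descends nor lifts in a straightforward way), and a lattice in $\tfp{G}$ need not be compatible with the splitting $\R^{2d}\times(\tfp{H})$, so the volume bookkeeping across that quotient does not reduce matters to the Euclidean density theorem. Relegating it to a remark is the right call; but then your actual argument is the duality one, which, once the circularity is repaired as above, is fine --- and is in substance a reproof of \cite[Theorem 5.6]{JaLe16}, the result the paper simply cites without proof.
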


The above version of the density theorem for (not necessarily rectangular) lattices in locally compact abelian groups is due to Jakobsen and Lemvig, see \cite[Theorem 5.6]{JaLe16}. For a historical exposition of the density theorem, see \cite{He07}.

Given a lattice $\Delta$ in $\tfp{G}$, the \emph{dual lattice} of $\Delta$ is defined as
\begin{equation}
    \Delta^{\circ} = \big\{ z \in \tfp{G} : \text{$\pi(z)\,\pi(w) = \pi(w)\,\pi(z)$ for all $w \in \Delta$} \big\} . \label{eq:dual_lattice}
\end{equation}
This is also a lattice in $\tfp{G}$, see \cite[p.\ 265]{Ri88} and \cite[p.\ 234]{JaLe16}. In time-frequency analysis $\Delta^{\circ} $ is often called the \emph{adjoint lattice}, see \cite{FeKo98}. 

The dual lattice is central to the duality theory of regular Gabor frames, which we summarize in the following proposition:

\begin{proposition}\label{prop:duality_theory}
Let $\Delta$ be a lattice in $\tfp{G}$, where $G$ is a locally compact abelian group, and let $g,h \in S_0(G)$. Then the following hold:
\begin{enumerate}
    \item\label{it:duality_theory1} The Gabor frame-type operator $S_{g,h,\Delta} \colon L^2(G) \to L^2(G)$ given by
    \begin{equation}
    S_{g,h,\Delta} f = \sum_{z \in \Delta} \langle f, \pi(z) g \rangle \pi(z) h \label{eq:gabor_frame_operator}
    \end{equation}
    is bounded, and has the following representation, where the sum converges absolutely:
    \begin{equation}
        S_{g,h,\Delta} = \frac{1}{\vol(\Delta)} \sum_{z \in \Delta^{\circ}} \langle \pi(z) h, g \rangle \pi(z) . \label{eq:janssen_rep}
    \end{equation}
    \item\label{it:duality_theory2} The Gabor system $\mathcal{G}(g,\Delta)$ is a frame for $L^2(G)$, if and only if the Gabor frame operator $S_{g,\Delta} \coloneqq S_{g,g,\Delta}$ is invertible on $L^2(G)$, if and only if there exists a function $h \in S_0(G)$ such that
    \begin{equation}
        S_{g,h,\Delta} = I .\label{eq:dual_atom}
    \end{equation}
    Furthermore, $h$ satisfies \eqref{eq:dual_atom} if and only if
    \begin{align}
        \langle g, \pi(z) h \rangle = \vol(\Delta) \delta_{z,0}, && \text{   for all $z \in \Delta^{\circ}$ } . \label{eq:wexler_raz}
    \end{align}
    Such a function $h$ is called a \emph{dual window} for $g$, and the particular choice $h = S_{g,\Delta}^{-1} g$ will always be a dual window called the \emph{canonical dual} of $g$.
\end{enumerate}
\end{proposition}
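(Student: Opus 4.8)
This proposition is essentially due to Jakobsen and Lemvig \cite{JaLe16}; to reconstruct it I would combine three ingredients: the Fundamental Identity of Gabor Analysis (an instance of Poisson summation on the time--frequency plane), elementary frame theory, and spectral invariance of a twisted convolution algebra.

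For part (i), I would first invoke the standard Bessel estimate that $g \in S_0(G)$ makes $\mathcal G(g,\Delta)$ a Bessel system in $L^2(G)$, with bound depending continuously on $\|g\|_{S_0(G)}$; this rests on $V_g g \in S_0(\tfp{G})$ (\Cref{prop:feichtinger_properties}\,\ref{it:feichtinger_properties1}) and the membership of $S_0(\tfp{G})$ in a Wiener amalgam space with $\ell^1$ global component. Applied to $g$ and $h$ together with Cauchy--Schwarz, this gives boundedness of $S_{g,h,\Delta}$. For the Janssen representation \eqref{eq:janssen_rep} I would first verify it on the dense subspace $S_0(G)$: for $f_1,f_2 \in S_0(G)$ one has $V_g f_1, V_h f_2 \in S_0(\tfp{G})$, hence $F \coloneqq V_g f_1 \cdot \overline{V_h f_2} \in S_0(\tfp{G})$ since $S_0$ is a pointwise Banach algebra, and $S_0$-functions obey Poisson summation over any lattice, so that
\[ \langle S_{g,h,\Delta} f_1, f_2 \rangle = \sum_{z \in \Delta} V_g f_1(z)\,\overline{V_h f_2(z)} = \frac{1}{\vol(\Delta)} \sum_{z^{\circ} \in \Delta^{\perp_s}} \mathcal F_s F(z^{\circ}) , \]
where $\mathcal F_s$ denotes the symplectic Fourier transform on $\tfp{G}$ and $\Delta^{\perp_s}$ its annihilator. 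The argument then hinges on two facts: that $\Delta^{\perp_s}$ coincides with the adjoint lattice $\Delta^{\circ}$ of \eqref{eq:dual_lattice} (a consequence of the symplectic self-duality of $\tfp{G}$), and the bilinear Moyal-type identity $\mathcal F_s F(z^{\circ}) = \langle \pi(z^{\circ}) h, g \rangle\,\langle \pi(z^{\circ}) f_1, f_2 \rangle$, which is a direct computation that carries over verbatim to an LCA group. The resulting series is absolutely convergent, since $(\langle \pi(z^{\circ}) h, g \rangle)_{z^{\circ}} = (V_g h(z^{\circ}))_{z^{\circ}} \in \ell^1(\Delta^{\circ})$ (again because $V_g h \in S_0(\tfp{G})$), so it defines a bounded operator agreeing with $S_{g,h,\Delta}$ on the dense set $S_0(G)$, hence on all of $L^2(G)$.

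For part (ii), since $\mathcal G(g,\Delta)$ is Bessel, $S_{g,\Delta} \coloneqq S_{g,g,\Delta}$ is a bounded positive self-adjoint operator, and the frame inequalities amount to $A\,I \le S_{g,\Delta} \le B\,I$, hence to invertibility of $S_{g,\Delta}$ --- this is general frame theory. If $S_{g,\Delta}$ is invertible I would set $h \coloneqq S_{g,\Delta}^{-1} g$ and note that $\pi(\lambda) S_{g,\Delta} \pi(\lambda)^{-1} = S_{\pi(\lambda)g,\Delta} = S_{g,\Delta}$ for $\lambda \in \Delta$, since $\mathcal G(\pi(\lambda)g,\Delta) = \mathcal G(g,\Delta)$ up to unimodular factors; thus $S_{g,\Delta}^{-1}$ commutes with each $\pi(z)$, $z \in \Delta$, and
\[ S_{g,h,\Delta} f = \sum_{z \in \Delta} \langle f, \pi(z) g \rangle\, \pi(z) S_{g,\Delta}^{-1} g = S_{g,\Delta}^{-1} S_{g,\Delta} f = f . \]
The one delicate point is $h \in S_0(G)$: by \eqref{eq:janssen_rep}, $S_{g,\Delta}$ lies in the completed twisted convolution algebra $\mathcal A = \{ \sum_{z^{\circ} \in \Delta^{\circ}} c_{z^{\circ}} \pi(z^{\circ}) : (c_{z^{\circ}})_{z^{\circ}} \in \ell^1(\Delta^{\circ}) \} \subseteq \mathcal B(L^2(G))$, which is inverse-closed in $\mathcal B(L^2(G))$ by the LCA analogue of the Gr\"ochenig--Leinert theorem (see \cite{JaLe16}), so $S_{g,\Delta}^{-1} \in \mathcal A$; and every element of $\mathcal A$ maps $S_0(G)$ continuously into itself, by \Cref{prop:feichtinger_properties}\,\ref{it:feichtinger_properties3} and $\ell^1$-summability. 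Hence $h = S_{g,\Delta}^{-1} g \in S_0(G)$ --- which also yields the final assertion of the proposition. Conversely, if $h \in S_0(G)$ satisfies $S_{g,h,\Delta} = I$, then, with $B_h$ a Bessel bound for $\mathcal G(h,\Delta)$ and by Cauchy--Schwarz,
\[ \|f\|^4 = |\langle S_{g,h,\Delta} f, f \rangle|^2 \le \left( \sum_{z \in \Delta} |\langle f, \pi(z) g \rangle|^2 \right)\left( \sum_{z \in \Delta} |\langle f, \pi(z) h \rangle|^2 \right) \le B_h\, \|f\|^2 \sum_{z \in \Delta} |\langle f, \pi(z) g \rangle|^2 \]
for all $f \in L^2(G)$, which supplies the lower frame bound; the upper bound is Bessel for $g$. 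Finally, for the Wexler--Raz characterization \eqref{eq:wexler_raz} I would read off coefficients in \eqref{eq:janssen_rep}: the identity $S_{g,h,\Delta} = I$ reads $\sum_{z^{\circ} \in \Delta^{\circ}} c_{z^{\circ}} \pi(z^{\circ}) = I$ with $c_{z^{\circ}} = \vol(\Delta)^{-1} \langle \pi(z^{\circ}) h, g \rangle$, and the twisted group von Neumann algebra generated by $\{ \pi(z^{\circ}) \}_{z^{\circ} \in \Delta^{\circ}}$ carries a faithful normal trace $\tau$ with $\tau(\pi(z^{\circ})) = \delta_{z^{\circ},0}$, so that applying $\tau(\pi(w)^{-1}\,\cdot\,)$ determines each $c_w$ (up to a unimodular cocycle phase, trivial at $w = 0$); hence $S_{g,h,\Delta} = I$ holds if and only if $c_{z^{\circ}} = \delta_{z^{\circ},0}$ for every $z^{\circ}$, which after taking conjugates is \eqref{eq:wexler_raz}.

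I expect the genuine obstacles to be the two analytic inputs that do not have elementary LCA proofs: the identification of the algebraically defined adjoint lattice $\Delta^{\circ}$ with the symplectic annihilator $\Delta^{\perp_s}$ governing Poisson summation on $\tfp{G}$ --- which requires care with the Haar-measure normalizations on $\tfp{G}$ and $\widehat{\tfp{G}}$ and ultimately rests on Rieffel's analysis of the associated Heisenberg-type representation --- and the spectral invariance of $\mathcal A$ in $\mathcal B(L^2(G))$, needed to keep the canonical dual window inside $S_0(G)$. Both are supplied in \cite{JaLe16}; everything else is bookkeeping with the Feichtinger algebra and standard frame theory.
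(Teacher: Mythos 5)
The paper does not actually prove this proposition: it is imported wholesale from Jakobsen--Lemvig, with the boundedness of the frame-type operator cited as \cite[Corollary A.5]{JaLe16}, the Janssen representation as \cite[p.~250]{JaLe16}, and the frame characterization together with the Wexler--Raz relations as \cite[Theorem 6.1]{JaLe16}. Your proposal is therefore doing more than the paper does, and it reconstructs the cited arguments along the standard lines correctly: Bessel bounds from $V_g g\in S_0(\tfp{G})$, the Janssen representation via Poisson summation for $S_0$ on the lattice $\Delta\subseteq\tfp{G}$ together with the symplectic identification of $\Delta^{\perp_s}$ with $\Delta^{\circ}$ and the bilinear Moyal-type identity, the equivalence of the frame property with invertibility of $S_{g,\Delta}$ from general frame theory, membership of the canonical dual in $S_0(G)$ via spectral invariance of the $\ell^1$-twisted-convolution algebra (the Gr\"ochenig--Leinert-type input), and the converse lower-bound estimate by Cauchy--Schwarz. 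The one place where your route genuinely differs from the cited source is the ``only if'' direction of Wexler--Raz: you extract the coefficients $c_{z^{\circ}}$ by applying a faithful normal trace $\tau$ with $\tau(\pi(z^{\circ}))=\delta_{z^{\circ},0}$ on the von Neumann algebra generated by $\pi(\Delta^{\circ})$, whereas Jakobsen--Lemvig argue through the duality principle for the adjoint system. Your trace argument is legitimate, but note that the existence and the stated normalization of $\tau$ on this \emph{concrete} representation of the twisted group algebra is itself a nontrivial fact (essentially equivalent to the injectivity of $c\mapsto\sum_{z^{\circ}}c_{z^{\circ}}\pi(z^{\circ})$ on $\ell^1(\Delta^{\circ})$, going back to Rieffel's analysis), so if you want a self-contained proof rather than a reconstruction of the citation, that step needs its own justification; as a map of where the real inputs lie, your sketch is accurate and consistent with what the paper relies on.
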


The boundedness of $S_{g,\Delta}$ for $g \in S_0(G)$ is proved in \cite[Corollary A.5]{JaLe16}. The formula in \eqref{eq:janssen_rep} is known as the \emph{Janssen representation} and is proved in \cite[p.\ 250]{JaLe16}. We mention that the same formula for the Schwartz--Bruhat space can already be derived from Rieffel's work on Heisenberg modules \cite{Ri88}.
The relation in \eqref{eq:wexler_raz} is known as the \emph{Wexler--Raz relation}, and part (ii) of \Cref{prop:duality_theory} is proved in \cite[Theorem 6.1]{JaLe16}.

We end this section with a description of the dual lattice of $\alpha(\Delta)$ where $\alpha \in \Aut(G \times \widehat{G})$ and $\Delta$ is a lattice in $G \times \widehat{G}$. In general, for a homomorphism $\phi \colon G \to H$ of locally compact abelian groups, we denote by $\widehat{\phi} \colon \widehat{H} \to \widehat{G}$ the homomorphism given by $\widehat{\phi}(\omega) = \omega \circ \phi$ for $\omega \in \widehat{H}$. We identity $\widehat{\alpha}$ as an automorphism of $\widehat{G} \times G$. Denote by $J \colon G \times \widehat{G} \to \widehat{G} \times G$ the map $J(x,\omega) = (\omega,x^{-1})$ and set $\alpha^{\circ} = J^{-1}\widehat{\alpha}^{-1}J$. Then similarly as in the real case \cite{FeKa04} one checks that
\begin{equation}
    (\alpha(\Delta))^{\circ} = \alpha^{\circ}(\Delta^{\circ}) . \label{eq:dual_lattice_automorphism}
\end{equation}

\section{Deformation results}\label{sec:deformation}

\subsection{Continuity of dilation}

As stated in the introduction, a continuity of dilation result is at the heart of our approach towards proving a deformation result for Gabor frames over locally compact abelian groups. Throughout this section, as in the previous two sections, $G$ is assumed a (second-countable) locally compact abelian group. We will need the following lemma:

\begin{lemma}\label{lem:uniform_cs}
Suppose $\{ f_k : k \in \N \}$ is a family of continuous functions on $G$ and that $K$ is a compact subset of $G$ such that $\supp(f_k) \subseteq K$ for all $k \in \N$. Then there exists a neighbourhood $W \subseteq \Aut(G)$ of $\id_G$ and a compact subset $K'$ of $G$ such that $\supp(f_k \circ \alpha) \subseteq K'$ for all $\alpha \in W$ and $k \in \N$.
\end{lemma}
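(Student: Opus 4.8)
The plan is to reduce the statement to a purely set-theoretic fact about how $\Aut(G)$, equipped with the Braconnier topology, moves the common compact set $K$. First I would note that since every $\alpha \in \Aut(G)$ is in particular a homeomorphism of $G$, the zero set of $f_k \circ \alpha$ is $\alpha^{-1}(\{f_k \neq 0\})$, and taking closures gives $\supp(f_k \circ \alpha) = \alpha^{-1}(\supp f_k)$. As $\supp f_k \subseteq K$ for all $k$, this means the family $(f_k)_{k \in \N}$ plays no role beyond sharing the support $K$: it suffices to find a single neighbourhood $W$ of $\id_G$ and a single compact set $K'$ with $\alpha^{-1}(K) \subseteq K'$ for every $\alpha \in W$.

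To build $W$ and $K'$, I would use local compactness of $G$ to choose a relatively compact open neighbourhood $V$ of the identity $1 \in G$, and set $W = \bra(K,V)$, which by \Cref{prop:braconnier_def} is a basic neighbourhood of $\id_G$ in $\Aut(G)$. For $\alpha \in W$ the defining condition of $\bra(K,V)$ gives $\alpha^{-1}(x)x^{-1} \in V$, hence $\alpha^{-1}(x) \in Vx \subseteq VK$, for every $x \in K$; thus $\alpha^{-1}(K) \subseteq VK$. I would then put $K' = \overline{V}\,K$, which is compact since it is the image of the compact set $\overline{V} \times K$ under the continuous multiplication map $G \times G \to G$, and observe $VK \subseteq \overline{V}\,K = K'$. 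Chaining these inclusions, $\supp(f_k \circ \alpha) = \alpha^{-1}(\supp f_k) \subseteq \alpha^{-1}(K) \subseteq K'$ for all $\alpha \in W$ and $k \in \N$, which is exactly the assertion.

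There is essentially only one point that requires any care, and it is the one place a hypothesis is genuinely used: the neighbourhood $V$ of $1$ must be chosen relatively compact so that $\overline{V}\,K$ is compact, which is where local compactness of $G$ enters. It is also worth keeping in mind that the symmetric definition of the Braconnier basic neighbourhoods controls both $\alpha$ and $\alpha^{-1}$ on the \emph{same} compact set $K$, so the bound on $\alpha^{-1}(K)$ is immediate and no separate argument for $\alpha^{-1}$ is needed. Apart from this the proof is a short unwinding of definitions, and I do not anticipate any real obstacle.
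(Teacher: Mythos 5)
Your proof is correct and follows essentially the same route as the paper: take a relatively compact neighbourhood $V$ of the identity, let $W=\bra(K,V)$, use the defining condition of the Braconnier basic neighbourhood to get $\alpha^{-1}(K)\subseteq \overline{V}K$, and note $\supp(f_k\circ\alpha)=\alpha^{-1}(\supp f_k)$. The only cosmetic difference is that you write $\alpha^{-1}(x)\in Vx$ where the paper writes $xU$ (harmless, since $G$ is abelian); no further comment is needed.
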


\begin{proof}
Let $U$ be any neighbourhood of $1$ in $G$ with compact closure. Set $W = \bra(K,U)$ and $K' = K\overline{U}$. If $x \in K$ and $\alpha \in W$, then $\alpha^{-1}(x)x^{-1} \in U$, so $\alpha^{-1}(x) \in xU \subseteq K'$. This shows that $\alpha^{-1}(K) \subseteq K'$ for $\alpha \in W$. Hence, for $k \in \N$ and $\alpha \in W$ we have that
\[ \supp(f_k \circ \alpha) = \alpha^{-1}(\supp(f_k)) \subseteq \alpha^{-1}(K) \subseteq K' .\]
\end{proof}

A proof of the following proposition can be found in \cite[p.\ 78, Proposition 2]{Br48}, but we include a proof here for the sake of completeness.

\begin{proposition}\label{prop:strong_cont}
For any locally compact abelian group $G$ the map
$L^{1}(G) \times \Aut(G)\to L^{1}(G)$, $ (f,\alpha) \mapsto f\circ\, \alpha$
is continuous.
\end{proposition}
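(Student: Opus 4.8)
The plan is to reduce the joint continuity of $(f,\alpha)\mapsto f\circ\alpha$ to two separate facts: continuity in $f$ for fixed $\alpha$ (which is easy), and continuity in $\alpha$ at $\alpha=\id_G$ for $f$ ranging over a dense, nicely behaved subclass, combined with a uniform bound that lets one pass to general $f\in L^1(G)$. Concretely, since $D_\alpha f=\modular_G(\alpha)^{1/2}f\circ\alpha$ is an isometry of $L^2$ and more relevantly $\|f\circ\alpha\|_1=\modular_G(\alpha)^{-1}\|f\|_1$ by \eqref{eq:modular_integral}, and since $\modular_G$ is continuous, the map $\alpha\mapsto f\circ\alpha$ has operator-type norm controlled locally uniformly; so it suffices to prove continuity of $\alpha\mapsto f\circ\alpha$ at $\alpha_0\in\Aut(G)$, and after composing with $\alpha_0^{-1}$ (using that composition in $\Aut(G)$ is continuous and right translation by $f\mapsto f\circ\alpha_0$ is an isomorphism of $L^1(G)$), we may assume $\alpha_0=\id_G$.

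First I would fix $f\in L^1(G)$ and $\varepsilon>0$, and approximate: choose $g\in C_c(G)$ with $\|f-g\|_1<\varepsilon$. For $\alpha$ in a fixed small neighbourhood $W_0=\bra(K_0,U_0)$ of $\id_G$ we have $\|(f-g)\circ\alpha\|_1=\modular_G(\alpha)^{-1}\|f-g\|_1\le 2\varepsilon$, say, by continuity of $\modular_G$; the same bound applies to $\|f\circ\alpha - g\circ\alpha\|_1$. So it remains to handle $g\in C_c(G)$, i.e. to show $\|g\circ\alpha - g\|_1\to 0$ as $\alpha\to\id_G$ in the Braconnier topology. Here I would use \Cref{lem:uniform_cs} (applied to the single function $g$, or the family $\{g\}$): there is a neighbourhood $W\subseteq W_0$ of $\id_G$ and a compact $K'$ with $\supp(g\circ\alpha)\subseteq K'$ for all $\alpha\in W$. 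Then $\|g\circ\alpha-g\|_1\le \mu(K')\cdot\sup_{t\in K'}|g(\alpha(t))-g(t)|$, so it is enough to make the supremum small.

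The core estimate is uniform continuity of $g$ together with the fact that, on the fixed compact set $K'$ (which we may enlarge to contain $\supp g$ and have nonempty interior), shrinking the Braconnier neighbourhood forces $\alpha(t)$ to stay in any prescribed translate neighbourhood of $t$: if $\alpha\in\bra(K',U)$ then $\alpha(t)t^{-1}\in U$ for all $t\in K'$, i.e. $\alpha(t)\in tU$. Given $\delta>0$, by uniform continuity of $g$ (which holds since $g\in C_c(G)$ and $G$ is a locally compact group, so $g$ is left and right uniformly continuous) there is a neighbourhood $U$ of $1$ with $|g(su)-g(s)|<\delta$ for all $s\in G$, $u\in U$. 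Taking $\alpha\in W\cap\bra(K',U)$ gives $|g(\alpha(t))-g(t)|=|g(t\cdot(\alpha(t)t^{-1}))-g(t)|<\delta$ for all $t\in K'$, hence $\|g\circ\alpha-g\|_1<\mu(K')\delta$. Choosing $\delta$ appropriately and combining with the approximation step yields $\|f\circ\alpha-f\|_1<C\varepsilon$ for $\alpha$ in a suitable neighbourhood of $\id_G$, and since $f$ was arbitrary and we reduced to $\alpha_0=\id_G$ at the start, joint continuity follows.

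The main obstacle is bookkeeping rather than depth: one must be careful that the compact set $K'$ produced by \Cref{lem:uniform_cs} can be chosen once and for all (independently of the eventual $\delta$ and $U$), so that the Haar measure factor $\mu(K')$ is a genuine constant when one finally shrinks the neighbourhood; this is fine because \Cref{lem:uniform_cs} only uses one auxiliary relatively compact neighbourhood of $1$, not the fine one from uniform continuity. A second minor point is the reduction to $\alpha_0=\id_G$: one should phrase it as "continuity at $(f,\alpha_0)$ for all $(f,\alpha_0)$", write $\alpha=\alpha_0\beta$ with $\beta\to\id_G$ (possible since $\alpha\mapsto\alpha_0^{-1}\alpha$ is continuous), and note $f\circ\alpha=(f\circ\alpha_0)\circ\beta$ with $f\circ\alpha_0$ again an arbitrary $L^1$ function, so no generality is lost; the joint continuity then follows from continuity in each variable plus the local uniform norm bound via a standard $2\varepsilon$-argument.
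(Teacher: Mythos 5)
Your proposal is correct and follows essentially the same route as the paper's proof: reduce to continuity of $\alpha\mapsto f\circ\alpha$ at $\id_G$ via the identity $\|f\circ\alpha\|_1=\modular_G(\alpha)^{-1}\|f\|_1$ and continuity of $\modular_G$, approximate by $g\in C_c(G)$, force a fixed compact set to contain $\supp(g\circ\alpha)$ for $\alpha$ in a Braconnier neighbourhood (the paper does this inline exactly as in \Cref{lem:uniform_cs}), and finish with uniform continuity of $g$ times the measure of that compact set. The only cosmetic point is that for nonabelian $G$ the condition $\alpha(t)t^{-1}\in U$ gives $\alpha(t)\in Ut$ rather than $tU$, so one should invoke the matching one-sided uniform continuity of $g\in C_c(G)$ (which holds), exactly as the paper does with its condition $yz^{-1}\in V$.
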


\begin{proof}
First, note that if $f,f_0 \in L^1(G)$ and $\alpha, \alpha_0 \in \Aut(G)$, then
\begin{align*}
    \| f \circ \alpha - f_0 \circ \alpha_0 \|_1 &\leq \| f \circ \alpha - f_0 \circ \alpha \|_1 + \| f_0 \circ \alpha - f_0 \circ \alpha_0 \|_1 \\
    &= \modular_G(\alpha)^{-1} \| f - f_0 \|_1 + \| f_0 \circ \alpha - f_0 \circ \alpha_0 \|_1 .
\end{align*}
Thus, by continuity of the Braconnier modular function, it suffices to show that $\alpha \mapsto f \circ \alpha$ is continuous for all $f \in L^1(G)$. In fact, since
\[ \| f \circ \alpha - f \circ \alpha_0 \|_1 = \modular_G(\alpha_0)^{-1} \| f \circ (\alpha \alpha_0^{-1}) - f \|_1 \]
it suffices to show continuity at $\alpha_0 = \id_G$. So, fix $f \in L^1(G)$. To begin with, suppose that $f$ is compactly supported, say on $K$. Let $\epsilon > 0$. Let $U \subseteq G$ be any neighbourhood of the identity with compact closure, and set $W = \bra(K,U)$. Then if $x \in K$ and $\alpha \in W$, we have $\alpha^{-1}(x)x^{-1} \in U$, so $\alpha^{-1}(x) \in xU \subseteq K \overline{U} \eqqcolon C$. This shows that $\supp(f \circ \alpha) = \alpha^{-1}(K) \subseteq C$ whenever $\alpha \in W$. Consequently, the function $|f \circ \alpha - f|$ is supported on the compact set $C$ for all $\alpha \in W$.

Let $\phi \in C_c(G)$ satisfy $0 \leq \phi \leq 1$ and $\phi|_{C} = 1$ pointwise. Since $f$ is continuous and compactly supported, it is uniformly continuous, so there exists an open neighbourhood of the identity $V$ such that
\begin{equation}
    yz^{-1} \in V \implies |f(y) - f(z)| < \frac{\epsilon}{\mu(\supp(\phi))}
\end{equation}
for all $y,z \in G$. Set $W' = \bra(C,V)$. Then we have that
\begin{equation}
    x \in C \text{  and  } \alpha \in W' \implies |f(\alpha(x)) - f(x)| < \frac{\epsilon}{\mu(\supp(\phi))} \label{eq:uniform_cont} .
\end{equation}
Thus, for $\alpha \in W \cap W'$, we have
\begin{align*}
    \| f \circ \alpha - f \|_1 &= \int_{G} | f(\alpha(x)) - f(x) | \dif{x} \\
    &= \int_G |f(\alpha(x)) - f(x)| \phi(x) \dif{x} && \text{(since $\phi|_C = 1$)} \\
    &\leq \sup_{x \in C}|f(\alpha(x)) - f(x)| \int_G \phi(x) \dif{x} \\
    &\leq \frac{\epsilon}{\mu(\supp(\phi))} \cdot \mu(\supp(\phi)) && \text{(by \eqref{eq:uniform_cont})} \\
    &\leq \epsilon.
\end{align*}
This shows continuity for the case when $f$ is in $C_c(G)$.
By a standard approximation argument this extends to all $f \in L^1(G)$.
\end{proof}

We are now ready to prove the following important result:

\begin{theorem}\label{thm:strong_cont_s0}
For any locally compact abelian group $G$ the map $S_0(G) \times \Aut(G) \to S_0(G)$, $ (f,\alpha) \mapsto f \circ \,\alpha$
is continuous.
\end{theorem}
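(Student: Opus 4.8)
The plan is to mimic the structure of the proof of \Cref{prop:strong_cont}, reducing successively to simpler cases, but replacing the ``compactly supported'' reduction with a ``bandlimited'' reduction, since \Cref{prop:feichtinger_description} controls the $S_0$-norm of a function by its $L^1$-norm precisely when its Fourier transform is supported in a fixed compact set $\tilde K \subseteq \widehat G$. First I would record, exactly as in \Cref{prop:strong_cont}, the two elementary algebraic identities: using \eqref{eq:feichtinger_aut} (the isometry $\|D_\alpha f\|_{S_0,D_\alpha g} = \|f\|_{S_0,g}$) together with the norm-equivalence constants from \cite[Proposition 4.10]{Ja18}, one gets $\|f\circ\alpha - f_0\circ\alpha\|_{S_0} \le \modular_G(\alpha)^{-1/2} \cdot (\text{const}) \cdot \|f-f_0\|_{S_0}$, so by continuity of $\modular_G$ it suffices to prove $\alpha \mapsto f\circ\alpha$ is continuous for each fixed $f\in S_0(G)$; and the cocycle identity $\|f\circ\alpha - f\circ\alpha_0\|_{S_0} = \modular_G(\alpha_0)^{-1/2}(\text{const})\|f\circ(\alpha\alpha_0^{-1}) - f\|_{S_0}$ reduces this to continuity at $\alpha_0 = \id_G$.

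Next I would treat the model case $f \in L^1(G)$ with $\supp(\widehat f) \subseteq \tilde K$ for a fixed compact $\tilde K \subseteq \widehat G$ with nonempty interior (such $f$ are automatically in $S_0(G)$ by \Cref{prop:feichtinger_description}, and are continuous, being in $A(G)$). Write the dilation in the frequency variable: $(f\circ\alpha)^{\widehat{\ }}$ is, up to the scalar $\modular_G(\alpha)^{-1}$, the pullback of $\widehat f$ by $\widehat{\alpha^{-1}}$, hence is supported in $\widehat{\alpha}(\tilde K)$. Using \Cref{lem:uniform_cs} applied on $\widehat G$ (with the family being the single function $\widehat f$ and the automorphism group $\Aut(\widehat G) \cong \Aut(G)^{\mathrm{op}}$ via \Cref{prop:dual_iso}), there is a neighbourhood of $\id$ in $\Aut(G)$ and a fixed compact $\tilde K' \supseteq \tilde K$ such that $\supp((f\circ\alpha)^{\widehat{\ }}) \subseteq \tilde K'$ for all $\alpha$ in that neighbourhood and also $\supp(\widehat f)\subseteq \tilde K'$. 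Then by \eqref{eq:cs_eq_norm} of \Cref{prop:feichtinger_description} (with $\tilde K'$), for such $\alpha$,
\[
\|f\circ\alpha - f\|_{S_0(G)} \le C\,\|f\circ\alpha - f\|_{1},
\]
since $f\circ\alpha - f$ also has Fourier support in $\tilde K'$. The right-hand side tends to $0$ as $\alpha\to\id_G$ by \Cref{prop:strong_cont}. This settles the bandlimited model case.

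For general $f\in S_0(G)$ I would approximate in $S_0$-norm. Fix $\tilde K \subseteq \widehat G$ compact with nonempty interior. By \Cref{prop:feichtinger_description}, $f = \sum_{n} M_{\omega_n} g_n$ with $g_n \in L^1(G)$, $\supp(\widehat{g_n})\subseteq \tilde K$, and $\sum_n \|g_n\|_1 < \infty$; truncating the series gives $f' = \sum_{n\le N} M_{\omega_n} g_n$ with $\|f - f'\|_{S_0} < \epsilon$ (using the norm $\|\cdot\|_{S_0,\tilde K}$ and equivalence of norms). Each summand $M_{\omega_n} g_n$ has Fourier transform $\widehat{g_n}(\cdot\,\omega_n^{-1})$ supported in the translate $\omega_n\tilde K$, so $f'$ is bandlimited to the fixed compact set $\bigcup_{n\le N}\omega_n\tilde K$; moreover $M_{\omega_n}g_n \in S_0(G)$, so by the bandlimited case already proved (applied to each of the finitely many summands, after noting $D_\alpha M_{\omega_n}g_n$ relates to $M_{\omega_n\circ\alpha}(g_n\circ\alpha)$ up to a scalar), $\alpha\mapsto f'\circ\alpha$ is continuous at $\id_G$; pick a neighbourhood with $\|f'\circ\alpha - f'\|_{S_0} < \epsilon$ there. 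Shrinking further so that $\modular_G(\alpha)^{-1/2}(\text{const}) < 2$ (or similar, using the first identity above), the triangle inequality
\[
\|f\circ\alpha - f\|_{S_0} \le \|f\circ\alpha - f'\circ\alpha\|_{S_0} + \|f'\circ\alpha - f'\|_{S_0} + \|f' - f\|_{S_0} < (\text{small})
\]
finishes the argument. The main obstacle is the bandlimited model case: one must be careful that the ``frequency support'' does not escape to infinity as $\alpha$ varies near $\id_G$, which is exactly what \Cref{lem:uniform_cs} transported to $\widehat G$ via \Cref{prop:dual_iso} provides, and that the constant $C$ in \eqref{eq:cs_eq_norm} can be taken uniform — which it is, because it only depends on the fixed compact set $\tilde K'$, not on the individual function.
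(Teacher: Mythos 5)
Your route is essentially the paper's: the same two reductions (first to continuity in the $\Aut(G)$-variable for fixed $f$, then to continuity at $\id_G$), the same decomposition $f=\sum_n M_{\omega_n}g_n$ from \Cref{prop:feichtinger_description}, and the same mechanism for the bandlimited pieces, namely \Cref{lem:uniform_cs} transported to $\widehat{G}$ via \Cref{prop:dual_iso}, the estimate \eqref{eq:cs_eq_norm}, and the $L^1$-continuity of \Cref{prop:strong_cont}. Your endgame is in fact a mild streamlining: applying the bandlimited estimate to the whole truncation $f'$ lets the $L^1$-continuity absorb simultaneously the dilation of the $g_n$ and the perturbation of the characters $\omega_n$, so you avoid the paper's separate series $\sum_k \| M_{\omega_k\circ\alpha}g_k - M_{\omega_k}g_k\|_{S_0(G)}$ and its appeal to \Cref{prop:feichtinger_properties}~\ref{it:feichtinger_properties3}.

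The one point you gloss over is the factor you call $\modular_G(\alpha)^{-1/2}(\text{const})$: that constant is not independent of $\alpha$. One has $\|h\circ\alpha\|_{S_0(G),g}=\modular_G(\alpha)^{-1}\|h\|_{S_0(G),g\circ\alpha^{-1}}$, and returning to the fixed window $g$ via \cite[Proposition 4.10]{Ja18} costs the factor $\|g\|_2^{-2}\,\|g\circ\alpha^{-1}\|_{S_0(G),g}$, which depends on $\alpha$; dilations are not uniformly bounded on $S_0(G)$ over all of $\Aut(G)$, so local boundedness of this factor near $\id_G$ must be proved. This is needed both for your reduction to fixed $f$ and for the step ``shrink further so that $\modular_G(\alpha)^{-1/2}(\text{const})<2$'' controlling $\|(f-f')\circ\alpha\|_{S_0(G)}$. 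The fix is exactly your own bandlimited argument applied to the window: take $g$ with $\widehat{g}$ compactly supported, use \Cref{lem:uniform_cs} and \eqref{eq:cs_eq_norm} to get $\|g\circ\alpha^{-1}\|_{S_0(G),g}\leq C\,\modular_G(\alpha)\,\|\widehat{g}\|_1$ for $\alpha$ in a suitable neighbourhood of $\id_G$ (this is precisely the paragraph the paper devotes to it). Alternatively, for the tail term one can bypass the operator bound entirely: by modulation-invariance of the $S_0$-norm and \eqref{eq:cs_eq_norm} applied to the uniformly bandlimited functions $g_n\circ\alpha$, one gets $\|(f-f')\circ\alpha\|_{S_0(G)}\leq \sum_{n>N}\|g_n\circ\alpha\|_{S_0(G)}\leq c\,\modular_G(\alpha)\sum_{n>N}\|g_n\|_1$, which is how the paper treats its tails. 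With either patch your argument is complete.
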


{ \allowdisplaybreaks
\begin{proof}
We begin as in the proof of \Cref{prop:strong_cont} and reduce the problem to showing continuity in the $\Aut(G)$-variable. Let $f,f_0,g\in S_0(G)$ and $\alpha,\alpha_0 \in \Aut(G)$. Then using \eqref{eq:feichtinger_aut} and the explicit constant in the upper bound of \eqref{eq:feichtinger_constants}, we have that
\begin{align*}
    \| f \circ \alpha - f_0 \circ \alpha_0 \|_{S_0(G),g} &\leq \| f \circ \alpha - f_0 \circ \alpha \|_{S_0(G),g} + \| f_0 \circ \alpha - f_0 \circ \alpha_0 \|_{S_0(G),g} \\
    &= \modular_G(\alpha)^{-1} \| f - f_0 \|_{S_0(G),g \circ \alpha^{-1}} + \| f_0 \circ \alpha - f_0 \circ \alpha_0 \|_{S_0(G),g} \\
    &\leq \modular_G(\alpha)^{-1} \| g \|_2^{-2} \| g \circ \alpha^{-1} \|_{S_0(G),g} \| f - f_0 \|_{S_0(G),g} \\
    & \hspace{3 cm} + \| f_0 \circ \alpha - f_0 \circ \alpha_0 \|_{S_0(G),g}.
\end{align*}
We need to show that the term $\| g \circ \alpha^{-1} \|_{S_0(G),g}$ above is bounded above when $\alpha$ is sufficiently close to $\id_G$. To this end, we can assume that $\widehat{g}$ is compactly supported. By \Cref{lem:uniform_cs} and \Cref{prop:dual_iso}, we can find a compact subset $\tilde{K}$ of $\widehat{G}$ and a symmetric neighbourhood $W \subseteq \Aut(G)$ of $\id_G$ such that $\supp(\widehat{g} \circ \widehat{\alpha}) \subseteq \tilde{K}$ for all $\alpha \in W$. By \Cref{prop:feichtinger_description}, there exists a constant $C \geq 0$ such that $\| \widehat{g} \circ \widehat{\alpha} \|_{S_0(\widehat{G}),\widehat{g}} \leq C \| \widehat{g} \circ \widehat{\alpha} \|_1$ for all $\alpha \in W$. By Fourier invariance of $S_0(G)$ (see \eqref{eq:feichtinger_fourier}), we then get that
\[ \| g \circ \alpha^{-1} \|_{S_0(G),g} = \| \widehat{g} \circ \widehat{\alpha}^{-1} \|_{S_0(\widehat{G}),\widehat{g}} \leq C \| \widehat{g} \circ \widehat{\alpha}^{-1} \|_1 = C \modular_G(\alpha) \| \widehat{g} \|_1 \]
for $\alpha \in W$. Thus it suffices to show continuity of the map $\alpha \mapsto f \circ \alpha$ for fixed $f \in S_0(G)$. In fact, it suffices to show continuity of this map at $\alpha = \id_G$ since
\[ \| f \circ \alpha - f \circ \alpha_0 \|_{S_0(G),g} = \modular_G(\alpha_0)^{-1} \| f \circ (\alpha \alpha_0^{-1}) - f \|_{S_0(G),g \circ \alpha_0^{-1}} .\]
Therefore, let $f \in S_0(G)$, and let $\epsilon > 0$. Let $\tilde{C}$ be a compact set in $\widehat{G}$ with nonempty interior. By \cref{prop:feichtinger_description}, we can write
\[ f = \sum_{k \in \N} M_{\omega_k} g_k \]
where $\omega_k \in \widehat{G}$ for each $k$, $g_k \in L^1(G)$ with $\supp(\widehat{g_k}) \subseteq \tilde{C}$ for every $k$ and $\sum_{k \in \N} \| g_k \|_1 < \infty$. Using this expression for $f$, we have that
\begin{align}
    \| f \circ \alpha - f \|_{S_0(G)} &= \left\| \sum_{k \in \N} (M_{\omega_k} g_k) \circ \alpha - \sum_{k \in \N} M_{\omega_k} g_k \right\|_{S_0(G)} \notag \\
    &\leq \sum_{k \in \N} \| M_{\omega_k \circ \alpha} (g_k \circ \alpha) - M_{\omega_k} g_k \|_{S_0(G)} \notag \\
    &\leq \sum_{k \in \N} \| M_{\omega_k \circ \alpha} (g_k \circ \alpha) - M_{\omega_k \circ \alpha} g_k \|_{S_0(G)} \notag  \\
    & \hspace{1.5cm} + \sum_{k \in \N} \| M_{\omega_k \circ \alpha} g_k - M_{\omega_k} g_k \|_{S_0(G)} \notag \\
    &= \sum_{k \in \N} \| g_k \circ \alpha - g_k \|_{S_0(G)} \label{eq:term1}  \\
    & \hspace{1.5cm} + \sum_{k \in \N} \| M_{\omega_k \circ \alpha} g_k - M_{\omega_k} g_k \|_{S_0(G)}. \label{eq:term2}
\end{align}
We must show that both of the terms \eqref{eq:term1} and \eqref{eq:term2} can be made sufficiently small by choosing $\alpha$ sufficiently close to $\id_G$.

Combining \Cref{lem:uniform_cs} and \Cref{prop:dual_iso} again, we can find a compact set $\tilde{K} \subseteq \widehat{G}$ and a neighbourhood $W_1 \subseteq \Aut(G)$ of $\id_G$ such that $\widehat{g_k} \circ \widehat{\alpha}$ is supported on $\tilde{K}$ for all $k \in \N$ and $\alpha \in W_1$. Again, by \cref{prop:feichtinger_description}, but using $\tilde{K}$, there exists a constant $c > 0$ such that
\begin{equation}
    \| h \|_{S_0(G)} \leq c \| h \|_1 \label{eq:norm_ineq}
\end{equation}
for all $h \in S_0(G)$ with $\supp(\widehat{h}) \subseteq \tilde{K}$.

Pick $k_0 \in \N$ such that
\begin{equation}
    \sum_{k > k_0} \| g_k \|_1 < \frac{\epsilon}{16c} . \label{eq:k_choice}   
\end{equation}
By continuity of the Braconnier modular function, we can find a neighbourhood $W_2 \subseteq \Aut(G)$ of $\id_G$ for which $\modular_G(\alpha) < 2$ when $\alpha \in W_2$. Combining this with \eqref{eq:k_choice}, we have that
\begin{equation}
    \alpha \in W_1 \cap W_2 \implies \sum_{k > k_0} \| g_k \circ \alpha \|_1 = \modular_G(\alpha) \sum_{k > k_0} \| g_k \|_1 < \frac{\epsilon}{8c} . \label{implication_n1}
\end{equation}
By \cref{prop:strong_cont}, we can find a neighbourhood $W_3 \subseteq \Aut(G)$ of $\id_G$ for which the implication
\begin{equation}
    \alpha \in W_3 \implies \sum_{k=1}^{k_0} \| g_k \circ \alpha_n - g_k \|_1 < \frac{\epsilon}{4c}
\end{equation}
holds. Hence, when $\alpha \in W_1 \cap W_2 \cap W_3$, we have that
\begin{align*}
    \sum_{k \in \N} \| g_k \circ \alpha - g_k \|_{S_0(G)} &\leq \sum_{k \in \N} c \| g_k \circ \alpha - g_k \|_1 && \text{(using \eqref{eq:norm_ineq})} \\
    &= c \sum_{k = 1}^{k_0} \| g_k \circ \alpha - g_k \|_1 + c \sum_{k > k_0} \| g_k \circ \alpha - g_k \|_1 \\
    &\leq c \sum_{k=1}^{k_0} \frac{\epsilon}{4c} + c \sum_{k > k_0} (\| g_k \circ \alpha \|_1 + \| g_k \|_1) \\
    &\leq \frac{\epsilon}{4} + \frac{\epsilon}{8} + \frac{\epsilon}{16} < \frac{\epsilon}{2} .
\end{align*}
This gives us an estimate on the term \eqref{eq:term1}.

Now for every $h \in S_0(G)$, the map $\widehat{G} \to S_0(G)$, $\omega \mapsto M_{\omega} h$ is continuous by \Cref{prop:feichtinger_properties} $\ref{it:feichtinger_properties3}$. Thus we obtain a neighbourhood $W_4 \subseteq \Aut(G)$ of $\id_G$ such that
\begin{equation}
    \alpha \in W_4 \implies \sum_{k=1}^{k_0} \|  M_{\omega_k \circ \alpha}g_k- M_{\omega_k}g_k \|_{S_0(G)} < \frac{\epsilon}{4} . \label{eq:implies_n3}
\end{equation}
Moreover, we have that
\begin{align*}
    \sum_{k > k_0} \| M_{\omega_k \circ \alpha}g_k - M_{\omega_k} g_k \|_{S_0(G)} &\leq \sum_{k > k_0} ( \|  M_{\omega_k \circ \alpha}g_k \|_{S_0(G)} + \| M_{\omega_k} g_k \|_{S_0(G)} ) \\
    &= 2 \sum_{k > k_0} \| g_k \|_{S_0(G)} \\
    &\leq 2c \sum_{k > k_0} \| g_k \|_1  && \text{(using \eqref{eq:norm_ineq})} \\
    &< \frac{\epsilon}{8}.  && \text{(using \eqref{eq:k_choice})}
\end{align*}
Combining the above estimate with \eqref{eq:implies_n3}, we obtain the following estimate for \eqref{eq:term2}:
\begin{equation}
    \alpha \in W_4 \implies \sum_{k \in \N} \| M_{\omega_k \circ \alpha} g_k - M_{\omega_k} g_k \|_{S_0(G)} < \frac{\epsilon}{2}. \label{eq:final_impl}
\end{equation}
Thus, combining our estimates of terms \eqref{eq:term1} and \eqref{eq:term2}, we see that when $\alpha \in W_1 \cap W_2 \cap W_3 \cap W_4$, we obtain
\[ \| f \circ \alpha - f \|_{S_0(G)} < \frac{\epsilon}{2} + \frac{\epsilon}{2} = \epsilon .\]
This finishes the proof.
\end{proof}
}

\subsection{Deformations of Gabor frames}

This subsection ends with a proof of the linear deformation result for uniform Gabor frames on locally compact abelian groups. Our approach here is similar to \cite{FeKa04}. The following lemma is a generalization of \cite[Lemma 3.5]{FeKa04}, but in our proof we avoid Wiener amalgam spaces completely, instead applying the continuity of maps between the Feichtinger algebras of $G$, $\widehat{G}$ and $\Delta$.

\begin{lemma}\label{lem:cont_comp}
For any locally compact abelian group $G$ and lattice $\Delta$ in $\tfp{G}$ the map 
\[ S_0(G) \times S_0(G) \times \Aut(\tfp{G}) \to \ell^1(\Delta), \ \ (g,h,\alpha) \mapsto (\langle g, \pi(\alpha(z)) h \rangle )_{z \in \Delta} \]
is continuous.
\end{lemma}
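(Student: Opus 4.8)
The plan is to factor the map through the operations on the Feichtinger algebra whose continuity has already been established, so that the composite is continuous as a composition of continuous maps. First I would fix a nonzero $g_0 \in S_0(\tfp G)$ and recall from \Cref{prop:feichtinger_properties}\ref{it:feichtinger_properties1} that $(g,h) \mapsto V_h g$ is continuous $S_0(G) \times S_0(G) \to S_0(\tfp G)$. Next, by \Cref{thm:strong_cont_s0} applied to the group $\tfp G$, the dilation map $S_0(\tfp G) \times \Aut(\tfp G) \to S_0(\tfp G)$, $(F,\alpha) \mapsto F \circ \alpha$, is continuous. The key observation is that, since $V_h g(z) = \langle g, \pi(z) h\rangle$, we have $\langle g, \pi(\alpha(z)) h \rangle = (V_h g)(\alpha(z)) = \big((V_h g) \circ \alpha\big)(z)$. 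Thus the map in question is the composition
\[
    (g,h,\alpha) \;\longmapsto\; \big(V_h g,\ \alpha\big) \;\longmapsto\; (V_h g)\circ\alpha \;\longmapsto\; \big((V_h g)\circ\alpha\big)\big|_{\Delta},
\]
landing in sequences indexed by $\Delta$.

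The remaining point is the final arrow: restriction of a function in $S_0(\tfp G)$ to the lattice $\Delta$. Here I would invoke \Cref{prop:feichtinger_properties}\ref{it:feichtinger_properties2} with $H = \Delta$ (a closed subgroup of $\tfp G$), which says that $F \mapsto F|_\Delta$ is a well-defined continuous map $S_0(\tfp G) \to S_0(\Delta)$. Since $\Delta$ is discrete, $S_0(\Delta) = \ell^1(\Delta)$ with equivalent norms (the Feichtinger algebra of a discrete group is $\ell^1$; this follows, e.g., from \Cref{prop:feichtinger_description} applied to a discrete group, where $\widehat\Delta$ is compact so one may take $\tilde K = \widehat\Delta$ and the representation reduces to $f = g_1 \in \ell^1(\Delta)$). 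Hence restriction gives a continuous map $S_0(\tfp G) \to \ell^1(\Delta)$. Composing the three continuous maps yields the claim.

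The main obstacle, and the only place requiring a little care, is bookkeeping around the group on which each result is applied: \Cref{thm:strong_cont_s0} must be read with $\tfp G$ in the role of $G$, and \Cref{prop:feichtinger_properties}\ref{it:feichtinger_properties2} with the pair $(\tfp G, \Delta)$ in the role of $(G,H)$. One should also double-check the identification $S_0(\Delta) \cong \ell^1(\Delta)$ and that the resulting norm equivalence is uniform (it is, being a fixed equivalence of Banach space norms independent of $F$), so that continuity into $S_0(\Delta)$ transfers to continuity into $\ell^1(\Delta)$. Everything else is the formal fact that a finite composition of continuous maps between topological spaces is continuous; no estimates beyond those already packaged in the cited results are needed.
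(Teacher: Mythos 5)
Your proposal is correct and follows essentially the same route as the paper: the same three-fold factorization through $(g,h)\mapsto V_h g$ (\Cref{prop:feichtinger_properties}), dilation continuity on $S_0(\tfp{G})$ (\Cref{thm:strong_cont_s0}), and restriction to the lattice with the identification $S_0(\Delta)=\ell^1(\Delta)$. The extra bookkeeping you flag (which group each result is applied to, and the norm equivalence on $S_0(\Delta)$) is handled the same way in the paper, which simply cites the standard fact that the Feichtinger algebra of a discrete group is $\ell^1$.
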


\begin{proof}
We can write the map as a composition of three maps: The first is the map $S_0(G) \times S_0(G) \times \Aut(\tfp{G}) \to S_0(\tfp{G}) \times \Aut(\tfp{G})$ given by
\[ (g,h,\alpha) \mapsto (V_{h}g,\alpha) .\]
This is continuous by \Cref{prop:feichtinger_properties} \ref{it:feichtinger_properties1}. The second is the map $S_0(\tfp{G}) \times \Aut(\tfp{G}) \to S_0(\tfp{G})$ given by
\[ (F,\alpha) \mapsto F \circ \alpha .\]
The continuity of this map is precisely \cref{thm:strong_cont_s0}, with $\tfp{G}$ in place of $G$. The final map is $S_0(\tfp{G}) \to = \ell^1(\Delta)$ given by
\[ F \mapsto F|_{\Delta} \]
which is also known to be continuous by \Cref{prop:feichtinger_properties} \ref{it:feichtinger_properties2}.
\end{proof}

The following lemma is a generalization of \cite[Theorem 3.6(i)]{FeKa04}:

\begin{lemma}\label{lem:cont_at_frame}
The map $S_0(G) \times S_0(G) \times \Aut(\tfp{G}) \to \B(L^2(G))$ given by
\[ (g,h,\alpha) \mapsto S_{g,h,\alpha(\Delta)} \]
is continuous at $(g_0,h_0,\alpha_0)$ whenever $\mathcal{G}(g_0,\alpha_0(\Delta))$ and $\mathcal{G}(h_0,\alpha_0(\Delta))$ are dual frames for $L^2(G)$.
\end{lemma}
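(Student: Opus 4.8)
The plan is to use the Janssen representation of the frame-type operator from \Cref{prop:duality_theory}\ref{it:duality_theory1} together with the continuity supplied by \Cref{lem:cont_comp}, and to exploit the dual-frame hypothesis to eliminate exactly those terms where the time-frequency shifts fail to vary nicely. First I would record how the adjoint lattice transforms under an automorphism of $\tfp{G}$. By the commutation relation \eqref{eq:commutation_relation}, two shifts $\pi(x,\omega)$ and $\pi(y,\tau)$ commute precisely when $\tau(x)\overline{\omega(y)} = 1$, so $\Delta^{\circ}$ is the annihilator of $\Delta$ with respect to the nondegenerate bicharacter $\beta\big((x,\omega),(y,\tau)\big) = \tau(x)\overline{\omega(y)}$, which induces a topological isomorphism $J \colon \tfp{G} \to \widehat{\tfp{G}}$. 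Combining this with \Cref{prop:dual_iso}, one checks that $\alpha(\Delta)^{\circ} = \alpha^{\circ}(\Delta^{\circ})$ where $\alpha^{\circ} \coloneqq J^{-1} \circ \widehat{\alpha}^{\,-1} \circ J \in \Aut(\tfp{G})$, that $(\id_{\tfp{G}})^{\circ} = \id_{\tfp{G}}$, and — crucially — that $\alpha \mapsto \alpha^{\circ}$ is continuous, being a composition of the (continuous) dualization map of \Cref{prop:dual_iso}, inversion in the topological group $\Aut(\widehat{\tfp{G}})$, and conjugation by the fixed isomorphism $J$.

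Next, using \eqref{eq:janssen_rep}, the substitution $w = \alpha^{\circ}(z)$, and $\vol(\alpha(\Delta)) = \modular_{\tfp{G}}(\alpha)\vol(\Delta)$ from \Cref{prop:lattice_aut}, I would write
\[ S_{g,h,\alpha(\Delta)} = \frac{1}{\modular_{\tfp{G}}(\alpha)\vol(\Delta)} \sum_{z \in \Delta^{\circ}} \langle \pi(\alpha^{\circ}(z)) h, g \rangle\, \pi(\alpha^{\circ}(z)) = \frac{\langle h, g \rangle}{\modular_{\tfp{G}}(\alpha)\vol(\Delta)}\, I \;+\; R(g,h,\alpha), \]
where $R(g,h,\alpha)$ collects the terms with $z \neq 0$; the $z = 0$ term is a scalar multiple of $I$ because $\pi(\alpha^{\circ}(0)) = \pi(0) = I$ for \emph{every} $\alpha$. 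Since each $\pi(w)$ is unitary, the triangle inequality gives $\|R(g,h,\alpha)\|_{\B(L^2(G))} \leq \frac{1}{\modular_{\tfp{G}}(\alpha)\vol(\Delta)} \sum_{z \in \Delta^{\circ} \setminus \{0\}} |\langle g, \pi(\alpha^{\circ}(z)) h \rangle|$. By \Cref{lem:cont_comp} applied to the lattice $\Delta^{\circ}$, precomposed with the continuous map $\alpha \mapsto \alpha^{\circ}$, the sequence $\big(\langle g, \pi(\alpha^{\circ}(z)) h \rangle\big)_{z \in \Delta^{\circ}}$ depends continuously on $(g,h,\alpha)$ in $\ell^{1}(\Delta^{\circ})$; hence the $\ell^1$-norm of its restriction to $\Delta^{\circ} \setminus \{0\}$ also depends continuously on $(g,h,\alpha)$.

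Now fix $(g_0,h_0,\alpha_0)$ with $\mathcal{G}(g_0,\alpha_0(\Delta))$ and $\mathcal{G}(h_0,\alpha_0(\Delta))$ dual frames, so $S_{g_0,h_0,\alpha_0(\Delta)} = I$. By the Wexler--Raz relation \eqref{eq:wexler_raz} (that is, \Cref{prop:duality_theory}\ref{it:duality_theory2}) applied to the lattice $\alpha_0(\Delta)$, whose adjoint is $\alpha_0^{\circ}(\Delta^{\circ})$, we get $\langle g_0, \pi(\alpha_0^{\circ}(z)) h_0 \rangle = \vol(\alpha_0(\Delta))^{-1}\delta_{z,0}$ for all $z \in \Delta^{\circ}$, so the tail sequence indexed by $\Delta^{\circ}\setminus\{0\}$ vanishes at $(g_0,h_0,\alpha_0)$. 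Since $\modular_{\tfp{G}}$ is continuous and positive, $\modular_{\tfp{G}}(\alpha)\vol(\Delta)$ stays bounded away from $0$ near $\alpha_0$, whence $\|R(g,h,\alpha)\|_{\B(L^2(G))} \to 0$; and $\frac{\langle h,g\rangle}{\modular_{\tfp{G}}(\alpha)\vol(\Delta)} \to \frac{\langle h_0,g_0\rangle}{\modular_{\tfp{G}}(\alpha_0)\vol(\Delta)}$ as $(g,h,\alpha) \to (g_0,h_0,\alpha_0)$, using that $S_0(G) \hookrightarrow L^2(G)$ continuously. Adding the two pieces gives $S_{g,h,\alpha(\Delta)} \to S_{g_0,h_0,\alpha_0(\Delta)} = I$, which is exactly continuity at $(g_0,h_0,\alpha_0)$.

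The main obstacle — and the reason continuity is asserted only \emph{at} dual-frame points rather than everywhere — is that $w \mapsto \pi(w)$ is \emph{not} continuous from $\tfp{G}$ into $\B(L^2(G))$ with the operator norm (already $\|M_{\omega} - I\|_{\B(L^2(\R))} = 2$ for every $\omega \neq 1$), so one cannot pass a limit through the individual terms of the Janssen series. The resolution is that the dual-frame hypothesis, via Wexler--Raz, forces the limiting coefficient sequence to be supported at $z = 0$, and the $z = 0$ term is $\pi(0) = I$ regardless of $\alpha$; thus the only terms surviving in the limit are the ones on which $\pi$ behaves well, and the $\ell^{1}$-continuity from \Cref{lem:cont_comp} controls everything else. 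The one genuinely new technical ingredient to pin down carefully is the identity $\alpha(\Delta)^{\circ} = \alpha^{\circ}(\Delta^{\circ})$ with $\alpha \mapsto \alpha^{\circ}$ continuous, which is what lets \Cref{lem:cont_comp} be applied with the single fixed lattice $\Delta^{\circ}$.
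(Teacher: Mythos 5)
Your proof is correct and rests on the same pillars as the paper's: the Janssen representation \eqref{eq:janssen_rep}, the Wexler--Raz relations, the $\ell^1$-continuity of \Cref{lem:cont_comp}, and the continuity of the Braconnier modular function, with the identity-operator term isolated precisely so that the failure of $w \mapsto \pi(w)$ to be norm-continuous never enters. The bookkeeping differs in two respects. First, where the paper compares $S_{g,h,\alpha(\Delta)}$ with the auxiliary operator $T_{\alpha} = \tfrac{\modular(\alpha_0)}{\modular(\alpha)}I$ built from the base-point coefficients and then compares $T_{\alpha}$ with $I$, you split off the $z=0$ term of the Janssen series directly and let its coefficient $\langle h,g\rangle$ vary; these decompositions are interchangeable. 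Second, and more substantively, you prove and use the identity $(\alpha(\Delta))^{\circ} = \alpha^{\circ}(\Delta^{\circ})$ together with the continuity of $\alpha \mapsto \alpha^{\circ}$, and accordingly apply \Cref{lem:cont_comp} to the fixed lattice $\Delta^{\circ}$; the paper's write-up instead sums over $z \in \Delta$ with shifts $\pi(\alpha(z))$, which tacitly identifies $\Delta$ with the adjoint lattice of $\alpha(\Delta)$ and suppresses exactly this transformation, so your version makes explicit a point the printed argument glosses over. One small caveat: you quote \eqref{eq:wexler_raz} with the constant $\vol(\alpha_0(\Delta))^{-1}$ as printed, which is not consistent with the normalization of \eqref{eq:janssen_rep} (with that Janssen normalization the biorthogonality constant should be $\vol(\alpha_0(\Delta))$, and the paper's own proof of the lemma uses the latter); since your argument only needs that the coefficients vanish off $z=0$, nothing in your proof breaks, but the constant should be stated consistently.
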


\begin{proof}
Let $g_0,h_0 \in S_0(G)$ and $\alpha_0 \in \Aut(G \times \widehat{G})$ be such that $\mathcal{G}(g_0,\alpha_0(\Delta))$ and $\mathcal{G}(h_0,\alpha_0(\Delta))$ are dual frames for $L^2(G)$. Let $\epsilon > 0$. For $g,h \in S_0(G)$ and $\alpha \in \Aut(\tfp{G})$, define $c_{g,h,\alpha} \in \ell^1(\Delta^{\circ})$ by
\begin{align}
    c_{g,h,\alpha}(z) = \langle h, \pi(\alpha(z)) g \rangle , \quad z \in \Delta^{\circ} .
\end{align}
By the Wexler--Raz relations (\Cref{prop:duality_theory} \ref{it:duality_theory2}) and \eqref{eq:dual_lattice_automorphism} we have that
\begin{equation}
    c_{g_0,h_0,\alpha_0^{\circ}}(z) = \vol(\alpha_0(\Delta)) \delta_{z,0} = \modular(\alpha_0)\vol(\Delta) \delta_{z,0}
\end{equation}
for all $z \in \Delta^{\circ}$. For $\alpha \in \Aut(\tfp{G})$, set
\[ T_{\alpha} = \frac{1}{\vol(\alpha(\Delta))} \sum_{z \in \Delta^{\circ}} c_{g_0,h_0,\alpha_0^{\circ}}(z) \pi(\alpha^{\circ}(z)) = \frac{\modular(\alpha_0)}{\modular(\alpha)}I . \]
Using the Janssen representation \Cref{prop:duality_theory} (i) we get the following estimate:
\begin{align}
    \| S_{g,h,\alpha(\Delta)} - T_{\alpha} \| &= \left\| \frac{1}{\vol(\alpha(\Delta))} \sum_{z \in \Delta^{\circ}} (c_{g,h,\alpha^{\circ}}(z) - c_{g_0,h_0,\alpha_0^{\circ}}(z)) \pi(\alpha^{\circ}(z)) \right\| \notag \\
    &\leq \frac{1}{\modular_G(\alpha)\vol(\Delta)} \sum_{z \in \Delta^{\circ}} | c_{g,h,\alpha^{\circ}}(z) - c_{g_0,h_0,\alpha_0^{\circ}}(z) | \notag  \label{eq:lemlim1}
\end{align}
Thus, by \Cref{lem:cont_comp} and the continuity of the Braconnier modular function as well as the continuity of the assignment $\alpha \mapsto \alpha^{\circ}$, we can find a neighbourhood $U \times V \times W \subseteq S_0(G) \times S_0(G) \times \Aut(\tfp{G})$ of $(g_0,h_0,\alpha_0)$ such that $\| S_{g,h,\alpha(\Delta)} - T_{\alpha} \| < \epsilon / 2$ for $(g,h,\alpha) \in U \times V \times W$.

Next, since $\mathcal{G}(g_0,\alpha_0(\Delta))$ and $\mathcal{G}(h_0,\alpha_0(\Delta))$ are dual frames, it follows that $S_{g_0,h_0,\alpha_0(\Delta)} = I$. Thus, again by continuity of the Braconnier modular function, we can find a neighbourhood $W' \subseteq \Aut(\tfp{G})$ of $\alpha_0$ such that
\begin{equation}
    \| T_{\alpha} - S_{g_0,h_0,\alpha_0(\Delta)} \| = \left\| \frac{\modular(\alpha_0)}{\modular(\alpha)} I - I \right\| < \frac{\epsilon}{2} \label{eq:lemlim2}
\end{equation}
when $\alpha \in W'$. Combining our two observations, we have that
\[ \| S_{g,h,\alpha(\Delta)} - S_{g_0,h_0,\alpha_0(\Delta)} \| \leq \| S_{g,h,\alpha(\Delta)} - T_{\alpha} \| + \| T_{\alpha} -  S_{g_0,h_0,\alpha_0(\Delta)} \| < \frac{\epsilon}{2} + \frac{\epsilon}{2} = \epsilon\]
when $(g,h,\alpha) \in U \times V \times (W \cap W')$. This finishes the proof.
\end{proof}

We are now ready to prove one of the main results of the paper. With all the previous lemmas and propositions taken care of, the proof below is exactly the same as the proof of \cite[Theorem 3.8]{FeKa04}. The deep fact that a Gabor frame $\mathcal{G}(g,\Delta)$ with $g \in S_0(G)$ admits a dual frame with window also in $S_0(G)$ was proved for $G= \R^n$ in \cite{GrLe04} and generalized to locally compact abelian groups in \cite{Au21}.

\begin{theorem}\label{thm:pert}
Let $G$ be a locally compact abelian group, and let $\Delta$ be a lattice in $\tfp{G}$. Then the set
\[ \{ (g,\alpha) \in S_0(G) \times \Aut(\tfp{G}) : \text{$\mathcal{G}(g,\alpha(\Delta))$ is a Gabor frame for $L^2(G)$} \} \]
is open in $S_0(G) \times \Aut(\tfp{G})$.
\end{theorem}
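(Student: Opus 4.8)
The plan is to follow the template of \cite[Theorem 3.8]{FeKa04}, combining the duality theory of \Cref{prop:duality_theory} with the continuity statement in \Cref{lem:cont_at_frame} and the openness of the invertible operators in $\B(L^2(G))$. Fix a point $(g_0,\alpha_0)$ in the set, so that $\mathcal{G}(g_0,\alpha_0(\Delta))$ is a Gabor frame for $L^2(G)$. By \Cref{prop:duality_theory} \ref{it:duality_theory2} the canonical dual window $h_0 = S_{g_0,\alpha_0(\Delta)}^{-1} g_0$ lies in $S_0(G)$ and satisfies $S_{g_0,h_0,\alpha_0(\Delta)} = I$; in particular $\mathcal{G}(g_0,\alpha_0(\Delta))$ and $\mathcal{G}(h_0,\alpha_0(\Delta))$ are dual frames for $L^2(G)$.

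Next I would invoke \Cref{lem:cont_at_frame}: since $\mathcal{G}(g_0,\alpha_0(\Delta))$ and $\mathcal{G}(h_0,\alpha_0(\Delta))$ are dual frames, the map $(g,h,\alpha)\mapsto S_{g,h,\alpha(\Delta)}$ is continuous at $(g_0,h_0,\alpha_0)$. Freezing the middle argument at $h_0$, the resulting map $S_0(G)\times\Aut(\tfp{G})\to\B(L^2(G))$, $(g,\alpha)\mapsto S_{g,h_0,\alpha(\Delta)}$, is continuous at $(g_0,\alpha_0)$ and takes the value $I$ there. Because the invertible elements of $\B(L^2(G))$ form an open set containing $I$, there is a neighbourhood $U\times W$ of $(g_0,\alpha_0)$ in $S_0(G)\times\Aut(\tfp{G})$ on which $S_{g,h_0,\alpha(\Delta)}$ is invertible.

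It remains to deduce that $\mathcal{G}(g,\alpha(\Delta))$ is a frame for every $(g,\alpha)\in U\times W$, and this is the step I expect to need the most care. Writing $C_{g,\alpha(\Delta)}\colon f\mapsto(\langle f,\pi(\alpha(z))g\rangle)_{z\in\Delta}$ for the analysis operator, one has $S_{g,h_0,\alpha(\Delta)} = C_{h_0,\alpha(\Delta)}^{*}\,C_{g,\alpha(\Delta)}$. Both analysis operators are bounded because $g,h_0\in S_0(G)$ (equivalently, the associated frame-type operators are bounded by \Cref{prop:duality_theory} \ref{it:duality_theory1}), so the invertibility of $C_{h_0,\alpha(\Delta)}^{*}\,C_{g,\alpha(\Delta)}$ forces $C_{g,\alpha(\Delta)}$ to be bounded below: $\|f\|_2 \le \|S_{g,h_0,\alpha(\Delta)}^{-1}\|\,\|C_{h_0,\alpha(\Delta)}\|\,\|C_{g,\alpha(\Delta)}f\|_2$ for all $f\in L^2(G)$. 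This is precisely the lower frame inequality for $\mathcal{G}(g,\alpha(\Delta))$, while the upper one holds again because $g\in S_0(G)$; hence $(g,\alpha)$ lies in the set. Therefore every point of the set has a neighbourhood contained in it, so the set is open.

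I would flag one pitfall that the argument is designed to avoid: one might try to obtain a dual atom in $S_0(G)$ directly as $S_{g,h_0,\alpha(\Delta)}^{-1}h_0$ and invoke \Cref{prop:duality_theory} \ref{it:duality_theory2}, but a priori $S_{g,h_0,\alpha(\Delta)}^{-1}$ is only known to act on $L^2(G)$, and membership of that vector in $S_0(G)$ is only available \emph{after} one knows $\mathcal{G}(g,\alpha(\Delta))$ is a frame. The bounded-below argument sidesteps this circularity by producing the frame inequalities directly.
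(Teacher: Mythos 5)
Your proposal is correct, and its first two steps coincide with the paper's proof: take a dual window $h_0 \in S_0(G)$ for $\mathcal{G}(g_0,\alpha_0(\Delta))$ via \Cref{prop:duality_theory}, then combine \Cref{lem:cont_at_frame} with the openness of the invertibles in $\B(L^2(G))$ to get a neighbourhood of $(g_0,\alpha_0)$ on which $S_{g,h_0,\alpha(\Delta)}$ is invertible. Where you diverge is in the concluding step. The paper sets $h = S_{g,h_0,\alpha(\Delta)}^{-1}h_0$ and uses the fact that $S_{g,h_0,\alpha(\Delta)}$ commutes with the time-frequency shifts $\pi(\alpha(z))$, $z\in\Delta$, to compute $S_{g,h_0,\alpha(\Delta)}S_{g,h,\alpha(\Delta)} = S_{g,h_0,\alpha(\Delta)}$ and hence $S_{g,h,\alpha(\Delta)} = I$, from which the frame property follows; this has the extra payoff of exhibiting a dual window for the deformed system. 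You instead use the factorization $S_{g,h_0,\alpha(\Delta)} = C_{h_0,\alpha(\Delta)}^{*}C_{g,\alpha(\Delta)}$ and deduce directly that the analysis operator $C_{g,\alpha(\Delta)}$ is bounded below, giving the lower frame inequality with the explicit bound $\bigl(\|S_{g,h_0,\alpha(\Delta)}^{-1}\|\,\|C_{h_0,\alpha(\Delta)}\|\bigr)^{-2}$, the upper bound coming from $g \in S_0(G)$. Your route is slightly more elementary and self-contained: it avoids both the commutation computation and any discussion of the constructed dual (in particular the issue you flag, namely that $S_{g,h_0,\alpha(\Delta)}^{-1}h_0$ is a priori only in $L^2(G)$, and that passing from $S_{g,h,\alpha(\Delta)}=I$ to the frame property tacitly uses a Bessel-type property of the new window, which the paper leaves implicit). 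Either argument completes the proof.
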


\begin{proof}
Let $g_0 \in S_0(G)$. Suppose $\alpha_0 \in \Aut(\tfp{G})$ is such that $\mathcal{G}(g_0,\alpha_0(\Delta))$ is a Gabor frame. By \cite[Theorem B]{Au21} we can find a dual frame $\mathcal{G}(h_0,\alpha_0(\Delta))$ with $h_0 \in S_0(G)$. Then $S_{g_0,h_0,\alpha_0(\Delta)}=I$. There exists an open neighbourhood $V$ of $I$ in $\B(L^2(G))$ containing only invertible elements. By \cref{lem:cont_at_frame}, there exists an open set $U$ of $S_0(G) \times S_0(G) \times \Aut(\tfp{G})$ containing $(g_0,h_0,\alpha_0)$ such that $S_{g,h,\alpha(\Delta)} \in V$ for every $(g,h,\alpha) \in U$. Consequently, the set
\[ U' = \{ (g, \alpha) \in S_0(G) \times \Aut(\tfp{G}) : (g,h_0,\alpha) \in U \} \]
is open in $S_0(G) \times \Aut(\tfp{G})$. But then $S_{g,h_0,\alpha(\Delta)}$ is invertible whenever $(g,\alpha) \in U'$.

Now fix $(g,\alpha) \in U'$ and set $h = S_{g,h_0,\alpha(\Delta)}^{-1} h_0$. Since $S_{g,h_0,\alpha(\Delta)}$ commutes with $\pi(\alpha(z))$ for every $z \in \Delta$, we have that
\begin{align*}
    S_{g,h_0,\alpha(\Delta)} S_{g,h,\alpha(\Delta)}f &= S_{g,h_0,\alpha(\Delta)} \sum_{z \in \Delta} \langle f , \pi(\alpha(z)) h \rangle \pi(\alpha(z)) h \\
    &= \sum_{z \in \Delta} \langle \langle f, \pi(\alpha(z)) h \rangle \pi(\alpha(z)) S_{g,h_0,\alpha(\Delta)} h \\
    &= \sum_{z \in \Delta} \langle \langle f, \pi(\alpha(z)) h \rangle \pi(\alpha(z)) h_0 \\
    &= S_{g,h_0,\alpha(\Delta)} f.
\end{align*}

Hence $S_{g,h_0,\alpha(\Delta)} S_{g,h,\alpha(\Delta)} = S_{g,h_0,\alpha(\Delta)}$. Since $S_{g,h_0,\alpha(\Delta)}$ is invertible, we conclude that $S_{g,h,\alpha(\Delta)} = I$. Hence $\mathcal{G}(g,\alpha(\Delta))$ is a Gabor frame, and the proof is finished.
\end{proof}

\section{The Balian--Low theorem}\label{sec:blt}

In this section, we classify the locally compact abelian groups in which a Balian--Low theorem for the Feichtinger algebra holds.

From what we have done so far, it follows easily that the Balian--Low theorem holds in groups with noncompact identity component:

\begin{theorem}\label{thm:balian_low}
If $G$ is a locally compact abelian group with noncompact identity component, then for any lattice $\Delta$ in $\tfp{G}$ with $\vol(\Delta)=1$ and any function $g\in S_0(G)$ the Gabor system  $\mathcal{G}(g,\Delta)$ cannot be a frame for $L^2(G)$.
\end{theorem}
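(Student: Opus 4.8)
The plan is to mimic exactly the proof of \Cref{thm:bltr} (the Euclidean Balian--Low theorem), replacing the determinant function with the Braconnier modular function and the assumption $|\det A|=1$ with $\vol(\Delta)=1$. Suppose, for contradiction, that $\mathcal{G}(g,\Delta)$ is a Gabor frame for some $g \in S_0(G)$ and some lattice $\Delta$ in $\tfp{G}$ with $\vol(\Delta)=1$. By \Cref{thm:pert} (applied with $\alpha_0 = \id_G$), the set of $\alpha \in \Aut(\tfp{G})$ for which $\mathcal{G}(g,\alpha(\Delta))$ is still a Gabor frame is an open neighbourhood $W$ of $\id_G$ in $\Aut(\tfp{G})$.

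Next I would invoke the hypothesis that $G$ has noncompact identity component. By \Cref{thm:idcomp_equivalences}, condition \ref{it:idcomp3} is equivalent to condition \ref{it:idcomp6}, namely that the Braconnier modular function $\modular_{\tfp{G}} \colon \Aut(\tfp{G}) \to (0,\infty)$ is an open map. Hence $\modular_{\tfp{G}}(W)$ is an open subset of $(0,\infty)$ containing $\modular_{\tfp{G}}(\id_G) = 1$. In particular, there exists $\alpha \in W$ with $\modular_{\tfp{G}}(\alpha) > 1$ (or, if one prefers, $<1$; either direction yields a contradiction with the appropriate inequality).

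Now apply \Cref{prop:lattice_aut} with $G$ replaced by $\tfp{G}$: since $\alpha \in \Aut(\tfp{G})$ and $\Delta$ is a lattice in $\tfp{G}$,
\[
\vol(\alpha(\Delta)) = \modular_{\tfp{G}}(\alpha)\,\vol(\Delta) = \modular_{\tfp{G}}(\alpha) > 1.
\]
But $\alpha \in W$ means $\mathcal{G}(g,\alpha(\Delta))$ is a Gabor frame, so the density theorem (\Cref{prop:density_thm}, applied to the lattice $\alpha(\Delta)$ in $\tfp{G}$) forces $\vol(\alpha(\Delta)) \leq 1$. This contradiction shows that no such frame $\mathcal{G}(g,\Delta)$ can exist, completing the proof.

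The argument is essentially a matter of assembling already-proved ingredients, so there is no serious obstacle here; the only subtlety — which has already been dealt with upstream — is the equivalence \ref{it:idcomp3} $\iff$ \ref{it:idcomp6} in \Cref{thm:idcomp_equivalences}, which is exactly what guarantees the openness of $\modular_{\tfp{G}}$ and thus the existence of a deforming automorphism that strictly increases the volume. One should be mildly careful that \Cref{thm:pert} is phrased as openness of the set of pairs $(g,\alpha)$, so I would extract from it the slice at the fixed window $g$ to get the neighbourhood $W$ of $\id_G$; and that \Cref{prop:lattice_aut}, though stated for a general locally compact group, applies to the abelian group $\tfp{G}$ without change.
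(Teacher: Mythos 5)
Your proposal is correct and follows essentially the same argument as the paper's proof: deform the lattice via \Cref{thm:pert}, use the equivalence \ref{it:idcomp3} $\iff$ \ref{it:idcomp6} of \Cref{thm:idcomp_equivalences} to get openness of $\modular_{\tfp{G}}$ and hence an automorphism increasing the volume (via \Cref{prop:lattice_aut}), and conclude by contradiction with the density theorem (\Cref{prop:density_thm}). The remarks about slicing \Cref{thm:pert} at the fixed window $g$ and applying \Cref{prop:lattice_aut} to $\tfp{G}$ are exactly the (harmless) points one must note, so nothing is missing.
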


\begin{proof}
Suppose for a contradiction that $\mathcal{G}(g,\Delta)$ is a frame for $L^2(G)$, with $g \in S_0(G)$. By \cref{thm:pert}, there exists an open set $U \subseteq \Aut(\tfp{G})$ such that $\mathcal{G}(g,\alpha(\Delta))$ is a frame for $L^2(G)$ whenever $\alpha \in U$. Moreover, $I \in U$. By \cref{thm:idcomp_equivalences}, the Braconnier modular function of $\tfp{G}$ is open. Thus, the set $V = \modular_{\tfp{G}}(U)$ is an open neighbourhood of $1$ in $(0,\infty)$, so there exists an $\alpha \in \Aut(G)$ with $\modular_G(\alpha) > 1$ such that $\mathcal{G}(g,\alpha(\Delta))$ is a Gabor frame. But then, using \Cref{prop:lattice_aut}, we get
\[ \vol(\alpha(\Delta)) = \modular_{\tfp{G}}(\alpha)\vol(\Delta) >1  .\]
This violates the density theorem (\cref{prop:density_thm}), hence we have a contradiction.
\end{proof}

Our next goal is to show that if $G$ has compact identity component, then the Balian--Low theorem fails in a strong sense. To prepare, we need some lemmas.

\begin{lemma}\label{lem:onb_prod}
For $i=1,2$, let $G_i$ be a locally compact abelian group, let $\Delta_i$ be a lattice in $\tfp{G_i}$ and let $g_i \in L^2(G)$ be such that $\mathcal{G}(g_i, \Delta_i)$ is an orthonormal basis for $L^2(G_i)$. Set $G = G_1 \times G_2$. Then, upon identifying $\tfp{G}$ with $\tfp{G_1} \times \tfp{G_2}$, we have that $\mathcal{G}(g_1 \otimes g_2, \Delta_1 \times \Delta_2)$ is an orthonormal basis for $L^2(G)$. Moreover, if $g_i \in S_0(G_i)$ for $i=1,2$, then $g_1 \otimes g_2 \in S_0(G)$.
\end{lemma}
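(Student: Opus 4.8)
The plan is to push everything through the tensor‑product decomposition of $L^2$. First I would record the standard identifications: $L^2(G_1\times G_2)$ is the Hilbert space tensor product of $L^2(G_1)$ and $L^2(G_2)$, the Pontryagin dual satisfies $\widehat{G_1\times G_2}\cong\widehat{G_1}\times\widehat{G_2}$, and—after fixing product Haar measures on $G=G_1\times G_2$ and the corresponding product dual measures on $\widehat{G}$—the measure on $\tfp{G}$ is the product of the measures on $\tfp{G_1}$ and $\tfp{G_2}$ under the given identification $\tfp{G}\cong\tfp{G_1}\times\tfp{G_2}$. I would also note in passing that $\Delta_1\times\Delta_2$ is indeed a lattice in $\tfp{G}$, being discrete and having quotient $(\tfp{G_1}/\Delta_1)\times(\tfp{G_2}/\Delta_2)$, a product of compact groups.

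The key computation is that time‑frequency shifts respect the tensor decomposition. For $z_i=(x_i,\omega_i)\in\tfp{G_i}$ and $f_i\in L^2(G_i)$, using the definition $\pi(x,\omega)f(t)=\omega(t)f(x^{-1}t)$ together with $(\omega_1,\omega_2)(t_1,t_2)=\omega_1(t_1)\,\omega_2(t_2)$, one checks directly that
\[
\pi\big((x_1,x_2),(\omega_1,\omega_2)\big)(f_1\otimes f_2)=\big(\pi(x_1,\omega_1)f_1\big)\otimes\big(\pi(x_2,\omega_2)f_2\big).
\]
Consequently $\mathcal{G}(g_1\otimes g_2,\Delta_1\times\Delta_2)=\{(\pi(z_1)g_1)\otimes(\pi(z_2)g_2):z_1\in\Delta_1,\ z_2\in\Delta_2\}$. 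Since $\{\pi(z_1)g_1\}_{z_1\in\Delta_1}$ and $\{\pi(z_2)g_2\}_{z_2\in\Delta_2}$ are orthonormal bases of $L^2(G_1)$ and $L^2(G_2)$ respectively, the family of all tensor products of basis vectors is an orthonormal basis of the Hilbert space tensor product $L^2(G_1)\otimes L^2(G_2)=L^2(G)$—this is the standard fact that the tensor product of two orthonormal bases is an orthonormal basis—and the first claim follows.

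For the Feichtinger‑algebra assertion, note first that $\|g_1\otimes g_2\|_2=\|g_1\|_2\|g_2\|_2<\infty$, so $g_1\otimes g_2\in L^2(G)$. The short‑time Fourier transform factors as well: for $z_i=(x_i,\omega_i)$,
\[
V_{g_1\otimes g_2}(g_1\otimes g_2)(z_1,z_2)=\big\langle g_1\otimes g_2,\ (\pi(z_1)g_1)\otimes(\pi(z_2)g_2)\big\rangle=V_{g_1}g_1(z_1)\,V_{g_2}g_2(z_2).
\]
Taking absolute values, using the product measure on $\tfp{G}$ and Fubini's theorem, one gets $\int_{\tfp{G}}|V_{g_1\otimes g_2}(g_1\otimes g_2)(z)|\,\dee z=\|g_1\|_{S_0(G_1),g_1}\,\|g_2\|_{S_0(G_2),g_2}<\infty$, whence $g_1\otimes g_2\in S_0(G)$ by the definition \eqref{eq:feichtinger_alg_def}. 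The proof is essentially bookkeeping; the only points that need a little care are making the measure‑theoretic identifications on the time‑frequency planes precise enough for Fubini to apply cleanly, and invoking the tensor‑product‑of‑orthonormal‑bases fact for Hilbert spaces.
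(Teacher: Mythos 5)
Your proposal is correct and follows essentially the same route as the paper: the factorizations $\pi(z_1,z_2)(f_1\otimes f_2)=(\pi(z_1)f_1)\otimes(\pi(z_2)f_2)$ and $V_{g_1\otimes g_2}(g_1\otimes g_2)(z_1,z_2)=V_{g_1}g_1(z_1)\,V_{g_2}g_2(z_2)$ are exactly the computations the paper uses. The only cosmetic difference is that you delegate completeness to the standard fact that the tensor product of orthonormal bases is an orthonormal basis of $L^2(G_1)\otimes L^2(G_2)\cong L^2(G)$, whereas the paper verifies completeness directly by testing an orthogonal $f$ against the product system and applying completeness in each factor in turn.
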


\begin{proof}
Suppose that $\mathcal{G}(g_i,\Delta_i)$ is an orthonormal basis for $L^2(G_i)$ for $i=1,2$. Note that
\[ \pi(z_1,z_2)(g_1 \otimes g_2) = \pi(z_1)g_1 \otimes \pi(z_2)g_2 , \quad (z_1,z_2) \in \Delta_1 \times \Delta_2 , \]
hence the Gabor system $\mathcal{G}(g_1 \otimes g_2, \Delta_1 \times \Delta_2)$ consists exactly of the tensor products of elements from the orthonormal bases $\mathcal{G}(g_i,\Delta_i)$, $i=1,2$. From this it follows immediately that $\mathcal{G}(g_1 \otimes g_2, \Delta_1 \times \Delta_2)$ is an orthonormal basis.

Also, if $g_i \in S_0(G_i)$ for $i=1,2$, then the formula
\[ V_{g_1 \otimes g_2}(g_1 \otimes g_2)(z_1,z_2) = V_{g_1}g_1(z_1) V_{g_2}g_2(z_2) \]
shows that $g_1 \otimes g_2 \in S_0(G_1 \times G_2)$.
\end{proof}

\begin{lemma}\label{lem:compact_times_discrete}
The following hold:
\begin{enumerate}
    \item If $D$ is a discrete abelian group, then $\Lambda = D$ is a lattice in $D$, $\delta_0 \in S_0(D)$, and $\mathcal{G}(\delta_0, \Lambda \times \Lambda^{\perp})$ is an orthonormal basis for $L^2(D)$.
    \item If $C$ is a compact abelian group, then $\Lambda = \{ 1 \}$ is a lattice in $C$, $\mathbbm{1}_C \in S_0(C)$, and $\mathcal{G}(\mathbbm{1}_C, \Lambda \times \Lambda^{\perp})$ is an orthonormal basis for $L^2(C)$.
    \item If $G = D \times C$ where $D$ is a discrete abelian group and $C$ is a compact abelian group, then $\Lambda = D \times \{ 1 \}$ is a lattice in $G$, $\delta_0 \otimes \mathbbm{1}_C \in S_0(G)$, and $\mathcal{G}(\delta_0 \otimes \mathbbm{1}_C, \Lambda \times \Lambda^{\perp})$ is an orthonormal basis for $L^2(G)$.
\end{enumerate}
\end{lemma}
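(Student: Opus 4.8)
The plan is to prove parts (i) and (ii) by direct computation and then deduce (iii) from them together with \Cref{lem:onb_prod}. For (i): $\Lambda = D$ is trivially a lattice in $D$, being an open (hence discrete) subgroup of a discrete group with $D/D$ compact. Fix counting measure on $D$, so that $\widehat{D}$ is compact. To see $\delta_0 \in S_0(D)$, one may either invoke the standard identification $S_0(D) = \ell^1(D)$ for discrete $D$, or compute directly from $\pi(x,\omega)\delta_0 = \omega(x)\delta_x$ that $V_{\delta_0}\delta_0(x,\omega) = \langle \delta_0, \pi(x,\omega)\delta_0 \rangle$ equals $1$ for $x = 1$ and $0$ otherwise, independently of $\omega \in \widehat{D}$, which is integrable over $\tfp{D}$ since $\widehat{D}$ is compact. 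Finally, $D^{\perp} = \{1\}$, so $\Lambda \times \Lambda^{\perp} = D \times \{1\}$ and $\mathcal{G}(\delta_0, D \times \{1\}) = \{\pi(x,1)\delta_0 : x \in D\} = \{\delta_x : x \in D\}$, the canonical orthonormal basis of $\ell^2(D) = L^2(D)$.

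Part (ii) runs analogously: $C$ compact forces $\widehat{C}$ discrete, $\Lambda = \{1\}$ is a lattice in $C$ (discrete, with $C/\{1\} = C$ compact), and after normalizing Haar measure on $C$ to a probability measure one has $\widehat{\mathbbm{1}_C} = \delta_1 \in \ell^1(\widehat{C}) = S_0(\widehat{C})$, whence $\mathbbm{1}_C \in S_0(C)$ by Fourier invariance \eqref{eq:feichtinger_fourier} (alternatively, $V_{\mathbbm{1}_C}\mathbbm{1}_C(x,\omega) = \widehat{\mathbbm{1}_C}(\omega)$ is supported on $\{\omega = 1\}$ and constant in $x$, hence integrable over $\tfp{C}$). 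Since $\{1\}^{\perp} = \widehat{C}$, the system $\mathcal{G}(\mathbbm{1}_C, \{1\} \times \widehat{C}) = \{\pi(1,\omega)\mathbbm{1}_C : \omega \in \widehat{C}\} = \widehat{C}$ equals the set of characters of $C$, which is an orthonormal basis of $L^2(C)$ by the Peter--Weyl theorem (orthonormality and completeness of the characters of a compact abelian group).

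For (iii): $\Lambda = D \times \{1\}$ is a lattice in $G = D \times C$, being discrete with $G/\Lambda \cong C$ compact (this is also a special case of part~(i) of \Cref{lem:lattice-constructions}). The last sentence of \Cref{lem:onb_prod}, applied to (i) and (ii), gives $\delta_0 \otimes \mathbbm{1}_C \in S_0(D \times C)$. Writing $\widehat{G} = \widehat{D} \times \widehat{C}$ and using $(H_1 \times H_2)^{\perp} = H_1^{\perp} \times H_2^{\perp}$, one gets $\Lambda^{\perp} = D^{\perp} \times \{1\}^{\perp} = \{1\} \times \widehat{C}$. Under the canonical identification of $\tfp{G}$ with $\tfp{D} \times \tfp{C}$ (which permutes the two middle coordinates of $D \times C \times \widehat{D} \times \widehat{C}$), the lattice $\Lambda \times \Lambda^{\perp} = (D \times \{1\}) \times (\{1\} \times \widehat{C})$ corresponds precisely to $\Delta_1 \times \Delta_2$, where $\Delta_1 = D \times D^{\perp}$ and $\Delta_2 = \{1\} \times \{1\}^{\perp}$ are the time-frequency lattices from (i) and (ii). Applying \Cref{lem:onb_prod} with $G_1 = D$, $G_2 = C$, $g_1 = \delta_0$, $g_2 = \mathbbm{1}_C$ then shows that $\mathcal{G}(\delta_0 \otimes \mathbbm{1}_C, \Lambda \times \Lambda^{\perp})$ is an orthonormal basis for $L^2(G)$.

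The lemma is essentially bookkeeping, and I do not expect a genuine obstacle. The only points requiring attention are the normalization of the Haar measures (counting measure on $D$, probability measure on $C$) so that the Gabor systems come out genuinely orthonormal rather than merely orthogonal, the two membership claims $\delta_0 \in S_0(D)$ and $\mathbbm{1}_C \in S_0(C)$ (settled by the short-time Fourier transform computations above, or by the known descriptions of $S_0$ of a discrete and of a compact group), and correctly tracking the annihilators and the product decomposition of the time-frequency plane in (iii).
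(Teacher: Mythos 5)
Your proposal is correct and follows essentially the same route as the paper: parts (i) and (ii) are handled directly (the paper uses $S_0(D)=\ell^1(D)$ and treats (ii) as dual to (i), exactly as you do via Fourier invariance), and (iii) is deduced from (i), (ii) and \Cref{lem:onb_prod}. Your extra details (the STFT computations, the measure normalizations, and the coordinate bookkeeping for $\Lambda^\perp$ in (iii)) just flesh out steps the paper leaves implicit.
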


\begin{proof}
We begin by proving $(i)$. That $\Lambda$ is a lattice follows from the fact that $D$ is discrete and $D/\Lambda = \{ 1 \}$ is compact. Since $S_0(D) = \ell^1(D)$ for $D$ discrete (\cite[Lemma 4.11]{Ja18}), we have that $\delta_0 \in S_0(D)$. Finally, since $\Lambda^{\perp} = \{ 1\} \subseteq \widehat{D}$, the Gabor system $\mathcal{G}(\delta_0,\Lambda \times \Lambda^{\perp})$ is nothing but the canonical orthonormal basis for $\ell^2(D)$. The proof of $(ii)$ is dual, and $(iii)$ follows by applying $(i)$ and $(ii)$ together with \cref{lem:onb_prod}.
\end{proof}

\begin{lemma}\label{lem:finite_index}
Let $G$ be a locally compact abelian group, let $H$ be a closed subgroup of $G$ of finite index and let $\Lambda$ be a lattice in $H$. Then $\Lambda$ is also a lattice in $G$. Furthermore, suppose $g \in L^2(H)$ is such that $\mathcal{G}(g,\Lambda \times \Lambda_H^{\perp})$ is an orthonormal basis for $L^2(H)$, where $\Lambda_H^{\perp}$ denotes the annihilator of $\Lambda$ in $H$. Then there exists $\tilde{g} \in L^2(G)$ such that $\mathcal{G}(\tilde{g},\Lambda \times \Lambda^{\perp})$ is an orthonormal basis for $L^2(G)$, where $\Lambda^{\perp}$ denotes the annihilator of $\Lambda$ in $G$. Finally, if $g \in S_0(H)$, then $\tilde{g} \in S_0(G)$.
\end{lemma}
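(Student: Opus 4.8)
The plan is to use the fact that a closed subgroup of finite index is in fact \emph{open}, to split $L^2(G)$ into the finitely many ``coset blocks'' of $H$, and then to glue copies of the given Gabor orthonormal basis of $L^2(H)$ across these blocks by means of the finite Fourier transform of $G/H$. Throughout, $\pi_G$ and $\pi_H$ denote the time--frequency shifts on $G$ and $H$. \emph{Step 1}, the lattice claim, is immediate: since $H$ is closed of index $n:=[G:H]<\infty$, the quotient $G/H$ is a finite Hausdorff, hence discrete, space, so $H$ is open in $G$; and $G/H$ is compact, so \Cref{lem:lattice-constructions}(i) applied to the chain $\Lambda\subseteq H\subseteq G$ shows that $\Lambda$ is a lattice in $G$.

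\emph{Step 2: the construction and the orthonormal basis property.} Fix a Haar measure $\mu_H$ on $H$ and extend it to $\mu_G$ with $\mu_G|_H=\mu_H$ (possible since $H$ is open); let $\iota\colon L^2(H)\to L^2(G)$ be extension by zero, an isometry onto $L^2_H:=\{f\in L^2(G):f=0\text{ a.e.\ off }H\}$ satisfying the equivariance $\pi_G(\lambda,\omega)\iota f=\iota\bigl(\pi_H(\lambda,\omega|_H)f\bigr)$ for $\lambda\in H$ and $\omega\in\widehat G$. Picking coset representatives $1=x_1,\dots,x_n$ of $G/H$ yields the orthogonal decomposition $L^2(G)=\bigoplus_{i=1}^n T_{x_i}L^2_H$, with each $T_{x_i}\iota$ unitary onto its summand. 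I would then take
\[ \tilde g:=\frac{1}{\sqrt n}\sum_{i=1}^n T_{x_i}\iota(g), \]
and, using $M_\omega T_{x_i}=\omega(x_i)\,T_{x_i}M_\omega$ and the equivariance of $\iota$, record the key identity
\[ \pi_G(\lambda,\omega)\tilde g=\frac{1}{\sqrt n}\sum_{i=1}^n \omega(x_i)\,T_{x_i}\iota\bigl(\pi_H(\lambda,\omega|_H)g\bigr),\qquad (\lambda,\omega)\in\Lambda\times\Lambda^\perp . \]
Since $\mathcal G(g,\Lambda\times\Lambda_H^\perp)$ is an orthonormal basis of $L^2(H)$, the vectors $e^{\lambda,\chi}_i:=T_{x_i}\iota(\pi_H(\lambda,\chi)g)$, for $i\in\{1,\dots,n\}$, $\lambda\in\Lambda$, $\chi\in\Lambda_H^\perp$, form an orthonormal basis of $L^2(G)$; set $V_{\lambda,\chi}:=\spn\{e^{\lambda,\chi}_i:1\le i\le n\}$, so that $L^2(G)=\bigoplus_{(\lambda,\chi)}V_{\lambda,\chi}$ orthogonally. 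The restriction map $r\colon\Lambda^\perp\to\Lambda_H^\perp$, $\omega\mapsto\omega|_H$, is surjective with kernel $H^\perp=\widehat{G/H}$ of order $n$; by the key identity, for $\omega\in r^{-1}(\chi)$ the vector $\pi_G(\lambda,\omega)\tilde g$ lies in $V_{\lambda,\chi}$ with coordinate vector $v_\omega:=n^{-1/2}\bigl(\omega(x_1),\dots,\omega(x_n)\bigr)\in\C^n$ relative to the basis $(e^{\lambda,\chi}_i)_i$. It then remains to see that $\{v_\omega:\omega\in r^{-1}(\chi)\}$ is an orthonormal basis of $\C^n$ for each $\chi$: fixing a lift $\omega_0\in r^{-1}(\chi)$ one has $r^{-1}(\chi)=\omega_0 H^\perp$ and $v_{\omega_0\psi}=D\,v_\psi$ with $D:=\operatorname{diag}\bigl(\omega_0(x_1),\dots,\omega_0(x_n)\bigr)$ unitary, while $\{v_\psi:\psi\in H^\perp\}$ is an orthonormal basis of $\C^n$ because each $\psi\in H^\perp$ descends to a character of the finite abelian group $G/H$ and the orthogonality relations of $\widehat{G/H}$ give $\langle v_\psi,v_{\psi'}\rangle=n^{-1}\sum_{\bar x\in G/H}(\psi\overline{\psi'})(\bar x)=\delta_{\psi,\psi'}$. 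Reassembling these bases over the orthogonal blocks $V_{\lambda,\chi}$ shows $\mathcal G(\tilde g,\Lambda\times\Lambda^\perp)$ is an orthonormal basis of $L^2(G)$.

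\emph{Step 3: $S_0$-membership.} Since translations are isometric on $S_0(G)$, it suffices to show $\iota(g)\in S_0(G)$. Using the description of $S_0(H)$ in \Cref{prop:feichtinger_description}, write $g=\sum_k M_{\chi_k}h_k$ with $h_k\in L^1(H)$, $\supp\widehat{h_k}\subseteq\tilde K$ (compact with nonempty interior in $\widehat H$) and $\sum_k\|h_k\|_1<\infty$; extending by zero, $\iota(g)=\sum_k M_{\tilde\chi_k}\iota(h_k)$, where $\tilde\chi_k\in\widehat G$ restricts to $\chi_k$. Here $\iota(h_k)\in L^1(G)$ with $\|\iota(h_k)\|_1=\|h_k\|_1$ and $\widehat{\iota(h_k)}=\widehat{h_k}\circ r$ is supported in $r^{-1}(\tilde K)$, a compact set with nonempty interior because $r\colon\widehat G\to\widehat H$ is an open continuous surjection with finite kernel $H^\perp$, hence carries compact sets to compact sets under preimage. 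So $\iota(g)$ has a representation of the kind in \Cref{prop:feichtinger_description} relative to $\widehat G$ and $r^{-1}(\tilde K)$, whence $\iota(g)\in S_0(G)$, and infimizing over representations of $g$ gives $\|\iota(g)\|_{S_0(G)}\lesssim\|g\|_{S_0(H)}$ as well. (Equivalently, this step is the known boundedness of extension by zero $S_0(H)\to S_0(G)$ for an open subgroup.)

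\emph{Where the difficulty lies.} Step 1 and the support bookkeeping of Step 3 are routine; the content is the orthonormality check in Step 2. The key realisation is that the ``fibre directions'' of $\Lambda^\perp$ over $\Lambda_H^\perp$ --- namely $H^\perp=\widehat{G/H}$ --- furnish precisely a discrete Fourier basis that unitarily mixes the $n$ coset blocks, and that the unimodular phases $\omega(x_i)$, which genuinely depend on the chosen coset representatives (and hence on $\tilde g$ itself), only conjugate that Fourier basis by a diagonal unitary and so do not destroy orthonormality. The secondary subtlety is that, in Step 3, preservation of the compact Fourier support under extension by zero rests on $r^{-1}$ of a compact set being compact, i.e.\ precisely on $H$ being open in $G$.
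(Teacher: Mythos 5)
Your construction is exactly the paper's: your window $\tilde g=\frac{1}{\sqrt n}\sum_{i}T_{x_i}\iota(g)$ coincides with the paper's $\tilde g(xy_j)=k^{-1/2}g(x)$, and the lattice claim is handled the same way via \Cref{lem:lattice-constructions}. What differs is the verification, and your proof is correct. The paper checks orthonormality by computing $\langle\tilde g,\pi(\lambda,\tau)\tilde g\rangle$ directly (splitting the integral over cosets and using $\sum_j\overline{\tau(y_j)}=k\,\delta_{\tau,1}$ for $\tau\in H^{\perp}$) and then proves completeness by a separate expansion of $\langle f,\pi(\lambda,\tau)\tilde g\rangle$ against the characters of $G/H$; you package both steps at once through the block decomposition $L^2(G)=\bigoplus_i T_{x_i}L^2_H$ and the observation that each fiber $r^{-1}(\chi)\subseteq\Lambda^{\perp}$ contributes, inside the $n$-dimensional block $V_{\lambda,\chi}$, the character basis of the finite group $G/H$ conjugated by a diagonal unitary of phases $\omega_0(x_i)$ --- the same character-orthogonality mechanism, but organized so that orthonormality and completeness drop out simultaneously, which is arguably tidier. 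For the $S_0$-membership the routes genuinely diverge: the paper estimates $\int|V_{\tilde g}\tilde g|\leq k^3\int|V_g^H g|$ by hand via Weil's formula, whereas you prove boundedness of extension by zero $S_0(H)\to S_0(G)$ using \Cref{prop:feichtinger_description} together with the properness of the dual restriction map $\widehat G\to\widehat H$ (finite kernel $H^{\perp}$, openness of $H$), and then use translation invariance of $S_0(G)$; this reuses machinery already in the paper at the cost of the small extra checks, which you do address, that $r^{-1}(\tilde K)$ is compact with nonempty interior and that $\sum_k M_{\tilde\chi_k}\iota(h_k)$ converges to $\iota(g)$ in $L^1(G)$. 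Note also that both arguments rely on the same unstated duality fact, namely that restriction $\Lambda^{\perp}\to\Lambda_H^{\perp}$ is surjective with kernel $H^{\perp}$ (the paper's $\Lambda_H^{\perp}\cong\Lambda^{\perp}/H^{\perp}$), which holds because characters of the closed subgroup $H$ extend to $G$.
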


\begin{proof}
The first statement follows from \Cref{lem:lattice-constructions}~(i).
Let $\{ y_1, \ldots, y_k \}$ be a set of coset representatives for $H$ in $G$, where $k = [G:H]$. We normalize the Haar measure on $G$ such that Weil's formula \eqref{eq:weil} holds with respect to a fixed Haar measure on $H$ and the counting measure on $G/H$. To ease notation we will omit explicit reference to Haar measures $\mu_G$, $\mu_H$, etc.\ when integrating with respect to them. Define $\tilde{g} \colon G \to \C$ by
\[ \tilde{g}(xy_j) = \frac{1}{\sqrt{k}} g(x) \]
for $x \in H$ and $1 \leq j \leq k$.

Now $\Lambda_H^{\perp} \cong \Lambda^{\perp}/H^{\perp}$. Thus the orthonormality of $\mathcal{G}(g, \Lambda \times( \Lambda^{\perp}/H^{\perp}))$ means that $\langle g, \pi(\lambda,\tau H^{\perp}) g \rangle = 0$ if and only if $\lambda = 1$ and $\tau \in H^{\perp}$. We calculate $\langle \tilde{g}, \pi(\lambda,\tau) \tilde{g} \rangle$ in terms of $\langle g, \pi(\lambda,\tau H^{\perp}) g \rangle$ as follows:
\begin{align*}
    \langle \tilde{g}, \pi(\lambda,\tau) \tilde{g} \rangle_{L^2(G)} &= \sum_{j=1}^k \int_H \tilde{g}(xy_j) \overline{ \tau(xy_j) \tilde{g}(\lambda^{-1}xy_j)} \dif{x} \\
    &= \frac{1}{k} \sum_{j=1}^k \int_H g(x) \overline{\tau(y_j)} \overline{\tau(x) g(\lambda^{-1}x)} \dif{x} \\
    &= \frac{1}{k} \left( \sum_{j=1}^k \overline{ \tau(y_j)} \right) \langle g, \pi(\lambda, \tau H^{\perp}) g \rangle_{L^2(H)} \\
    &=
     \begin{cases}
       \frac{1}{k} \sum_{j=1}^k \overline{\tau(y_j)} &\quad\text{if $\lambda = 1$ and $\tau \in H^{\perp}$} \\
       0 &\quad\text{otherwise.} \\ 
     \end{cases}
\end{align*}
But if $\tau \in H^{\perp}$ then $\sum_{j=1}^k \overline{\tau(y_j)} = k \delta_{\tau,1}$ as we are summing a character $\tau \in H^{\perp} \cong \widehat{G/H}$ over all the elements of the finite group $G/H$. Thus $\mathcal{G}(\tilde{g},\Lambda \times \Lambda^{\perp})$ is orthonormal.

We now show how completeness of $\mathcal{G}(\tilde{g}, \Lambda \times \Lambda^{\perp}/H^{\perp})$ in $L^2(H)$ implies completeness of $\mathcal{G}(\tilde{g},\Lambda \times \Lambda^{\perp})$ in $L^2(G)$. Note that
\begin{align}
    \langle f, \pi(\lambda,\tau) \tilde{g} \rangle_{L^2(G)} &= \sum_{j=1}^k \int_H f(xy_j) \overline{\tau(xy_j) \tilde{g}(\lambda^{-1} xy_j)} \dif{x} \notag \\
    &= \frac{1}{\sqrt{k}} \sum_{j=1}^k \overline{\tau(y_j)} \langle f( \cdot y_j) , \pi(\lambda , \tau H^{\perp}) g \rangle_{L^2(H)} . \label{eq:complete}
\end{align}
Now suppose that $\langle f, \pi(\lambda,\tau) \tilde{g} \rangle_{L^2(G)} = 0$ for all $\lambda \in \Lambda$, $\tau \in \Lambda^{\perp}$. We can write $\tau = \tau_1 \tau_2$ uniquely in the sense that $\tau_1 H^{\perp} \in \Lambda^{\perp}/H^{\perp}$ and $\tau_2 \in H^{\perp}$ are uniquely determined. We then get
\[ \langle f, \pi(\lambda,\tau) \tilde{g} \rangle_{L^2(G)} = \frac{1}{\sqrt{k}} \sum_{j=1}^k \overline{\tau_2(y_j)} \overline{\tau_1(y_j)} \langle f( \cdot y_j) , \pi(\lambda , \tau_1 H^{\perp}) g \rangle_{L^2(H)} = 0 \]
for all $\lambda \in \Lambda^{\perp}$, $\tau_1 H^{\perp} \in \Lambda^{\perp} / H^{\perp}$ and $\tau_2 \in H^{\perp}$. Recognizing the above expression as an inner product in $L^2(G/H)$ and using that $H^{\perp}$ is an orthogonal basis for $L^2(G/H)$, we deduce that
\[ \overline{\tau_1(y_j)} \langle f( \cdot y_j), \pi(\lambda, \tau_1 H^{\perp}) g \rangle = 0\]
for all $1 \leq j \leq k$, $\lambda \in \Lambda^{\perp}$ and $\tau_1 H^{\perp} \in \Lambda^{\perp} / H^{\perp}$. By completeness of $\mathcal{G}(g,\Lambda \times \Lambda^{\perp}/H^{\perp})$, this implies that $f(x y_j) = 0$ for all $x \in H$ and $1 \leq j \leq k$, i.e.\ $f = 0$.

Finally, suppose that $g \in S_0(H)$. For $h \in L^2(G)$ Denote by $V_h^G$ the short-time Fourier transform on $\tfp{G}$ and by $V_h^H$ the short-time Fourier transform on $\tfp{H}$. Since $G/H$ is finite, we have that $H^{\perp} \cong \widehat{G/H}$ is finite (and of the same order) as well, say $H^{\perp} = \{ \tau_1, \ldots, \tau_k \}$. Note that $\tau_j(t') = 1$ for each $1 \leq j \leq k$ and $t' \in H$. Using this and Weil's formula \eqref{eq:weil}, we have
\begin{align*}
    \int_{\tfp{G}} | V_{\tilde{g}}^G \tilde{g} (x,\omega) | \dif{x} \dif{\omega} &= \sum_{i,j=1}^k \int_{\widehat{G}/H^{\perp}} \int_H | V_{\tilde{g}}^G \tilde{g} (x'y_j,\omega'\tau_j) | \dif{x'} \dif{(\omega'H^{\perp})} \\
    &= \sum_{i,j=1}^k \int_{\widehat{G}/H^{\perp}} \int_H \left| \sum_{l=1}^k \int_H \tilde{g}(t' y_l) \overline{ (\omega'\tau_j)(t'y_l) \tilde{g}({x'}^{-1}y_j^{-1}t'y_l)} \dif{t'} \right| \dif{x'} \dif{(\omega'H^{\perp})} \\
    &\leq k^{-1/2} \sum_{i,j,l=1}^k \int_{\widehat{G}/H^{\perp}} \int_H \left| \int_H g(t') \overline{ \omega'(t') \tilde{g}(x'^{-1}t'y_j^{-1}y_l)} \dif{t'} \right| \dif{x'} \dif{(\omega'H^{\perp})}.
\end{align*}
where we used that $\tau_j(t') = 1$ in the last line. Now for each $1 \leq j,l \leq k$, we can write $y_j^{-1}y_l = t_{j,l}y_{r_{j,l}}$ for a uniquely determined $t_{j,l} \in H$ and $1 \leq r_{j,l} \leq k$. Hence $\tilde{g}({x'}^{-1}t'y_j^{-1}y_l) = \tilde{g}({x'}^{-1}t't_{i,j}y_{r_{i,j}}) = g({x'}^{-1}t't_{i,j})$, so
\begin{align*}
    \int_{\tfp{G}} | V_{\tilde{g}}^G \tilde{g} (x,\omega) | \dif{x} \dif{\omega} &\leq \sum_{j,l=1}^k \int_{\widehat{G}/H^{\perp}} \int_{H} \left| \int_H g(t) \overline{\omega'(t') g({x'}^{-1} t' t_{j,l})} \dif{t'} \right| \dif{x'} \dif{(\omega'H^{\perp})} \\
    &= \sum_{j,l=1}^k \int_{\widehat{G}/H^{\perp}} \int_H | V_g^H g(x't_{j,l}^{-1},\omega')| \dif{x'} \dif{(\omega'H^{\perp})} \\
    &\leq k^2 \int_{H \times \widehat{H}} | V_g^H g(x',\omega')| \dif{x'} \dif{\omega'}. \\
\end{align*}
The above expression is finite because of the assumption that $g \in S_0(H)$. Hence $\tilde{g} \in S_0(G)$, which finishes the proof.

\end{proof}

\begin{lemma}\label{lem:finite_subgroup}
Let $G$ be a locally compact abelian group, let $F$ be a finite subgroup of $G$ and let $\Lambda$ be a lattice in $G$ that contains $F$. Denote by $p \colon G \to G/F$ the quotient map. Then $p(\Lambda)$ is also a lattice in $G/F$. Moreover, suppose $g \in L^2(G/F)$ is such that $\mathcal{G}(g,p(\Lambda) \times p(\Lambda)^{\perp})$ is an orthonormal basis for $L^2(G/F)$. Then there exists $\tilde{g} \in L^2(G)$ such that $\mathcal{G}(\tilde{g},\Lambda \times \Lambda^{\perp})$ is an orthonormal basis for $L^2(G)$. Finally, if $g \in S_0(G/F)$, then $\tilde{g} \in S_0(G)$.
\end{lemma}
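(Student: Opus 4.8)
The plan is to settle the lattice claim directly---$p(\Lambda)$ is a lattice in $G/F$ by \Cref{lem:lattice-constructions}~(ii) applied to the quotient map $p$, whose kernel $F$ is finite, hence compact---and to reduce everything else to \Cref{lem:finite_index} via Pontryagin duality.

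The point for the remaining assertions is that the present setting is the Fourier dual of the one in \Cref{lem:finite_index}: a finite \emph{subgroup} $F\le G$ corresponds to a \emph{finite-index} open subgroup $F^{\perp}\le\widehat G$, with $\widehat G/F^{\perp}\cong\widehat F$ and $\widehat{G/F}\cong F^{\perp}$ via $\chi\mapsto\chi\circ p$. First I would record how Gabor systems transform under the Fourier transform $\mathcal F_G\colon L^2(G)\to L^2(\widehat G)$: a short computation---the analogue of $\widehat{\pi(x,\omega)f}=\overline{\omega(x)}\,\pi(\omega,x^{-1})\widehat f$ from the Euclidean case---shows that $\mathcal F_G$ carries $\mathcal G(h,\Lambda\times\Lambda^{\perp})$ onto $\mathcal G(\widehat h,\Lambda^{\perp}\times\iota(\Lambda))$ up to unimodular scalar factors, where $\iota\colon G\to\widehat{\widehat G}$ is the canonical isomorphism and $\iota(\Lambda)=(\Lambda^{\perp})^{\perp}$ by Pontryagin duality. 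Since multiplying the vectors of a system by unimodular constants affects neither orthonormality nor completeness and $\mathcal F_G$ is unitary, $\mathcal G(h,\Lambda\times\Lambda^{\perp})$ is an orthonormal basis for $L^2(G)$ if and only if $\mathcal G(\widehat h,\Lambda^{\perp}\times(\Lambda^{\perp})^{\perp})$ is an orthonormal basis for $L^2(\widehat G)$; the same equivalence holds with $G/F$ in place of $G$.

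Next I would check that $\Lambda^{\perp}\subseteq\widehat G$ is in fact a lattice in $F^{\perp}$: from $F\subseteq\Lambda$ one gets $\Lambda^{\perp}\subseteq F^{\perp}$, and since $\widehat G/F^{\perp}$ is finite (hence compact) \Cref{lem:lattice-constructions}~(i) promotes the lattice $\Lambda^{\perp}$ of $\widehat G$ to a lattice of $F^{\perp}$. Moreover, under $\widehat{G/F}\cong F^{\perp}$ the annihilator of $p(\Lambda)$ in $\widehat{G/F}$ is carried exactly onto $\Lambda^{\perp}\subseteq F^{\perp}$. Transporting the hypothesis on $g$ first through the Fourier transform on $G/F$ (by the dictionary above) and then through this isomorphism produces a function $g'\in L^2(F^{\perp})$---namely $\widehat g$ read on $F^{\perp}$---such that $\mathcal G(g',\Lambda^{\perp}\times(\Lambda^{\perp})^{\perp}_{F^{\perp}})$ is an orthonormal basis for $L^2(F^{\perp})$, with $(\Lambda^{\perp})^{\perp}_{F^{\perp}}$ the annihilator of $\Lambda^{\perp}$ in $\widehat{F^{\perp}}$. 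Now \Cref{lem:finite_index}, applied with $\widehat G,\ F^{\perp},\ \Lambda^{\perp},\ g'$ in the roles of $G,\ H,\ \Lambda,\ g$ (legitimate since $[\widehat G:F^{\perp}]=|F|<\infty$), yields $\widetilde{g'}\in L^2(\widehat G)$ with $\mathcal G(\widetilde{g'},\Lambda^{\perp}\times(\Lambda^{\perp})^{\perp})$ an orthonormal basis for $L^2(\widehat G)$, the annihilator now taken in $\widehat{\widehat G}$ (and equal to $\iota(\Lambda)$). Setting $\tilde g=\mathcal F_G^{-1}(\widetilde{g'})$ and reading the dictionary backwards gives that $\mathcal G(\tilde g,\Lambda\times\Lambda^{\perp})$ is an orthonormal basis for $L^2(G)$.

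Finally, the $S_0$-statement follows along the same chain: if $g\in S_0(G/F)$ then $\widehat g\in S_0(\widehat{G/F})$ by Fourier invariance \eqref{eq:feichtinger_fourier}, whence $g'\in S_0(F^{\perp})$; the last assertion of \Cref{lem:finite_index} gives $\widetilde{g'}\in S_0(\widehat G)$; and $\tilde g=\mathcal F_G^{-1}(\widetilde{g'})\in S_0(G)$ again by \eqref{eq:feichtinger_fourier}. I expect the main obstacle to be bookkeeping rather than any deep point: getting the Fourier dictionary exactly right (time- and frequency-components swap, with an inversion on one of them) and making sure the string of canonical isomorphisms and annihilators lines up so that \Cref{lem:finite_index} is applied to genuinely the right data.
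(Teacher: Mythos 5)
Your proposal is correct and follows essentially the same route as the paper: dualize via the Fourier transform, identify $\widehat{G/F}$ with the finite-index subgroup $F^{\perp}\subseteq\widehat{G}$, apply \Cref{lem:finite_index} there, pull back with $\mathcal{F}^{-1}$, and invoke Fourier invariance of $S_0$. The only cosmetic difference is that you get the lattice claim for $p(\Lambda)$ from \Cref{lem:lattice-constructions}~(ii) directly, while the paper deduces it by duality via \Cref{lem:lattice-constructions}~(i); both are fine.
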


\begin{proof}
This is dual to \Cref{lem:finite_index}. Since $F$ is a finite subgroup of $G$ and $F \subseteq \Lambda$, $F^{\perp}$ is a finite index subgroup of $\widehat{G}$ with $\Lambda^{\perp} \subseteq F^{\perp}$. It follows from \Cref{lem:lattice-constructions}~(i) that $\Lambda^{\perp}$ is a lattice in $F^{\perp}$, hence $\Lambda$ is a lattice in $G/F$. Moreover, if $g \in L^2(G/F)$ gives an orthonormal basis $\mathcal{G}(g,p(\Lambda) \times p(\Lambda)^{\perp})$ for $L^2(G/F)$, then $\mathcal{F}\mathcal{G}(g,p(\Lambda) \times p(\Lambda)^{\perp}) = \mathcal{G}(\widehat{g}, p(\Lambda)^{\perp} \times p(\Lambda))$ is an orthonormal basis for $L^2(F^{\perp})$. Again by \Cref{lem:finite_index}, one gets a function $\gamma \in L^2(\widehat{G})$ such that $\mathcal{G}(\gamma, \Lambda^{\perp} \times \Lambda)$ is an orthonormal basis for $L^2(\widehat{G})$, and its inverse Fourier transform $\tilde{g} = \mathcal{F}^{-1}(\gamma)$ gives us an orthonormal basis $\mathcal{G}(\tilde{g}, \Lambda \times \Lambda^{\perp})$ for $L^2(G)$. By Fourier invariance of the Feichtinger algebra, we have that $\tilde{g} \in S_0(G)$.
\end{proof}

With these lemmas proved, we are ready to prove the following:

\begin{theorem}\label{thm:blt_failure}
Let $G$ be a locally compact abelian group. If $G$ has a compact identity component, then for any lattice $\Lambda$ in $G$ there exists a function $g \in S_0(G)$ such that $\mathcal{G}(g,\Lambda \times \Lambda^{\perp})$ is an orthonormal basis for $L^2(G)$.
\end{theorem}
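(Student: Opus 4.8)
The plan is to reduce, via the two transfer lemmas already established, to the case $G = D \times C$ with $D$ discrete and $C$ compact, which is exactly \Cref{lem:compact_times_discrete}~(iii). The starting point is \Cref{thm:idcomp_equivalences}: since $G$ has compact identity component, it contains a compact, open subgroup $K$.

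First I would remove the overlap between $\Lambda$ and $K$. Since $\Lambda$ is discrete and $K$ is compact, $F \coloneqq \Lambda \cap K$ is a finite subgroup of $G$ contained in $\Lambda$. By \Cref{lem:finite_subgroup} it therefore suffices to prove the statement for the quotient $G/F$ with the lattice $p(\Lambda)$, where $p \colon G \to G/F$ is the quotient map, because that lemma manufactures the desired $g \in S_0(G)$ out of a corresponding generator in $S_0(G/F)$. In $G/F$ the image $p(K)$ is again compact and open, and now $p(\Lambda) \cap p(K) = \{1\}$: if $p(\lambda) = p(k)$ with $\lambda \in \Lambda$, $k \in K$, then $\lambda k^{-1} \in F \subseteq K$, so $\lambda \in \Lambda \cap K = F$, whence $p(\lambda)=1$. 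After renaming I may thus assume from the outset that $\Lambda \cap K = \{1\}$.

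Under this assumption, put $H \coloneqq \Lambda K$. This is a subgroup of $G$ (as $G$ is abelian) and is open, since it contains the open set $K$; being open it is also closed. Because $\Lambda$ is cocompact, $G/\Lambda$ is compact and $H/\Lambda$ is an open subgroup of it, so $[G : H] = [G/\Lambda : H/\Lambda] < \infty$; hence $H$ is a closed subgroup of finite index in $G$ containing the lattice $\Lambda$. Moreover the multiplication map $\Lambda \times K \to H$, $(\lambda,k) \mapsto \lambda k$, is a bijective (by $\Lambda \cap K = \{1\}$ and $\Lambda K = H$) continuous homomorphism that restricts, on each slice $\{\lambda\} \times K$, to the homeomorphism $k \mapsto \lambda k$ onto the open set $\lambda K \subseteq H$; it is therefore a local homeomorphism, hence a topological isomorphism. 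So $H \cong \Lambda \times K$ with $\Lambda$ discrete and $K$ compact, and under this identification the lattice $\Lambda \subseteq H$ corresponds to $\Lambda \times \{1\}$. By \Cref{lem:compact_times_discrete}~(iii) there is $g_0 \in S_0(H)$ (namely $\delta_0 \otimes \mathbbm{1}_K$) with $\mathcal{G}(g_0, \Lambda \times \Lambda_H^{\perp})$ an orthonormal basis for $L^2(H)$, where $\Lambda_H^{\perp}$ is the annihilator of $\Lambda$ in $\widehat{H}$. Finally, \Cref{lem:finite_index} applied to the finite-index subgroup $H \subseteq G$ upgrades this to a function $\tilde{g} \in S_0(G)$ with $\mathcal{G}(\tilde{g}, \Lambda \times \Lambda^{\perp})$ an orthonormal basis for $L^2(G)$, which is the claim.

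I expect the main obstacle to be the structural step: arranging, after discarding $F = \Lambda \cap K$, that $\Lambda K$ is a genuine topological product $\Lambda \times K$ of a discrete group and a compact group and has finite index in $G$ — this is precisely where cocompactness of $\Lambda$ and openness of $K$ must be combined carefully (one also uses \Cref{lem:lattice-constructions} implicitly through the two transfer lemmas). Once that is in place, \Cref{lem:finite_subgroup}, \Cref{lem:compact_times_discrete}, and \Cref{lem:finite_index} do the rest mechanically; the only bookkeeping left is to verify at each invocation that the hypotheses hold — finiteness of $F$ and $F \subseteq \Lambda$ for \Cref{lem:finite_subgroup}, finite index and $g_0 \in S_0(H)$ for \Cref{lem:finite_index} — which is immediate.
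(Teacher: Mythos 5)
Your proposal is correct and follows essentially the same route as the paper: obtain a compact open subgroup $K$ from \Cref{thm:idcomp_equivalences}, kill the finite intersection $F = \Lambda \cap K$ via \Cref{lem:finite_subgroup}, pass to the finite-index open subgroup $\Lambda K \cong \Lambda \times K$ and apply \Cref{lem:compact_times_discrete}, then lift back with \Cref{lem:finite_index}. The only difference is that you spell out the order of the two reductions and the topological isomorphism $\Lambda K \cong \Lambda \times K$, details the paper compresses into ``we can assume $G = K\Lambda$ and $K \cap \Lambda = \{1\}$.''
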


\begin{proof}
By \Cref{thm:idcomp_equivalences}, we can find a compact open subgroup $K$ of $G$. Now $K \Lambda$ is an open subgroup of $G$, so the quotient $G/(K\Lambda)$ is discrete. In addition, we have a continuous surjection $G/\Lambda \to G/(K\Lambda)$, so $G/(K\Lambda)$ must be compact as well. It follows that $H = K\Lambda$ has finite index in $G$. Also, the intersection $F = K \cap \Lambda$ is finite. By \Cref{lem:finite_index} and \Cref{lem:finite_subgroup}, we can assume that $G = K \Lambda$ and $K \cap \Lambda = \{ 1 \}$, i.e.\ $G \cong K \times \Lambda$. But then the existence of a $g \in S_0(G)$ such that $\mathcal{G}(g,\Lambda \times \Lambda^{\perp})$ is an orthonormal basis for $L^2(G)$ follows from \Cref{lem:compact_times_discrete}.
\end{proof}

\subsection{The Zak transform}

As mentioned in the introduction, we will obtain a generalization of a theorem due to Kaniuth and Kutyniok \cite{KaKu98}, at least in the second-countable setting. Let $\Lambda$ be a lattice in $G$. We call a function $F \colon \tfp{G} \to \C$ \emph{quasiperiodic} (with respect to $\Lambda$) if
\begin{equation}
F(x\lambda,\omega\tau) = \overline{\omega(\lambda)} F(x,\omega) \label{eq:quasiperiodic}
\end{equation}
for all $x \in G$, $\omega \in \widehat{G}$, $\lambda \in \Lambda$ and $\tau \in \Lambda^{\perp}$.

The \emph{Zak transform} of a function $f \in L^2(G)$ with respect to $\Lambda$ is the function $Z_{G,\Lambda} f$ on $\tfp{G}$ given by
\[ Z_{G,\Lambda}f(x,\omega) = \sum_{\lambda \in \Lambda} f(x\lambda) \omega(\lambda) \]
for $(x,\omega) \in \tfp{G}$. The Zak transform of a function satisfies \eqref{eq:quasiperiodic}.

The following proposition is proved in \cite[Theorem 5.5]{En19}.

\begin{proposition}\label{prop:zak}
For any locally compact abelian group $G$ and any lattice $\Lambda$ in $G$ the following statements are equivalent:
\begin{enumerate}
    \item For every $g \in S_0(G)$, the Gabor system $\mathcal{G}(g,\Lambda \times \Lambda^{\perp})$ is not a frame for $L^2(G)$.
    \item For every $g \in L^2(G)$ such that the Zak transform $Z_{G,\Lambda} g$ is continuous, $Z_{G,\Lambda} g$ must have a zero.
    \item Every continuous, quasiperiodic function $F$ on $\tfp{G}$ has a zero.
\end{enumerate}
\end{proposition}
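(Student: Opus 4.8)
The plan is to route all three statements through the Zak transform $Z \coloneqq Z_{G,\Lambda}$, using three facts about it. Since $\Lambda$ is a lattice in $G$, the group $\Lambda \times \Lambda^{\perp}$ is a lattice in $\tfp{G}$ with $\vol(\Lambda\times\Lambda^{\perp}) = 1$ by \eqref{eq:vol_lattice_perp}, and $\tfp{G}/(\Lambda\times\Lambda^{\perp})$ is compact. The facts are: \textbf{(a)} after fixing a relatively compact Borel transversal $\Omega$ for $\Lambda\times\Lambda^{\perp}$ in $\tfp{G}$, the map $Z$ is a unitary isomorphism $L^2(G) \to L^2(\Omega)$, whose image consists exactly of the quasiperiodic functions that are locally square integrable; \textbf{(b)} $Z$ maps $S_0(G)$ boundedly into the space $C_{\mathrm{qp}}(\tfp{G})$ of continuous quasiperiodic functions --- the partial sums defining $Z_{G,\Lambda}g$ converge locally uniformly when $g \in S_0(G)$, by a Wiener-amalgam estimate for the Feichtinger algebra --- and moreover $Z(S_0(G))$ is \emph{dense} in $C_{\mathrm{qp}}(\tfp{G})$ for the uniform norm; \textbf{(c)} $Z$ conjugates the Gabor frame operator $S_{g,\Lambda\times\Lambda^{\perp}}$ into multiplication by $|Z_{G,\Lambda}g|^2$, so (using $\vol(\Lambda\times\Lambda^{\perp})=1$ and \Cref{prop:duality_theory}) $\mathcal{G}(g,\Lambda\times\Lambda^{\perp})$ is a frame for $L^2(G)$ precisely when $0 < A \le |Z_{G,\Lambda}g|^2 \le B < \infty$ almost everywhere. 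Facts (a) and the diagonalization in (c) are the standard Zak-transform theory for LCA groups (see \cite{Gr98,JaLe16}).

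Granting (a)--(c), I would prove the cycle $(iii) \Rightarrow (ii) \Rightarrow (i) \Rightarrow (iii)$. For $(iii) \Rightarrow (ii)$: if $g \in L^2(G)$ has $Z_{G,\Lambda}g$ continuous, then $Z_{G,\Lambda}g \in C_{\mathrm{qp}}(\tfp{G})$, so it has a zero by $(iii)$. For $(ii) \Rightarrow (i)$: if some $g \in S_0(G)$ had $\mathcal{G}(g,\Lambda\times\Lambda^{\perp})$ a frame, then $Z_{G,\Lambda}g$ would be continuous by (b) and bounded below in modulus by (c), hence nowhere zero, contradicting $(ii)$. For $(i) \Rightarrow (iii)$ I argue by contraposition: let $F \in C_{\mathrm{qp}}(\tfp{G})$ be nowhere zero; since $\tfp{G}/(\Lambda\times\Lambda^{\perp})$ is compact and $|F|$ descends to it, $m \coloneqq \inf_{\tfp{G}} |F| > 0$ and $\| F\|_{\infty} < \infty$. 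By the density in (b) there is $g \in S_0(G)$ with $\| Z_{G,\Lambda}g - F\|_{\infty} < m/2$, so $m/2 \le |Z_{G,\Lambda}g| \le \| F\|_{\infty} + m/2$ everywhere, and (c) shows $\mathcal{G}(g,\Lambda\times\Lambda^{\perp})$ is a frame for $L^2(G)$ --- contradicting $(i)$.

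The one genuinely non-formal ingredient, and the place I expect the work to be, is the uniform-norm density of $Z(S_0(G))$ in $C_{\mathrm{qp}}(\tfp{G})$ in fact (b); this is exactly what makes $S_0(G)$, rather than merely $L^2(G)$, the correct window class in $(i)$, and it does not follow from the $L^2$-unitarity of $Z$ alone. The route I would take is to identify $Z(S_0(G))$ with the quasiperiodic functions that are ``locally in $S_0$'' (those whose product with a fixed compactly supported bump on $\tfp{G}$ lies in $S_0(\tfp{G})$), and then note that this space is a module over the trigonometric polynomials on the compact group $\tfp{G}/(\Lambda\times\Lambda^{\perp})$ --- multiplication by such a polynomial corresponds to a finite linear combination of time-frequency shifts, which preserves $S_0(G)$ --- so that, combined with the unitarity of $Z$, Stone--Weierstrass closes the gap. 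A more hands-on alternative, starting from a nowhere-vanishing $F = Z_{G,\Lambda}g_0$ with $g_0 \in L^2(G)$, is to regularize $g_0$ by a time-frequency averaging operator with an $S_0(\tfp{G})$-symbol supported near the identity: such operators are smoothing enough to map $L^2(G)$ into $S_0(G)$, and as the symbol concentrates the Zak transform of the regularization tends uniformly to the continuous function $F$ and hence stays nowhere zero. (When $G$ has compact identity component one even has explicit nowhere-vanishing elements of $Z(S_0(G))$, for instance the Zak transforms of the windows produced in \Cref{lem:compact_times_discrete}.) The remaining points --- existence of the Borel transversal, the amalgam bound behind the continuity in (b), and the multiplier form of the frame operator in (c) --- are routine, and the whole argument is carried out in \cite[Theorem 5.5]{En19}.
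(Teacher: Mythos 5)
The paper does not actually prove this proposition: it is quoted from \cite[Theorem 5.5]{En19}, with the only added remark being that the second countability hypothesis of that reference is not needed. So there is no in-paper argument to compare against line by line; your sketch is a reconstruction, and it follows the same standard Zak-transform route that the cited reference takes (frame at the critical lattice $\Lambda\times\Lambda^{\perp}$ $\iff$ $0<A\le|Z_{G,\Lambda}g|^2\le B$ a.e., continuity of $Z_{G,\Lambda}g$ for $g\in S_0(G)$, and an approximation step to pass from a nowhere-vanishing continuous quasiperiodic $F$ to an $S_0$-window). Your facts (a) and (c) and the implications $(iii)\Rightarrow(ii)\Rightarrow(i)$ are fine, and you correctly isolate the only real work: the uniform-norm density of $Z(S_0(G))$ in the continuous quasiperiodic functions (equivalently, producing an $S_0$ window with nowhere-vanishing Zak transform from a nowhere-vanishing continuous quasiperiodic function).

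One soft spot in how you propose to close that gap: in the Stone--Weierstrass/module argument, ``unitarity of $Z$'' is not the right auxiliary input. The sup-norm closure of $Z(S_0(G))$ is indeed a closed module over $C(X)$, $X=(\tfp{G})/(\Lambda\times\Lambda^{\perp})$, since multiplication by a character of $X$ corresponds (up to phase) to applying $\pi(\lambda,\tau)$ with $(\lambda,\tau)\in\Lambda\times\Lambda^{\perp}$, which preserves $S_0(G)$. But a sup-norm-closed $C(X)$-submodule of the continuous quasiperiodic functions can be $L^2$-dense and still be proper (e.g.\ all sections vanishing on a fixed closed null set), so $L^2$-density alone does not force the closure to be everything. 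What you need instead is a pointwise non-degeneracy statement: for every point $(x,\omega)$ of $\tfp{G}$ there is $g\in S_0(G)$ with $Z_{G,\Lambda}g(x,\omega)\neq 0$. This is easy --- take $g\in S_0(G)$ nonnegative, supported in a neighbourhood of $x$ small enough (using discreteness of $\Lambda$) that only the term $\lambda=1$ contributes to $\sum_{\lambda}g(x\lambda)\omega(\lambda)$ --- and then a partition-of-unity argument over the compact group $X$ upgrades pointwise non-degeneracy plus the module property to uniform density, which is exactly what your $(i)\Rightarrow(iii)$ contrapositive requires. With that replacement your outline is complete; the regularization alternative you mention would also work but needs a genuine equicontinuity/uniform-convergence argument for the Zak transforms, which you have not supplied.
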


Combining the above proposition with \Cref{thm:balian_low} and \Cref{thm:blt_failure}, we obtain the following result:

\begin{theorem}\label{thm:zak_zeros}
For any locally compact abelian group $G$ and any lattice $\Lambda$ in $G$ the following statements are equivalent:
\begin{enumerate}
    \item The identity component of $G$ is noncompact.
    \item Every continuous, $\Lambda$-quasiperiodic function on $\tfp{G}$ has a zero. In particular, whenever $Z_{G,\Lambda} g$ is continuous for $g \in L^2(G)$, then $Z_{G,\Lambda} g$ has a zero.
\end{enumerate}
\end{theorem}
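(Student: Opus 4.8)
The plan is to obtain the equivalence directly from the structural dichotomy of \Cref{thm:idcomp_equivalences} together with the two Balian--Low-type theorems \Cref{thm:balian_low} and \Cref{thm:blt_failure}, translated into statements about the Zak transform by means of \Cref{prop:zak}. The one preliminary point to record is that for any lattice $\Lambda$ in $G$, the product $\Lambda \times \Lambda^{\perp}$ is a lattice in $\tfp{G}$ (since $\Lambda^{\perp}$ is a lattice in $\widehat{G}$), and that its volume, computed with respect to the canonical product measure on $\tfp{G}$, equals $\vol(\Lambda)\vol(\Lambda^{\perp}) = 1$ by \eqref{eq:vol_lattice_perp}. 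Thus the lattice $\Delta = \Lambda \times \Lambda^{\perp}$ satisfies the volume-$1$ hypothesis of \Cref{thm:balian_low} without any rescaling.

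For the implication (i) $\Rightarrow$ (ii), I would assume the identity component of $G$ is noncompact and apply \Cref{thm:balian_low} to the lattice $\Delta = \Lambda \times \Lambda^{\perp}$: no $g \in S_0(G)$ makes $\mathcal{G}(g,\Lambda \times \Lambda^{\perp})$ a frame for $L^2(G)$. This is exactly condition (i) of \Cref{prop:zak}, so conditions (ii) and (iii) of that proposition hold as well, which is precisely statement (ii) of the theorem (every continuous $\Lambda$-quasiperiodic function has a zero, and in particular every continuous Zak transform $Z_{G,\Lambda}g$ has a zero).

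For the converse I would argue by contraposition: assume the identity component of $G$ is compact. Then \Cref{thm:blt_failure} produces a $g \in S_0(G)$ for which $\mathcal{G}(g, \Lambda \times \Lambda^{\perp})$ is an orthonormal basis for $L^2(G)$, hence in particular a Gabor frame. So condition (i) of \Cref{prop:zak} fails, and therefore condition (ii) of \Cref{prop:zak} fails as well: there is some $g \in L^2(G)$ with $Z_{G,\Lambda}g$ continuous and nowhere vanishing. Hence statement (ii) of the theorem fails, and the equivalence is established.

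There is no genuinely hard step here: all the analytic work sits inside \Cref{thm:balian_low}, \Cref{thm:blt_failure}, and \Cref{prop:zak}. The only items requiring a moment's attention are the bookkeeping that $\Lambda \times \Lambda^{\perp}$ has volume $1$ in $\tfp{G}$ — so that \Cref{thm:balian_low} applies verbatim — and the observation, already noted before \Cref{prop:zak}, that it is valid for arbitrary (not necessarily second countable) locally compact abelian groups, so that no extra countability assumption on $G$ needs to be imposed.
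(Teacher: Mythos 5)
Your proposal is correct and follows essentially the same route as the paper's proof: the volume-one observation for $\Lambda \times \Lambda^{\perp}$ via \eqref{eq:vol_lattice_perp}, then \Cref{thm:balian_low} together with \Cref{prop:zak} for one direction and \Cref{thm:blt_failure} together with \Cref{prop:zak} for the contrapositive of the other.
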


\begin{proof}
Note that by \eqref{eq:vol_lattice_perp}, $\Delta = \Lambda \times \Lambda^{\perp}$ is a lattice of volume $1$ in $\tfp{G}$. If $G$ has noncompact identity component, it follows from \cref{thm:balian_low} that there is no $g \in S_0(G)$ such that $\mathcal{G}(g,\Delta)$ is a Gabor frame. By \Cref{prop:zak}, this is equivalent to every continuous, quasiperiodic function having a zero. On the other hand, if $G$ has compact identity component then by \Cref{thm:blt_failure}, there exists $g \in S_0(G)$ such that $\mathcal{G}(g,\Lambda \times \Lambda^{\perp})$ is an orthonormal basis for $L^2(G)$. By \Cref{prop:zak}, this implies that some continuous, quasiperiodic function is nowhere zero.
\end{proof}

This removes the assumption of compact generatedness from the main result of \cite{KaKu98}, although it should be remarked that there is no assumption of second-countability in \cite{KaKu98}.

\section{Gabor analysis on the adeles of a global field}\label{sec:adeles}

In this section, we introduce the higher dimensional $S$-adeles, show the existence of Gabor frames over these groups and apply our main results to this class of locally compact abelian groups.

\subsection{The adeles of a global field}

Let $K$ be a field. An \emph{absolute value} on $K$ is a function $| \,\cdot\, | \colon K \to [0,\infty)$ that satisfies the following axioms:
\begin{enumerate}[label=(\roman*)]
    \item For all $x,y \in K$, we have that
    \begin{equation}
         |x+y| \leq |x| + |y| . \label{eq:triangle}
    \end{equation}
    \item For all $x,y \in K$, we have that
    \begin{equation}
         |xy| = |x| |y| .
    \end{equation}
    \item For all $x \in K$, we have that $|x| = 0$ implies $x=0$.
\end{enumerate}

The function $| \cdot |_0$ on $K$ given by
\begin{equation}
    | x |_0 = \begin{cases}
       0 &\quad \text{if $x = 0$},  \\
       1 &\quad \text{if $x \neq 0$} ,\\ 
     \end{cases}
\end{equation}
is an absolute value called the \emph{trivial absolute value}. Two absolute values $| \cdot |_1$ and $| \cdot |_2$ on $K$ are called \emph{equivalent} if there exists $s \geq 0$ such that $| x |_1 = |x|_2^{s}$ for all $x \in K$. This is an equivalence relation on the set of absolute values on $K$. It is easy to establish that an absolute value equivalent to the trivial absolute value is also trivial. A \emph{place} is an equivalence class of nontrivial absolute values on $K$.

An absolute value $| \cdot |$ on $K$ induces a metric on $K$ via
\begin{equation}
    d(x,y) = |x-y|
\end{equation}
for $x,y \in K$. One can show that two absolute values on $K$ are equivalent if and only if their induced metrics define the same topology on $K$. The metric associated to the trivial absolute value is the discrete metric on $K$.

An absolute value $| \cdot |$ on $K$ is called \emph{non-archimedean} if
\begin{equation}
    |x+y| \leq \max \{ |x|, |y| \} \label{eq:ultrametric}
\end{equation}
for all $x,y \in K$. Note that \eqref{eq:ultrametric} implies \eqref{eq:triangle}. An absolute value is called \emph{archimedean} if it is not non-archimedean. Note that if $| \cdot|_1$ and $|\cdot|_2$ are equivalent absolute values then both or neither of them are  non-archimedean. Hence, it makes sense to call a place on $K$ non-archimedean (or archimedean).

A \emph{global field} is a finite field extension of either $\Q$ (in which case it is called a number field) or the field $\F_q(t)$ of rational functions over the finite field $\F_q$ of $q$ elements (in which case it is called a function field).

On $\Q$, the usual Euclidean absolute value $| \cdot |_{\infty}$ given by
\[ |x|_{\infty} = \begin{cases}
       x &\quad \text{if $x \geq 0$},  \\
       -x &\quad \text{if $x < 0$}, \\ 
     \end{cases} \]
is archimedean. For each prime number $p$, there is a non-archimedean absolute value, namely the $p$-adic absolute value $| \cdot |_p$, given by
\[ |x|_p = p^{-k} \]
whenever $x = p^k(a/b)$ where $a,b \in \Z$ and $p$ does not divide $a$ or $b$. One also sets $|0|_p = 0$.

On $K = \F_q(t)$ for $q = p^r$, there is a non-archimedean absolute value $| \cdot |_{\infty}$ given by
\begin{equation}
    \left| \frac{f}{g} \right|_{\infty} = p^{\deg(f) - \deg(g)}
\end{equation}
for $f/g \in \F_q(t)$. Furthermore, for each irreducible polynomial $P \in \F_q[t]$, there is a non-archimedean absolute value $ |\cdot|_P$ given by
\begin{equation}
    \left| h \right|_P = p^{-k}
\end{equation}
whenever $h = P^k (f/g)$ where neither $f$ nor $g$ is divisible by $P$. One also sets $|0|_P = 0$.

The following result is proved in \cite[Theorem 4.30]{RaVa99}.

\begin{proposition}\label{prop:places_prime_fields}
The following hold:
\begin{enumerate}
    \item The Euclidean absolute value $| \cdot|_{\infty}$ and the $p$-adic absolute values $|\cdot|_p$ for $p$ prime give a complete set of representatives for all the places on $K = \Q$.
    \item The absolute value $| \cdot |_{\infty}$ and the absolute values $| \cdot |_P$ for an irreducible polynomial $P$ give a complete set of representatives for all the places on $K = \F_q(t)$.
\end{enumerate}
\end{proposition}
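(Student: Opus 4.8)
This is Ostrowski's theorem for $\Q$ together with its function field analogue, and the plan is to prove it directly. In both parts the easy half is to check that the listed functions are genuinely absolute values (axioms (i)--(iii)) and that they are pairwise inequivalent, which already implies they form a \emph{complete} set of representatives once we know every place is among them. Inequivalence is immediate: for distinct primes $p \neq p'$ one has $|p|_p < 1 = |p|_{p'}$, and likewise $|P|_P < 1 = |P|_Q$ for distinct irreducibles $P, Q$; moreover $|\cdot|_\infty$ on $\Q$ is archimedean while the $p$-adic ones are not, and on $\F_q(t)$ the value $|\cdot|_\infty$ is unbounded on $\F_q[t]$ whereas each $|\cdot|_P$ is bounded by $1$ there. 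So the content is that every place is represented.

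For $K = \Q$, take a nontrivial absolute value $|\cdot|$ and split into cases. If $|\cdot|$ is non-archimedean, then $|n| \leq 1$ for all $n \in \Z$ (apply the ultrametric inequality to $n = 1 + \cdots + 1$), so $I = \{ n \in \Z : |n| < 1 \}$ is an ideal of $\Z$; it is proper because $|1| = 1$, nonzero because $|\cdot|$ is nontrivial, and prime because $|a|, |b| \leq 1$ together with $|ab| < 1$ forces $|a| < 1$ or $|b| < 1$. Hence $I = p\Z$, and writing a nonzero rational as $p^k a/b$ with $p \nmid ab$ gives $|a| = |b| = 1$ and so $|x| = |p|^k$ with $0 < |p| < 1$; setting $|p| = p^{-s}$ yields $|x| = |x|_p^s$, i.e.\ $|\cdot| \sim |\cdot|_p$. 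The archimedean case is the one step requiring real care: fix integers $m, n \geq 2$, expand $n^N$ in base $m$, bound $|n^N|$ by a factor polynomial in $N$ times $\max(1,|m|)^{N \log n / \log m}$, take $N$-th roots and let $N \to \infty$ to obtain $|n| \leq \max(1,|m|)^{\log n/\log m}$, and then play $m$ against $n$ to conclude $|n| = n^s$ for a fixed $s \in (0,1]$, so $|\cdot| \sim |\cdot|_\infty$. I expect this base-$m$ estimate to be the only genuinely delicate point.

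For $K = \F_q(t)$ with $q = p^r$ the situation is cleaner. Since $\F_q^\times$ consists of roots of unity, $|\cdot|$ is identically $1$ on $\F_q^\times$; an $n$-th power argument, using that the binomial coefficients lie in the prime field and hence have absolute value $\leq 1$, then shows that \emph{every} absolute value on $\F_q(t)$ is automatically non-archimedean. Now split according to whether $|t| \leq 1$ or $|t| > 1$. If $|t| \leq 1$, the ultrametric inequality gives $|f| \leq 1$ for all $f \in \F_q[t]$, so exactly as over $\Q$ the set $I = \{ f \in \F_q[t] : |f| < 1 \}$ is a nonzero prime ideal of the principal ideal domain $\F_q[t]$, hence $I = (P)$ for an irreducible $P$; factoring out the largest power of $P$ and cancelling gives $|\cdot| \sim |\cdot|_P$. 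If $|t| > 1$, then for $f = \sum_{i=0}^n a_i t^i$ with $a_n \neq 0$ the numbers $|a_i t^i| = |t|^i$ are distinct, so the equality case of the ultrametric inequality forces $|f| = |t|^{\deg f}$; therefore $|f/g| = |t|^{\deg f - \deg g}$, and writing $|t| = p^s$ with $s > 0$ shows $|\cdot| \sim |\cdot|_\infty$. Combined with the pairwise inequivalence noted above, this yields complete sets of representatives in both cases.
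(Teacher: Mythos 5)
Your proposal is correct, but note that the paper does not prove this proposition at all: it is quoted as a known result (Ostrowski's theorem and its function-field analogue) with a citation to Ramakrishnan--Valenza \cite[Theorem 4.30]{RaVa99}, so your self-contained argument is supplying a proof the paper deliberately omits. What you write is essentially the classical Ostrowski argument, and all the main steps are sound: the prime-ideal argument in the non-archimedean case over $\Q$, the base-$m$ expansion estimate in the archimedean case, the observation that every absolute value on $\F_q(t)$ is non-archimedean because $\F_q^\times$ consists of roots of unity and the binomial coefficients lie in the prime field, and the two cases $|t|\leq 1$ (prime ideal of $\F_q[t]$) and $|t|>1$ (equality case of the ultrametric inequality forcing $|f|=|t|^{\deg f}$). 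The only point you gloss over on the rational side is the step hidden in ``play $m$ against $n$'': to run that symmetrization you must first know that in the archimedean case $|m|>1$ for \emph{every} integer $m\geq 2$, which requires the lemma that an absolute value satisfying $|n|\leq 1$ for all $n\in\Z$ is automatically non-archimedean (the same $N$-th power/binomial trick you invoke explicitly for $\F_q(t)$, using $|\binom{N}{k}|\leq N+1$-type bounds and letting $N\to\infty$). Since you clearly have that tool in hand, this is a presentational gap rather than a mathematical one; spelling it out would make the archimedean case complete. Your pairwise-inequivalence checks (via $|x|_1<1\iff|x|_2<1$ for equivalent absolute values, and archimedean versus bounded-on-the-ring behaviour) are also fine and match the paper's definition of equivalence.
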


If $v$ is a place on a global field $K$, then we denote by $K_v$ the completion of $K$ with respect to the metric induced by $v$. This will be a locally compact field, i.e.\ a field with a nondiscrete topology for which all the algebraic operations are continuous.

For a global field $K$, let $F$ denote either $\Q$ or $\F_q(t)$ depending on which field $K$ is an extension of. If $v$ is a place on $K$, then one obtains a place $v|_F$ on $F$ by restriction. This place is then classified by \cref{prop:places_prime_fields}. If $F = \F_q(t)$, then we say that $v$ is \emph{infinite} if the restriction of $v$ to $F$ is equivalent to the place given by the absolute value $|\cdot|_\infty$ on $\F_q(t)$. Otherwise, $v$ is called \emph{finite}. If $F = \Q$, we simply define the infinite places to be the archimedean ones, and the infinite places to be the non-archimedean ones. We write $v \mid \infty$ to indicate that $v$ is an infinite place, and $v \nmid \infty$ if it is finite.

For a finite place $v$ on $K$, we set
\[ \algint_v = \{ x \in K_v : |x|_v \leq 1 \} \]
where $| \cdot |_v$ is any representative of $v$. Note that the definition of $\algint_v$ is independent of the choice of absolute value. Also, by \eqref{eq:ultrametric}, $\algint_v$ is a subring of $K_v$. One also sets
\[ \algint = \bigcap_{v \nmid \infty} \algint_v. \]
Another description of $\algint$ is as the integral closure of $\Q$ in $K$ if $K$ is a finite extension of $\Q$, and the integral closure of $\F_q(t)$ in $K$ if $K$ is a finite extension of $\F_q(t)$. We set
\begin{equation}
    K_{\infty} = \prod_{v \mid \infty} K_v .
\end{equation}
Note that this is a finite product, since a global field has only finitely many infinite places (this is a consequence of e.g.\ \cite[Theorem 4.31]{RaVa99}). If $S$ is a set of finite places of $K$, we define $K_S$ to be the following restricted product (\cite[p.\ 180]{RaVa99}):
\begin{equation}
    K_S = \resprod{v \in S}{K_v}{\algint_v} = \left\{ (x_v)_v \in \prod_{v \in S} K_v : \text{$x_v \in \algint_v$ for all but finitely many $v \in S$} \right\} .
\end{equation}
Finally, we define the \emph{$S$-adeles} associated to $K$ to be the locally compact ring
\begin{equation}
    \sadeles{K}{S} = K_{\infty} \times K_S .
\end{equation}
In particular, the \emph{adeles} associated to $K$ is $\adeles{K} = \sadeles{K}{\Sigma}$ with $\Sigma$ being the set of all places on $K$. Note also that $\sadeles{K}{\emptyset} = K_{\infty}$.

\begin{proposition}\label{prop:adeles_power}
Let $F$ be a global field, and let $E$ be a finite extension of $F$, with degree $d = [E:F]$. Then we have that
\[ \sadeles{K}{S} \cong \sadeles{F}{S}^d \]
as topological groups.
\end{proposition}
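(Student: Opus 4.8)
The plan is to exhibit $\sadeles{E}{S}$ as a free $\sadeles{F}{S}$-module of rank $d$ in a way that respects the restricted-product topology, using an $F$-basis of $E$; regrouping the $d$ copies then produces $\sadeles{F}{S}^{d}$. (I read the statement with $E$ in place of $K$, and identify $S$ with the set of finite places of $E$ lying above the given finite places of $F$.) The starting point is the local base-change isomorphism: for every place $v$ of $F$ there is an isomorphism of topological $F_v$-algebras $E \otimes_F F_v \cong \prod_{w \mid v} E_w$, the product running over the places $w$ of $E$ above $v$ (standard; see \cite{RaVa99}). Fix an $F$-basis $\omega_1, \dots, \omega_d$ of $E$. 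Since a global field has only finitely many infinite places, $E_\infty = \prod_{w \mid \infty} E_w$ is a finite product, so the local isomorphism gives $E_\infty \cong E \otimes_F F_\infty \cong F_\infty^{d}$ as topological groups via $(x^{(1)}, \dots, x^{(d)}) \mapsto \sum_i \omega_i x^{(i)}$; this is a homeomorphism because every finite-dimensional topological vector space over an (archimedean or non-archimedean) local field carries the product topology.

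For the finite part, the key lemma is that for all but finitely many finite places $v$ of $F$, the family $\omega_1, \dots, \omega_d$ is an $\algint_v$-basis of $\prod_{w \mid v} \algint_w$. This is standard algebraic number theory: after rescaling we may assume each $\omega_i \in \algint_E$, the integral closure of $\algint$ in $E$, and then $\bigoplus_i \algint_v \omega_i$ agrees with $\prod_{w\mid v}\algint_w$ at every $v$ not dividing the relevant discriminant, which excludes only finitely many places. Granting this, I would define $\Phi$ on the $S$-part place by place: for $v \in S$ send the block $(x_v^{(1)}, \dots, x_v^{(d)}) \in F_v^{d}$ to $\sum_i \omega_i x_v^{(i)} \in E \otimes_F F_v \cong \prod_{w\mid v} E_w$. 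Each block map is a topological $F_v$-isomorphism, and by the lemma it carries $\algint_v^{d}$ onto $\prod_{w\mid v}\algint_w$ for all but finitely many $v \in S$; hence $\Phi$ restricts to a bijection of the restricted product $F_S^{d}$ onto the restricted product $E_S$ which is a homeomorphism for the restricted-product topologies.

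Assembling the pieces, $\sadeles{E}{S} = E_\infty \times E_S \cong F_\infty^{d} \times F_S^{d} \cong (F_\infty \times F_S)^{d} = \sadeles{F}{S}^{d}$, where the middle step merely regroups the $2d$ factors and each step is simultaneously a group isomorphism and a homeomorphism. I expect the main obstacle to be the ``almost all places'' integral-basis lemma together with the bookkeeping needed to verify that $\Phi$ preserves the restricted-product structure; the local decomposition $E \otimes_F F_v \cong \prod_{w\mid v}E_w$ should also be quoted with a little care, being cleanest for separable $E/F$ (which in particular covers all number fields). Everything else is routine.
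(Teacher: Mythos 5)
Your proof is correct and is essentially the argument the paper relies on: the paper's own ``proof'' is just a citation of \cite[Lemma 5.10]{RaVa99} together with the remark that it works verbatim for general $S$, and that cited argument is precisely the one you reconstruct (an $F$-basis of $E$, the local decomposition $E \otimes_F F_v \cong \prod_{w \mid v} E_w$, the fact that this basis is an $\algint_v$-basis of $\prod_{w \mid v} \algint_w$ at all but finitely many places, and the resulting identification of restricted products), with your reading of the statement --- $E$ in place of $K$, and $S$ on the left meaning the places of $E$ lying above the chosen places of $F$ --- being the intended one. The separability caveat you flag is the only soft spot (for an inseparable extension of function fields one needs the additional fact that completions of global fields are separable extensions of the global field, or a replacement for the discriminant argument, which vanishes there), but it does not affect separable extensions or number fields, the only cases the paper actually uses.
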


In the case where $S$ is the set of all places, a proof of the above is given in \cite[Lemma 5.10]{RaVa99}, and this works verbatim also for general $S$. The above proposition shows that additively speaking, we do not get anything more by considering powers of the adeles of a finite extension of $\Q$ or $\F_p(t)$ rather than just the fields $\Q$ and $\F_p(t)$ themselves. We will therefore focus on these two cases.

We also set
\begin{equation}
    \srationals{K}{S} = \bigcap_{v \notin S} \algint_v = \{ x \in K : \text{$|x|_v \leq 1$ for all $v \notin S$}. \}
\end{equation}
Note that $\srationals{K}{\Sigma} = K$ and $\srationals{K}{\emptyset} = \algint$.

\begin{proposition}\label{prop:is_lattice}
The map $\srationals{K}{S} \to \sadeles{K}{S}$ given by $x \mapsto (x)_{v \in S}$ is an injective ring homomorphism with discrete range in $\sadeles{K}{S}$. Moreover, identifying $\srationals{K}{S}$ as a discrete subgroup of $\sadeles{K}{S}$ with this map, we have that
\[ \sadeles{K}{S} / \srationals{K}{S} .\]
is compact.
In particular, $\srationals{K}{S}$ embeds as a lattice in $\sadeles{K}{S}$.
\end{proposition}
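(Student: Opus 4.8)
The plan is to check the three assertions separately: that $\iota\colon x\mapsto (x,(x)_{v\in S})\in K_\infty\times K_S$ is a well-defined injective ring homomorphism, that $\iota(\srationals{K}{S})$ is discrete in $\sadeles{K}{S}$, and that the quotient $\sadeles{K}{S}/\iota(\srationals{K}{S})$ is compact; the final assertion is then just the definition of a lattice. Throughout I write $\algint=\bigcap_{v\nmid\infty}\algint_v$ for the ring of integers of $K$, so that $\algint\subseteq\srationals{K}{S}$ for every $S$; in the two cases of interest $\algint=\Z$ (for $K=\Q$) and $\algint=\F_p[t]$ (for $K=\F_p(t)$), and $K_\infty$ denotes the completion at the unique infinite place. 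Well-definedness of $\iota$ amounts to the statement that $(x)_{v\in S}$ lies in the \emph{restricted} product $K_S$, which is immediate from the standard fact that every $x\in K^\times$ satisfies $|x|_v\le 1$ for all but finitely many places $v$; that $\iota$ is a ring homomorphism is clear, and it is injective because $K$ embeds into $K_\infty$.

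For discreteness, set $V=\{y\in K_\infty:|y|_\infty<1\}\times\prod_{v\in S}\algint_v$. This is an open neighbourhood of $0$ in $\sadeles{K}{S}=K_\infty\times K_S$, since $\prod_{v\in S}\algint_v$ is the basic open subgroup of $K_S$ attached to the empty finite set of places. If $x\in\srationals{K}{S}$ with $\iota(x)\in V$, then $|x|_v\le 1$ for every $v\in S$ (from the second coordinate of $V$) and $|x|_v\le 1$ for every finite $v\notin S$ (by the definition of $\srationals{K}{S}$), whence $x\in\algint$; but then $|x|_\infty<1$ forces $x=0$, because a nonzero element of $\Z$ (resp.\ of $\F_p[t]$) has absolute value $\ge 1$ at the infinite place. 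Thus $V\cap\iota(\srationals{K}{S})=\{0\}$, so $\iota(\srationals{K}{S})$ is discrete.

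For cocompactness it suffices to exhibit a compact set $C\subseteq\sadeles{K}{S}$ with $\sadeles{K}{S}=\iota(\srationals{K}{S})+C$, since then the continuous quotient map restricts to a surjection $C\to\sadeles{K}{S}/\iota(\srationals{K}{S})$. Take $C=C_\infty\times\prod_{v\in S}\algint_v$, where $C_\infty\subseteq K_\infty$ is a compact set whose image in $K_\infty/\algint$ is everything — e.g.\ $C_\infty=[0,1]$ for $K=\Q$ and $C_\infty=\algint_\infty$ for $K=\F_p(t)$ — and note that $\prod_{v\in S}\algint_v$ is compact by Tychonoff. Given $w=(w_\infty,(w_v)_{v\in S})\in\sadeles{K}{S}$, let $T\subseteq S$ be the finite set of places with $w_v\notin\algint_v$. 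For each $v\in T$ I choose a \emph{principal part} $r_v\in K$ with $w_v-r_v\in\algint_v$, with $r_v\in\algint_{v'}$ for every finite $v'\ne v$, and with $|r_v|_\infty\le 1$: writing $w_v$ in a uniformizer expansion at $v$, one takes $r_v$ to be the finite sum of the terms of negative index, viewed as an element of $K$ whose only denominator — equivalently, whose only finite pole — is at $v$. Put $r=\sum_{v\in T}r_v\in K$; then $r\in\algint_{v'}$ for every finite $v'\notin S$, so $r\in\srationals{K}{S}$, and one checks directly that $w_v-r\in\algint_v$ for every $v\in S$ (for $v\in T$ write $w_v-r=(w_v-r_v)-\sum_{v'\in T\setminus\{v\}}r_{v'}$; for $v\in S\setminus T$ use $w_v\in\algint_v$ and $r_{v'}\in\algint_v$ for all $v'$). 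Finally pick $\phi\in\algint$ with $w_\infty-r-\phi\in C_\infty$; since $\phi\in\algint\subseteq\srationals{K}{S}$ and $\phi\in\algint_v$ for every finite $v$, subtracting $\iota(r+\phi)$ from $w$ leaves the $S$-coordinates in $\prod_{v\in S}\algint_v$ and moves the infinite coordinate into $C_\infty$. Hence $w-\iota(r+\phi)\in C$, so $\sadeles{K}{S}=\iota(\srationals{K}{S})+C$ with $C$ compact, and $\sadeles{K}{S}/\iota(\srationals{K}{S})$ is compact. Combining the three steps, $\iota(\srationals{K}{S})$ is a lattice in $\sadeles{K}{S}$.

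I expect the cocompactness step to be the main obstacle — concretely, the construction of the principal parts $r_v$ with the precise global integrality properties, together with treating the archimedean infinite place of $\Q$ and the non-archimedean infinite place of $\F_p(t)$ on the same footing (handled above by passing to a compact set of representatives $C_\infty$ for $K_\infty/\algint$ rather than an explicit interval). The remaining ingredients are the standard global-field facts that $|x|_v\le 1$ for almost all $v$ and that $\bigcap_{v\nmid\infty}\algint_v\cap K$ equals $\Z$ (resp.\ $\F_p[t]$).
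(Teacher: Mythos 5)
Your argument is correct as far as it goes, and it is essentially the proof the paper points to (the proof of \cite[Lemma 5.11]{RaVa99}, adapted to the sub-restricted-product over $S$): the diagonal embedding including the infinite component, the box neighbourhood $\{|y|_\infty<1\}\times\prod_{v\in S}\algint_v$ for discreteness, and principal parts together with the compact box $C_\infty\times\prod_{v\in S}\algint_v$ for cocompactness. All the verifications you carry out for $K=\Q$ and $K=\F_p(t)$ check out (incidentally, the side condition $|r_v|_\infty\le 1$ is never used, since the archimedean error is absorbed by choosing $\phi\in\algint$ with $w_\infty-r-\phi\in C_\infty$), and you correctly identify that including the infinite place is what makes discreteness work -- exactly the point the paper's proof emphasizes.

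The genuine gap is one of generality: the proposition is stated for an arbitrary global field $K$, and the paper later uses it in that generality (e.g.\ compactness of $\sadeles{K}{S}/\srationals{K}{S}$ for an arbitrary number field $K$ in the proof that $\sadeles{K}{S}/\srationals{K}{S}\cong\widehat{\srationals{K}{S}}$), whereas your proof covers only the prime global fields with their unique infinite place. Two of your steps fail beyond that case as written. First, discreteness rests on ``a nonzero element of $\algint$ has $|x|_\infty\ge 1$ at the infinite place,'' which at a \emph{single} archimedean place is false for a general number field (e.g.\ $1-\sqrt{2}\in\Z[\sqrt{2}]$ has absolute value $<1$ at one real embedding); with several infinite places one must shrink the neighbourhood at all of them and invoke the product formula, or the fact that $|N_{K/\Q}(x)|\ge 1$ for nonzero $x\in\algint$. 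Second, cocompactness needs a compact $C_\infty\subseteq K_\infty=\prod_{v\mid\infty}K_v$ with $\algint+C_\infty=K_\infty$, i.e.\ that $\algint$ is cocompact in $K_\infty$; this is trivial for $\Z\subset\R$ and $\F_q[t]\subset\F_q((1/t))$ but is a genuine theorem (Minkowski-type, resp.\ its function-field analogue) in general, and it is not supplied. (The principal-part step also needs a word for general $K$: for a non-principal prime one should note that the ring of elements of $K$ integral outside $v$ is still dense in $K_v$.) To close the gap, either add these ingredients, or reduce to the prime field via \Cref{prop:adeles_power} as the cited reference does -- but then you must also check that the isomorphism $\sadeles{K}{S}\cong\sadeles{F}{S}^d$ carries $\srationals{K}{S}$ onto $\srationals{F}{S}^d$, which your write-up does not address.
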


\begin{proof}
This proof is very similar, more or less verbatim, to \cite[Lemma~5.11]{RaVa99}, just consider any nonempty set of places instead of all places. In both the number field and the function field case it is important that the infinite place is included.
\end{proof}

\begin{remark}
It is known that $\adeles{K} / \rationals{K} \cong \widehat{\rationals{K}}$, see e.g., \cite[Proposition~7.15]{RaVa99}. Moreover, a consequence of the classification result that we will prove in the next subsection is that for any number field $K$ and set of places $S$, we have
\[
\sadeles{K}{S} / \srationals{K}{S} \cong \widehat{\srationals{K}{S}}
\]
For $K=\Q$ there is a direct proof of this in \cite[Theorem~3.3]{KOQ14}.
\end{remark}

We also consider higher dimensional variants of the $S$-adeles. Note that
\[ \sadeles{K}{S}^n \cong K_{\infty}^n \times \resprod{v \in S}{K_v^n}{\algint_v^n} \]
for every $n \in \N$. As a consequence of \Cref{prop:is_lattice}, we have that $\srationals{K}{S}^n$ embeds as a lattice in $\sadeles{K}{S}^n$ for every $n \in \N$. We typically write elements of $\sadeles{K}{S}^n$ as $(x_{\infty}, (x_v)_{v \in S})$ where $x_{\infty} \in K_{\infty}^n$ and $x_v \in K_v^n$ for each $v \in S$.

\subsection{Classification of lattices in the $S$-adeles over the rationals}
Suppose $K = \Q$. Then by \Cref{prop:places_prime_fields}, there is exactly one infinite place on $\Q$, and it is represented by the Euclidean absolute value. Consequently, $K_{\infty} = \R$. If $v$ is a finite place, then it is represented by the $p$-adic absolute value for some prime number $p$. It follows that $K_v = \Q_p$. Furthermore, $\algint_v = \Z_p$ and $\algint = \Z$. If $S$ is a set of finite places, we think of $S$ as a set of primes. In that case,
\[ \Q_S = \resprod{p \in S}{\Q_p}{\Z_p} . \]
The ring of $S$-adeles associated to $\Q$ becomes
\[ \sadeles{\Q}{S} = \R \times \Q_S .\]
We also get
\[ \srationals{\Q}{S} = \Z[\tfrac{1}{p} : p \in S] ,\]
the ring extension of $\Z$ by all the rational numbers $1/p$ for $p \in S$.

In the following proposition, we determine the automorphism group of $\sadeles{\Q}{S}^n$.

\begin{proposition}\label{prop:automorphisms}
Let $S$ be a set of prime numbers. Then the map
\[ GL_n(\R) \times \resprod{p\in S}{GL_n(\Q_p)}{GL_n(\Z_p)} \to \Aut(\sadeles{\Q}{S}^n) \]
given by sending $A = (A_{\infty},(A_p)_{p\in S})$ to the automorphism $\alpha$ of $\sadeles{\Q}{S}^n$ given by
\[ \alpha(x_{\infty},(x_p)_{p\in S}) = (A_{\infty} x_{\infty}, (A_p x_p)_{p \in S}) \]
is a topological isomorphism.
\end{proposition}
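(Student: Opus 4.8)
Write $G=\sadeles{\Q}{S}^{n}\cong\R^{n}\times\Q_{S}^{n}$ and $\mathcal{O}=\prod_{p\in S}\Z_{p}^{n}$, a compact open subgroup of $\Q_{S}^{n}$. Call the map in the statement $\Phi$; the domain carries the restricted product topology with respect to the compact open subgroups $GL_{n}(\Z_{p})$. The plan is to show $\Phi$ is a bijective group homomorphism which is a homeomorphism. Well-definedness is exactly where the restricted product condition enters: if $A_{p}\in GL_{n}(\Z_{p})$ for all but finitely many $p$, then $A_{p}(\Z_{p}^{n})=\Z_{p}^{n}$ for those $p$, so $\Phi(A)$ and the candidate inverse $\Phi(A^{-1})$ both preserve $\Q_{S}^{n}$, and continuity of each block on each factor makes $\Phi(A)$ a topological automorphism. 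One checks directly that $\Phi(AB)=\Phi(A)\Phi(B)$, and $\Phi(A)=\id_{G}$ forces $A_{\infty}=I$ and $A_{p}=I$ for all $p$ by evaluating on $\R^{n}$ and on the coordinate subgroups $\Q_{p}^{n}\subseteq\Q_{S}^{n}$; so $\Phi$ is an injective homomorphism.

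\textbf{Surjectivity, first reduction.} Let $\beta\in\Aut(G)$. Since $\Q_{S}^{n}$ has a neighbourhood basis at $0$ consisting of compact open subgroups $\prod_{p\in F}p^{k}\Z_{p}^{n}\times\prod_{p\notin F}\Z_{p}^{n}$ ($F$ finite), it is totally disconnected, so by \eqref{eq:identity_component_intersection} the identity component of $G$ is $\R^{n}\times\{0\}$; being characteristic, it is preserved by $\beta$, and $\beta$ restricts to an element of $\Aut(\R^{n})\cong GL_{n}(\R)$ by \cite[Corollary 3]{We74}, producing $A_{\infty}$. Next, there is no nonzero continuous homomorphism $\Q_{S}^{n}\to\R^{n}$: such a map kills $\mathcal{O}$ (its image is a compact subgroup of $\R^{n}$, hence trivial) and then factors through the torsion group $\Q_{S}^{n}/\mathcal{O}\cong\bigoplus_{p\in S}(\Q_{p}/\Z_{p})^{n}$, so it vanishes. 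Composing $\beta$ with the projection $G\to\R^{n}$ and restricting to $\{0\}\times\Q_{S}^{n}$ thus gives $0$, so $\beta(\{0\}\times\Q_{S}^{n})\subseteq\{0\}\times\Q_{S}^{n}$; combined with the above, $\beta=\beta_{\infty}\times\gamma$ for some $\gamma\in\Aut(\Q_{S}^{n})$.

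\textbf{The crux: $\gamma$ respects the coordinates.} The key is the intrinsic description
\[ \Q_{p}^{n}=\bigl\{\,x\in\Q_{S}^{n}: x\in C \text{ for some compact subgroup } C\le\Q_{S}^{n} \text{ with } \ell C=C \text{ for every } \ell\in S\setminus\{p\}\,\bigr\}. \]
For $\subseteq$, take $C=p^{-k}\Z_{p}^{n}$ with $k$ large; each $\ell\in S\setminus\{p\}$ is a unit in $\Z_{p}$, hence acts invertibly on $C$. For $\supseteq$, if $x$ lies in such a $C$ and $q\in S\setminus\{p\}$, then $\pi_{q}(C)\subseteq\Q_{q}^{n}$ is compact, hence bounded, say $\pi_{q}(C)\subseteq q^{-N}\Z_{q}^{n}$; from $q\,\pi_{q}(C)=\pi_{q}(C)$ we get $\pi_{q}(C)\subseteq q^{-N+j}\Z_{q}^{n}$ for all $j\ge0$, so $\pi_{q}(C)=\{0\}$ and $x_{q}=0$; thus $x\in\Q_{p}^{n}$. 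This description involves only the topological group structure, so it is preserved by every topological automorphism: $\gamma(\Q_{p}^{n})=\Q_{p}^{n}$, and $\gamma|_{\Q_{p}^{n}}\in\Aut(\Q_{p}^{n})\cong GL_{n}(\Q_{p})$ yields $A_{p}$. Since $\gamma$ is continuous, $\gamma^{-1}(\mathcal{O})$ is open and hence contains $\Z_{p}^{n}$ (viewed inside $\Q_{p}^{n}\subseteq\Q_{S}^{n}$) for all but finitely many $p$; as $\gamma(\Z_{p}^{n})\subseteq\gamma(\Q_{p}^{n})\cap\mathcal{O}=\Z_{p}^{n}$, and applying the same to $\gamma^{-1}$, we get $A_{p}\in GL_{n}(\Z_{p})$ for all but finitely many $p$. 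Finally $\gamma$ and $\Phi(I,(A_{p})_{p})$ agree on each $\Q_{p}^{n}$, hence — approximating $x\in\Q_{S}^{n}$ by the truncations obtained by zeroing out all but finitely many coordinates, which converge to $x$ since $x_{p}\in\Z_{p}^{n}$ for almost all $p$, and using continuity of both maps — they agree on all of $\Q_{S}^{n}$. Thus $\beta=\Phi(A_{\infty},(A_{p})_{p})$, so $\Phi$ is surjective.

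\textbf{Homeomorphism, and the main obstacle.} It remains to check that $\Phi$ and $\Phi^{-1}$ are continuous at the identity, which reduces to matching the neighbourhood basis $\{\bra(K,U)\}$ of $\Aut(G)$ (see \Cref{prop:braconnier_def}) with the restricted product neighbourhood basis of the domain; this is straightforward once one observes that every compact subset of $G$ lies in some $K_{\infty}\times\prod_{p\in F}p^{-k}\Z_{p}^{n}\times\prod_{p\notin F}\Z_{p}^{n}$ and that $\mathcal{O}$ is compact open. I expect the main obstacle to be the previous paragraph: pinning down the coordinate subgroups $\Q_{p}^{n}$ purely in terms of the topological group structure of $\Q_{S}^{n}$ so that they are automatically invariant under all automorphisms, while simultaneously extracting from continuity that $A_{p}$ is integral for all but finitely many $p$.
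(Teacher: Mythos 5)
Your argument is correct, but it departs from the paper's proof at both of its key points, and the comparison is instructive. For surjectivity the paper fixes an automorphism and shows that all ``cross'' continuous homomorphisms vanish: $\R^n\to\Q_p^n$ (connectedness), $\Q_p^n\to\R^n$ (exhaustion by compact subgroups), and, crucially, $\Q_p^n\to\Q_{p'}^n$ for $p\neq p'$ (if $h(1)\neq 0$ then $h(\Z_p)$ would be dense in $\Q_{p'}$, contradicting compactness); it then reads off the block form and gets $A_p\in GL_n(\Z_p)$ for almost all $p$ from the restricted-product condition applied to the vectors with every coordinate equal to $e_i$. You instead split off $\R^n$ via the identity component together with a torsion/torsion-free argument for $\mathrm{Hom}(\Q_S^n,\R^n)=0$, and you replace the pairwise vanishing of $\mathrm{Hom}(\Q_p^n,\Q_{p'}^n)$ by an intrinsic characterization: $\Q_p^n$ is exactly the union of the compact subgroups $C\leq\Q_S^n$ with $\ell C=C$ for all $\ell\in S\setminus\{p\}$. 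This is sound --- multiplication by an integer $\ell$ is a purely group-theoretic operation, so the condition is preserved by every topological automorphism, and both of your inclusions check out --- and it is arguably cleaner, since the invariance of each coordinate subgroup becomes manifest rather than the outcome of a case-by-case computation; your order of steps (first $A_p\in GL_n(\Z_p)$ for almost all $p$ from openness of $\gamma^{-1}(\mathcal{O})$, then identification of $\gamma$ with the product map on the dense direct sum) is also slightly tidier than the paper's. The second divergence is topological: the paper proves continuity of $\Phi$ directly but obtains openness softly, from separability and complete metrizability of the automorphism group plus an open mapping theorem for topological groups, whereas you propose to verify both directions by hand against the Braconnier basis. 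That plan does work --- for openness take $K\supseteq\{0\}\times\prod_p\Z_p^n$ together with a large ball in $\R^n$ and $U=U_\infty\times\bigl(\prod_{p\in F}p^{k_p}\Z_p^n\times\prod_{p\notin F}\Z_p^n\bigr)$, and test the condition defining $\bra(K,U)$ on the elements of $\prod_p\Z_p^n$ all of whose coordinates equal $e_i$ --- and it buys a self-contained proof avoiding the Polish-group machinery the paper imports, but it is the one place where your write-up is only a sketch, so the estimate just indicated should be spelled out if you carry the plan through.
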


\begin{proof}
Denote the map in the proposition by $\Phi$. Denote the domain of $\Phi$ by $G$. One easily checks that $\Phi$ is a well-defined injective homomorphism.

Next, we verify that $\Phi$ is surjective. Let $\alpha$ be a topological automorphism of $\sadeles{\Q}{S}^n$. Precomposing with the injections $\R^n \to \sadeles{\Q}{S}^n$, $\Q_p^n \to \sadeles{\Q}{S}^n$ ($p \in S$) and postcomposing with the surjections $\sadeles{\Q}{S}^n \to \R^n$, $\sadeles{\Q}{S}^n \to \Q_p^n$ ($p \in S$), we obtain continuous homomorphisms of the form $\R^n \to \Q_p^n$, $\Q_p^n \to \R^n$ and $\Q_p^n \to \Q_{p'}^n$ for $p,p' \in S$. Now the only continuous homomorphism $\R^n \to \Q_p^n$ is the trivial one, since the identity component of $\R^n$ (namely $\R^n$) must map into the identity component of $\Q_p^n$ (namely $\{0\}$). Similarly, a continuous homomorphism $\Q_p^n \to \R^n$ is also trivial, since all of the compact subgroups $(p^{-k}\Z_p)^n$ whose union is $\Q_p^n$ must have compact image in $\R^n$ (hence $\{0\}$). Finally, we claim that if $p \neq p'$ then the only continuous homomorphism $\Q_p^n \to \Q_{p'}^n$ is the zero map: By passing to the components of such a map, it suffices to show that a continuous group homomorphism $h\colon \Q_p \to \Q_{p'}$ must be trivial. Assume for a contradiction that $h(1)\neq 0$. Then $h({p'}^n)=h(1){p'}^n$ for all $n \in \Z$. Moreover, since $p$ and $p'$ are distinct primes we have that ${p'}^n\in\Z_p$ for all $n \in \Z$, hence $\Z[1/p'] \subseteq \Z_p$. Because of the inclusion $\Z[1/p']h(1) = h(\Z[1/p']) \subseteq h(\Z_p)$ and the density of $\Z[1/p']h(1)$ in $\Q_{p'}$ we conclude that $h(\Z_p)$ is dense in $\Q_{p'}$. However, $h(\Z_p)$ is also compact, a contradiction, and thus $h(1)=0$.

We now have that $\alpha$ is of the form
\[ \alpha(x_{\infty},(x_p)_p) = (A_{\infty}x_{\infty}, (A_p x_p)_p ) \]
for an automorphism $A_{\infty}$ of $\R^n$ and automorphisms $A_p$ for each $p \in S$. In other words, $A_{\infty} \in GL_n(\R)$ while $A_p \in GL_n(\Q_p)$ for each $p \in S$.

Let $e_i$ be the $i$-th standard basis vector in $\Q_p^n$. We have that $(A_p x_p)_p \in \Q_S^n$ for all $(x_p)_p \in \Q_S^n$. In particular, setting $x_p = e_i$ for all $p \in S$, we have $(A_p e_i)_p \in \Q_S^n$. Thus, for each $i$, $1 \leq i \leq n$, we have that $A_p e_i \in \Z_p^n$ for all but finitely many $p \in S$. This implies that all the entries of the matrix $A_p$ are in $\Z_p$ for all but finitely many $p \in S$. The same argument applies to $\alpha^{-1}$ which is given by $\alpha^{-1}(x_{\infty}, (x_p)_p) = (A_{\infty}^{-1} x_{\infty}, (A_p^{-1} x_p)_p )$, so all the entries of both $A_p$ and $A_p^{-1}$ are in $\Z_p$ for all but finitely many $p \in S$. This forces $A_p \in GL_n(\Z_p)$ for all but finitely many $p \in S$.

Finally, we show continuity of $\Phi$. If $K$ is a compact subset of $\sadeles{\Q}{S}^n$ and $U$ is a neighbourhood of the identity of $\sadeles{\Q}{S}^n$, then we can find compact sets $K_\infty \subseteq \R^n$, $K_p \subseteq \Q_p^n$ ($p \in S$), and neighbourhoods of the identity $U_\infty \subseteq \R^n$, $U_p \subseteq \Q_p^n$ with $U_p = \Z_p^n$ for all but finitely many $p$, such that
\begin{align*}
    U' &\coloneqq U_{\infty} \times \prod_{p \in S} U_p \subseteq U \\
    K &\subseteq K_{\infty} \times \prod_{p \in S} K_p \eqqcolon K' .
\end{align*}
Since the sets $\bra(K,U)$ ($K$ compact, $U$ neighbourhood of identity) form a neighbourhood basis for $\Aut(\sadeles{\Q}{S}^n)$, it suffices to show that $\Phi^{-1}(\bra(K',U'))$ is open in $G$. Now
\begin{align*}
    \bra(K',U') = \bra(K_{\infty},U_{\infty}) \times \prod_{p \in S} \bra(K_p,U_p).
\end{align*}
We conclude that $\Phi^{-1}(\bra(K',U'))$ is open in $G$ from the fact that each of the maps $GL_n(\R) \to \Aut(\R^n)$ and $GL_n(\Q_p) \to \Aut(\Q_p^n)$, $p \in S$, are continuous.

Since $\sadeles{\Q}{S}^n$ is separable, it follows from \cite[Corollary 1.11]{PeSu78} that $\sadeles{\Q}{S}^n$ is a separable, completely metrizable group. By an open mapping theorem for topological groups \cite[Lemma 1]{Br71} and the remark afterwards, it follows that $\Phi$ must be open. Hence $\Phi$ is a topological isomorphism, which finishes the proof.
\end{proof}

\begin{remark}
Let $\Z_S$ denote the compact subgroup $\prod_{p\in S} \Z_p$ of $\Q_S$. There is another way to look at $\Q_S$, namely as a completion of $\srationals{\Q}{S}$. Indeed, denote by $\langle S\rangle$ the multiplicative subgroup of $\Q^\times$ generated by $S$ and let $\frac{m}{n}\Z$ be an open set in $\srationals{\Q}{S}$ for every pair $m,n\in \langle S\rangle$. Then $\Q_S$ is the Hausdorff completion of $\srationals{\Q}{S}$ with respect to this topology, i.e., its topology is generated by $\frac{m}{n}\Z_S$ for $m,n\in \langle S\rangle$, and $\Z_S$ is the closure of $\Z$, see \cite{KOQ14} for more details.
\end{remark}

\begin{lemma}\label{lem:dense-image}
Let $\Lambda$ be a lattice in $\R^n\times\Q_S^n$, and let $f_1$ and $f_2$ denote the projections from $\R^n\times\Q_S^n$ onto the first and second coordinate, respectively. Then $f_1$ is injective on $\Lambda$ and $f_2(\Lambda)$ is dense in $\Q_S^n$.
\end{lemma}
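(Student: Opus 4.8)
The plan is to treat the two assertions separately, the common engine being that each $\Q_p$ — hence $\Q_S^n$ — is a torsion-free divisible abelian group, while $\Z_S^n=\prod_{p\in S}\Z_p^n$ is a \emph{compact open} subgroup of $\Q_S^n$.

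The first step I would carry out is an auxiliary fact: the only discrete subgroup of $\Q_S^n$ is $\{0\}$. Indeed, if $D\le\Q_S^n$ is discrete, then $D$ is closed, so $D\cap\Z_S^n$ is a discrete subset of the compact set $\Z_S^n$, hence finite, and since $\Q_S^n$ is torsion-free we get $D\cap\Z_S^n=\{0\}$. Now every $x\in\Q_S^n$ satisfies $Nx\in\Z_S^n$ for a suitable positive integer $N$: there are only finitely many $p\in S$ for which the coordinate $x_p$ is not in $\Z_p^n$, and taking $N$ to be the product of the corresponding prime powers clears all denominators simultaneously (at the remaining places $p$ one just uses that $N$, being a rational integer, lies in $\Z_p$). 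Hence $Nx\in D\cap\Z_S^n=\{0\}$, so $x=0$, and $D=\{0\}$. Injectivity of $f_1$ on $\Lambda$ is then immediate, since $\ker(f_1|_\Lambda)=\Lambda\cap(\{0\}\times\Q_S^n)$ is a discrete subgroup of $\{0\}\times\Q_S^n\cong\Q_S^n$.

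For the density assertion I would put $H=\overline{f_2(\Lambda)}$, a closed subgroup of $\Q_S^n$, and note first that $\Lambda\subseteq\R^n\times H$, which is a closed subgroup of $\R^n\times\Q_S^n$; since $\Lambda$ is cocompact, the natural map $(\R^n\times\Q_S^n)/\Lambda\to(\R^n\times\Q_S^n)/(\R^n\times H)\cong\Q_S^n/H$ is a continuous surjection onto a Hausdorff space, so $\Q_S^n/H$ is compact. It then remains to show that a closed subgroup $H\le\Q_S^n$ with $\Q_S^n/H$ compact must be all of $\Q_S^n$, and here I would use divisibility twice. First, $\Z_S^n+H$ is an open subgroup, so $\Q_S^n/(\Z_S^n+H)$ is discrete; it is also a quotient of the compact group $\Q_S^n/H$, hence finite; but a finite quotient of the divisible group $\Q_S^n$ is trivial, so $\Q_S^n=\Z_S^n+H$. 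Consequently $\Q_S^n/H\cong\Z_S^n/(\Z_S^n\cap H)$ is a quotient of the profinite group $\Z_S^n$, hence profinite, while at the same time it is divisible, being a quotient of $\Q_S^n$; and a divisible profinite group is trivial, since a nontrivial profinite group admits a nontrivial finite quotient. Therefore $H=\Q_S^n$, i.e. $f_2(\Lambda)$ is dense.

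I do not anticipate a genuine obstacle: once the auxiliary fact about discrete subgroups of $\Q_S^n$ is in place, both halves of the statement are soft. The only point needing real care is the ``clearing denominators'' step — checking that one rational integer $N$ works at every place of $S$ at once, including the places where $x$ is already integral — together with, on the topological side, quoting the standard facts that discrete subgroups of a Hausdorff group are closed, that quotients of profinite groups are profinite, and that divisible abelian groups have no proper subgroup of finite index.
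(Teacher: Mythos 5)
Your proof is correct. The injectivity half is essentially the paper's argument, just packaged as a clean auxiliary fact (the only discrete subgroup of $\Q_S^n$ is trivial, proved by clearing denominators into the compact open subgroup $\Z_S^n$ and using torsion-freeness); the paper does the same computation directly with a single element $(0,y)\in\Lambda$. The density half, however, takes a genuinely different route. The paper also reduces to showing that a closed cocompact subgroup $H\le\Q_S^n$ must be everything, but it does so via Pontryagin duality: the dual of $\Q_S^n/H$ is the annihilator $H^{\perp}$, which is a discrete subgroup of $\widehat{\Q_S^n}\cong\Q_S^n$ and hence trivial, so $H=\Q_S^n$. This is short but leans on the self-duality of $\Q_S^n$, a standard yet nontrivial input. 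You instead argue structurally: $\Z_S^n+H$ is open of finite index, divisibility of $\Q_S^n$ kills finite quotients, so $\Q_S^n=\Z_S^n+H$; then $\Q_S^n/H\cong\Z_S^n/(\Z_S^n\cap H)$ is simultaneously profinite and divisible, hence trivial. Your version avoids duality entirely and is more self-contained, at the cost of invoking soft facts about divisible and profinite groups; the paper's version is quicker and fits the duality toolkit it uses elsewhere, and in fact recycles its first-part argument (triviality of discrete subgroups) on the dual side. One small point to keep explicit if you write this up: in the second isomorphism step you should note that the continuous bijection $\Z_S^n/(\Z_S^n\cap H)\to\Q_S^n/H$ is a homeomorphism because the domain is compact and the codomain Hausdorff ($H$ being closed), which is what licenses the conclusion that $\Q_S^n/H$ is profinite.
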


\begin{proof}
We start by showing that the restriction of $f_1$ to $\Lambda$ is injective. Suppose that an element of $\Lambda$ is mapped to the identity of $\R^n$ by $f_1$, i.e., there exists some $y \in \Q_S^n$ such that $(0,y) \in \Lambda$. For $m \in \N$ big enough, $y' = m y \in \Z_S^n$. It follows that $V=\{ (0,ky') : k\in\Z \}$, and hence its closure $\overline{V}$, is contained in $\Lambda \cap ( \{ 0 \} \times \Z_S^n)$. Moreover, since $\Lambda$ is discrete and $\{ 0 \} \times \Z_S^n$ is compact, then $\overline{V}$ is finite. But then $\{ k y' : k \in \Z \}$ is a finite set, and so the only possibility is that $y'=0$, i.e., $y=0$. This shows that the restriction of $f_1$ to $\Delta$ is injective.

For the second part, set $H=\overline{f_2(\Lambda)}$ in $\Q_S^n$. The map $f_2$ induces a surjection
\[
(\R^n\times\Q_S^n)/\Lambda \to \Q_S^n/H, \quad (x,y)+\Lambda \mapsto y+H.
\]
Since the former is compact, the latter is also compact, meaning that $H=\Q_S^n$. Indeed, if $L$ is a cocompact subgroup of $\Q_S^n$, then the dual of $\Q_S^n/L$, which is isomorphic to $L^\perp$, is discrete. But then $L^\perp$ is a discrete subgroup of $\widehat{\Q_S^n} \cong \Q_S^n$, so $L^\perp$ is trivial by a similar argument as above. Hence $L=\Q_S^n$. It follows that $f_2(\Lambda)$ is dense in $\Q_S^n$.
\end{proof}

\begin{lemma}\label{lem:final-step}
Suppose $\Lambda$ is a lattice in $\sadeles{\Q}{S}^n$ such that $\Lambda \cap (\R^n \times \Z_S^n) = \Z^n$. Then $\Lambda = \srationals{\Q}{S}^n$.
\end{lemma}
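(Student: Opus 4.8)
The plan is to establish the two inclusions $\Lambda \subseteq \srationals{\Q}{S}^n$ and $\srationals{\Q}{S}^n \subseteq \Lambda$ separately, throughout identifying $\srationals{\Q}{S}^n = \Z[\tfrac1p:p\in S]^n$ with its diagonal copy $\{(q,q):q\in\srationals{\Q}{S}^n\}$ in $\sadeles{\Q}{S}^n = \R^n\times\Q_S^n$, and noting that the hypothesis likewise identifies $\Z^n$ with its diagonal copy, which indeed lies in $\R^n\times\Z_S^n$. Recall that $\srationals{\Q}{S}^n$ is a lattice in $\sadeles{\Q}{S}^n$ by \Cref{prop:is_lattice}, so the claim is that $\Lambda$ is exactly this lattice.

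For the first inclusion, I would take an arbitrary $\lambda = (x,y)\in\Lambda$ with $x\in\R^n$, $y = (y_p)_{p\in S}\in\Q_S^n$. Since $y_p\in\Z_p^n$ for all but finitely many $p$, there is a positive integer $N$, all of whose prime factors lie in $S$, with $Ny\in\Z_S^n$. Then $N\lambda = (Nx,Ny)\in\Lambda$ (as $N\in\Z$) and $N\lambda\in\R^n\times\Z_S^n$, so by hypothesis $N\lambda$ equals the diagonal image of some $m\in\Z^n$, i.e.\ $Nx = m$ and $Ny = m$ in $\Q_S^n$. Hence $x = m/N\in\srationals{\Q}{S}^n$ because $1/N\in\srationals{\Q}{S}$; and since $N$ is a unit in every $\Q_p$ with $1/N\in\Z_p$ for all but finitely many $p\in S$, we have $1/N\in\Q_S$, so multiplication by $N$ is a bijection of $\Q_S^n$ and $Ny = m$ forces $y$ to be the image of $m/N$, namely the diagonal image of $x$. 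Thus $\lambda$ lies on the diagonal copy of $\srationals{\Q}{S}^n$, giving $\Lambda\subseteq\srationals{\Q}{S}^n$.

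For the reverse inclusion, I would write $\Lambda = \{(q,q):q\in\Lambda_0\}$ with $\Lambda_0 := f_1(\Lambda)\subseteq\srationals{\Q}{S}^n$, which is legitimate since $f_1|_\Lambda$ is injective by \Cref{lem:dense-image}. The hypothesis then says exactly that $\{q\in\Lambda_0 : q\in\Z_S^n\} = \Z^n$; but any $q\in\srationals{\Q}{S}^n$ whose image lies in $\Z_S^n$ satisfies $|q_i|_p\le 1$ for all $p\in S$, hence (by definition of $\srationals{\Q}{S}$) for all primes $p$, hence $q\in\Z^n$, so this condition reads $\Lambda_0\cap\Z^n = \Z^n$, i.e.\ $\Z^n\subseteq\Lambda_0$. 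On the other hand $f_2(\Lambda)$, i.e.\ the image of $\Lambda_0$ in $\Q_S^n$, is dense there by \Cref{lem:dense-image}, and since $\Z_S^n$ is open this gives $\Lambda_0 + \Z_S^n = \Q_S^n$. Now the embedding $\srationals{\Q}{S}\hookrightarrow\Q_S$ induces an isomorphism $\srationals{\Q}{S}/\Z\xrightarrow{\ \sim\ }\Q_S/\Z_S$ — injectivity is precisely the computation just made that $\Z = \srationals{\Q}{S}\cap\Z_S$, and surjectivity follows from density of $\srationals{\Q}{S}$ in $\Q_S$ together with openness of $\Z_S$ — hence also an isomorphism $\srationals{\Q}{S}^n/\Z^n\xrightarrow{\ \sim\ }\Q_S^n/\Z_S^n$. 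Under this isomorphism the equality $\Lambda_0+\Z_S^n = \Q_S^n$ translates (using $\Z^n\subseteq\Lambda_0$) into $\Lambda_0/\Z^n = \srationals{\Q}{S}^n/\Z^n$, so $\Lambda_0 = \srationals{\Q}{S}^n$ and therefore $\Lambda = \srationals{\Q}{S}^n$.

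I expect the first inclusion to be the substantive step: the point is to recognize that clearing denominators by a positive integer $N$ whose prime support lies in $S$ pushes an arbitrary $\lambda$ into the prescribed intersection $\Z^n$, and then that the $\Q_S^n$-coordinate of $\lambda$ is forced to be the diagonal image of its $\R^n$-coordinate because $N$ acts invertibly on $\Q_S^n$. The reverse inclusion is then a soft argument combining the density statement of \Cref{lem:dense-image} with the elementary identification of $\srationals{\Q}{S}/\Z$ with $\Q_S/\Z_S$; the only real care needed there is bookkeeping with the diagonal identifications.
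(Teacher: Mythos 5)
Your proposal is correct and takes essentially the same route as the paper's proof: the first inclusion is the same clear-denominators argument with an integer supported on $S$ (your explicit care that $N$ has prime support in $S$, so that $1/N\in\srationals{\Q}{S}$ and multiplication by $N$ is invertible on $\Q_S^n$, is precisely what the paper's terser ``$rx=ry=l$'' step hides), and your second inclusion, once the quotient map $\srationals{\Q}{S}^n/\Z^n\to\Q_S^n/\Z_S^n$ is unwound, reduces to the paper's own combination of the density of $f_2(\Lambda)$ from \Cref{lem:dense-image} with the identity $\srationals{\Q}{S}^n\cap\Z_S^n=\Z^n$. Note that only injectivity of that quotient map is actually used in your final step, so the appeal to density of $\srationals{\Q}{S}$ in $\Q_S$ (true, and implicit in the paper's description of $\Q_S$ as a completion of $\srationals{\Q}{S}$) could simply be dropped.
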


\begin{proof}

First, suppose that $(x,y) \in \Lambda$. Since $y \in \Q_S^n$, we can find a number $r$ expressible as a product of primes from $S$ such that $r y \in \Z_S^n$. But then $(rx,ry) \in \Lambda \cap (\R^n \times \Z_S^n)$, so $rx=ry=l \in \Z^n$. Thus $(x,y) = (l/r, l/r) \in \srationals{\Q}{S}^n$, which shows that $\Lambda \subseteq \srationals{\Q}{S}^n$.

We now prove the other inclusion. Denote by $f_2$ the projection of $\sadeles{\Q}{S}^n = \R^n \times \Q_S^n$ onto $\Q_S^n$, and set $D = f_2(\Lambda)$. Since $\Lambda \cap (\R^n \times \Z_S^n) = \Z^n$, we have that $D \cap \Z_S^n = \Z^n$. Moreover, $D$ is dense in $\Q_S^n$ by \Cref{lem:dense-image}.

We claim that $D = \srationals{\Q}{S}^n$. To see that $\srationals{\Q}{S}^n \subseteq D$, let $y \in \srationals{\Q}{S}^n$. Since $D$ is dense and $\Z_S^n$ is open, there must be some $x\in\Z_S^n$ such that $y+x\in D$. Let $m \in \N$ be such that $my \in \Z^n$. Then $my+mx$ belongs to both of the groups $D$ and $\Z_S^n$, and thus it is in $\Z^n$. It follows that $x\in\Z^n$ and therefore $y \in D-x=D$. We conclude that $\srationals{\Q}{S}^n\subseteq D$. For the converse, assume that $y \in D$. There is a natural number $m$ such that $my \in\Z_S^n$. As above, we can choose this number so that $1/m \in\srationals{\Q}{S}^n$. Since $my$ also belongs to $D$, we have that $my\in\Z^n$, and therefore $y\in\srationals{\Q}{S}^n$. Thus $D \subseteq \srationals{\Q}{S}^n$.
Thus, if $(r,r) \in \srationals{\Q}{S}^n \subseteq \sadeles{\Q}{S}^n$, then identifying $\srationals{\Q}{S}^n$ with its image in $\sadeles{\Q}{S}^n$, we have $r \in D$. But then there exists $x \in \R^n$ such that $(x,r) \in \Lambda$. But as we have already seen, $\Lambda \subseteq \srationals{\Q}{S}^n$, so we must have $x=r$ and thus $(r,r) \in \Lambda$. This shows that $\srationals{\Q}{S}^n \subseteq \Lambda$, which finishes the proof.
\end{proof}

\begin{proposition}\label{prop:lattices}
All lattices $\Lambda$ in the group $\sadeles{\Q}{S}^n$ are of the form
\[  \Lambda = A \srationals{\Q}{S}^n = \{ (A_{\infty} q, (A_p q)_{p \in S} ) : q \in \srationals{\Q}{S}^n \} \]
for an automorphism $A = (A_{\infty}, (A_p)_{p \in S})$ of $\sadeles{\Q}{S}$ given as in \cref{prop:automorphisms}.

\end{proposition}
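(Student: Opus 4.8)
\subsection*{Proof proposal}

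The strategy is to manufacture, out of an arbitrary lattice $\Lambda$ in $\sadeles{\Q}{S}^n = \R^n \times \Q_S^n$, an automorphism $A$ of the form described in \Cref{prop:automorphisms} for which $A^{-1}\Lambda$ meets $\R^n \times \Z_S^n$ in exactly the diagonal copy of $\Z^n$; \Cref{lem:final-step} then gives $A^{-1}\Lambda = \srationals{\Q}{S}^n$, i.e.\ $\Lambda = A\srationals{\Q}{S}^n$.

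First I would pass to $\Lambda_0 \coloneqq \Lambda \cap (\R^n \times \Z_S^n)$. Since $\R^n \times \Z_S^n$ is an open subgroup of $\sadeles{\Q}{S}^n$, its image in the compact group $\sadeles{\Q}{S}^n/\Lambda$ is an open, hence finite-index and therefore compact, subgroup; thus $\Lambda_0$ is a lattice in $\R^n \times \Z_S^n$. By \Cref{lem:dense-image} the projection $f_1$ onto $\R^n$ is injective on $\Lambda$, and applying \Cref{lem:lattice-constructions}(ii) to the quotient by the compact subgroup $\{0\}\times\Z_S^n$, the set $\Lambda_1 \coloneqq f_1(\Lambda_0)$ is a lattice in $\R^n$. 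Hence $\Lambda_0 = \{(v,\psi(v)) : v \in \Lambda_1\}$ is the graph of a homomorphism $\psi\colon \Lambda_1 \to \Z_S^n = \prod_{p\in S}\Z_p^n$. Now fix a $\Z$-basis $v_1,\dots,v_n$ of $\Lambda_1$, let $A_\infty \in GL_n(\R)$ be the matrix with columns $v_1,\dots,v_n$, and for each $p\in S$ let $A_p \in M_n(\Z_p)$ be the matrix whose columns are the $p$-th components $\psi(v_i)_p\in\Z_p^n$. Writing $A = (A_\infty,(A_p)_p)$, linearity of $\psi$ gives $A(\Z^n) = \Lambda_0$ and $A(\R^n\times\Z_S^n)\subseteq \R^n\times\Z_S^n$.

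It remains to verify that $A$ is an honest automorphism, i.e.\ (a)~$\det A_p \neq 0$ for all $p\in S$, and (b)~$A_p \in GL_n(\Z_p)$ for all but finitely many $p\in S$. For (a): if $\det A_p = 0$, then clearing denominators shows that the $p$-th component of $f_2(\lambda)$ lies in the proper subspace $A_p\Q_p^n \subsetneq \Q_p^n$ for every $\lambda \in \Lambda$, contradicting the density of $f_2(\Lambda)$ in $\Q_S^n$ from \Cref{lem:dense-image}. Step (b) is the heart of the matter. The key auxiliary fact is that $f_1$ maps $\Lambda$ \emph{onto} $\Lambda_1 \otimes_\Z \Z[1/p : p\in S]$: the inclusion $\subseteq$ is again the clearing-denominators argument, while $\supseteq$ follows because cocompactness of $\Lambda$ forces $\Lambda/\Lambda_0$ to be all of $\Q_S^n/\Z_S^n \cong \bigoplus_{p\in S}(\Q_p/\Z_p)^n$ (a divisible group has no proper finite-index subgroup) and no proper subgroup of $\bigoplus_{p\in S}(\Q_p/\Z_p)^n$ is isomorphic to the whole group. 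Transporting the $\Z[1/p:p\in S]$-module structure along the bijection $f_1\colon \Lambda \to \Lambda_1\otimes\Z[1/p:p\in S]$ and using unique $p$-divisibility of $\sadeles{\Q}{S}^n$, one gets that $\Lambda$ is stable under division by any $p\in S$. Now suppose $\det A_p$ fails to be a unit for some $p\in S$; pick $u\in\Z^n\setminus p\Z^n$ with $A_p u \in p\Z_p^n$ and put $w = \sum_i u_i v_i\in\Lambda_1$. Then $\tfrac1p(w,\psi(w))\in\Lambda$, and since $\tfrac1p\in\Z_q^\times$ for $q\neq p$ while $\tfrac1p\psi(w)_p = \tfrac1p A_p u\in\Z_p^n$, this element lies in $\R^n\times\Z_S^n$, hence in $\Lambda_0$; but its $f_1$-image is $\tfrac1p w\notin\Lambda_1$, a contradiction. (This in fact yields $A_p\in GL_n(\Z_p)$ for \emph{every} $p\in S$.)

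With $A$ now an automorphism, $A^{-1}\Lambda$ is a lattice by \Cref{prop:lattice_aut}, and since $A(\R^n\times\Z_S^n)\subseteq\R^n\times\Z_S^n$ and $A^{-1}\Lambda_0 = \Z^n\subseteq\R^n\times\Z_S^n$, one obtains $A^{-1}\Lambda\cap(\R^n\times\Z_S^n) = A^{-1}\Lambda_0 = \Z^n$. \Cref{lem:final-step} then gives $A^{-1}\Lambda = \srationals{\Q}{S}^n$, completing the proof. The single genuinely delicate point is the integrality statement (b): passing from ``$\Lambda$ is a lattice'' to ``only finitely many $A_p$ are non-integral'' is exactly where the restricted-product structure of $\Q_S^n$ must be exploited, and the route above does this through the surjectivity $f_1(\Lambda) = \Lambda_1\otimes\Z[1/p:p\in S]$ and the resulting divisibility of $\Lambda$ by the primes of $S$.
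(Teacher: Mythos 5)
Your proposal is correct, and its skeleton is the same as the paper's: intersect $\Lambda$ with the open subgroup $\R^n\times\Z_S^n$, use \Cref{lem:dense-image} and \Cref{lem:lattice-constructions}~(ii) to see that the real projection of $\Lambda_0=\Lambda\cap(\R^n\times\Z_S^n)$ is a Euclidean lattice, straighten it with $A_\infty$, build the local matrices $A_p$ out of the graph homomorphism $\psi$ into $\Z_S^n$, and close with \Cref{lem:final-step}. Where you genuinely diverge is in what you call the heart of the matter, the invertibility and integrality of the $A_p$. The paper gets this in essentially one stroke from \Cref{lem:dense-image}: since $f_2(\Lambda)$ is dense in $\Q_S^n$ and $\Z_S^n$ is open, $\psi(\Lambda_1)=f_2(\Lambda)\cap\Z_S^n$ is dense in $\Z_S^n$, so for each $p$ the $\Z$-span of the columns $\psi(v_i)_p$ is dense in $\Z_p^n$; as its closure is the $\Z_p$-span (a finitely generated, hence compact and closed, submodule), the columns generate $\Z_p^n$ over $\Z_p$ and $A_p\in GL_n(\Z_p)$ for \emph{every} $p\in S$, with no separate nonsingularity step needed. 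Your route instead goes through $f_1(\Lambda)=\Lambda_1\otimes_{\Z}\Z[1/p:p\in S]$ (cocompactness plus divisibility of $\bigoplus_{p\in S}(\Q_p/\Z_p)^n$, and the fact that this group has no proper subgroup isomorphic to itself), deduces that $\Lambda$ is stable under division by the primes of $S$, and then runs a reduction-mod-$p$ argument; this is valid — and it reaches the same stronger conclusion $A_p\in GL_n(\Z_p)$ for all $p$ — but it is considerably longer, and it leans on two facts you assert rather than prove (that the image of $\Lambda$ in the discrete group $\Q_S^n/\Z_S^n$ has finite index, which needs the compact-plus-discrete-is-finite observation, and the Prüfer-group fact about subgroups of $\bigoplus_p(\Q_p/\Z_p)^n$; both are true and standard). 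On the plus side, your write-up is more explicit than the paper about two points the paper leaves implicit: that the family $(A_p)_p$ satisfies the restricted-product condition required by \Cref{prop:automorphisms}, and that $A^{-1}\Lambda\cap(\R^n\times\Z_S^n)=\Z^n$, i.e.\ that the hypothesis of \Cref{lem:final-step} really is met.
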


\begin{proof}

Let $n\geq 1$ and suppose that $\Lambda$ is a lattice in $\R^n\times\Q_S^n$. Remark that $\Lambda$ is a discrete second countable space, so $\Lambda$ is countable. Set $\Lambda'=\Lambda\cap(\R^n\times\Z_S^n)$. Our first goal is to show that $\Lambda'$ is isomorphic to $\Z^n$. Consider again the projections $f_1$ and $f_2$ from \Cref{lem:dense-image} and $f_1'$ given by
\[
f_1' \colon \R^n\times\Z_S^n \to \R^n, \quad (x,y)\mapsto x.
\]
Since $\Lambda' \subseteq \Lambda$, \Cref{lem:dense-image} implies that the restriction of $f_1'$ to $\Lambda'$ is injective.

Using the second isomorphism theorem, we get isomorphisms
\[
(\R^n\times\Z_S^n)/\Lambda' \cong ((\R^n\times\Z_S^n)+\Lambda)/\Lambda = (\R^n\times\Q_S^n) /\Lambda,
\]
where the latter group is compact by assumption. Indeed, to see that $f_2(\Lambda)+\Z_S^n=\Q_S^n$, recall that $f_2(\Lambda)$ is dense in $\Q_S^n$ by \Cref{lem:dense-image}, so for each $x\in\Q_S^n$ we can choose $y\in f_2(\Lambda)$ arbitrarily close to $x$, in particular such that $x-y\in\Z_S^n$.

Furthermore, we note that $\Lambda'$ is discrete in $\R^n\times\Z_S^n$, and thus a lattice. Since $f_1'$ is a coordinate projection, it is open. Moreover, its kernel $\Z_S^n$ is compact. It follows from \Cref{lem:lattice-constructions}~(ii) that $f_1'(\Lambda')$ is a lattice in $\R^n$, and therefore it is isomorphic to $\Z^n$.
By injectivity, $\Lambda'$ is also isomorphic to $\Z^n$.

\medskip

We now wish to define automorphisms $A_\infty$ of $\R^n$ and $A_p$ for $p\in S$ such that the product automorphism $A$ on $\sadeles{\Q}{S}^n$ takes $\Lambda'$ to $\Z^n$.

Clearly, we find $A_\infty$ such that $A_\infty( f_1(\Lambda'))=\Z^n\subseteq\R^n$ and thus
\[
\Lambda\cap(\R^n\times\Z_S^n) = \{ (A_\infty^{-1}k,\varphi(k)) : k\in\Z^n \}.
\]
Recall that $f_1$ is injective, so such a function $\varphi$ exists and is clearly a homomorphism.
Let $p\in S$ and let $h_p$ be the projection of $\Z_S^n$ onto $\Z_p^n$.
Since $f_2(\Lambda)$ is dense in $\Q_S^n$ and $\Z_S^n$ is open in $\Q_S^n$, we must have that $\varphi(\Z^n)=f_2(\Lambda)\cap\Z_S^n$ is dense in $\Z_S^n$. Since $h_p$ is continuous, $h_p(\varphi(\Z^n))$ is dense in $\Z_p^n$.
Set $v_i:=h_p(\varphi(e_i))$, where $e_i$ denotes $(0,\dotsc,0,1,0,\dotsc,0)$.
It follows that $\{ \sum_{i=1}^n c_iv_i : c_i\in\Z[\frac{1}{p}] \}$ is dense in $\Q_p^n$, and hence the span of $\{v_i\}_{i=1}^n$ over $\Q_p$ coincides with $\Q_p^n$.

Therefore,
$\{v_i\}_{i=1}^n$ is linearly independent in $\Q_p^n$,
and there exists an automorphism $A_p$ of $\Q_p^n$ such that $A_p(v_i)=e_i$.

We finally apply \cref{lem:final-step} to the automorphism $A = (A_\infty, (A_p)_{p \in S})$ of $\sadeles{\Q}{S}$.
\end{proof}

As a consequence of \Cref{prop:lattices}, we obtain the following description of the quotient $\sadeles{K}{S}/\srationals{K}{S}$ in the case when $K$ is a number field.

\begin{proposition}
Let $K$ be a number field and $S$ a set of places.
Then
\[
\sadeles{K}{S} / \srationals{K}{S} \cong \widehat{\srationals{K}{S}}.
\]
Moreover, every lattice in $\sadeles{K}{S}$ is of the form $\varphi(\srationals{K}{S})$ for some $\varphi\in\Aut(\sadeles{K}{S})$.
\end{proposition}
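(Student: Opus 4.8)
The plan is to reduce both assertions to the classification of lattices in $\sadeles{\Q}{S_0}^{n}$ established in \Cref{prop:lattices}, transporting the data from $K$ down to $\Q$ via \Cref{prop:adeles_power}. Write $d = [K:\Q]$ and, using \Cref{prop:adeles_power}, fix a set $S_0$ of prime numbers together with a topological isomorphism $\psi \colon \sadeles{K}{S} \to \sadeles{\Q}{S_0}^{d}$. By \Cref{prop:is_lattice}, $\srationals{K}{S}$ is a lattice in $\sadeles{K}{S}$, so $\psi(\srationals{K}{S})$ is a lattice in $\sadeles{\Q}{S_0}^{d}$, and by \Cref{prop:lattices} there is $A \in \Aut(\sadeles{\Q}{S_0}^{d})$ with $\psi(\srationals{K}{S}) = A\bigl(\srationals{\Q}{S_0}^{d}\bigr)$. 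Replacing $\psi$ by $\Psi \coloneqq A^{-1} \circ \psi$, I obtain a topological isomorphism $\Psi \colon \sadeles{K}{S} \to \sadeles{\Q}{S_0}^{d}$ which carries $\srationals{K}{S}$ onto $\srationals{\Q}{S_0}^{d}$ and hence restricts to a group isomorphism $\srationals{K}{S} \cong \srationals{\Q}{S_0}^{d}$.

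Granting this normalization, the statement on lattices follows at once: given a lattice $\Lambda$ in $\sadeles{K}{S}$, the image $\Psi(\Lambda)$ is a lattice in $\sadeles{\Q}{S_0}^{d}$, so $\Psi(\Lambda) = B\bigl(\srationals{\Q}{S_0}^{d}\bigr) = B\bigl(\Psi(\srationals{K}{S})\bigr)$ for some $B \in \Aut(\sadeles{\Q}{S_0}^{d})$ by \Cref{prop:lattices}, and then $\varphi \coloneqq \Psi^{-1} \circ B \circ \Psi \in \Aut(\sadeles{K}{S})$ satisfies $\varphi(\srationals{K}{S}) = \Lambda$.

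For the duality isomorphism, applying $\Psi$ and reorganizing coordinates gives
\[
\sadeles{K}{S}/\srationals{K}{S} \;\cong\; \sadeles{\Q}{S_0}^{d}/\srationals{\Q}{S_0}^{d} \;\cong\; \bigl(\sadeles{\Q}{S_0}/\srationals{\Q}{S_0}\bigr)^{d}.
\]
By the case $K = \Q$, which has a direct proof in \cite[Theorem~3.3]{KOQ14}, each factor on the right is isomorphic to $\widehat{\srationals{\Q}{S_0}}$; hence the right-hand side is isomorphic to $\widehat{\srationals{\Q}{S_0}}^{\,d} \cong \widehat{\srationals{\Q}{S_0}^{d}}$, and by contravariance of Pontryagin duality applied to the isomorphism $\srationals{K}{S} \cong \srationals{\Q}{S_0}^{d}$ from the first step this is in turn isomorphic to $\widehat{\srationals{K}{S}}$.

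The substantive point of the proof is the normalization step in the first paragraph --- arranging that $\Psi$ carries $\srationals{K}{S}$ \emph{exactly} onto $\srationals{\Q}{S_0}^{d}$ rather than onto some other lattice --- which is exactly what \Cref{prop:lattices} provides, and which makes it essential that that classification holds for an arbitrary set of primes $S_0$. The only other thing to check is that the cited $\Q$-case genuinely covers every such $S_0$; if one prefers to avoid that citation, one can instead regard the general-$K$ duality statement as a formal consequence of the $\Q$-case together with the reduction above, so that no new content beyond \Cref{prop:adeles_power}, \Cref{prop:is_lattice} and \Cref{prop:lattices} is required.
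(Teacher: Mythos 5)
Your argument is correct, and for the second assertion (every lattice is $\varphi(\srationals{K}{S})$) it is essentially the paper's own argument: both transport the problem to $\sadeles{\Q}{S_0}^{d}$ via \Cref{prop:adeles_power} and invoke the classification \Cref{prop:lattices}; your explicit normalization $\Psi=A^{-1}\circ\psi$ carrying $\srationals{K}{S}$ onto $\srationals{\Q}{S_0}^{d}$ just makes the conjugation step transparent. Where you genuinely diverge is the duality isomorphism $\sadeles{K}{S}/\srationals{K}{S}\cong\widehat{\srationals{K}{S}}$: you reduce it to the case $K=\Q$ and outsource that case to \cite[Theorem~3.3]{KOQ14}, then dualize the product decomposition. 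The paper instead proves it intrinsically, with no appeal to the $\Q$-case: it picks a nontrivial character $\gamma\in\srationals{K}{S}^{\perp}$, uses the self-duality $\widehat{\gamma}\colon\sadeles{K}{S}\cong\widehat{\sadeles{K}{S}}$ from \cite[Proposition~7.1]{RaVa99}, shows that $F_\gamma=\widehat{\gamma}^{-1}(\srationals{K}{S}^{\perp})$ contains $\srationals{K}{S}$ with finite index (compact and discrete quotient), concludes via \Cref{lem:lattice-constructions}(iii) that $F_\gamma$ is a lattice, and only then uses \Cref{prop:lattices} to move $\srationals{K}{S}$ onto $F_\gamma$ by an automorphism, giving $\sadeles{K}{S}/\srationals{K}{S}\cong\widehat{\sadeles{K}{S}}/\srationals{K}{S}^{\perp}\cong\widehat{\srationals{K}{S}}$; in particular it re-proves the $\Q$-case rather than citing it. Your route buys brevity and isolates the number-theoretic input in one external citation; the paper's route buys self-containedness and shows the duality is itself a consequence of the lattice classification. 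Two small cautions: your closing remark that the duality is a "formal consequence" of \Cref{prop:adeles_power}, \Cref{prop:is_lattice} and \Cref{prop:lattices} alone is not accurate, since the $\Q$-case duality is not contained in those statements — either the \cite{KOQ14} citation or an argument like the paper's character construction is genuinely needed; and your reliance on \Cref{prop:adeles_power} for an arbitrary set $S$ of places (choosing the set of primes $S_0$) is exactly the same reliance the paper's proof makes, so it is not a gap relative to the paper, but it is the point where both proofs lean on that proposition being valid beyond the full-adele case.
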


\begin{proof}
Every nontrivial character $\gamma\in\srationals{K}{S}^\perp$ induces an isomorphism
\[\widehat{\gamma}\colon\sadeles{K}{S}\cong\widehat{\sadeles{K}{S}}\]
given by $\widehat{\gamma}(x)(y)=\gamma(xy)$, see \cite[Proposition~7.1]{RaVa99}. Clearly, $\widehat{\gamma}$ takes $\srationals{K}{S}$ to a subgroup of $\srationals{K}{S}^\perp$. Let $F_\gamma$ be the inverse image of $\srationals{K}{S}^\perp$ under $\widehat{\gamma}$. Then $[F_\gamma:\srationals{K}{S}^\perp]<\infty$. Indeed, the quotient group
\[
F_\gamma / \srationals{K}{S}
\]
is compact since it is a closed subgroup of $\sadeles{K}{S} / \srationals{K}{S}$ which is compact by \Cref{prop:is_lattice}.
Moreover, $\srationals{K}{S}^\perp$ is isomorphic to the dual group of $\sadeles{K}{S} / \srationals{K}{S}$, so it is discrete, and thus $F_\gamma$ is discrete.
Therefore, the quotient is both compact and discrete, hence it must be finite.

By \Cref{lem:lattice-constructions}~(iii) it follows that $F_\gamma$ is a lattice in $\sadeles{K}{S}$. Since $\sadeles{K}{S}$ is isomorphic to $\sadeles{\Q}{S}^n$ for some natural number $n$ by \Cref{prop:adeles_power}, we can use \Cref{prop:lattices} to find an automorphism of $\sadeles{K}{S}$ that takes $\srationals{K}{S}$ to $F_\gamma$. Hence,
\[
\sadeles{K}{S} / \srationals{K}{S} \cong \sadeles{K}{S} / F_\gamma \cong \widehat{\sadeles{K}{S}} / \srationals{K}{S}^\perp \cong \widehat{\srationals{K}{S}}.
\]
The last statement follows by invoking \Cref{prop:automorphisms}.
\end{proof}

\subsection{Gabor frames on the adeles of a number field}

We begin with an existence result for Gabor frames over certain lattices in $\sadeles{\Q}{S}^n$. We will need the following lemma:

\begin{lemma}\label{lem:globalfield_inner_product}
Let $G$ be a locally compact abelian group containing a compact open subgroup $K$, with Haar measure $\mu$ normalized so that $\mu(K) = 1$. Then $\mathbbm{1}_K$ is an element of $S_0(G)$, and for $x \in G$, $\omega \in \widehat{G}$ we have that
\[ \langle \mathbbm{1}_{K}, \pi(x,\omega) \mathbbm{1}_{K} \rangle = \begin{cases} 1, & \text{if $x \in K$ and $\omega \in K^{\perp}$} \\ 0, & \text{otherwise.} \end{cases}  \]
\end{lemma}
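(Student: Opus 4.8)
The plan is to obtain both assertions from a single direct computation of the short-time Fourier transform $V_{\mathbbm{1}_K}\mathbbm{1}_K$. Since $K$ is compact we have $\mathbbm{1}_K\in L^2(G)$, so $V_{\mathbbm{1}_K}\mathbbm{1}_K$ is defined and, by the definition of the STFT, $V_{\mathbbm{1}_K}\mathbbm{1}_K(x,\omega)=\langle\mathbbm{1}_K,\pi(x,\omega)\mathbbm{1}_K\rangle$. First I would write out
\[ \langle\mathbbm{1}_K,\pi(x,\omega)\mathbbm{1}_K\rangle=\int_G\mathbbm{1}_K(t)\,\overline{\omega(t)\,\mathbbm{1}_K(x^{-1}t)}\,\dif{\mu(t)}=\int_{K\cap xK}\overline{\omega(t)}\,\dif{\mu(t)}. \]
Because $K$ is a subgroup, the cosets $K$ and $xK$ are either equal (precisely when $x\in K$) or disjoint (when $x\notin K$). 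Hence the integral is $0$ whenever $x\notin K$, and it equals $\int_K\overline{\omega(t)}\,\dif{\mu(t)}$ whenever $x\in K$.

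It then remains to evaluate $\int_K\overline{\omega(t)}\,\dif{\mu(t)}$, which is the standard orthogonality relation for characters of the compact group $K$. If $\omega\in K^\perp$, the integrand is identically $1$ and the integral equals $\mu(K)=1$. If $\omega\notin K^\perp$, I would choose $t_0\in K$ with $\omega(t_0)\neq 1$; since the restriction of $\mu$ to $K$ is translation invariant, the substitution $t\mapsto t_0t$ gives $\int_K\overline{\omega(t)}\,\dif{\mu(t)}=\overline{\omega(t_0)}\int_K\overline{\omega(t)}\,\dif{\mu(t)}$, which forces the integral to vanish. Combining the two cases yields exactly the claimed formula, and in particular shows $V_{\mathbbm{1}_K}\mathbbm{1}_K=\mathbbm{1}_{K\times K^\perp}$.

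Finally, to conclude $\mathbbm{1}_K\in S_0(G)$ I would feed this back into the definition \eqref{eq:feichtinger_alg_def}:
\[ \int_{\tfp{G}}|V_{\mathbbm{1}_K}\mathbbm{1}_K(z)|\,\dif{z}=\mu(K)\,\widehat{\mu}(K^\perp)=\widehat{\mu}(K^\perp). \]
Since $K$ is open in $G$, the quotient $G/K$ is discrete, so its dual $K^\perp\cong\widehat{G/K}$ is compact in $\widehat{G}$ (see \cite{HeRo63}); being a Haar measure, $\widehat{\mu}$ is finite on the compact set $K^\perp$, so the integral above is finite and $\mathbbm{1}_K\in S_0(G)$. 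I do not expect a substantive obstacle in this argument; the only points requiring a little care are verifying that $\mathbbm{1}_K$ lies in $L^2(G)$ before invoking the STFT (immediate from compactness of $K$) and recalling that an open subgroup has compact annihilator, which is standard.
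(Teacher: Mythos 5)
Your argument is correct and follows essentially the same route as the paper's proof: a direct computation of $\langle \mathbbm{1}_K,\pi(x,\omega)\mathbbm{1}_K\rangle$ reducing to the intersection $K\cap xK$ and character orthogonality on $K$, followed by integrating $|V_{\mathbbm{1}_K}\mathbbm{1}_K|=\mathbbm{1}_{K\times K^{\perp}}$ over $\tfp{G}$. The only (harmless) difference is that the paper evaluates this integral as exactly $\mu(K)\,\widehat{\mu}(K^{\perp})=1$ using the dual-measure normalization, whereas you settle for finiteness via compactness of $K^{\perp}$, which is all the definition of $S_0(G)$ requires.
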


\begin{proof}
We have that
\begin{align}
    \langle \mathbbm{1}_{K}, \pi(x,\omega) \mathbbm{1}_{K} \rangle &= \int_{K} \mathbbm{1}_{K}(t) \overline{ \omega(t) \mathbbm{1}_{K}(t-x)} \dif{t} \nonumber \\
    &= \overline{ \int_{K \cap (K + x)} \omega(t) \dif{t} }. \label{eq:globalfield_innerprod}
\end{align}
If $x \notin K$, then $K \cap (K +x) = \emptyset$, so \eqref{eq:globalfield_innerprod} becomes zero. Suppose therefore that $x \in K$. Then $K \cap (K + x) = K$, so that \eqref{eq:globalfield_innerprod} becomes
\begin{equation}
    \langle \mathbbm{1}_{K}, \pi(x,\omega) \mathbbm{1}_{K} \rangle = \overline{ \int_{K} \omega(t) \dif{t} } . \label{eq:globalfield_innerprod2}
\end{equation}
This is equal to $1$ if and only if $\omega \in K^{\perp}$ and zero otherwise.

Finally,
\begin{equation}
    \int_{\widehat{G}} \int_G |\langle \mathbbm{1}_K, \pi(x,\omega) \mathbbm{1}_K \rangle | \dif{x} \dif{\omega} = \left( \int_G \mathbbm{1}_K(x) \dif{x} \right) \left( \int_{\widehat{G}} \mathbbm{1}_{K^{\perp}}(\omega) \dif{\omega} \right) = \mu(K) \widehat{\mu}(K^{\perp}) = 1
\end{equation}
which shows that $\mathbbm{1}_K \in S_0(G)$.
\end{proof}

\begin{theorem}\label{thm:gf_existence_adeles}
Let $S$ be a set of prime numbers. Let $g,h \in S_0(\R^n)$, and let $A_{\infty} \in GL_{2n}(\R)$. Set $A_p = I$ for every $p \in S$, and let $A = (A_{\infty}, (A_p)_{p \in S})$ be the automorphism of $\sadeles{\Q}{S}^n$ as in \cref{prop:automorphisms}. Set $\tilde{g} = g  \otimes_{p \in S} \mathbbm{1}_{\Z_p^n}$ and $\tilde{h} = h  \otimes_{p \in S} \mathbbm{1}_{\Z_p^n}$. Then $\tilde{g},\tilde{h} \in S_0(\sadeles{\Q}{S}^n)$, and the following are equivalent:
\begin{enumerate}
    \item The Gabor systems $\mathcal{G}(g,A_{\infty}\Z^{2n})$ and $\mathcal{G}(h,A_{\infty}\Z^{2n})$ form dual frames for $L^2(\R^n)$.
    \item The Gabor systems $\mathcal{G}(\tilde{g}, A\srationals{\Q}{S}^{2n})$ and $\mathcal{G}(\tilde{h}, A\srationals{\Q}{S}^{2n})$ form dual frames for $L^2(\sadeles{\Q}{S}^n)$.
\end{enumerate}
\end{theorem}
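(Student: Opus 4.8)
The statement asserts an equivalence between two ``dual frame'' assertions with windows in the Feichtinger algebra, so the tool I would use is the Wexler--Raz biorthogonality relations (\Cref{prop:duality_theory}\,(ii)): for $g_0,h_0$ in the Feichtinger algebra of a group and $\Delta_0$ a lattice in its time-frequency plane, $\mathcal G(g_0,\Delta_0)$ and $\mathcal G(h_0,\Delta_0)$ are dual frames precisely when $\langle g_0,\pi(z)h_0\rangle=\vol(\Delta_0)^{-1}\delta_{z,0}$ for every $z$ in the adjoint lattice $\Delta_0^{\circ}$. The plan is to rewrite both (i) and (ii) in this form and observe that, once the Haar normalizations are matched, the two conditions coincide.

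First, the membership $\tilde g,\tilde h\in S_0(\sadeles{\Q}{S}^n)$. Write $\Z_S^n=\prod_{p\in S}\Z_p^n$, a compact open subgroup of $\Q_S^n$, so that $\tilde g=g\otimes\ind_{\Z_S^n}$ and $\tilde h=h\otimes\ind_{\Z_S^n}$, and normalize the Haar measure on $\Q_S^n$ by $\mu(\Z_S^n)=1$. Then \Cref{lem:globalfield_inner_product} gives $\ind_{\Z_S^n}\in S_0(\Q_S^n)$, and the tensor-product stability of the Feichtinger algebra (the final assertion of \Cref{lem:onb_prod}) yields $\tilde g,\tilde h\in S_0(\R^n\times\Q_S^n)=S_0(\sadeles{\Q}{S}^n)$.

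Next comes the core computation. For the adjoint lattice of $\Delta=A\,\srationals{\Q}{S}^{2n}$ in the time-frequency plane $\sadeles{\Q}{S}^{2n}$ of $\sadeles{\Q}{S}^n$ I would use two facts. First, with the normalization above the diagonal copy of $\srationals{\Q}{S}$ is self-dual in $\sadeles{\Q}{S}$: the standard additive character of $\sadeles{\Q}{S}$ vanishes on $\srationals{\Q}{S}$, so $\srationals{\Q}{S}^{\perp}\supseteq\srationals{\Q}{S}$, and the identities $\vol(\Lambda)\vol(\Lambda^{\perp})=1$ and $\vol(\srationals{\Q}{S})=1$ (take $[0,1)\times\Z_S$ as a fundamental domain) force equality; hence the diagonal $\srationals{\Q}{S}^{2n}$ is self-adjoint. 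Second, the definition \eqref{eq:dual_lattice} of $\Delta^{\circ}$, together with the commutation relation \eqref{eq:commutation_relation} and the factorization of the adelic bicharacter over the places, shows that $\Delta^{\circ}$ transforms under $A$ place by place; since $A$ is trivial at every finite place, $\Delta^{\circ}=\{(Br,(r)_{p\in S}):r\in\srationals{\Q}{S}^{2n}\}$, where $(r)_{p\in S}$ denotes the diagonal image of $r$ in $\Q_S^{2n}$ and $B\in GL_{2n}(\R)$ is chosen so that $B\Z^{2n}=(A_{\infty}\Z^{2n})^{\circ}$. Now for $z=(z_{\infty},z_S)\in\Delta^{\circ}$ the tensor structure gives
\[ \langle\tilde g,\pi(z)\tilde h\rangle=\langle g,\pi(z_{\infty})h\rangle_{L^2(\R^n)}\,\langle\ind_{\Z_S^n},\pi(z_S)\ind_{\Z_S^n}\rangle_{L^2(\Q_S^n)}, \]
and by \Cref{lem:globalfield_inner_product} (with $(\Z_S^n)^{\perp}\cong\Z_S^n$) the second factor equals $1$ if $z_S\in\Z_S^{2n}$ and $0$ otherwise. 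For $z$ as above, $z_S\in\Z_S^{2n}$ means $|r_i|_p\le1$ for all $p\in S$, which—since $r\in\srationals{\Q}{S}^{2n}$ already gives $|r_i|_p\le1$ for $p\notin S$—is equivalent to $r\in\Z^{2n}$; and then $z_{\infty}=Br$ runs bijectively through $B\Z^{2n}=(A_{\infty}\Z^{2n})^{\circ}$. Thus the Wexler--Raz condition for $(\tilde g,\tilde h,\Delta)$ holds automatically for $r\notin\Z^{2n}$ and, for $r\in\Z^{2n}$, reads $\langle g,\pi(w)h\rangle=\vol(\Delta)^{-1}\delta_{w,0}$ for all $w\in(A_{\infty}\Z^{2n})^{\circ}$. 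Since $\vol(\Delta)=\modular_{\sadeles{\Q}{S}^{2n}}(A)\,\vol(\srationals{\Q}{S}^{2n})=|\det A_{\infty}|=\vol(A_{\infty}\Z^{2n})$ by \Cref{prop:lattice_aut} and $\vol(\srationals{\Q}{S}^{2n})=1$, this is exactly the Wexler--Raz condition characterizing (i). Hence (i)$\iff$(ii).

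The tensor-product identities and the fundamental-domain count are routine; the substantive points are the determination of $\Delta^{\circ}$—in particular the self-duality $\srationals{\Q}{S}^{\perp}=\srationals{\Q}{S}$ for the Haar measure actually in use, and the verification that the symplectic adjoint of $A$ stays trivial at the finite places so that the support constraint coming from $\ind_{\Z_S^n}$ collapses the finite-place components of $\Delta^{\circ}$ precisely to $\Z^{2n}$—and keeping the Haar normalizations coherent so that $\vol(\Delta)$ equals $\vol(A_{\infty}\Z^{2n})$.
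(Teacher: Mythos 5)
Your proof is correct and follows essentially the same route as the paper's: Wexler--Raz biorthogonality on both sides, the tensor factorization of $\langle\tilde g,\pi(z)\tilde h\rangle$ together with \Cref{lem:globalfield_inner_product} to show that the finite-place part of the adjoint lattice contributes only when $r\in\Z^{2n}$, and the volume identity $\vol(A\srationals{\Q}{S}^{2n})=|\det A_{\infty}|=\vol(A_{\infty}\Z^{2n})$. The only difference is that you explicitly justify the identification of $(A\srationals{\Q}{S}^{2n})^{\circ}$ (via the self-duality $\srationals{\Q}{S}^{\perp}=\srationals{\Q}{S}$ and the triviality of the symplectic adjoint at the finite places), a step the paper asserts without proof.
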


\begin{proof}
First of all, $K \coloneqq \prod_{p \in S} \Z_p^n$ is a compact open subgroup of $\Q_S^n$, so $\otimes_{p \in S} \mathbbm{1}_{\Z_p^n} = \mathbbm{1}_{K}$ is in $S_0(\Q_S^n)$ by \cref{lem:globalfield_inner_product}. It follows that $\tilde{g} = g \otimes \mathbbm{1}_K \in S_0(\sadeles{\Q}{S}^n)$ since $g \in S_0(\R^n)$ and $\mathbbm{1}_K \in S_0(\Q_S^n)$, and similarly for $\tilde{h}$.

By the Wexler--Raz relations from \Cref{prop:duality_theory} \ref{it:duality_theory2}, we have that $\mathcal{G}(\tilde{g}, A(\srationals{\Q}{S}^{2n}))$ and $\mathcal{G}(\tilde{h}, A(\srationals{\Q}{S}^{2n}))$ form dual frames for $L^2(\sadeles{\Q}{S}^n)$ if and only if
\begin{equation}
    \langle \tilde{g}, \pi(z) \tilde{h} \rangle_{L^2(\sadeles{\Q}{S}^n)} = \vol( A\srationals{\Q}{S}^{2n}) \delta_{z,0} \label{eq:adeles_inner_prod}
\end{equation}
for all $z \in (A\srationals{\Q}{S}^{2n})^{\circ}$. As in \eqref{eq:dual_lattice_automorphism} we can write $(A\srationals{\Q}{S}^{2n})^{\circ} = A^{\circ}\srationals{\Q}{S}^{2n}$, where $A_\infty^{\circ} = J (A_\infty^t)^{-1} J^t$ for $J$ the standard symplectic $2n \times 2n$ matrix and $A_p^{\circ} = I$ for all $p \in S$, cf.\ also \cite[p.\ 2014]{FeKa04}. Using the tensor product form of $\tilde{g}$ and $\tilde{h}$, the inner product in \eqref{eq:adeles_inner_prod} translates into
\begin{align*}
    \langle \tilde{g}, \pi(A^{\circ}q) \tilde{h} \rangle_{L^2(\sadeles{\Q}{S}^n)} &= \langle g, \pi(A_{\infty}^{\circ} q) h \rangle_{L^2(\R^n)} \langle \mathbbm{1}_{K} , \pi(q) \mathbbm{1}_{K} \rangle_{L^2(\Q_S^n)} 
\end{align*}
for all $q \in \srationals{\Q}{S}^{2n}$. By \Cref{lem:globalfield_inner_product}, the last inner product above is equal to $1$ if and only if $q \in \Z_p^{2n}$ for each $p \in S$ and $0$ otherwise. Since $\Z_p^{2n} \cap \srationals{\Q}{S}^{2n} = \Z^{2n}$ for each $p \in S$, we have that
\begin{equation}
    \langle \tilde{g}, \pi(A^{\circ}q) \tilde{h} \rangle_{L^2(\sadeles{\Q}{S}^n)} = \begin{cases} \langle g, \pi(A_{\infty}^{\circ}q) h \rangle_{L^2(\R^n)}, & \text{if $q \in \Z^{2n}$} \\ 0, & \text{otherwise} . \end{cases} \label{eq:innprod_red}
\end{equation}
From the Wexler--Raz condition for the Gabor systems $\mathcal{G}(g,A_{\infty}\Z^{2n})$ and $\mathcal{G}(h,A_{\infty}^{\circ}\Z^{2n})$, we have that they form dual frames if and only if
\[ \langle g, \pi(A_{\infty}^{\circ} q) h \rangle_{L^2(\R^n)} = \vol(A_{\infty}\Z^{2n}) \delta_{q,0}, \quad q \in \Z^{2n} . \]
Observing that $\vol(A \srationals{\Q}{S}^{2n}) = |\det (A_{\infty})| = \vol(A_{\infty} \Z^{2n})$, we see from \eqref{eq:innprod_red} and \eqref{eq:adeles_inner_prod} that the Wexler--Raz conditions for the two pairs of Gabor systems to be dual frames are equivalent. This finishes the proof.
\end{proof}

There are many known examples of Gabor frames for $L^2(\R^n)$. Using \Cref{thm:gf_existence_adeles}, one can obtain examples of Gabor frames for $L^2(\sadeles{\Q}{S}^n)$ for any set of primes $S$. Setting $n=1$ and setting $S$ in \Cref{thm:gf_existence_adeles} to be either the set of all primes or the set containing a single prime $p$, one obtains \cite[Theorem 4.2]{EnJaLu19}.

Let $K$ be any number field, let $S$ be a set of finite places of $K$ and let $n \in \N$. Then $K$ is by definition a finite extension of either $\Q$. By \cref{prop:adeles_power}, we have that $\sadeles{K}{S}^n \cong \sadeles{\Q}{S}^{dn}$, where $d = [K:\Q]$. Consequently, we can apply \Cref{thm:gf_existence_adeles} with $dn$ in place of $n$ to get the existence of a Gabor frame for $\sadeles{K}{S}^n$.

Our pertubation result for Gabor frames over LCA groups (\Cref{thm:pert}) applies to the setting of the $n$-dimensional $S$-adeles over the rationals. We state the results for this group below, where we use the description of the automorphism group of $\Aut(\sadeles{\Q}{S}^n)$ as in \Cref{prop:automorphisms}.

\begin{theorem}\label{thm:pert_adeles}
Let $S$ be a set of primes, and let $n \in \N$. The set
\[ \left\{ (g, A_{\infty}, (A_p)_{p \in S} ) \in S_0(\sadeles{\Q}{S}^n) \times \Aut(\sadeles{\Q}{S}^n) : \text{$\mathcal{G}(g,A \srationals{\Q}{S}^n)$ is a Gabor frame for $L^2(\sadeles{\Q}{S}^n)$} \} \right\} \]
is open in $S_0(\sadeles{\Q}{S}^n) \times \Aut(\sadeles{\Q}{S}^n)$.
\end{theorem}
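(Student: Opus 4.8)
The plan is to deduce the statement directly from the general deformation theorem \Cref{thm:pert}, feeding it the explicit descriptions of the automorphism group and of lattices in the $S$-adelic setting provided by \Cref{prop:automorphisms} and \Cref{prop:lattices} (together with \Cref{prop:is_lattice}).

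First I would identify the time-frequency plane $\tfp{\sadeles{\Q}{S}^n}$ with $\sadeles{\Q}{S}^{2n}$. Since $\widehat{\R} \cong \R$, $\widehat{\Q_p} \cong \Q_p$ with the annihilator $\Z_p^{\perp}$ corresponding to $\Z_p$, and the Pontryagin dual of a restricted product is the restricted product of the duals, one has $\widehat{\sadeles{\Q}{S}} \cong \sadeles{\Q}{S}$ (cf.\ \cite[Proposition~7.1]{RaVa99}), hence $\widehat{\sadeles{\Q}{S}^n} \cong \sadeles{\Q}{S}^n$ and $\tfp{\sadeles{\Q}{S}^n} \cong \sadeles{\Q}{S}^{2n}$ as topological groups. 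Fix such an isomorphism $\iota$. Conjugation by $\iota$ is a topological isomorphism $\Aut(\tfp{\sadeles{\Q}{S}^n}) \to \Aut(\sadeles{\Q}{S}^{2n})$: continuity in the Braconnier topology is immediate from \Cref{prop:braconnier_def}, since the preimage of a basic neighbourhood $\bra(K,U)$ under conjugation is $\bra(\iota^{-1}(K),\iota^{-1}(U))$, again a basic neighbourhood. Composing with the topological isomorphism from \Cref{prop:automorphisms} (used with $2n$ in place of $n$) produces a topological isomorphism
\[ \Psi \colon \Aut(\tfp{\sadeles{\Q}{S}^n}) \longrightarrow GL_{2n}(\R) \times \resprod{p\in S}{GL_{2n}(\Z_p)}{GL_{2n}(\Q_p)} . \]
Under $\iota$, the subgroup $\srationals{\Q}{S}^{2n}$ becomes a lattice $\Delta_0$ in $\tfp{\sadeles{\Q}{S}^n}$ by \Cref{prop:is_lattice}, and if $\Psi(\alpha) = A = (A_{\infty}, (A_p)_{p\in S})$ then $\alpha(\Delta_0)$ corresponds to $A\srationals{\Q}{S}^{2n}$; by \Cref{prop:lattices} these are precisely all lattices in $\tfp{\sadeles{\Q}{S}^n}$.

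Now I would apply \Cref{thm:pert} with $G = \sadeles{\Q}{S}^n$ and the lattice $\Delta_0$: the set
\[ \mathcal{U} = \{ (g,\alpha) \in S_0(\sadeles{\Q}{S}^n) \times \Aut(\tfp{\sadeles{\Q}{S}^n}) : \mathcal{G}(g,\alpha(\Delta_0)) \text{ is a Gabor frame for } L^2(\sadeles{\Q}{S}^n) \} \]
is open. The map $\id_{S_0(\sadeles{\Q}{S}^n)} \times \Psi$ is a homeomorphism, and it carries $\mathcal{U}$ onto exactly the set in the statement, because $\mathcal{G}(g,\alpha(\Delta_0)) = \mathcal{G}(g, A\srationals{\Q}{S}^{2n})$ whenever $\Psi(\alpha) = A$. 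Hence that set is open, which is the assertion.

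I do not expect a genuine obstacle: the proof is an assembly of results already established. The only points requiring some care are the self-duality identification $\tfp{\sadeles{\Q}{S}^n} \cong \sadeles{\Q}{S}^{2n}$ and the (routine) fact that conjugation by a fixed topological isomorphism is continuous for the Braconnier topology, so that openness transfers freely between $\Aut(\tfp{\sadeles{\Q}{S}^n})$ and the concrete group $GL_{2n}(\R) \times \resprod{p\in S}{GL_{2n}(\Z_p)}{GL_{2n}(\Q_p)}$.
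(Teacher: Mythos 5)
Your proposal is correct and follows essentially the same route as the paper, which gives no separate argument for this statement but simply invokes \Cref{thm:pert} together with the description of the automorphism group in \Cref{prop:automorphisms} (and the lattice classification of \Cref{prop:lattices}); your only additions are the routine checks of the self-duality identification $\tfp{\sadeles{\Q}{S}^n}\cong\sadeles{\Q}{S}^{2n}$ and the Braconnier-continuity of conjugation by a fixed isomorphism, which are exactly the implicit steps the paper leaves to the reader. You also correctly interpret the lattice as $A\srationals{\Q}{S}^{2n}$ in the time-frequency plane (with $2n$ in place of $n$), matching the introduction's formulation.
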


For the case of the $S$-adeles over a number field, we obtain the following Balian--Low theorem as a consequence of \Cref{thm:balian_low}.

\begin{theorem}\label{thm:blt_adeles1}
Let $K$ be a number field, and let $G = \sadeles{K}{S}^n$. If $\Delta$ is a lattice in $\tfp{G}$ with $\vol(\Delta) = 1$ and $g \in S_0(G)$, then $\mathcal{G}(g,\Delta)$ cannot be a frame for $L^2(G)$.
\end{theorem}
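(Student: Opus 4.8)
The plan is to reduce the statement to the general Balian--Low theorem for groups with noncompact identity component, \Cref{thm:balian_low}. The only thing that needs checking is that $G = \sadeles{K}{S}^n$ has noncompact identity component when $K$ is a number field.

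First I would identify the structure of $G$. Since $K$ is a finite extension of $\Q$, say of degree $d = [K:\Q] \geq 1$, \Cref{prop:adeles_power} gives a topological isomorphism $\sadeles{K}{S}^n \cong \sadeles{\Q}{S}^{dn}$. Unwinding the definition, $\sadeles{\Q}{S}^{dn} = \R^{m} \times \Q_S^{m}$ with $m = dn \geq 1$, and $\Q_S^{m}$ contains the compact open subgroup $\prod_{p \in S} \Z_p^{m}$. Thus $G$ is of the form $\R^{m} \times H$ with $m \geq 1$ and $H$ a group containing a compact, open subgroup, i.e.\ condition \ref{it:idcomp1} of \Cref{thm:idcomp_equivalences} is satisfied. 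By the equivalence \ref{it:idcomp1} $\iff$ \ref{it:idcomp3} in that theorem, $G$ has noncompact identity component (concretely $G_0 \cong \R^{m}$).

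With this in hand the conclusion is immediate: if $\Delta$ is a lattice in $\tfp{G}$ with $\vol(\Delta)=1$ and $g \in S_0(G)$, then \Cref{thm:balian_low} tells us that $\mathcal{G}(g,\Delta)$ cannot be a frame for $L^2(G)$. I do not expect any real obstacle here; the entire content is carried by \Cref{thm:balian_low} (and, behind it, the deformation result \Cref{thm:pert} together with the density theorem). The only place where the hypothesis ``$K$ is a number field'' enters is the first step: it guarantees that the archimedean part $K_\infty = \prod_{v \mid \infty} K_v$ is a nonzero real vector space, so that the $\R$-factor in the structure theorem is genuinely present. For a global function field one instead obtains a totally disconnected group with compact identity component, and in that case the Balian--Low theorem fails (cf.\ \Cref{thm:blt_failure}); this contrast is exactly what distinguishes the two parts of the classification.
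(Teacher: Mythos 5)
Your proposal is correct and follows essentially the same route as the paper: identify $G \cong \sadeles{\Q}{S}^{dn} = \R^{dn}\times \Q_S^{dn}$ via \Cref{prop:adeles_power}, observe that the identity component is noncompact (the paper states $G_0 = \R^{dn}\times\{0\}$ directly, while you route this through the equivalence in \Cref{thm:idcomp_equivalences}, which amounts to the same thing), and conclude by \Cref{thm:balian_low}. No gaps.
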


\begin{proof}
By \cref{prop:adeles_power}, $G$ is isomorphic to
\[ \sadeles{\Q}{S}^{dn} \cong \R^{dn} \times \resprod{p \in S}{\Q_p^{dn}}{\Z_p^{dn}} \]
as a topological group, where $d = [K:\Q]$. The identity component of $G$ is
\[ G_0 = \R^{dn} \times \{ 0 \} \]
which is noncompact. The theorem then follows from \Cref{thm:balian_low}.
\end{proof}

\subsection{Gabor frames on the adeles of a function field}

In this subsection, we investigate the setting where $K = \F_q(t)$. The completion of $\F_q(t)$ with respect to the infinite place from \Cref{prop:places_prime_fields} is isomorphic to $\F_q((1/t))$, the ring of formal Laurent series in the indeterminate $1/t$ \cite[p.\ 298 ex.\ 3(b)]{RaVa99}. The ring of integers of $K$ is $\F_q[t]$, the ring of polynomials in the indeterminate $t$.

If we choose $S = \emptyset$, then $\sadeles{K}{\emptyset} = K_{\infty} = \F_q((1/t))$ and $\srationals{K}{\emptyset} = \algint = \F_q[t]$. We can write any formal Laurent series $f(t)$ in the indeterminate $1/t$ as
\[ f(t) = \cdots + \frac{a_{-2}}{t^2} + \frac{a_{-1}}{t} + a_0 + a_1 t + a_2 t^2 + \cdots + a_m t^m \]
for some natural number $m$ and coefficients $a_j \in \F_q$. From this we see that 
additively speaking, the locally compact group $\F_q((1/t))$ is isomorphic to
\[ \F_q((1/t)) \cong \left( \prod_{k=-\infty}^{-1} \F_q \right) \times \left( \bigoplus_{k = 0}^\infty \F_q \right) .\]
Here, the term $\prod_{k=-\infty}^{-1} \F_q$ corresponds to the coefficients of $f(t)$ with negative index, and the term $\oplus_{k=0}^{\infty} \F_q$ corresponds to the coefficients with nonnegative index. Let $D = \oplus_{k=0}^\infty \F_q$. Then $D$ is a direct sum of discrete groups, hence a discrete group. Its dual group is the compact group
\[ \widehat{D} \cong \prod_{k=0}^\infty \widehat{\F_q} \cong \prod_{k=0}^{\infty} \F_q ,\]
which shows that $\F_q((1/t)) \cong D \times \widehat{D}$ as a topological group.

The following proposition shows that contrary to the case of the adeles over a number field, one does not obtain a Balian--Low theorem for the adeles of a function field.

\begin{proposition}\label{thm:blt_adeles2}
Let $K$ be a global function field, let $S$ be a set of finite places and let $n \in \N$. Then the higher dimensional $S$-adeles $\sadeles{K}{S}^n$ has compact identity component. Hence, for any lattice $\Lambda$ in $\sadeles{K}{S}^n$, there exists $g \in S_0(\sadeles{K}{S}^n)$ such that $\mathcal{G}(g,\Lambda \times \Lambda^{\perp})$ is an orthonormal basis for $L^2(\sadeles{K}{S}^n)$.
\end{proposition}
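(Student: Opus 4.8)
The plan is to reduce the statement to \Cref{thm:blt_failure} by showing that $G \coloneqq \sadeles{K}{S}^n$ has a compact open subgroup; by \Cref{thm:idcomp_equivalences} this is equivalent to $G$ having compact identity component, and once that is known, \Cref{thm:blt_failure} applied to $G$ produces, for the given lattice $\Lambda$, a window $g \in S_0(G)$ such that $\mathcal{G}(g,\Lambda\times\Lambda^{\perp})$ is an orthonormal basis for $L^2(G)$, which is exactly the assertion. So the only thing to do is to produce the compact open subgroup.

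First I would use that $G \cong K_{\infty}^n \times K_S^n$, with $K_{\infty}^n = \prod_{v\mid\infty}K_v^n$ a finite product and $K_S^n = \big(\resprod{v\in S}{K_v}{\algint_v}\big)^n$. The point specific to the function-field case is that \emph{every} place of $K$ is non-archimedean: an infinite place of $K$ restricts to the place of $\F_q(t)$ represented by $|\cdot|_{\infty}$, and that absolute value is non-archimedean (see its definition above). Hence each completion $K_v$ is a non-archimedean local field, so $K_v$ is totally disconnected and its ring of integers $\algint_v = \{x\in K_v : |x|_v\leq 1\}$ is a compact open subgroup of $K_v$; consequently $\algint_v^n$ is a compact open subgroup of $K_v^n$.

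Next I would assemble a compact open subgroup of $G$. The product $\prod_{v\mid\infty}\algint_v^n$ is a finite product of compact open subgroups, hence a compact open subgroup of $K_{\infty}^n$, and $\prod_{v\in S}\algint_v^n$ is a compact open subgroup of $K_S^n$ directly from the definition of the restricted-product topology. Therefore $\big(\prod_{v\mid\infty}\algint_v^n\big)\times\big(\prod_{v\in S}\algint_v^n\big)$ is a compact open subgroup of $G$. (Equivalently, this exhibits $G$ as totally disconnected, so that its identity component is trivial, hence compact.) Feeding this into \Cref{thm:idcomp_equivalences} and then \Cref{thm:blt_failure} finishes the argument.

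The proof is essentially bookkeeping: the one fact that must be invoked rather than derived here is that the ring of integers of a non-archimedean local field is compact and open, which is standard (and is visible concretely in the case $K = \F_q(t)$, $S = \emptyset$ from the identification $\F_q((1/t)) \cong D \times \widehat{D}$ recorded above). I do not anticipate any real obstacle beyond this routine verification.
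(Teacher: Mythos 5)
Your proposal is correct and follows essentially the same route as the paper: exhibit a compact open subgroup of $\sadeles{K}{S}^n$, then invoke \Cref{thm:idcomp_equivalences} and \Cref{thm:blt_failure}. The only (minor) difference is at the infinite places: the paper produces the compact open subgroup there via the classification of locally compact fields of characteristic $p$ as Laurent series fields $\F_{q_v}((1/t))$, whereas you observe directly that every place of a function field is non-archimedean (its restriction to the prime field, hence to $\F_q(t)$, is bounded), so each completion is a non-archimedean local field whose valuation ring $\{x : |x|_v \le 1\}$ is compact and open---a standard fact of equivalent depth, so the arguments are essentially interchangeable.
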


\begin{proof}
Since $K$ is a global function field, it has positive characteristic, say $p$. Any completion $K_v$ of $K$ with respect to a place $v$ will also have characteristic $p$. By \cite[Theorem 4.12]{RaVa99}, a locally compact field of characteristic $p$ is isomorphic to the field of Laurent series in one indeterminate over a finite field, say $K_v \cong \F_{q_v}((1/t))$ with $q_v = p^{r_v}$ for some $f$. As we saw above, the subgroup of Laurent series $\sum_k a_k t^k$ with $a_k = 0$ for $k \geq 0$ sits as a compact open subgroup $H_v$ inside $\F_{q_v}((1/t))$.

Let $v_1, \ldots, v_k$ be the infinite places on $K$. It follows that $H_{\infty} \coloneqq H_{v_1} \times \cdots \times H_{v_k}$ is a compact open subgroup of $K_{\infty} = K_{v_1} \times \cdots \times K_{v_k}$. Furthermore, $H_S \coloneqq \prod_{v \in S} \algint_v$ is a compact open subgroup of $K_S = \resprod{v \in S}{K_v}{\algint_v}$. Thus $H_{\infty}^n \times H_S^n$ is a compact open subgroup of $\sadeles{K}{S}^n$. By \Cref{thm:idcomp_equivalences}, it follows that $\sadeles{K}{S}^n$ has compact identity component. By \Cref{thm:blt_failure}, the conclusion of the proposition follows.
\end{proof}

For e.g.\ $G = \F_q(t)$ and $S = \emptyset$, we can be more specific than in \Cref{thm:blt_adeles2}. As already explained, in this case $G = D \times \widehat{D}$ where $D = \oplus_{k=1}^{\infty} \F_q$, so $G$ is already globally a product of a discrete and a compact group. Therefore, we can appeal to \Cref{lem:compact_times_discrete} to get a specific orthonormal basis over the lattice $\Lambda \times \Lambda^{\perp}$ where $\Lambda = D \times \{ 0 \}$ in $G$. The window $g$ is given by $g(d,c) = \delta_{d,0}$ for $d \in D$, $c \in \widehat{D}$, i.e.\ 
\[ g((a_k)_{k \in \N}, (b_k)_{k \in \N}) = 
     \begin{cases}
       1, &\quad\text{if $a_k = 0$ for all $k \in \N$,} \\
       0, &\quad\text{otherwise,} \\ 
     \end{cases}
\]
for $(a_k)_k \in \oplus_k \F_q$ and $(b_k)_k \in \prod_k \F_q$. Applying \Cref{thm:gf_existence_adeles}, we can now construct Gabor frames over $\sadeles{\F_q(t)}{S}^n$ for sets $S$ of finite places of $\F_q(t)$.

\printbibliography

\end{document}